\documentclass[reqno,english]{amsart}
\usepackage{macros}
\usepackage{caption}

\newcommand{\Var}{\mathrm{Var}}
\newcommand{\arctanh}{\mathrm{arctanh}}
\newcommand{\sol}{\mathbf{X}}
\newcommand{\calP}{\mathcal{P}}
\newcommand{\calZ}{\mathcal{Z}}
\newcommand{\sign}{\mathrm{sign}}

\title{
Satisfiability and Freezing in Random integer programs}
\author{Dylan J. Altschuler, Ruben Ascoli, and Will Perkins}
\date{\today}

\address{Department of Mathematics, The University of Texas at Austin}
\email{dylan.altschuler@austin.utexas.edu}

\address{School of Mathematics, Georgia Institute of Technology}
\email{rascoli3@gatech.edu}

\address{School of Computer Science, Georgia Institute of Technology}
\email{math@willperkins.org}

\begin{document}

\begin{abstract}
    We study a model of a random centrally symmetric polytope and ask about the existence of points in the polytope with non-zero, integer coordinates.  Our main results are the location of a sharp threshold for the existence of these integer points, a characterization of the typical empirical distribution of their coordinates, and showing that near the feasibility threshold these integer points are almost always totally isolated --- at linear Hamming distance to the nearest other integer point in the polytope.  

    The behavior of these random integer programs is related to that of the symmetric binary perceptron (the case when coordinates of points are $\pm 1$-valued).  The lack of symmetry in the set of potential solutions leads to new questions and phenomena, such as the distribution of the coordinate sizes in typical solutions, and requires the development of new information-theoretic techniques to deal with higher-dimensional optimization problems arising in the method of moments. 
\end{abstract}

\maketitle

\section{Introduction}

\label{secIntro}

How should worst-case algorithmic hardness be reconciled with practical and empirical evidence of tractability in real-world instances? There are several different philosophies for moving beyond worst-case analysis, each opening up different areas of theoretical computer science. For example, one can look for approximation guarantees or algorithms that succeed on special classes of structured instances. Another particularly fruitful approach is to look at the tractability of ``typical" instances – those drawn from a specified probability distribution.

Random $k$-SAT has become the prototypical model in this last direction, garnering intense research attention after the empirical discovery of several surprising phase transitions ~\cite{selman1996generating}. The probability that a random instance of $k$-SAT with a given constraint density is satisfiable appeared to drop rapidly from near $1$ to near $0$ as the density increased. Moreover, the algorithmic cost of solving these random instances seemed to spike near the same density.  These observations led  researchers in computer science, mathematics, and physics to try to understand how \textit{algorithmic} aspects of random $k$-SAT, such as whether typical instances are computationally hard, are related to \textit{probabilistic} behaviour, such as the existence of a sharp threshold for satisfiability or the geometry of the solution space for typical instances. 

The past two decades have clarified that algorithmic and probabilistic aspects of random $k$-SAT are closely linked, albeit in subtle ways \cite{achlioptas2008algorithmic,krzakala2007gibbs,mezard2009information,ding2022proof}. Moreover, the theory developed for random $k$-SAT turns out to be surprisingly general as highly similar phenomena have been discovered in a huge variety of other random optimization problems. We now bring this perspective to bear on a simple model of random integer programming.

Our model takes the random, centrally symmetric polytope obtained by intersecting origin-symmetric slabs whose normal vectors are independent Gaussians. This provides the feasible region of our random symmetric program. We then study integer points with no zero coordinates, which we call
``punctured lattice points,'' within the polytope. Given the dimension, the slab width, and the number of constraints, how many solutions to this random integer
program typically exist, and what is their structure?

We draw inspiration from the closely related symmetric binary perceptron (SBP) model, which studies the same polytope but asks for solutions from the discrete hypercube $\{ \pm 1 \}^n$. The SBP exhibits several striking phenomena, including a sharp threshold for satisfiability, a constant-size scaling window, and so-called \textit{frozen-1-RSB} behavior in which typical solutions are isolated and macroscopically far from any other solutions~\cite{APZ,PX24,ALS22,dja-ejp,ss}.  Curiously, despite this isolation of typical solutions, efficient algorithms can find solutions in atypical clusters comprising an exponentially small fraction of the solution space~\cite{abbe2022binary}. Analogously, our motivations for studying the current model of random integer programming are:
\begin{itemize}
    \item  Understanding the geometry of integer points in typical polytopes.
    \item Understanding how the frozen $1$-RSB phenomenon generalizes to broader settings.  
    \item Understanding the power, generality, and limitations of the fine second-moment-based analysis that has proven successful in analyzing the SBP.
\end{itemize}

\subsection{The model and main results}
\label{secmodelresults}

We first formalize our model of a random polytope.  In what follows, $n$ refers to the dimension of the problem, and we will be interested in asymptotics as $n$ diverges. The number of constraints will be $m = \Theta(n)$, and $\kappa >0$ will be a fixed parameter governing the shape of the polytope.   We say an event $A$ occurs \emph{with high probability} (whp) if  $\PP{A} \to 1$ as $n \to \infty$.

\begin{definition}[Model of random polytope]
    Let $\calP_{n,m,\kappa}$ be a random $n$-dimensional polytope obtained by intersecting $m$ random, origin-symmetric slabs of margin $\kappa$. That is, for a sequence of iid Gaussian vectors $A_i \sim N(0,n^{-1}I_n)$,  
    \[
        \calP_{n,m,\kappa} = \bigcap_{i=1}^m\cb{x \in \RR^n\,:\, |\la x,\, A_i \ra | \le \kappa}\,.
    \]
\end{definition}

Equivalently, letting $A$ be the $m\times n$ matrix with row $i$ given by $A_i$, 
\[
    \calP_{n,m,\kappa} = \cb{x \in \RR^n\,:\, \|Ax\|_\infty \le \kappa}\,.
\]
Next, we define our main object of study: the solution set.

\begin{definition}[Punctured lattice points and solutions]
    Define
    \[
        \Lambda_n := (\ZZ\setminus\{0\})^n.
    \]
    The random integer solution space is denoted by
     \[
        \sol_{n,m,\kappa} :=\calP_{n,m,\kappa} \cap \Lambda_n\,.
    \]
\end{definition}

\begin{figure}[t!]
    \centering
    \includegraphics[width=0.5\linewidth]{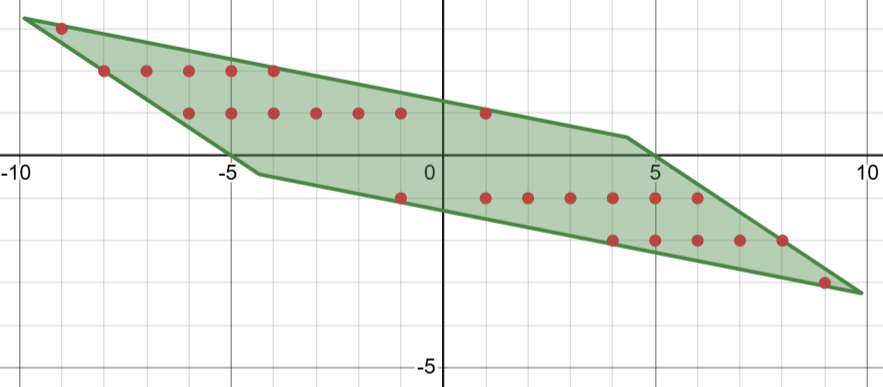}
    \caption{An example of the random polytope in two dimensions, with two pairs of parallel hyperplanes.  The ``punctured lattice points'' in red comprise the solution set $\sol_{n,m,\kappa}$. }
    \label{fig:2d}
\end{figure}

Our main questions now can be stated precisely. In the physics terminology from the study of perceptrons as models of neural networks, the \textit{capacity} is the random variable given by the largest $m$ for which $\sol_{n,m,\kappa}$ is non-empty. 

\begin{question}
    For fixed $\kappa >0$, what is the typical distribution of the capacity?  In particular, 
    \begin{itemize}
        \item What is the satisfiability threshold, i.e., the largest value of $\alpha$ such that $ \sol_{n,\alpha n,\kappa}$ is non-empty whp? 
        \item How sharp is the satisfiability transition? More pecisely, we ask for the width of the scaling window: how much must $m$ increase for the probability of satisfiability to decrease from $1-\epsilon$ to $\epsilon$? On what scale does the capacity typically fluctuate?
    \end{itemize}
\end{question}
\begin{question}
    When $ \sol_{n,m,\kappa}$ is   non-empty, what is its typical size  and structure?  In particular, what is the typical coordinate profile of an element of $ \sol_{n,m,\kappa}$ and what is the geometry of $ \sol_{n,m,\kappa}$ as a set?
\end{question}

We give complete answers to all of these questions for a wide range of parameters $\kappa$ and $\alpha$.  A few definitions are needed to state the results.

\begin{definition}[Scalar discrete Gaussian]\label{def:DG}
    For $r > 1$, let $\mu_{r}$ denote the ``punctured'' discrete Gaussian supported on $\ZZ \setminus \{0\}$ having mean zero and variance $r^2$. For a uniquely determined $\sigma := \sigma(r)$ and $Z:=Z(r)$, its probability mass function is
    \begin{equation}
        \mu_{r}(x) := \frac{1}{Z}\exp\left(-\frac{x^2}{2\sigma^2}\right),
        \qquad x \in \ZZ \setminus \{0\}.
    \end{equation}
    Equivalently, $\mu_r$ is the unique maximum-entropy distribution on
    $\ZZ\setminus\{0\}$ with mean zero and variance $r^2$. We also set
    $\mu_1$ as the uniform distribution on $\{-1,1\}$. 
\end{definition}

The punctured discrete Gaussian plays a distinguished role as the approximate distribution of a single coordinate from a uniformly random punctured lattice point of Euclidean norm $r \sqrt{n}$. We will also show in Section~\ref{sec:1profile} that the Shannon entropy  $H(\mu_r)$ measures the exponential growth rate of punctured lattice points with norm $r \sqrt{n}$ for fixed $r \ge 1$ as $n$ diverges. Letting $W$ be a scalar standard Gaussian, set
\begin{align*}
    p(a) := \PP{|W| \le a}.
\end{align*}
\begin{definition}[First-moment prediction for  satisfiability]   
The \textit{annealed free energy} function is given by
\begin{equation}\label{eq:F1-def}
    F_1(\kappa,\alpha,r):=H(\mu_r)+\alpha\log p(\kappa/r).
\end{equation}
The \textit{first-moment prediction for the satisfiability threshold} is
\begin{equation}\label{eq:capacity definition}
        \alpha_c(\kappa)
        :=\sup_{r\ge1}\frac{H(\mu_r)}{-\log p(\kappa/r)}.
    \end{equation}
Given $\kappa > 0$ and $\alpha > 1$, the \emph{maximizing radius} is $$r_*(\kappa,\alpha)=\arg\max_{r\ge1}F_1(\kappa,\alpha,r).$$ 
\end{definition}

The existence and uniqueness of $r_*(\kappa, \alpha)$ are proven in \cref{sec:1profile}.
The function $F_1(\kappa,\alpha,r)$ measures the exponential growth rate of the expected number of solutions with norm $r\sqrt n$. By Markov's inequality, $F_1$ provides an upper bound on the true number of solutions; we use the second moment method to provide a matching lower bound. 

Next, we construct the central object in the second moment method, $F_2$, which describes the exponential rate of growth of the expected number of \textit{pairs} of solutions with prescribed Euclidean norms and angle. Pairs of punctured lattice vectors with prescribed norms and angle are described by the following planar analogue of the scalar
discrete Gaussian in \cref{def:DG}. 
\begin{definition}[Planar discrete Gaussian]\label{def:DG 2}
    Let $r,\hat r>1$ and $\theta\in(-1,1)$. Set 
    \[
        \Sigma_{r,\hat{r},\theta} := \begin{pmatrix}
            r^2&\theta r\hat r\\
            \theta r\hat r&\hat r^2
        \end{pmatrix}.
    \]
    Assuming that there exists a valid probability distribution on $(\ZZ \setminus \{0\})^2$ with mean zero and covariance $\Sigma_{r,\hat{r},\theta}$, we define the planar discrete Gaussian, denoted by $\mu_{r,\hat r,\theta}^{(2)}$, as the maximum-entropy such distribution.
\end{definition}
Convexity of the feasible set of such distributions and the strict concavity of entropy imply that the maximum entropy distribution is unique and hence the above definition is well-posed; see \cite[Theorem~2.5]{dg-theta}. As discussed shortly in \cref{prop:dg2}, the density of $\mu^{(2)}$ is also given by an exponential quadratic under mild assumptions (which are necessary because we exclude vectors with zero coordinates).

For $u, v>0$ and $\theta\in[-1,1]$, define \[
    p^{(2)}(u,v,\theta)
    :=\PP{|W|\le u,\ |W_\theta|\le v},
\]
where $W$ and $W_\theta$ are centered jointly Gaussian random variables with
covariance $\theta$ and unit variance. Finally, define the \emph{second moment profile function} by
\begin{equation}\label{eq:F2-definitionintro}
    F_2(\kappa,\alpha,r,\theta)
    :=H(\mu^{(2)}_{r,r,\theta})
      +\alpha\log p^{(2)}(\kappa/r,\kappa/ r,\theta).
\end{equation}
Note that both terms are even in $\theta$. Also observe that the first moment profile is related to the second moment profile at $\theta = 0$ by
\begin{equation}\label{eq:F2-at-zero}
    F_2(\kappa,\alpha,r,0)
    =2F_1(\kappa,\alpha,r) \,.
\end{equation}

We are able to obtain a complete description of our model assuming the following condition on the second moment profile function:

\begin{hypothesis}[Second-moment condition]\label{hyp:maxima}
Fix $\kappa>0$ and $\alpha\in(1,\alpha_c(\kappa)]$, and let
$r_*=r_*(\kappa,\alpha)$. Then
\begin{enumerate}
    \item
    \begin{equation}\label{eq:F2-maxima}
        F_2(\kappa,\alpha,r_*,\theta)
        <F_2(\kappa,\alpha,r_*,0)
        \qquad
        \text{for every }\theta\in(-1,1)\setminus\{0\};
    \end{equation}
    \item
    \[
        \partial_{\theta\theta}^2 F_2(\kappa,\alpha,r_*,0) < 0
    \]
\end{enumerate}
\end{hypothesis}

These conditions are user-friendly in the sense that they concern a single-variable analytic function on a bounded domain.
We analytically verify \cref{hyp:maxima} in the following parameter range. 

\begin{proposition}[Verification of the second-moment condition]
\label{thm:unique-max}
If $\kappa\ge10$, then
$\alpha_c(\kappa)>1+\kappa^{-2}$ and \cref{hyp:maxima} holds whenever
\begin{equation}\label{eq:profile-main-range}
    1+\frac1{\kappa^2}\le\alpha\le\alpha_c(\kappa).
\end{equation}
\end{proposition}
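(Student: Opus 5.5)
The plan is to work in the large-$\kappa$ regime, where the relevant radii are large ($r_*\gtrsim \kappa$). In that regime every discrete object is exponentially close to its continuous Gaussian analogue, so \cref{hyp:maxima} reduces to an explicit inequality perturbed by controllable errors.

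\emph{Step 1: locate $\alpha_c$ and $r_*$.} Use the Poisson summation (theta-function) representation of $\mu_r$ to show, for $r\ge1$, that $H(\mu_r)=\tfrac12\log(2\pi e r^2)+O(e^{-c r^2})$ up to the explicit puncturing correction. Combining this with $\log p(a)=\log(\sqrt{2/\pi}\,a)-a^2/6+O(a^4)$ for small $a$ and the exact formula for $a=\kappa/r$ of order one, analyze $F_1$ in $r$. We show that $\partial_r F_1$ changes sign once, which is the uniqueness of $r_*$ from \cref{sec:1profile}. This gives $r_*=\kappa/a_*(\alpha)$, where $a_*$ solves $1/\alpha = a\,p'(a)/p(a)$ up to errors $O(e^{-c\kappa^2})$. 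The condition $\alpha\ge 1+\kappa^{-2}$ ensures $a_*\lesssim \kappa\cdot$const, so that $r_*\ge c$ stays bounded away from the lattice scale. In particular, evaluating $F_1$ at $r\approx\kappa$ shows that $F_1>0$ for $\alpha=1+\kappa^{-2}$, which yields $\alpha_c(\kappa)>1+\kappa^{-2}$.

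\emph{Step 2: planar entropy.} Use \cref{prop:dg2} to write $\mu^{(2)}_{r,r,\theta}$ as an exponential quadratic. Then bound, via two-dimensional Poisson summation,
\[
H(\mu^{(2)}_{r,r,\theta})=\log(2\pi e r^2)+\tfrac12\log(1-\theta^2)+E(r,\theta).
\]
Here $E$ and $\partial_\theta^2E$ are exponentially small in $r^2(1-|\theta|)$. Near $|\theta|=1$ the continuous approximation fails. There we instead use the crude bound $H(\mu^{(2)})\le H(\mu_r)+H(X-\hat X)$, where the difference has variance $2r^2(1-\theta)$, which gives $H\le H(\mu_r)+\tfrac12\log(4\pi e r^2(1-\theta))+O(1)$.

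\emph{Step 3: compare to the Gaussian $\theta$-profile.} With $a=\kappa/r_*$, the main term is
\[
g(\theta)=\tfrac12\log(1-\theta^2)+\alpha\log\frac{p^{(2)}(a,a,\theta)}{p(a)^2}.
\]
We must show $g(\theta)<g(0)=0$ for $\theta\neq0$, together with $g''(0)<0$. Expanding in $\theta$, we get $g''(0)=-1+\alpha\,(\mathbb E[W^2\mid |W|\le a]-1)^2$. For the second factor we use $\mathbb E[W^2\mid|W|\le a]=1-a p'(a)/p(a)=1-1/\alpha$ from the stationarity equation. Hence $g''(0)=-1+1/\alpha<0$ for $\alpha>1$, with margin $\gtrsim\kappa^{-2}$. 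This margin dominates the exponentially small $\partial^2_\theta E$, which proves part (2).

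For part (1), a global inequality is needed. We plan to use the Gaussian correlation structure: writing $p^{(2)}$ via Hermite/Mehler expansion gives $p^{(2)}/p^2=\sum_k \theta^{2k}c_k^2$ with $c_k=\mathbb E[H_{2k}(W)\mid|W|\le a]/\sqrt{(2k)!}$. We then bound $\alpha\log(1+\sum_{k\ge1}\theta^{2k}c_k^2)\le \alpha\sum_k\theta^{2k}c_k^2$. Next we show that $\alpha c_k^2\le 1/k$, which is what is needed to beat $-\tfrac12\log(1-\theta^2)=\sum\theta^{2k}/(2k)$. The $k=1$ term is exactly $1/\alpha$ by Step 3, and for larger $k$ we bound $c_k$ using $a\lesssim$ const.

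\emph{Main obstacle.} We expect the main obstacle to be this global comparison, particularly the window $\theta$ near $\pm1$ combined with $\alpha$ close to $\alpha_c$. There $g$ is nearly flat, and the discrete corrections of Step 2 are no longer negligible. We will handle $|\theta|\ge1-\delta$ separately using the crude entropy bound: as $\theta\to1$, the term $F_2\approx F_1+\tfrac12\log(1-\theta)$ tends to $-\infty$ relative to $2F_1\ge0$. The hypothesis $\kappa\ge10$ is what makes the error constants explicit enough to close the intermediate range numerically-analytically.
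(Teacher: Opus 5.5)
Your verification of $\alpha_c(\kappa)>1+\kappa^{-2}$ is fine; even $r=1$ works, since $\log 2+(1+\kappa^{-2})\log p(\kappa)>0$. The rest of the plan, however, rests on a false premise, and this is a genuine gap.

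\emph{The near-binary regime is missing.} The range of $\alpha$ extends up to $\alpha_c(\kappa)\ge \log 2/(-\log p(\kappa))$, which is of order $10^{22}$ at $\kappa=10$. The exact stationarity relation is $\alpha\,d(K)=q(r_*)$, with $K=\kappa/r_*\le\kappa$, $d(K)=2K\phi(K)/p(K)$ decreasing, and $q(r)=r^2/\sigma(r)^2$. It gives $q(r_*)\ge \alpha\, d(\kappa)$, so for large $\alpha$ the factor $q(r_*)$ must be large, which forces $r_*$ close to $1$. Near $\alpha_c(\kappa)$, $r_*^2-1$ is exponentially small in $K^2$. So $r_*\gtrsim\kappa$ holds only for bounded $\alpha$. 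In the near-binary regime $\mu_{r_*}$ is essentially Rademacher, and $D_{r_*}(\theta)$ behaves like the binary rate $\tfrac{1+\theta}{2}\log(1+\theta)+\tfrac{1-\theta}{2}\log(1-\theta)$, not like $-\tfrac12\log(1-\theta^2)$. No continuum approximation is available there; the paper projects the pair onto signs and invokes the binary profile inequality of Abbe--Li--Sly.

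\emph{The endpoint picture is wrong.} Because the pair law is discrete, $H(\mu^{(2)}_{r,r,\theta})\to H(\mu_r)$, hence $F_2(\theta)\to F_1(\kappa,\alpha,r_*)$ as $\theta\to\pm1$. Your bound $\tfrac12\log(4\pi e r^2(1-\theta))$ on the entropy of an integer-valued difference is false once the variance is small, since that entropy is nonnegative. At $\alpha=\alpha_c(\kappa)$ the limit equals $F_2(0)=0$, so the endpoint is a tie with $\theta=0$, not a region where $F_2\to-\infty$. Strictness near $\pm1$ needs a genuine derivative comparison, as in the paper's treatment of $\theta>1-1/(3K^2)$.

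\emph{Discrete corrections are polynomial, not exponential.} This matters even where $r_*$ is large ($K$ small, $\alpha$ near $1$). Excluding $0$ from the support gives $q(r)\approx 1+1/(\sqrt{2\pi}\,r)$. Stationarity is therefore $\alpha d(K)=q(r_*)$, not your $\alpha d(K)=1$, and the curvature is $\partial_\theta^2F_2(0)=-1+\alpha d(K)^2=-1+q(r_*)^2/\alpha$, not $-1+1/\alpha$. Your formula would yield part (2) of the hypothesis for every $\alpha>1$, which is false. Indeed $\alpha-1\ge q(r_*)-1$, so $\alpha-1\ll\kappa^{-2}$ forces $r_*\gg\kappa^2$. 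Then in $q(r_*)d(K)\approx 1+1/(\sqrt{2\pi}\,r_*)-K^2/3$ the first correction wins and $q(r_*)d(K)>1$. Near $\alpha=1+\kappa^{-2}$ the two corrections nearly cancel, and this is exactly where the paper uses the lower bound on $\alpha$ quantitatively, in its small-window scalar estimate.

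\emph{The Hermite step fails for small $K$.} After $\log(1+x)\le x$ you need $\alpha c_k^2\le 1/(2k)$ (not $1/k$). But as $K\to0$, $c_k^2\to\binom{2k}{k}4^{-k}\sim(\pi k)^{-1/2}$, which is already $3/8>1/4$ at $k=2$. In fact $G_K(\theta)=\log\bigl(p^{(2)}(K,K,\theta)/p(K)^2\bigr)\to-\tfrac12\log(1-\theta^2)$ as $K\to0$. The comparison is thus a near-cancellation at every $\theta$. One must bound the logarithm itself (the paper uses $G_K\le-\tfrac{d(K)^2}{2}\log(1-\theta^2)$ for $K\le2$) against a discrete Poisson-summation lower bound on $D_r$.

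\emph{The missing organizing idea.} Eliminate $\alpha$ through exact stationarity and reduce the hypothesis to $D_r(\theta)>\tfrac{q(r)}{d(K)}G_K(\theta)$. Then handle three regimes of $(r,K)$ by separate arguments: the bulk ($r\ge1.2$, $K\ge2$), the small window ($K\le 2$), and the near-binary case ($r<1.2$).
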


Our first main result determines the satisfiability threshold and the width of the scaling window.
\begin{theorem}[Sharp threshold and critical window]\label{thm:main}
Fix $\kappa>0$ such that $\alpha_c(\kappa)>1$ and
\cref{hyp:maxima} holds for $\alpha=\alpha_c(\kappa)$. 
There exists a function $f_\kappa: \mathbb R \to [0,1]$ with $\lim_{t\to-\infty}f_\kappa(t)=0$ and
$\lim_{t\to+\infty}f_\kappa(t)=1$ so that the following holds.  For each fixed $T > 0$, uniformly for $t \in  [-T,T]$, when 
$m=\alpha_cn+t\in\mathbb N$,
\begin{equation}\label{eq:main-intro-scaling-limit}
    \PP { \sol_{n,m,\kappa} = \emptyset } = f_\kappa(t)+o_n(1)
\end{equation}
In particular, $\alpha_c$ is the sharp satisfiability threshold, and the scaling window is of constant order.
\end{theorem}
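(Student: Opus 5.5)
We follow the strategy developed for the symmetric binary perceptron (Abbe--Li--Sly, Perkins--Xu, Sah--Sawhney): show that at $m=\alpha_c n+t$ the number of solutions, suitably localized, converges in distribution to a nondegenerate limit (a product of independent lognormal-type factors driven by small subgraph/cycle statistics), and read off $f_\kappa(t)$ as the limiting probability that this count is zero. The first step is to restrict to solutions whose norm lies in a window $\|x\|_2^2 \in n r_*^2 + O(\sqrt n)$ and whose empirical coordinate profile is close to $\mu_{r_*}$; call the restricted count $Z$. The unrestricted count has the same vanishing behaviour up to $o(1)$: by the first-moment computation of \cref{sec:1profile}, the expected number of solutions with other norms or profiles is $o(1)$ at $\alpha_c$, because $F_1(\kappa,\alpha_c,r)<0=F_1(\kappa,\alpha_c,r_*)$ for $r\ne r_*$. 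We then compute $\mathbb E Z$ precisely, not just to exponential order, via a local limit theorem for lattice points of given norm (a saddle-point or Gaussian-sum estimate with the punctured discrete Gaussian $\mu_{r_*}$) together with exact Gaussian probabilities $p(\kappa/\|x\|\sqrt{n})^m$. Since $F_1(\kappa,\alpha_c,r_*)=0$, shifting $m$ by $t$ multiplies $\mathbb E Z$ by $p(\kappa/r_*)^t$, so $\mathbb E Z=\Theta(1)\cdot p(\kappa/r_*)^{t}$ with an explicit polynomial prefactor that must be balanced (for instance by tuning the norm window) to be order one.

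The second step is the second moment. Writing $\mathbb E Z^2$ as a sum over pairs with overlap $\theta$ and norms near $r_*$, \cref{hyp:maxima}(1) makes the contribution of $|\theta|>\epsilon$ exponentially negligible relative to $(\mathbb EZ)^2$. The region $|\theta|\le\epsilon$ is handled by Laplace's method using \cref{hyp:maxima}(2) and the identity \eqref{eq:F2-at-zero}, together with a two-dimensional local limit theorem for $\mu^{(2)}_{r,r,\theta}$, using its exponential-quadratic form from \cref{prop:dg2}. This gives $\mathbb E Z^2/(\mathbb E Z)^2 \to C_\kappa$, a finite constant that is strictly bigger than $1$ because $m=\Theta(n)$ constraints produce a Gaussian-matrix determinant correction of order one.

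The third step, and the main obstacle, is identifying the source of this excess variance and conditioning on it. We follow the small graph conditioning method in the form used for the SBP: condition on the statistics $\mathrm{tr}$ of cycles in $AA^\top$ (equivalently, linear spectral statistics of the Gaussian matrix), which are asymptotically independent Gaussians/Poissons. We compute the joint moments $\mathbb E[Z\prod_k Y_k^{(j_k)}]/\mathbb E Z$ by planting a solution: given $x$, the rows $A_i$ conditioned on $|\langle A_i,x\rangle|\le\kappa$ are Gaussian with a shifted variance in direction $x$, so the cycle statistics acquire explicit mean shifts $\delta_k$. We must then verify that $\exp(\sum_k \delta_k^2/(2\sigma_k^2))=C_\kappa$, which is the delicate identity matching the Laplace-method constant. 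The lack of $\pm$ symmetry means the planted distortion depends on the profile, but after localizing to the typical profile it reduces to the SBP-type computation with effective margin $\kappa/r_*$. Given this identity, Janson's theorem yields $Z/\mathbb E Z \to \prod_k(1+\delta_k)^{X_k}e^{-\lambda_k\delta_k}$ (or its Gaussian analogue) in distribution. This limit $W$ is almost surely positive.

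The last step upgrades this to $\mathbb P(Z=0)$ at constant-order $\mathbb E Z$. Concentration alone does not give this, so, as in Abbe--Li--Sly, we argue that conditional on the cycle statistics the number of solutions is asymptotically Poisson. This follows from a conditional second-moment computation, which shows that pairs are asymptotically uncorrelated given $\mathcal F$ (frozen isolation makes the $\theta\ne0$ contributions vanish), and then factorial moments via the same planting. The result is $f_\kappa(t)=\mathbb E\exp(-\lambda_\kappa p(\kappa/r_*)^t W)$, which has the required limits as $t\to\pm\infty$, with the $o(1)$ error uniform over $|t|\le T$ by continuity in the parameters.
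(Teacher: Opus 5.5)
Your Steps 3 and 4 are inconsistent, and the inconsistency points to the missing idea. At $m=\alpha_c n+t$ one has $\mathbb E Z=\Theta(1)$. The second moment is therefore not $(C_\kappa+o(1))(\mathbb E Z)^2$: the diagonal and antipodal pairs $y\in\{x,-x\}$ contribute $2\,\mathbb E Z$, which is of the same order as $(\mathbb E Z)^2$. So Janson's variance condition fails at criticality. The conclusion that $Z/\mathbb E Z$ converges to an almost surely positive limit is also false, since it would force $\mathbb P(Z=0)\to0$, contradicting the statement you are proving. The cycle correction and log-normal limit can only be established where $\mathbb E Z\to\infty$. The paper runs them at the precritical time $m_0=\lfloor\alpha_c n-\eta\log n\rfloor$, where $\mathbb E\mathcal Z_0=n^{\Theta(\eta)}$ and the diagonal term is $n^{-\Omega(\eta)}$ relative to the squared first moment.

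Your Step 4 is a second gap. A conditional Poisson law via conditional factorial moments $\mathbb E[(Z)_k\mid\mathcal F]$ requires summing over $k$-tuples of solutions with arbitrary $k\times k$ overlap matrices. You would have to show the $k$-replica annealed profile is maximized at mutually orthogonal tuples, and plant $k$ solutions in the cycle computation. \cref{hyp:maxima} is purely pairwise and gives none of this, and matching only two factorial moments does not identify a Poisson limit. Also, $Z$ is always even, so at best $Z/2$ is Poisson; antipodal pairs are perfectly correlated, not ``asymptotically uncorrelated''.

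The paper avoids higher moments altogether, in three steps.
\begin{enumerate}
\item Because $\mathbb E\mathcal Z_0=n^{O(\eta)}$ is only polynomial, the unweighted off-center second-moment bounds give \emph{complete} freezing at $m_0$: with probability $1-n^{-1}$, every pair of typical solutions with $y\notin\{x,-x\}$ has $|\langle x,y\rangle|\le\sqrt n\log n$.
\item Take $\eta$ small so that $\mathcal Z_0\le n^{1/4}$ with high probability. Then the Gram matrix $R$ of one representative per antipodal pair satisfies $\|R-I\|_{\mathrm F}=o(1)$. Hence the projections onto the remaining $\ell=O(\log n)$ rows are $o(1)$-close in total variation to independent Gaussians (relative entropy at most $C\ell\|R-I\|_{\mathrm F}^2$, then Pinsker).
\item Each representative therefore survives essentially independently with probability $(1+o(1))p_c^\ell$. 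This gives $\mathbb P(\text{no survivor}\mid\mathcal F_{m_0})=\exp\{-\tfrac12p_c^\ell\mathcal Z_0\}+o(1)$, and the log-normal limit of $\mathcal Z_0/\mathbb E\mathcal Z_0$ at $m_0$ yields $f_\kappa$.
\end{enumerate}
This argument uses only pairwise information, which is exactly what the hypothesis supplies.

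Two minor points. No tuning of the norm window is needed for the prefactor: the $n^{-1/2}$ local limit factor cancels against the $\Theta(\sqrt n)$ admissible radii. However, the window must grow slowly, e.g.\ $\sqrt n\,|r-r_*|\le\log\log n$, for the excluded radii to contribute $o(1)$.
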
 
The function $f_\kappa$ is given by an explicit formula deferred to \cref{thm:main2}.

Our next main result describes the structure of typical solutions. Say that a solution $x\in \sol_{n,m,\kappa}$ is \emph{$\delta$-frozen} if for all $y\in \sol_{n,m,\kappa}\setminus\{x\}$, we have $\left|\{i:x_i\ne y_i\}\right|>\delta n$. Moreover,
for $x = (x_1,\dots,x_n) \in\Lambda_n$, let
\[
\nu_x:=\frac1n\sum_{i=1}^n\delta_{x_i}
\]
be the empirical distribution of its coordinates. Finally, we say ``almost all solutions'' (or pairs of solutions) satisfy a property if the fraction of solutions with the property converges in probability to one as $n\to \infty$. 
\begin{theorem}[Structure of solutions and solution space]
\label{thmStructure}
Fix $\kappa>0$ and $\alpha\in(1,\alpha_c(\kappa))$ such that
\cref{hyp:maxima} holds. Let
$m=\lfloor\alpha n\rfloor$, $\calZ=|\sol_{n,m,\kappa}|$, and $r_*=r_*(\kappa,\alpha)$. The following hold with high probability:
\begin{enumerate}
    \item\label{item:freezing} (Strong freezing.)
    For some $\delta = \delta(\kappa,\alpha)>0$, almost all solutions $x \in \sol_{n,m,\kappa}$ are $\delta$-frozen.

    \item\label{item:typical orthogonal} (Typical pairs of solutions are almost orthogonal)
    Almost all pairs of solutions $(x,y)\in\sol_{n,m,\kappa}^2$ satisfy $$|\la x,y\ra|\le \sqrt n\log n \,.$$
    
    \item \label{item:typical empirical} (Empirical distribution of a typical solution)
    For almost all  solutions $ x \in \sol_{n,m,\kappa} $, $$ \|\nu_x-\mu_{r_*}\|_{\mathrm{TV}} =o(1)\,.$$

    \item \label{item:lognormal} The random variable $\calZ/ \E \calZ$ has a log-normal limit (explicitly given in \cref{lem:lognormal}).
\end{enumerate}
\end{theorem}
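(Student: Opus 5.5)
The plan is to follow the small-graph-conditioning route used for the symmetric binary perceptron (Abbe--Li--Sly, Perkins--Xu), adapted to the unbounded punctured lattice. All four parts rest on a planted-model/contiguity statement.

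\textbf{Step 1: restrict to the typical norm shell.} Let $\calZ_*$ count solutions $x$ with $\|x\|^2/n \in [r_*^2-\epsilon_n, r_*^2+\epsilon_n]$ and with empirical profile $\nu_x$ within $o(1)$ of $\mu_{r_*}$. First-moment estimates show that $\E(\calZ-\calZ_*) = o(\E\calZ)$:
\begin{itemize}
\item the local expansion of $F_1$ controls radii away from $r_*$, using uniqueness of the maximizer $r_*$ established in Section~\ref{sec:1profile};
\item Sanov-type large deviations for the profile control atypical $\nu_x$ at fixed norm, because $\mu_{r}$ is the maximum-entropy law with the given variance.
\end{itemize}
Markov's inequality then shows that almost all solutions lie in this set. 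This already gives item~\eqref{item:typical empirical}, provided the later steps show $\calZ \ge c\,\E\calZ$ whp.

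\textbf{Step 2: second moment and log-normal limit.} Write $\E\calZ_*^2$ as a sum over pairs of overlap $\theta$ and joint profile. The exponential rate is $\max_\theta F_2(\kappa,\alpha,r_*,\theta)$. By \cref{hyp:maxima}(1) and \eqref{eq:F2-at-zero}, the maximum is attained uniquely at $\theta=0$ with value $2F_1$.

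A Laplace-method computation around $\theta=0$ then uses the nondegeneracy in \cref{hyp:maxima}(2). It also needs a local CLT for pairs of punctured lattice vectors with near-orthogonal overlap, coming from the planar discrete Gaussian $\mu^{(2)}$. Together these give
\[
\E\calZ_*^2/(\E\calZ_*)^2 \to C(\kappa,\alpha) \in (1,\infty).
\]
As in the SBP, the constant $C$ should be matched by small-cycle statistics of the constraint structure. Concretely, these are the traces of powers of the relevant Gram-type matrix, i.e.\ fluctuations of $\sum_{i,j}$ of the Hermite-projected constraint indicators.

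Applying Janson's small subgraph conditioning (or the Gaussian-disorder analogue used for the SBP) yields:
\begin{itemize}
\item $\calZ_*/\E\calZ_*$ converges to an explicit log-normal variable, which proves item~\eqref{item:lognormal} since $\calZ-\calZ_*$ is negligible;
\item contiguity of the planted model with the null model.
\end{itemize}

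\textbf{Step 3: transfer through the planted model.}
\begin{itemize}
\item \emph{Item~\eqref{item:typical orthogonal}.} In the planted model, the expected number of second solutions at overlap $|\la x,y\ra|>\sqrt n\log n$ is at most $e^{-c\log^2 n}\,\E\calZ$ by the quadratic decay of $F_2$ near $0$. For $|\theta|$ bounded away from $0$ the decay is exponential. Contiguity and the lower bound on $\calZ$ then transfer this to almost all pairs.
\item \emph{Item~\eqref{item:freezing}, freezing.} This is the main obstacle, because $F_2$ only sees overlaps, while freezing concerns Hamming distance. Moreover, small Hamming changes can change coordinates by large integers.

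The first thing to try is a direct planted first moment. Fix a planted $x$ with profile $\mu_{r_*}$ and count $y\ne x$ differing in $k\le\delta n$ coordinates, with $y_i$ arbitrary in $\ZZ\setminus\{0\}$ on the changed set $S$. The constraint $A(y-x)$ must keep $Ay$ in the slab. Conditionally on the planted $Ax$, a positive fraction of rows have $|\la A_i,x\ra|$ within $\kappa$ but near the boundary. The displacement $\la A_i,y-x\ra$ is Gaussian with variance $\|y-x\|^2/n \ge k/n$, because every changed coordinate moves by at least $1$. So each such row fails with probability bounded below by an amount that scales like $\sqrt{k/n}$ relative to the boundary density.

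This gives a probability bound $\exp(-c\,n\sqrt{k/n}\cdot\text{stuff})$, with the $\sqrt{k/n}$ effect multiplied by roughly $\alpha n$ constraints. It must beat the entropy $\binom{n}{k}\prod_{i\in S}(\text{weight of }y_i)$. The weights are summable if large $|y_i|$ are penalized, and large displacement only makes the constraint cost worse. Since $n\sqrt{k/n}\gg k\log(n/k)$ for $k\le\delta n$ with $\delta$ small, the count is $o(1)$ uniformly in $k\ge1$.

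Finally, contiguity upgrades this statement to: almost all solutions are $\delta$-frozen whp.
\end{itemize}
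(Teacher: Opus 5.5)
Your plan for items (2)--(4) follows the paper: first-moment truncation, a second moment under \cref{hyp:maxima}, removal of the constant-order fluctuation by cycle counts of the disorder, and transfer of the geometric statements by Markov's inequality plus the lower tail of $\calZ$. Contiguity is not actually needed for that transfer.

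The genuine difference is freezing. The paper does not count neighbours in Hamming balls. Instead:
\begin{itemize}
\item It bounds pairs with overlap near $\pm1$ via \cref{lemma:F2-endpoint-uniform}. The $t^{-1/2}$ singularity of $\partial_\theta\log p^{(2)}$ there is exactly your $\sqrt{k/n}$ boundary cost, and its entropy bound plays the role of your $\binom nk$ count.
\item From this it deduces freezing at Euclidean distance $\delta\sqrt n$.
\item It then converts to Hamming distance, using item (3) to show that large coordinates carry little norm, plus a pigeonhole step for atypical solutions.
\end{itemize}
Your direct planted count over a Hamming ball gives Hamming freezing at once and covers atypical neighbours $y$ automatically. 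It also yields the Euclidean version for free, since $|\{i:x_i\ne y_i\}|\le\|x-y\|_2^2$ for integer vectors. The paper's route has the advantage of reusing the overlap-indexed estimate it needs anyway for the $\Omega_{\mathrm{end}}$ part of $\E{Q^2}$.

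There are two points to patch.

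\textbf{Radial component.} $\la A_i,y-x\ra$ is not independent of $\la A_i,x\ra$. Write $y-x=\beta x+d_\perp$ with $d_\perp\perp x$. The radial part rescales $\la A_i,x\ra$, and for $\beta<0$ it pulls boundary rows inward, which could cancel the cost. This is what the hypothesis $(r-\hat r)^2\le c_0\eps\log(1/\eps)$ guards against in the paper's endpoint lemma. It is harmless in your setting, for two reasons:
\begin{itemize}
\item $y-x$ vanishes off $S$ while $x$ has no zero coordinates, so $\|d_\perp\|_2^2\ge\beta^2(n-k)$.
\item A short case analysis also gives $\|d_\perp\|_2^2\ge ck$ for bounded $r$.
\end{itemize}
Hence the shift $|\beta|\kappa$ is at most $O(\kappa)$ times the noise level $\|d_\perp\|_2/\sqrt n\gtrsim\sqrt{k/n}$, and boundary rows still fail with probability bounded below. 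You need to say this explicitly.

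\textbf{Cycle-count step.} Your Step 2 black-boxes what is most of the paper's work. Janson's theorem does not apply to dense Gaussian disorder. The paper instead uses a truncated cycle-weighted count with $T\asymp\log n$ cycles. Computing the planted shift $(2B_{\kappa/r,\alpha})^k$ of each cycle count, via low-rank resampling along $\hat x$, needs the coherence bound $\|x\|_\infty\le\log n$. Your norm-and-profile truncation does not supply this. You should add that truncation; it costs nothing at the first-moment level.
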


\subsection{Discussion and future directions}

The first-moment prediction $\alpha_c(\kappa)$ is an upper bound on the satisfiability threshold: above it, the expected number of solutions tends to zero. For $1<\alpha<\alpha_c(\kappa)$, we restrict the solution count to vectors with length close to $r_*\sqrt n$ and group pairs by their normalized inner product $\theta$. Laplace's method reduces the exponential rate of the second moment to maximizing $F_2(\theta) := F_2(\kappa,\alpha,r_*,\theta)$. Since $F_2(0)=2F_1(\kappa,\alpha,r_*)$, if there is some $\theta$ with $F_2(\theta) > F_2(0)$ then there is an exponential mismatch between the second moment and the squared first moment. \cref{hyp:maxima} asks $F_2$ to both have a strict maximum on $(-1,1)$ at $\theta = 0$ and also to have negative curvature at zero. Together, these enable sharp asymptotics of the second moment via Laplace's method and yield a coarse threshold. (A number of additional techniques, such as a dense analogue of the small subgraph conditioning, are then needed to obtain a sharp threshold and the log-normal limit. We give a detailed account in \cref{sec:ov}.)

\begin{figure}
    \centering
    \begin{minipage}{0.5\textwidth} 
    \centering
    \includegraphics[width=1.05\linewidth]{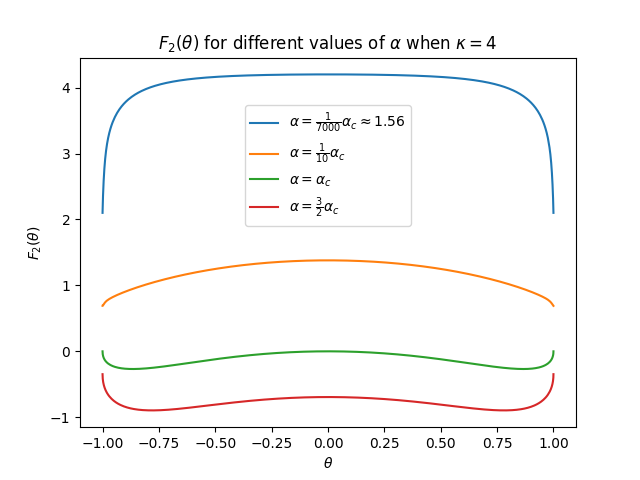}
    \caption*{\textbf{(a.)} The function $F_2$ for $\kappa=4$ and various values of $\alpha$.}%
    \end{minipage}%
    \begin{minipage}{0.5\textwidth}
    \centering
    \includegraphics[width=1.05\linewidth]{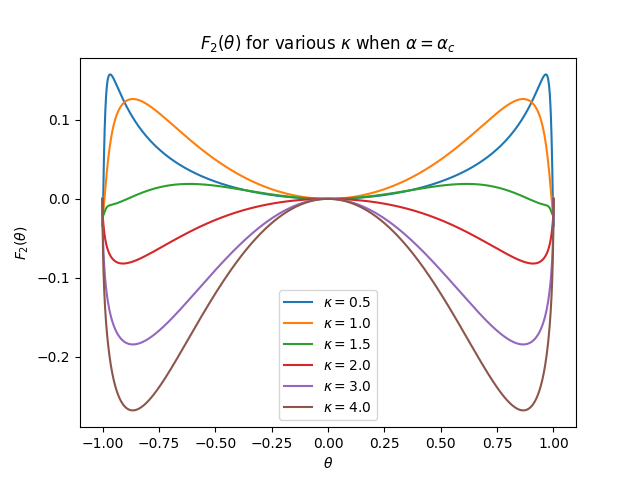}
    \caption*{\textbf{(b.)} The function $F_2$ for various values of $\kappa$ with $\alpha = \alpha_c(\kappa)$.}
    \end{minipage}
    \caption{The second moment profile function for some example choices of parameters.}
    \label{fig:F2}
\end{figure}

However, \cref{hyp:maxima} actually fails for some values of $\kappa < 2$ when $\alpha = \alpha_c(\kappa)$; see Figure \ref{fig:F2}. Hence, while the assumption $\kappa\geq 10$ in \cref{thm:unique-max} is certainly not best possible, it is necessary to bound $\kappa$ from below. A natural next direction is to study the count and geometry of the solution space for the regime of $(\kappa, \alpha)$ where \cref{hyp:maxima} fails.

\begin{question}\label{q}
    Fix $\kappa>0$.
    \begin{enumerate}
        \item Does strong freezing hold for all $\alpha$ below the satisfiability threshold?
        \item Is $\alpha_c(\kappa)$ the satisfiability threshold for all $\kappa$? 
    \end{enumerate}
\end{question}
Our second moment analysis does not answer these questions
when \cref{hyp:maxima} fails; however, the failure of \cref{hyp:maxima} does not necessarily
imply a negative answer. We are especially interested in the values of $\kappa$ for which \cref{hyp:maxima} fails in the entire range $1 < \alpha \le \alpha_c(\kappa)$. 

We conjecture a positive answer to the first question and a negative answer to the second. For the latter, there are two mechanisms that cause the first moment prediction $\alpha_c(\kappa)$ to sometimes be an overestimate. The first mechanism is that, for $\kappa$ small and $\alpha$ close to one, large deviations in the lower singular spectral edge of $A$, the $\alpha n \times n$ matrix obtained by stacking the rows $(A_i)_{i \in [\alpha n]}$, can cause a disproportionate increase in the expected number of solutions. 

\begin{remark}
    As a concrete example, fix $(\kappa,\alpha) = (.5,1.01)$. The smallest eigenvalue of $A^TA$ is typically approximately $2.5\times 10^{-5}$. Consider the rare event $\mathcal{E} := \{\lambda_{\mathrm{min}}(A^TA) \le 10^{-5}\}$. It can be shown $\E{\calZ\, \ind(\mathcal{E}^c)}/\E{\calZ} < e^{-0.0006n}$.
\end{remark}

This can potentially be resolved by conditioning on the high probability event that $A$ has a typical singular spectrum and then adjusting the definition of $\alpha_c(\kappa)$ accordingly. However, we conjecture that the first moment prediction, even after conditioning on the spectrum of $A$, is still an overestimate for some (narrower) regime of $\kappa$ and $\alpha$. This is due to the second mechanism: we suspect the existence of a regime of $(\kappa,\alpha)$ in which the solution space is nonempty and exhibits ``full replica symmetry breaking'' (FRSB), a complicated form of hierarchical clustering also conjectured to occur in the $U$-perceptron \cite{APZ} and negative spherical perceptron (see \cite{negsphere} and references therein for the formal definition). More boldly, we conjecture that after the appropriate spectral conditioning, the model exhibits only frozen 1-RSB or FRSB, with no intermediate replica symmetry breaking regimes. 

\begin{question}
    Does there exist $(\kappa,\alpha)$ with $\alpha$ below the satisfiability threshold such that the solution space exhibits FRSB, in the sense of the overlap distribution\footnote{The empirical distribution proportional to $\sum_{(x,y) \in \sol^2}\delta(|\la x/\|x\|,y/\|y\|\ra|)$.} converging weakly to a probability measure with infinite support? If so, is it the case that for all $\kappa > 0$ and all $\alpha > 1$ below the satisfiability threshold that the solution space only either exhibits frozen 1-RSB or else FRSB?
\end{question}

\subsection{Related work}

The study of solution space geometry of random constraint satisfaction problems has a rich history in physics, computer science and mathematics, including the seminal picture put forth by Krzaka{\l}a, Montanari,  Ricci-Tersenghi,  Semerjian, and Zdeborov{\'a}~\cite{krzakala2007gibbs}.  The mathematical understanding of the solution space is closely linked to second moment methods, as shown by Achlioptas and Coja-Oghlan~\cite{achlioptas2008algorithmic}.

The value of symmetry in aiding second moment methods was first exploited by Achlioptas and Moore in the context of the random $k$-SAT problem~\cite{achlioptas2006random} and explains the relatively rich literature of rigorous results on the symmetric binary perceptron in comparison to, say, the (asymmetric) binary perceptron.

Some of the structural properties studied here were first predicted in the physics literature on the binary perceptron.  This is a toy model of a neural network considered in the 1980's by Gardner and Derrida~\cite{gardner1988optimal}. Precise predictions for the satisfiability threshold were put forth by Krauth and Mezard~\cite{krauth1989storage} along with a form of the frozen $1$-RSB phenomenon which was then conjectured in a strong form by Huang, Wong, and Kabashima~\cite{huang2013entropy}.  For this model, the location of the satisfiability threshold was only recently established by Ding and Sun~\cite{ding2019capacity} (lower bound) and Huang~\cite{huang2024capacity} (upper bound); the structural predictions remain open. 

The SBP was introduced in~\cite{APZ}, though it also has a natural interpretation as a random model of the combinatorial discrepancy problem~\cite{matouvsek1999geometric}.  The symmetry of the model makes direct first and second moment calculations effective. Combined with
small subgraph conditioning and some concentration arguments, the moment method yields the satisfiability threshold,
frozen $1$-RSB behavior, and the width of the scaling window for the SBP
\cite{PX24,ALS22,dja-ejp,abbe2022binary,ss}. Links between the structure of the solution space and algorithms were explored in~\cite{gamarnik2022algorithms,barbier2024atypical,li2025discrepancy,el2025hardness,gong2026stable,vafa2025symmetric}.  We leave as a future direction understanding algorithmic aspects of finding solutions under the current model of random integer programs. 

Finally, numerous similar models of random integer programs have been studied under the name ``chance-constrained integer feasibility programs''~\cite{chandrasekaran2014integer,charnes1958cost,miller1965chance,prekopa1970probabilistic}. For example, our model is roughly the same studied by Chandrasekaran and Vempala in~\cite{chandrasekaran2014integer}, except that our hyperplanes come in symmetric pairs. 

\subsection*{Organization}
In \cref{sec:ov}, we give an extended technical overview of the article. In \cref{sec:1profile}, we prove the lattice-counting estimates and first-moment
truncation. In \cref{sec:2profile} (and the appendix), we verify the second-moment condition \cref{hyp:maxima} for a large regime of $(\kappa,\alpha)$. \cref{sec:cycle counts,sec:second moment method} assume $(\kappa,\alpha)$ satisfy \cref{hyp:maxima} and establish the
cycle-count correction, log-normal limit, and freezing.
Finally, the remaining claims in the main theorems are completed in \cref{sec:main theorem proofs}.

\subsection*{Statement on AI use}

This project spanned several years and was almost entirely completed in the ``pre-AI era.'' LLMs were involved in the final writing stages. During proof reading, GPT 5.6 suggested and drafted improvements for the range of $\alpha$ considered in \cref{thm:unique-max}, which we verified and adopted.   

\section{Notation and proof overview}
\label{sec:ov}

\subsection{Notation}

We use the standard asymptotic notation $o$, $O$, $\omega$, $\Omega$, and
$\Theta$, with asymptotics taken as $n\to\infty$ unless otherwise stated.
The notation $\widetilde O$ suppresses factors of order $n^{o(1)}$. We work
throughout in the overdetermined regime $\alpha=m/n>1$. Indeed, when $m<n$
the polytope contains infinitely many integer points almost surely, whereas
for $m\ge n$ it contains only finitely many.

All logarithms are natural, and $0\log0:=0$. For a probability distribution
$\nu$ on a countable set, its Shannon entropy is
\[
    H(\nu):=-\sum_x\nu(x)\log\nu(x).
\]
We also write $H(X)$ for the entropy of the law of a random variable $X$.
For distributions $\nu$ and $\mu$ on the same countable set, their relative
entropy is
\[
    D(\nu\|\mu):=\sum_x\nu(x)\log\frac{\nu(x)}{\mu(x)}.
\]

Next, it will be convenient to work with the following representation of the planar discrete Gaussian.

\begin{proposition}\label{prop:dg2}
    Let $r,\hat{r} > 1$ and $\theta \in (-1,1)$ satisfy
    \begin{equation}\label{eq:dg2-condition}
        r\hat{r}|\theta| < 1 + \sqrt{(r^2-1)(\hat{r}^2-1)}\,.
    \end{equation}
    Then $\mu_{r,\hat r,\theta}^{(2)}$ exists and there is also a unique tuple of parameters $(a,b,c)$ and a normalizing constant $\Psi$ with
    \[
        \mu_{r,\hat r,\theta}^{(2)}(x,y) =\exp\{ax^2+bxy+cy^2-\Psi(a,b,c)\}.
    \]
\end{proposition}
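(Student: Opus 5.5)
We will treat this as a maximum-entropy problem with moment constraints and use the standard exponential-family duality (as in \cite[Theorem~2.5]{dg-theta}). The covariance $\Sigma_{r,\hat r,\theta}$ is reachable in the \emph{interior} of the moment set exactly when \eqref{eq:dg2-condition} holds; this is the part of the proof that actually uses the hypothesis. Once interiority is known, existence of exponential-quadratic parameters follows from convex duality. Uniqueness then follows from strict convexity of the log-partition function.

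\textbf{Step 1: describe the moment set.} Let $\mathcal M$ be the set of second-moment matrices $\E[(X,Y)^T(X,Y)]$ of probability measures on $(\ZZ\setminus\{0\})^2$. The statistics $x^2,xy,y^2$ are unbounded, so $\mathcal M$ is convex and unbounded. The coordinates satisfy $|x|,|y|\ge1$, which forces $E X^2\ge 1$ and $EY^2\ge1$. Applying Cauchy--Schwarz to $(X^2-1)$ and $(Y^2-1)$ gives the sharper constraint on the cross moment. Using $|xy| \le 1+\sqrt{(x^2-1)(y^2-1)}$ pointwise, which holds because $(|x||y|-1)^2\ge (x^2-1)(y^2-1)$ reduces to $(|x|-|y|)^2\ge0$, and then Cauchy--Schwarz, we get
\[
|\E XY|\le 1+\sqrt{(\E X^2-1)(\E Y^2-1)}.
\]
I expect this is the exact closure of $\mathcal M$. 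Equality is approached by laws concentrated on $|x|=|y|$ with a fixed sign pattern, e.g.\ mixtures of $(\pm k,\pm k)$, which show the boundary is attained in the limit. I would then show that every triple strictly inside, with $r,\hat r>1$ and strict inequality \eqref{eq:dg2-condition}, is realized by a finitely supported law with full-dimensional support of its statistics. The construction is to take convex combinations of symmetric laws on $\{(\pm j,\pm k)\}$, which sweep out the region, and then perturb to get interiority. Symmetrizing under $(x,y)\mapsto(-x,-y)$ makes the mean zero automatically. This step is the main obstacle: it requires showing the region cut out by the inequality is exactly the interior of $\mathcal M$, not merely contained in its closure.

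\textbf{Step 2: duality.} Consider the log-partition function $\Psi(a,b,c)=\log\sum_{(x,y)\in(\ZZ\setminus\{0\})^2}e^{ax^2+bxy+cy^2}$. It is finite on the open cone $\mathcal D$ where the quadratic form is negative definite, since Gaussian tails dominate there. $\Psi$ is smooth and strictly convex on $\mathcal D$: its Hessian is the covariance of $(x^2,xy,y^2)$, which is nondegenerate because these three functions are affinely independent on the lattice.

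\textbf{Step 3: steepness.} Next I would show $\nabla\Psi(\mathcal D)=\mathrm{int}\,\mathcal M$. The argument is that $\Psi$ is steep: as $(a,b,c)\to\partial\mathcal D$ the gradient blows up, because the form degenerates along a rational or irrational direction and the sum diverges. Then standard Legendre theory (Barndorff-Nielsen / Rockafellar) gives surjectivity onto the interior of the convex support. For a target $(r^2,\theta r\hat r,\hat r^2)$ in the interior, this yields parameters with $\nabla\Psi=$ target, and they are unique by strict convexity. The resulting Gibbs law has mean zero by symmetry.

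\textbf{Step 4: identify with the max-entropy law.} Let $\nu$ be any law with the same second moments. The Gibbs identity gives
\[
H(\mu)-H(\nu)=D(\nu\|\mu)\ge0,
\]
so the exponential-quadratic law is the maximum-entropy distribution. This identifies it with $\mu^{(2)}_{r,\hat r,\theta}$ and proves both existence and the claimed representation.
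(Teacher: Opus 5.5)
\textbf{The gap is in Step 1, the only step that uses the hypothesis.} Steps 2--4 are the standard exponential-family package: a regular minimal family, $\nabla\Psi$ mapping onto the interior of the mean-parameter set, and the Gibbs inequality. The paper imports the same package from Wainwright--Jordan.

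In Step 1, your pointwise inequality is reversed. Since $|x||y|\ge1$, the identity $(|x||y|-1)^2-(x^2-1)(y^2-1)=(|x|-|y|)^2\ge0$ gives $|xy|\ge1+\sqrt{(x^2-1)(y^2-1)}$, with strict inequality when $|x|\ne|y|$; for example, $(x,y)=(1,2)$ gives $2>1$. Consequences:
\begin{itemize}
\item The moment bound $|\mathbb{E}[XY]|\le1+\sqrt{(\mathbb{E}[X^2]-1)(\mathbb{E}[Y^2]-1)}$ is false: the law uniform on $\pm(1,2)$ has second moments $(1,2,4)$.
\item The region cut out by \eqref{eq:dg2-condition} is therefore neither the closure nor the interior of $\mathcal M$; it is only a sufficient condition.
\item The ``main obstacle'' as you formulate it (that this region is exactly $\operatorname{int}\mathcal M$) is a false statement.
\end{itemize}
What the proposition actually needs is the one-sided inclusion: $m:=(r^2,r\hat r\theta,\hat r^2)$ lies in the interior of $\operatorname{conv}\{(x^2,xy,y^2):(x,y)\in S\}$, where $S:=(\ZZ\setminus\{0\})^2$. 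For this you give only an announced sweep-and-perturb construction, not an argument.

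\textbf{How the paper closes it.} It uses the supporting-hyperplane criterion. Write $s:=r\hat r\theta$. Then $m$ is interior if and only if every nonzero $q=Ax^2+Bxy+Cy^2$ satisfies $Ar^2+Bs+C\hat r^2>\inf_S q$.
\begin{itemize}
\item Only $q$ bounded below on $S$ matter, and these have $A,C\ge0$ and $|B|\le2\sqrt{AC}$.
\item Testing at $(\pm1,\pm1)$ gives $\inf_S q\le A+C-|B|$.
\item Set $R=r^2-1$ and $T=\hat r^2-1$, and choose $\varepsilon>0$ with $|s|-1\le(1-\varepsilon)\sqrt{RT}$. Using $2\sqrt{ARCT}\le AR+CT$, one gets $AR+CT+Bs+|B|\ge\varepsilon(AR+CT)>0$.
\end{itemize}

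\textbf{Rescuing your constructive route.} The right decomposition is $m=(1,t,1)+(R,s-t,T)$ with $t=\operatorname{sign}(s)\min\{|s|,1\}$.
\begin{itemize}
\item The first summand is a mixture of the images of $(\pm1,\pm1)$.
\item The second summand is positive definite, because $(s-t)^2=((|s|-1)_+)^2<RT$.
\item Positive semidefinite directions lie in the recession cone of the closed hull. To see this, mix mass $p$ at $\pm k(x,y)$ with $pk^2$ fixed and let $k\to\infty$; then take closed conic hulls of the rank-one matrices.
\item Finally, a convex set with nonempty interior has the same interior as its closure.
\end{itemize}
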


The proof is elementary and deferred to the appendix. Note that this condition is satisfied if $r = \hat{r}$, for every $r \ge 1$ and $\theta \in (-1,1)$. Fortunately, as discussed further in \cref{sec:2profile}, we only consider regimes of $(r,\hat{r},\theta)$ in this article for which this condition is satisfied.

\begin{remark}[Parameterization]
    Our parameterization of the discrete Gaussian distribution $\mu_r$ is through its variance $r^2$. In the    computer science literature, especially in lattice cryptography, the discrete Gaussian is often parameterized by $\sigma^2$, the parameter appearing in the exponential. Another parameterization frequently employed in this paper and arising from viewing the discrete Gaussian as an exponential family solving a maximum entropy problem, is to set $a := a(r) = -1/(2\sigma^2)$, $b = -\log Z$, and $\mu_r(\ell) = e^{a \ell^2 +  b}$. 
\end{remark}

The central objects are $\sol := \sol_{n,m,\kappa}$ and
$\calZ=\calZ(n,m,\kappa):=|\sol|$. For these variables as well as those defined below, we suppress dependence on $n,m,\kappa,\alpha$ whenever there is no ambiguity.
\begin{definition}[Solution spheres]
For $r\ge1$, define
\[
    \Lambda_n(r):=\cb{x\in\Lambda_n:\|x\|_2^2=nr^2},
    \qquad
    \sol(r) := \sol \cap\Lambda_n(r),
    \qquad
    \calZ_r:=|\sol(r)|,
\]
and let
\[
    N_n(r) := |\Lambda_n(r)|, \qquad \calR_n:=\cb{r\ge1:\Lambda_n(r)\ne\emptyset}.
\]
For $J\subset[1,\infty)$, set
\[
    \calZ_J:=\sum_{r\in J\cap\calR_n}\calZ_r
    =\left|\bigcup_{r\in J\cap\calR_n}
      \sol(r)\right|.
\]
We also define an analogous count of lattice pairs. For $r,\hat{r} \in \calR_n$ and $\theta \in [-1,1]$ with $\theta r\hat{r} n \in \ZZ$,
\[
    \Lambda_n(r,\hat{r},\theta):=\cb{(x,y)\in\Lambda_n(r) \times \Lambda_n(\hat{r}): \la x,y \ra = \theta r \hat{r} n},
    \qquad
    N_n(r,\hat{r},\theta):= |\Lambda_n(r,\hat{r},\theta)|\,.
\]
\end{definition}
We henceforth will suppress the subscript $n$ in the above variables, wherever possible.

For easy reference, \cref{table:notation} lists some notation with short descriptions. 
Whenever $\Lambda_n(r,\hat r,\theta)$ or
$\Lambda_n^{\typ}(r,\hat r,\theta)$ appears, we implicitly assume that
$r,\hat r\in\calR_n$ and $\theta r\hat r n\in\ZZ$. Similarly, a
moment-constrained distribution is considered only when its prescribed
moments can be realized on the stated support.

\begin{table}[htb!]
\centering
\small
\renewcommand{\arraystretch}{1.08}
\begin{tabular}{>{\centering\arraybackslash}p{4.1cm} p{10cm}}
\toprule
\textbf{Notation} & \textbf{Description} \\
\midrule
$n,m,\alpha=m/n,\kappa$
    & Dimension, number and density of constraints, and slab margin \\
$\calP_{n,m,\kappa}$, $\Lambda_n$, $\sol_{n,m,\kappa}$, $\calZ$
    & Random polytope, punctured lattice, solution space, solution count \\
$\mu_r$, $\mu_{r,\hat r,\theta}^{(2)}$
    & Scalar and planar ``punctured'' (no zero coordinates) discrete Gaussians with the prescribed
      second moments \\
$\calR_n$, $\Lambda_n(r)$, $\calZ_r$
    & Admissible radii and restrictions to squared Euclidean norm $nr^2$ \\
$\Lambda_n(r,\hat r,\theta)$, $N(r,\hat{r},\theta)$ & Set of and cardinality of lattice pairs with given length and angle. \\
$p(u)$, $p^{(2)}(u,v,\theta)$
    & One- and two-dimensional Gaussian constraint probabilities \\
    $F_1$, $F_2$ &
    First
      and second moment profiles \\
$\alpha_c(\kappa)$, $r_*(\kappa,\alpha)$; $r_c$, $p_c$
    & First-moment capacity and maximizing radius; critical capacity specializations, $r_*(\kappa,\alpha_c(\kappa))$ and $p(\kappa/r_*)$ \\
$I=I_n(\kappa,\alpha)$
    & Radii within $(\log\log n)/\sqrt n$ of $r_*$
    \\
$d(u)$, $B_{u,\alpha}$
    & $d(u)=1-\E{W^2\mid |W|\le u}$ and
      $B_{u,\alpha}=-\sqrt\alpha\,d(u)/2$ \\
    $\eta$, $m_0$ & Small constant and ``precritical'' time $m_0 := \alpha_c(\kappa)n - \eta \log n$. \\
$\lambda,T,L$
    & Cycle parameters:
      $T=\lfloor\lambda\log n\rfloor$, $L=\sqrt{\log n}$,
      $\lambda$ sufficiently small and independent of $\eta$ \\
\bottomrule
\end{tabular}
\caption{Notation used throughout to denote relevant parameters and  sets.}\label{table:notation}
\end{table}

\newpage

\subsection{Technical overview}

\subsubsection{The symmetric binary perceptron}

The SBP provides a partial template for our
strategy as well as an interesting contrast. The SBP considers the same random polytope $\calP_{n,m,\kappa}$ but restricts
solutions to $\Lambda_n(1)=\{-1,+1\}^n$, so the object of interest is
$\calZ_1$. For fixed $x\in\Lambda_n(1)$, $Ax$ is an $m$--dimensional
standard Gaussian. Consequently, 
\[
    \E{\calZ_1} = 2^n p(\kappa)^{\alpha n},\, \qquad \alpha_{\rm bin}(\kappa) = - \frac{\log 2}{\log p(\kappa)}\,.
\]
Moreover, the Gaussian process $(Ax)_{x\in\Lambda_n(1)}$ has covariance entirely determined by the inner products of its indices (since every vector $x \in \Lambda_n(1)$ has the same norm). Consequently, grouping pairs $(x,y)$ by their normalized inner product
reduces computing the exponential growth rate of $\E{\calZ_1^2}$ to maximizing a one-variable profile function:
\[
    \E{\calZ_1^2} = \sum_{\theta} N(1,1,\theta) \,p^{(2)}(\kappa,\kappa,\theta)^{\alpha n} =: \sum_{\theta} e^{nF_2(\theta)}\,,
\]
and so $\frac{1}{n} \log  \E{\calZ_1^2} = \max_\theta F_2 (\theta) + o(1)  $.
Note that $N(1,1,\theta)$ admits an exact identity in terms of binomial coefficients, and hence sharp asymptotics using Stirling's formula. For every $\kappa>0$ and $\alpha\le\alpha_{\rm bin}(\kappa)$, the profile $F_2^{\rm bin}$ is strictly maximized at zero on $(-1,1)$.
For fixed $0<\alpha<\alpha_{\rm bin}(\kappa)$, the endpoint contributions are also exponentially smaller, so the second
moment is dominated by nearly orthogonal pairs. A saddle-point computation then yields, for some constant $C>1$,

\[
    \E{\calZ_1^2}=(C+o(1))\,\E{\calZ_1}^2
\]
This provides a constant-order lower bound on the probability of the existence of solutions below the satisfiability threshold, but it does not yet establish a sharp threshold since $C$ is strictly larger than one \cite{APZ}.

The remaining fluctuations are encoded by the emergence of small correlations in the matrix $A$. The ``dense'' small subgraph conditioning method quantifies this by counting certain short weighted cycle counts and gives a log-normal limit for $\calZ_1/\E{\calZ_1}$ \cite{ALS22,ss}. Hence, $\alpha_{\rm bin}(\kappa)$ is a sharp threshold for satisfiability \cite{PX24,dja-ejp}. Moreover, this log-normal convergence implies concentration of $\log \calZ_1$, which can be combined with analysis of  $\lim_{\theta \to \pm 1}F_2'(\theta)$ to establish strong freezing of the solution space.

The above arguments extend to within a logarithmic number of constraints from criticality; beyond this point, $\calZ_1$ is typically sub-polynomial and certain concentration arguments break down. Nonetheless, for $\eta$ some small positive universal constant and $m_0 = \lfloor \alpha_{\rm bin}(\kappa)n - \eta \log n\rfloor$, a different argument suffices. For $m \ge m_0$, it holds with high probability that \textit{all} non-antipodal pairs of solutions are nearly orthogonal, and this property persists among the surviving solutions as further constraints are added. The Gaussian projections governing these constraints can then be compared with independent Gaussians, yielding the critical window and capacity distribution \cite{ss}.

\subsubsection{Integer points in random polytopes}

Our analysis of  integer points in random polytopes  follows a similar outline, but there are substantial technical challenges to overcome as well as genuinely different behaviors compared to the SBP.
For example, solutions can have different Euclidean norms and coordinate
profiles, so even computing $\E{\calZ}$ is nontrivial. In particular, computing both first and second moments requires solving higher-dimensional optimization problems. Even more fundamentally, for some
choices of $(\kappa,\alpha)$ there is an exponential mismatch between the second moment and squared first moment of $\calZ$.

Our first main contribution is the isolation of a user-friendly condition, formalized as \cref{hyp:maxima}, which is necessary and sufficient for the first and second moments to match at the exponential scale. The condition concerns the maximum of an analytic function on a one-dimensional bounded interval. We rigorously establish in \Cref{sec:2profile} that \cref{hyp:maxima} holds for $(\kappa,\alpha)$ satisfying
\[
    \kappa\ge10,
    \qquad
    1+\kappa^{-2}\le\alpha\le\alpha_c(\kappa).
\]

Our second main contribution is an essentially complete description of the model for all $(\kappa,\alpha)$ satisfying \cref{hyp:maxima}. We obtain sharp estimates on the solution count and solution-space geometry at subcritical parameters $(\kappa,\alpha)$, as well as the satisfiability threshold, the critical window, and the capacity distribution when the hypothesis holds at $\alpha=\alpha_c(\kappa)$.

\subsubsection{Gaussian process view and shell entropies}
Fix $(\kappa,\alpha)$. The collection $(Ax)_{x\in\Lambda_n}$ is a centered $m$--dimensional
Gaussian process. Hence, the probability that a pair of vectors $(x,y) \in \Lambda_n(r) \times \Lambda_n(\hat{r})$ are both solutions is just a function of $r$, $\hat{r}$, and $\theta = \la x,y\ra/(\|x\|_2\|y\|_2)$. In particular, 
\begin{align}
    \E{\calZ^2} 
    &=: \sum_{(r,\hat r, \theta) \in \calR_n \times \calR_n  \times [-1,1]} N(r,\hat r,\theta)\,
      p^{(2)}(\kappa/r,\kappa/\hat r,\theta)^m\,.\label{eq:ov-2mm}
\end{align}
The task of estimating $N_n(r,\hat{r},\theta)$ was trivial in the binary setting, corresponding to $r = \hat{r} = 1$. In our setting, even the simpler task of obtaining tight asymptotics for $N_n(r)$ is already non-trivial.

Our information theoretic approach for approximating $N_n(r)$ and $N_n(r,\hat{r},\theta)$ is particularly transparent for the following example, where we consider solutions whose coordinates are bounded in absolute value by a constant. Let
$\mathcal A_k=\{-k,\ldots,-1,1,\ldots,k\}$ and let
$U_1,\ldots,U_n$ be uniform on $\mathcal A_k$. Then
\[
    \left|\cb{x\in\mathcal A_k^n:\|x\|_2^2=nr^2}\right|
    =(2k)^n\PP{\sum_{i=1}^nU_i^2=nr^2}.
\]
Sanov's theorem identifies the
exponential rate of this large-deviation probability as
\[
    \max\left\{H(\nu):
       \nu\text{ is supported on }\mathcal A_k,\ 
       \sum_xx^2\nu(x)=r^2\right\}.
\]
A Lagrange multiplier computation shows that the unique maximizer has mass proportional to $e^{ax^2}$, i.e., is a discrete Gaussian supported on $\mathcal{A}_k$. Letting $k$ diverge, this suggests $n^{-1}\log N_n(r)$ is given by the entropy of
the punctured discrete Gaussian. In fact, no limiting argument is needed. The maximum-entropy counting
identity of \cite{mckinley2023maximum} gives the exact formula
\[
    |\Lambda_n(r)|
    =e^{nH(\mu_r)}~
      \PP[X_i\stackrel{\mathrm{iid}}\sim\mu_r]
      {\sum_{i=1}^nX_i^2=nr^2}.
\]
The probability term can be evaluated using a classical lattice local central limit theorem, and provides an explicit polynomial correction to the exponential rate suggested by Sanov's theorem. A formula for the count of pairs is derived similarly:
\[
    N_n(r,\hat r,\theta)
    =e^{nH(\mu_{r,\hat r,\theta}^{(2)})}
      \times(\text{explicit local central limit correction}).
\]
These counting formulas are developed in \cref{prop:1d-lattice-count,prop:2d-lattice-count} and also yield as corollaries that a typical vector or pair of vectors with prescribed Euclidean data has empirical coordinate distribution close to the corresponding discrete Gaussian.

\begin{remark}
    A reasonable, but absolutely wrong, low-dimensional intuition is that $N_n(r)$ scales proportionally to the Euclidean volume of the $n$-ball of radius $r\sqrt{n}$ \cite{mazo1990lattice}. The above computations establish that this is off by an exponential factor. The intuition is that the discrepancy between the volume and number of integer points in a body is largely due to boundary effects, whereas almost all of the volume of a high-dimensional sphere lies near its boundary. However, as $r$ diverges, the entropy of $\mu_r$ approaches $\frac12\log(2\pi e r^2)$, the differential entropy of $N(0,r^2)$. For fixed $r$, this also equals the limiting normalized log-volume of the ball, $n^{-1}\log \mathrm{Vol} \,B_n(0,r\sqrt{n})$. Hence, as $r \to 1$ and $r \to \infty$, the integer geometry of our model approaches the discrete cube and continuum respectively, and is non-trivial everywhere in between. 
\end{remark}

\subsubsection{First-moment truncation}

Since a vector in $\Lambda_n(r)$ satisfies one constraint with probability
$p(\kappa/r)$, we have 
\[
    \E{\calZ_r}
    =\exp\{nF_1(\kappa,\alpha,r)+O(\log n)\},
    \quad \text{ where }
    F_1(\kappa,\alpha,r)
    =H(\mu_r)+\alpha\log p(\kappa/r).
\]
The two terms express a simple competition: larger radii provide more
lattice vectors but make each constraint harder to satisfy.

In \cref{sec:1profile} we prove that, for every $\kappa>0$ and
$\alpha\in(1,\alpha_c(\kappa)]$, $F_1$ has a unique maximizer
$r_*=r_*(\kappa,\alpha)>1$ and strictly negative curvature there. Hence
the first moment is concentrated on
\[
    I=\cb{r\in\mathcal R_n:
       \sqrt n\,|r-r_*|\le w_n},
\]
where $w_n$ can be chosen as any slowly diverging window; for concreteness, we take $w_n = \log\log n$. The first moment is also concentrated on ``flat'' vectors satisfying the delocalization condition $\|x\|_\infty\le\log n$. We denote
the restriction of the lattice and solution set to typical vectors by
$\Lambda_n^{\typ},\sol_{n,m,\kappa}^{\typ}$, and $\calZ^{\typ}$. \cref{thm:truncated 1mm} proves that the expected number of discarded
solutions is negligible relative to $\E{\calZ}$. We therefore apply the second-moment method to $\calZ^\typ$. Its contributing pairs have $r,\hat r=r_*+o(1)$,  essentially reducing the three-dimensional saddle-point computation in
\eqref{eq:ov-2mm} to a one-dimensional maximization over the overlap $\theta$.

\subsubsection{Second moment profile function} After truncation, the relevant saddle-point computation in \eqref{eq:ov-2mm} is the maximization over $0 \le |\theta| < 1$ of the second moment profile function,
\[
    F_2(\theta) := H(\mu_{\theta}^{(2)})
    +\alpha\log p^{(2)}(\theta),
\]
with $\mu_\theta^{(2)} := \mu_{r_*,r_*,\theta}^{(2)}$ and $p^{(2)}(\theta) := \PP{|W| \le \kappa/r_*,\,|W_\theta| \le \kappa/r_*}$ where $(W,W_\theta)$ are standard Gaussians with covariance $\theta$. 
As $\mu_0^{(2)}$ factors as a product distribution, we have the identity
\begin{equation}\label{eq:overview F2 zero}
    F_2(0) = 2\,F_1\,.
\end{equation}
Hence, the first and second moments matching at the exponential scale requires $F_2$ to be strictly maximized at $\theta=0$; this property is the first condition in \cref{hyp:maxima}.

We also need the curvature of $F_2$ at $\theta = 0$ to be non-degenerate, which is the second condition of \cref{hyp:maxima}. This condition can be made explicit. Let $\phi$ denote the standard Gaussian density, and define
\[
    d(u):=1-\E{W^2\mid |W|\le u}
         =\frac{2u\,\phi(u)}{p(u)},
    \qquad
    B_{u,\alpha}:=-\frac{\sqrt\alpha}{2}d(u),
\]
where the equality for $d(u)$ follows from integration by parts. Then, as will be proven in \cref{prop:F2'' central}, for $K = \kappa/r_*$,
\[
    -\partial_{\theta\theta}^2 F_2(\kappa,\alpha,r_*,0) = 1-4B_{K,\alpha}^2 = 1 - \frac{2\alpha}{\pi}\frac{K^2e^{-K^2}}{p(K)^2}.
\]

The main content of \cref{sec:2profile} is \cref{thm:unique-max}, which provides an analytic proof that \cref{hyp:maxima} holds for all $\kappa$ modestly large and $\alpha$ bounded away from the linear-algebraic transition at $\alpha = 1$. More precisely, we consider:
\begin{equation}\label{eq:ov-hyp-region}
    \kappa \ge 10, \qquad 1 + \kappa^{-2} \le \alpha \le \alpha_c(\kappa)\,.
\end{equation}
While this characterization is not sharp, it is necessary to lower bound both $\kappa$ and $\alpha$. Indeed, \cref{hyp:maxima} fails at some values of $\kappa < 2$ (see Figure \ref{fig:F2}), and $\calZ = +\infty$ almost surely for $\alpha < 1$ and any $\kappa$. 

We sketch an overview of the proof of \cref{thm:unique-max}. Define the shorthand:
\[
    D_r(\theta)
    :=2H(\mu_r)-H(\mu_{r,r,\theta}^{(2)})
    =
    D\left(\mu_{r,r,\theta}^{(2)}
       \,\middle\|\,\mu_r\otimes\mu_r\right), \qquad  G_K(\theta)
    :=\log\frac{p^{(2)}(K,K,\theta)}{p(K)^2}.
\]
Here, $D_r(\theta)$ is the entropy lost by correlating the two lattice
vectors, while $G_K(\theta)$ is the gain in their joint probability of
satisfying one constraint. Next, define
\[
    q(r):=\frac{r^2}{\sigma(r)^2}, \qquad d(K):=1-\E{W^2\mid |W|\le K},
     \qquad K := \kappa/r_*\,.
\]
As $r_*$ is a critical point of $F_1$, stationarity implies $\alpha d(K)=q(r_*)$. Hence, we can eliminate $\alpha$ by treating $r_*$ as a parameter:
\begin{equation}\label{eq:overview profile gap}
    F_2(0)
    -F_2(\theta)
    =
    D_{r_*}(\theta)
    -\frac{q(r_*)}{d(K)}~G_K(\theta).
\end{equation}

Our goal is to show the
right-hand side of \eqref{eq:overview profile gap} is strictly positive on
$0<|\theta|<1$, for $(\kappa,\alpha)$ within \eqref{eq:ov-hyp-region}. We consider three regimes. When $r_*\ge1.2$ and $K\ge2$, basic convexity estimates
on $D_{r_*}$ and $G_K$ suffice.  When $K\le 2$, the assumption
$\kappa\ge10$ forces $r_*\ge5$, so that the relevant discrete Gaussians are close to their continuous counterparts, yielding convenient approximation of $D$ and $q$, implemented via Poisson summation. Finally, when
$1<r_*<1.2$, the scalar discrete Gaussian is close to Rademacher, and
we can appeal to existing estimates on the SBP
\cite{ALS22}.

In practice, we need to compute the second moment of $\calZ^\typ$ up to vanishing, rather than just sub-exponential, multiplicative error. This creates a number of additional technical obstacles, but none are fundamental. The restriction $r \approx \hat{r} \approx r_*$ needs to be carefully quantified in a separate saddle-point argument. Then, we need to show $F_2''(0) < 0$, which is the second condition in \cref{hyp:maxima}. Finally, to prove strong freezing and to handle the critical case $\alpha = \alpha_c(\kappa)$, we require estimates on the singularities $\lim_{\theta \to \pm 1} F_2'(\theta)$; these are developed in \cref{lemma:F2-endpoint-uniform}. While analogous computations are needed for the SBP, our setting has the substantial additional complication that $\mu^{(2)}$ is defined in terms of non-elementary fixed-point equations, and hence is not as easily worked with.

\subsubsection{Cycle correction and conditional second moment}

Assume \cref{hyp:maxima}, and let $m\le m_0 := \alpha_c(\kappa)n-\eta\log n$. A saddle-point computation gives
\[
    \frac{\E{(\calZ^{\typ})^2}}
         {\E{\calZ^{\typ}}^2}
    = \frac1{\sqrt{1-4B^2}} + o(1)>1,
    \qquad
    B:=B_{\kappa/r_*,\alpha}.
\]
Thus $\calZ^\typ$ has multiplicative fluctuations of order one. These are removed by a dense analogue of the small
subgraph conditioning method \cite{ALS22,ss}. Let $C_k$ be the normalized sum of products of matrix entries
around alternating cycles with $k$ row vertices and $k$ column
vertices, defined formally in \cref{sec:cycle counts}, and consider cycles up to length $T=\lfloor\lambda\log n\rfloor$ for a sufficiently small constant $\lambda$. Let
\[
    Y_T(r)
    =
    \sum_{k=1}^T
    \frac{
      2(2B_{\kappa/r,\alpha})^kC_k
      -(2B_{\kappa/r,\alpha})^{2k}
    }{4k}.
\]
The variables $C_k/\sqrt{2k}$ are
approximately jointly independent standard Gaussians. Conditioning on one
solution of radius $r$ maintains approximate joint Gaussianity of $(C_k)_k$, but shifts the mean of $C_k$ by
$(2B_{\kappa/r,\alpha})^k$.  Conditioning on two nearly orthogonal
solutions of radii $r,\hat r$ adds the two shifts. The main content of \cref{sec:cycle counts} is the quantification of these Gaussian approximations.

These conditioning calculations are made particularly transparent by the Gaussianity of the model.
Conditioning on $x$ changes each row of $A$ only in the direction spanned by
$x$, while conditioning on $(x,y)$ changes it only in their two-dimensional
span. We couple the conditional and unconditional matrices by resampling
these projections. The leading change in $C_k$ is the trace of the $k$th
power of the resulting one- or two-dimensional covariance perturbation,
which gives the shifts above. The delocalization estimates 
$\|x\|_\infty,\|y\|_\infty\le\log n$ ensure that the sub-leading terms are
negligible.

In \cref{sec:second moment method}, we utilize these cycle counts to remove the remaining fluctuations from $\calZ$. Let $L=\sqrt{\log n}$ and define the reweighted solution count:
\[
    Q:=\sum_{r\in I}\calZ_r^{\typ}
       e^{-Y_T(r)}\ind_{\{Y_T(r)\ge-L\}}.
\]
The quantitative Gaussian approximations for $C_k$ under the null, single planted, and pair planted distributions allow us to compute high moments of $C_k$, $\ind(x \in \sol)\,C_k$, and $\ind(x,y \in \sol)\,C_k$. In particular, we can Taylor expand $\exp(-Y_T(r))$ as a polynomial of $(C_k)_k$ and compute $\E{Q}$ and $\E{Q^2}$. Doing so yields
\[
    \E{Q}=(1+o(1))\E{\calZ^{\typ}},
    \qquad
    \E{Q^2}=(1+o(1))\E{Q}^2.
\]
Hence $Q/\E{Q}\to1$ in probability. Moreover, $Y_T(r)=Y_T(r_*)+o(1)$ uniformly over $r \in I$, and $Y_T(r_*)$ converges to a Gaussian.
It follows that
\[
    \log\frac{\calZ^{\typ}}{\E{\calZ^{\typ}}}
    \ \xrightarrow{\ d\ }\
    \mathcal N\left(
       \frac14\log(1-4B^2),
       -\frac12\log(1-4B^2)
    \right).
\]
During the second moment computation for $Q$, we naturally obtain strong control over the solution geometry: with high probability, only an $\exp(-\tilde{O}(\sqrt n))$--fraction of solutions are not completely isolated, establishing strong freezing. Moreover, only an inverse-polynomial fraction of pairs of solutions have absolute inner product larger than
$\sqrt n\log n$. In particular, for $m \ge m_0$, concentration of $Q$ implies that with high probability  $\calZ \approx \E{\calZ} = n^{O(\eta)}$, and hence for $\eta$ sufficiently small, \textit{all} (non-collinear) pairs of solutions are nearly orthogonal. This ``complete freezing'' property will be used for the critical window.

\subsubsection{Critical window}
Reveal the first $m_0$ rows of $A$. As just described, taking $\eta$ small yields complete freezing: with high probability, every pair of
solutions $(x,y)$ (with $x \neq \pm y$) satisfies
\[
    |\la x,y\ra|\le\sqrt n\log n.
\]
After accounting for antipodal pairs of vectors, the normalized Gram matrix of $\sol_0 := \sol_{n,m_0,\kappa}^\typ$ is $o(1)$-close to the identity in Frobenius norm. For any bounded $t \in \ZZ$, the Gaussian projections 
\[
    (\la \mathrm{row}_s(A),\,x \ra)_{x \in \sol_0,~m_0 < s \le \alpha_c n +t}
\]
are jointly vanishingly close in total variation to independent Gaussians. Hence, if $\ell$ further constraints are revealed, each representative in $\sol_0$
survives with probability $(1+o(1))\,p_c^\ell$ (uniformly over the
slightly different radii in the thin shell of typical solutions).  Conditional on $\calZ_0 := |\sol_0|$,
\begin{equation}\label{eq:overview conditional extinction}
    \PP{\text{no solution survives the next $\ell$ constraints}}
    =
    \exp\left\{-\frac12p_c^\ell\calZ_0\right\}+o(1).
\end{equation}
The factor $1/2$ accounts for antipodal pairs. The log-normal characterization of $\calZ_0$ yields the distributional limit for the satisfiability threshold claimed in \cref{thm:main}, with the specific definition of the function $f_\kappa$ given in \cref{thm:main2}.

\section{First moment profile function}\label{sec:1profile}

This section considers the profile of typical vectors in $\sol_{n,m,\kappa}$. In particular, we develop large deviation bounds on the Euclidean norm and infinity norm of such vectors. We begin with the maximizing radius, which we show below is well-defined:
\[
    r_*=r_*(\kappa,\alpha):=\arg\max_{r\ge 1}F_1(\kappa,\alpha,r).
\]
Set $w_n:=\log\log n$ and
\begin{align*}
    I=I_n(\kappa,\alpha)
      &:=\cb{r\in\calR_n:\sqrt n\,|r-r_*|\le w_n},\\
    B_\infty(\log n)&:=\cb{x\in\RR^n:\|x\|_\infty\le\log n}.
\end{align*}
Define
\begin{align*}
    \Lambda_n^\typ(r)&:=\Lambda_n(r)\cap B_\infty(\log n),\\
    \sol_{n,m,\kappa}^\typ(r)&:=\sol_{n,m,\kappa}(r)\cap B_\infty(\log n),\\
    \calZ_r^\typ&:=|\sol_{n,m,\kappa}^\typ(r)|,
\end{align*}
and
\begin{align*}
    \Lambda_n^\typ&:=\bigcup_{r\in I}\Lambda_n^\typ(r),\\
    \sol_{n,m,\kappa}^\typ&:=\bigcup_{r\in I}\sol_{n,m,\kappa}^\typ(r),\\
    \calZ^\typ=\calZ_I^\typ&:=|\sol_{n,m,\kappa}^\typ|=\sum_{r\in I}\calZ_r^\typ.
\end{align*}

\begin{theorem}[First-moment truncation]\label{thm:truncated 1mm}
Fix $\kappa>0$, let $m=m(n)$, and set $\alpha=m/n$. Suppose that
$\alpha$ ranges over a compact subset of $(1,\alpha_c(\kappa)]$. Then
$r_*$ exists and is uniquely defined for each $n$. Moreover, there is a constant
$c>0$ such that, uniformly in this range,
\begin{equation}\label{eq:F1 dominated by r*}
    \E{\calZ}=\pa{1+O\pa{e^{-cw_n^2}}}\E{\calZ_I^\typ},
\end{equation}
and
\[
    \PP{0\le \calZ-\calZ_I^\typ
        \le e^{-cw_n^2}\E{\calZ_I^\typ}}
    \ge 1-e^{-cw_n^2}.
\]
\end{theorem}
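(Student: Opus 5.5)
We decompose $\E{\calZ}=\sum_{r\in\calR_n}N(r)\,p(\kappa/r)^m$. Each lattice vector of squared norm $nr^2$ satisfies all $m$ constraints independently with probability $p(\kappa/r)^m$, because $Ax$ has iid $N(0,r^2)$ entries. We will bound separately the contributions from (i) radii outside $I$ and (ii) vectors in $\Lambda_n(r)$ with $r\in I$ but $\|x\|_\infty>\log n$. The probability statement then follows from Markov's inequality applied to the nonnegative variable $\calZ-\calZ^\typ_I$. Its expectation is at most $e^{-2cw_n^2}\E{\calZ_I^\typ}$ after shrinking $c$, so
\[
\PP{\calZ-\calZ_I^\typ> e^{-cw_n^2}\E{\calZ_I^\typ}}\le e^{-cw_n^2}.
\]

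\textbf{Step 1: uniqueness and curvature of $r_*$.} We write $F_1(r)=H(\mu_r)+\alpha\log p(\kappa/r)$ and use the exponential-family parametrization $\mu_r(\ell)=e^{a\ell^2+b}$. Then $\frac{d}{d(r^2)}H(\mu_r)=-a=1/(2\sigma^2)$, so
\[
\partial_r H(\mu_r)=\frac{r}{\sigma^2}=\frac{q(r)}{r}.
\]
Also $\partial_r\,\alpha\log p(\kappa/r)=-\alpha d(\kappa/r)/r$, using $d(u)=2u\phi(u)/p(u)$. Stationarity is therefore $q(r)=\alpha\, d(\kappa/r)$. We will show that $q(r)=r^2/\sigma^2$ is increasing in $r$: it equals $1$ at $r\to\infty$ in the continuum limit and blows up as $r\downarrow1$. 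We will also show that $d(\kappa/r)$ is increasing in $r$, since $d(u)$ decreases in $u$. A cleaner route is to verify that $r\,\partial_rF_1$ changes sign exactly once, by monotonicity of the ratio $q(r)/d(\kappa/r)$. Near $r=1$ we have $\partial_r H\to+\infty$, because $\sigma\to0$. As $r\to\infty$, $F_1\to-\infty$ since $\alpha>1$: $H\sim\log r$ while $\alpha\log p(\kappa/r)\sim-\alpha\log r$. Hence a unique interior maximizer exists with $F_1''(r_*)<0$, and compactness of the $\alpha$-range gives uniform bounds on $r_*$ and on the curvature. The monotonicity of $q/d$ is the main obstacle. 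I would first try to prove it via log-concavity or variance identities for the exponential family ($q'$ relates to $\mathrm{Var}_{\mu_r}(X^2)$), falling back on Poisson-summation estimates for large $r$ and explicit comparison for $r$ near $1$.

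\textbf{Step 2: radii outside $I$.} We apply \cref{prop:1d-lattice-count} (the maximum-entropy counting identity with the local CLT), which gives $N(r)=e^{nH(\mu_r)}\Theta(n^{-1/2})$ uniformly for $r$ in compacts and an upper bound $N(r)\le e^{nH(\mu_r)}$ for all $r$. Thus each term is at most $e^{nF_1(r)}$ and is at least $e^{nF_1(r)}n^{-O(1)}$ near $r_*$. By the curvature from Step 1,
\[
F_1(r)\le F_1(r_*)-c(r-r_*)^2
\]
on compacts, so summing over the $O(n)$-spaced admissible values $nr^2\in\ZZ$ with $\sqrt n|r-r_*|>w_n$ gives a Gaussian tail $e^{-cw_n^2}$ relative to the sum over $I$. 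The sum over $I$ is itself of order $e^{nF_1(r_*)}n^{-1/2}\cdot\sqrt n$, a Riemann sum. For large $r$, the crude bound $F_1(r)\le C-(\alpha-1)\log r$ gives an exponentially small, summable tail.

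\textbf{Step 3: delocalization.} For $r\in I$, we bound the number of $x\in\Lambda_n(r)$ with some $|x_i|>\log n$ by a union bound over $i$ and the value $x_i=\ell$. This gives
\[
\sum_{|\ell|>\log n}N_{n-1}\bigl(\sqrt{(nr^2-\ell^2)/(n-1)}\bigr)\lesssim n\,N(r)\sum_{|\ell|>\log n}\mu_r(\ell)\,\mathrm{poly}(n).
\]
This follows from the counting identity: the number with $x_1=\ell$ equals $e^{nH}\mu_r(\ell)\,\PP{\sum_{i\ge2}X_i^2=nr^2-\ell^2}$, and the local CLT bounds the probability by $O(n^{-1/2})$. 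Since $\mu_r(\ell)\le e^{-\ell^2/(2\sigma^2)}$, the resulting factor $e^{-c\log^2 n}$ beats all polynomial factors and is far smaller than $e^{-cw_n^2}$. Combining Steps 2 and 3 yields \eqref{eq:F1 dominated by r*}.
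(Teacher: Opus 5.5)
Your architecture matches the paper's: split $\E{\calZ}$ by radius, get a Gaussian tail around $r_*$ from curvature plus the local CLT, use crude bounds far away, delocalize by a union bound under $\mu_r^{\otimes n}$ divided by the $\Theta(n^{-1/2})$ local CLT probability, and finish with Markov. Step 3 and the Markov step are fine. The real gap is in Step 1.

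\textbf{Step 1.} You claim $q(r)=r^2/\sigma^2$ is increasing, but your own limits ($q\to\infty$ as $r\downarrow1$, $q\to1$ as $r\to\infty$) show it must be decreasing. With $q$ increasing, the ratio $q(r)/d(\kappa/r)$ would not be monotone. The corrected statement does give a valid and clean argument. With $v=r^2$,
\[
\partial_vF_1=\frac1{2v}\bigl(q-\alpha\,d(\kappa/\sqrt v)\bigr),
\]
so $q$ strictly decreasing and $d(\kappa/\sqrt v)$ strictly increasing give exactly one sign change. The strict derivative signs, $dq/dv=2(t-v/\Var_{\mu_r}(X^2))<0$ and $d'(u)<0$, then give $F_1''(r_*)<0$ and continuity of $r_*$ in $\alpha$. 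This is a tidier packaging of the paper's dichotomy in \cref{lemma:F1 shape}. That lemma's two ingredients, $|H'/H''|<r^2$ and $|(\log p)''/(\log p)'|<r^{-2}$, amount to exactly these two monotonicities.

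However, $q$ decreasing is equivalent to $t\,\Var_{\mu_r}(X^2)<r^2$, which is \cref{prop:DG moments}. This is the actual content of Step 1, and you leave it unproved. It holds with equality for the continuous Gaussian, so it is a purely discrete effect, and a soft log-concavity or variance-identity argument will not produce it. The paper proves it by Poisson summation for $t\le1/2$ and explicit series bounds for $t\ge1/2$. Your fallback plan is the right one, but it has to be carried out.

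\textbf{Step 2.} There is also a quantitative slip here. The bound ``each term is at most $e^{nF_1(r)}$'' does not give the claimed rate in the window $w_n/\sqrt n<|r-r_*|\le\delta$. There are $\asymp\sqrt n$ admissible values of $nr^2$ per unit change of $\sqrt n(r-r_*)$. So summing $e^{nF_*-cn(r-r_*)^2}$ over this window gives order $\sqrt n\,e^{nF_*}e^{-c'w_n^2}$, whereas $\E{\calZ_I}\asymp e^{nF_*}$. Since $e^{-c(\log\log n)^2}$ is only $n^{-o(1)}$, the ratio $\sqrt n\,e^{-c'w_n^2}$ diverges. You must keep the factor $n^{-1/2}$ by using the uniform local CLT upper bound $N(r)\le Cn^{-1/2}e^{nH(\mu_r)}$ for $|r-r_*|\le\delta$, as the paper does. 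The crude bound $N(r)\le e^{nH(\mu_r)}$ should be reserved for $|r-r_*|>\delta$, including $r$ near $1$ where the local CLT is not uniform. There an exponential gap $e^{-cn}$ absorbs the polynomially many radii.
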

The particular choice $w_n=\log\log n$ is arbitrary; the same proof works for any $w_n\to\infty$ with $w_n=o(\sqrt{\log n})$. The rest of this section is devoted to the proof of \cref{thm:truncated 1mm}.

\subsection{Integer geometry of the sphere}

For $r,\hat r\ge1$ and $\theta\in[-1,1]$, define
\begin{align*}
    \Lambda_n(r,\hat r,\theta)
    &:=\cb{(\sigma,\tau)\in\Lambda_n(r)\times\Lambda_n(\hat r):
        \la\sigma,\tau\ra=\theta r\hat r n},\\
    N(r)&:=|\Lambda_n(r)|,
    \qquad
    N(r,\hat r,\theta):=|\Lambda_n(r,\hat r,\theta)|.
\end{align*}

We will use the following identity, relating maximum entropy distributions to counting. 
\begin{lemma}[{\cite[Lemma~3.2]{mckinley2023maximum}}]
\label{lemma:max entropy}
Let $\Omega$ be a countable set, let $f:\Omega\to\RR^d$, and fix
$b\in\RR^d$. Suppose that, for some $\lambda\in\RR^d$,
\[
    Z:=\sum_{x\in\Omega}e^{\la\lambda,f(x)\ra}<\infty,
    \qquad
    \mu(x):=Z^{-1}e^{\la\lambda,f(x)\ra}
\]
satisfies $\E[\mu]{f}=b$. Then, writing
$\Omega_b:=\{x\in\Omega:f(x)=b\}$ and assuming that this set is finite,
\[
    |\Omega_b|=e^{H(\mu)}\mu(\Omega_b).
\]
\end{lemma}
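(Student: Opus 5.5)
The identity is a change of measure between the uniform measure on $\Omega_b$ and the tilted measure $\mu$. The key point is that $\mu$ is constant on each fiber of $f$.

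\emph{Step 1: $\mu$ is constant on $\Omega_b$.} For every $x\in\Omega_b$ we have $f(x)=b$, so
\[
    \mu(x)=Z^{-1}e^{\la\lambda,b\ra}=:\mu_b .
\]
Hence $\mu(\Omega_b)=|\Omega_b|\,\mu_b$. Finiteness of $\Omega_b$ is used only to make this sum meaningful; summability of $\mu$ already ensures $\mu_b\le 1$.

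\emph{Step 2: compute the entropy.} Since $Z<\infty$, we have
\[
    -\log\mu(x)=\log Z-\la\lambda,f(x)\ra .
\]
Therefore
\[
    H(\mu)=\sum_x\mu(x)\bigl(\log Z-\la\lambda,f(x)\ra\bigr)=\log Z-\la\lambda,\E[\mu]{f}\ra=\log Z-\la\lambda,b\ra ,
\]
using the moment hypothesis $\E[\mu]{f}=b$. Comparing with Step 1 gives $e^{-H(\mu)}=Z^{-1}e^{\la\lambda,b\ra}=\mu_b$.

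\emph{Step 3: combine.} From Step 1 and Step 2,
\[
    \mu(\Omega_b)=|\Omega_b|\,\mu_b=|\Omega_b|\,e^{-H(\mu)},
\]
and rearranging gives $|\Omega_b|=e^{H(\mu)}\mu(\Omega_b)$.

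The only step needing care is the exchange of sums in Step 2, i.e.\ that $\sum_x\mu(x)\,|\la\lambda,f(x)\ra|<\infty$, so that $H(\mu)$ is finite and equals $\log Z-\la\lambda,\E[\mu]{f}\ra$. I would take this integrability as implicit in the hypothesis that $\E[\mu]{f}$ exists and equals $b$. In the applications of this section, with $f(x)=x^2$ or $f(x,y)=(x^2,xy,y^2)$ and a Gaussian tilt, it holds because of the Gaussian tails of $\mu$.
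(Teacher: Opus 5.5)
Your proposal is correct and follows the paper's proof: both compute $H(\mu)=\log Z-\la\lambda,b\ra$ from the moment condition, observe that $\mu\equiv Z^{-1}e^{\la\lambda,b\ra}=e^{-H(\mu)}$ on $\Omega_b$, and rearrange $\mu(\Omega_b)=|\Omega_b|e^{-H(\mu)}$. Your integrability remark is indeed covered by the hypothesis that $\E[\mu]{f}$ exists. Finiteness of $\Omega_b$ is in fact automatic, since $\mu$ takes the same positive value at every point of $\Omega_b$ and has total mass one.
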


\begin{proof}
Since $H(\mu)=\log Z-\la\lambda,b\ra$, for $x\in\Omega_b$ one has
$\mu(x)=Z^{-1}e^{\la\lambda,b\ra}=e^{-H(\mu)}$. Then $
\PP[\mu]{\Omega_b} = |\Omega_b|e^{-H(\mu)}$. Rearranging gives the claim.
\end{proof}

By nonnegativity of relative entropy, the assumed exponential form implies that $\mu$ is the maximum-entropy distribution on $\Omega$ satisfying $\E{f} = b$.

\begin{proposition}[Asymptotic count of lattice points]\label{prop:1d-lattice-count}
For every $r\ge1$,
\[
    N(r)
    =e^{nH(\mu_r)}
      \PP[X\sim\mu_r^{\otimes n}]{\|X\|_2^2=nr^2}
    \le e^{nH(\mu_r)}.
\]
Moreover, for every $\delta>0$ and $C<\infty$, uniformly over
$r\in[1+\delta,C]$ such that $nr^2\in\ZZ$,
\[
    N(r)
    =e^{nH(\mu_r)}
      \frac{1+o(1)}{\sqrt{2\pi n\Var_{\mu_r}(X_1^2)}}
    =\Theta(n^{-1/2})e^{nH(\mu_r)}.
\]
For $r=1$, one has $N(1)=2^n$.
\end{proposition}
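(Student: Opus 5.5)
The exact identity is a direct application of \cref{lemma:max entropy}. Take $\Omega=\Lambda_n=(\ZZ\setminus\{0\})^n$, $d=1$, $f(x)=\|x\|_2^2$ and $b=nr^2$. For $r>1$, set $\lambda=a(r)=-1/(2\sigma(r)^2)$. Then $e^{\lambda f(x)}$ factors over coordinates, so the associated measure is $\mu_r^{\otimes n}$. Its partition function is $Z(r)^n<\infty$, and $\E{f}=nr^2$ by the definition of $\mu_r$. The level set $\Omega_b=\Lambda_n(r)$ is finite because each coordinate is bounded by $r\sqrt n$. Since $H(\mu_r^{\otimes n})=nH(\mu_r)$, the lemma gives $N(r)=e^{nH(\mu_r)}\PP[X\sim\mu_r^{\otimes n}]{\|X\|_2^2=nr^2}$. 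The upper bound follows because a probability is at most $1$.

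The case $r=1$ is handled directly. No exponential tilt realizes $\mu_1$, but $\Lambda_n(1)=\{\pm1\}^n$, so $N(1)=2^n=e^{nH(\mu_1)}$, and the probability factor equals $1$. This agrees with the formula.

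For the asymptotics, set $Y_i=X_i^2$. These are iid, supported on $\{1,4,9,\dots\}$, with mean $r^2$ and variance $v(r):=\Var_{\mu_r}(X_1^2)\in(0,\infty)$. The variance is positive for $r>1$ because $\mu_r$ charges both $\pm1$ and $\pm2$, and all moments are finite by the Gaussian tails. I would apply the lattice local CLT to $S_n=\sum Y_i$ at the point $nr^2$, which is its mean. This requires identifying the minimal lattice of $Y_1$. The support $\{k^2\}$ contains $1$ and $4$, whose difference is $3$, and $4$ and $9$, whose difference is $5$. Since $\gcd(3,5)=1$ and all values lie in $1+\ZZ$ up to a shift, $Y_1$ is aperiodic on $\ZZ$ with span $1$. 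The local CLT then gives $\PP{S_n=nr^2}=(1+o(1))/\sqrt{2\pi n v(r)}$ for integer $nr^2$. Finiteness of $v(r)$ gives $\Theta(n^{-1/2})$.

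The main obstacle is uniformity over $r\in[1+\delta,C]$, since the law of $Y_1$ changes with $n$ through $r$. I would run the standard Fourier proof of the local CLT with constants made uniform over this compact family. The family $r\mapsto\mu_r$ is continuous: $\sigma(r)$ is continuous and increasing, which follows from the exponential-family parametrization, since the variance is strictly monotone in the natural parameter $a$. As a result, $v(r)$ is bounded above and below, the third absolute moments $\E|Y_1-r^2|^3$ are uniformly bounded, and the characteristic function satisfies
\[
\sup_{r\in[1+\delta,C]}\ \sup_{\epsilon\le|t|\le\pi}|\phi_r(t)|\le1-c(\epsilon)<1
\]
by continuity in $(r,t)$ and aperiodicity at each $r$. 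The lower bound $r\ge1+\delta$ is what keeps the masses $\mu_r(\pm2)$ bounded below, so aperiodicity does not degenerate. With these three uniform inputs, splitting the inversion integral into $|t|\le\epsilon$, where one uses a Taylor expansion, and $|t|>\epsilon$, where the integral is exponentially small, gives the $1+o(1)$ uniformly.
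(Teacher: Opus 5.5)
Your proposal is correct and follows essentially the same route as the paper: the max-entropy counting identity (\cref{lemma:max entropy}) with $f(x)=\|x\|_2^2$ and $\mu=\mu_r^{\otimes n}$ gives the exact formula. A lattice local CLT for $X_1^2$ then gives the asymptotics, uniformly on compact subsets of $(1,\infty)$, because the moments are uniformly bounded, the variance is bounded away from zero, and the span is one. Your direct treatment of $r=1$ (where no finite tilt realizes $\mu_1$) and your compactness argument for the uniform characteristic-function bound spell out details the paper leaves implicit.
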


\begin{proof}[Proof of \cref{prop:1d-lattice-count}]
Apply \cref{lemma:max entropy} with
$\Omega=(\ZZ\setminus\{0\})^n$ and $f(x)=\|x\|_2^2$; take $\mu_r^{\otimes n}$ as the distribution $\mu$ in the Lemma, as its log-density is
affine in $f$ and $\E{f}=nr^2$. This gives the first identity. The remaining assertion follows from the lattice-valued local central limit theorem \cite[Ch.~5, \S\,22]{lclt}, uniformly when $r$ ranges over a compact subset of $(1,\infty)$. Indeed, in this regime the moments are uniformly bounded, $\Var_{\mu_r}(X_1^2)$ is bounded away from zero, and the lattice span of $X_1^2$ is one.
\end{proof}

The analogous pair count will be used in the second moment analysis. Let
$\Gamma_{r,\hat r,\theta}$ denote the covariance matrix of
$(X^2,XY,Y^2)$ under $\mu_{r,\hat r,\theta}^{(2)}$. Say a triple $(r,\hat{r},\theta)$ is \textit{feasible} if $N(r,\hat r,\theta)>0$ and, as described right after \cref{def:DG 2}, the corresponding discrete Gaussian has density proportional to the exponential of a negative-definite quadratic form.

\begin{proposition}[Asymptotic lattice-pair count]\label{prop:2d-lattice-count}
For every feasible $(r,\hat r,\theta)$ with $r,\hat r>1$ and $\theta\in(-1,1)$,
\begin{align*}
    N(r,\hat r,\theta)
    &=e^{nH(\mu_{r,\hat r,\theta}^{(2)})}\,
      \PP[(X_i,Y_i)\stackrel{\mathrm{iid}}\sim\mu_{r,\hat r,\theta}^{(2)}]
      {\sum_{i=1}^n(X_i^2,X_iY_i,Y_i^2)
       =(nr^2,n\theta r\hat r,n\hat r^2)}\le e^{nH(\mu_{r,\hat r,\theta}^{(2)})}.
\end{align*}
Fix $\delta,\eps>0$ and $C<\infty$. There exists $\eta>0$
such that, uniformly over feasible triples satisfying
\[
    r,\hat r\in[1+\delta,C],\qquad
    |r-\hat r|\le\eta,\qquad
    |\theta|\le1-\eps,
\]
one has
\begin{equation}\label{eq:2d lattice count}
    N(r,\hat r,\theta)
    =e^{nH(\mu_{r,\hat r,\theta}^{(2)})}
      \frac{1+o(1)}{(2\pi n)^{3/2}\sqrt{\det\Gamma_{r,\hat r,\theta}}}
    =e^{nH(\mu_{r,\hat r,\theta}^{(2)})+O(\log n)},
\end{equation}
and also
\begin{equation}\label{eq:N2 expansion}
    \frac1n\log N(r,\hat r,\theta)
    =\frac1n\log\frac{N(r)N(\hat r)}{\sqrt{2\pi n r^2\hat r^2}}
      +\frac{\theta^2}{2}
       \pa{\partial_{\theta\theta}^2H(\mu_{r,\hat r,0}^{(2)})+O(n^{-1})}
      +O(\theta^4)+o(n^{-1}).
\end{equation}
\end{proposition}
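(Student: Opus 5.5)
The plan has three parts: an exact identity from \cref{lemma:max entropy}, a uniform local limit theorem for the first asymptotic, and a Taylor expansion in $\theta$ for \eqref{eq:N2 expansion}.

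\emph{Exact identity.} Apply \cref{lemma:max entropy} with $\Omega=((\ZZ\setminus\{0\})^2)^n$ and $f(x,y)=\sum_i(x_i^2,x_iy_i,y_i^2)\in\RR^3$. Take $b=(nr^2,n\theta r\hat r,n\hat r^2)$ and $\mu=(\mu^{(2)}_{r,\hat r,\theta})^{\otimes n}$. Feasibility of $(r,\hat r,\theta)$ gives, via \cref{prop:dg2}, that $\mu^{(2)}$ has density $\exp\{ax^2+bxy+cy^2-\Psi\}$ with a negative-definite form. Its log-density is therefore affine in $f$, the partition function is finite, and $\E[\mu]{f}=b$. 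The set $\Omega_b$ equals $\Lambda_n(r,\hat r,\theta)$ and is finite. The lemma then gives the identity, and the upper bound follows because the probability is at most one.

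\emph{Local limit asymptotic.} Apply a multivariate lattice local CLT to the i.i.d.\ vectors $V_i=(X_i^2,X_iY_i,Y_i^2)\in\ZZ^3$, evaluated exactly at their mean. This yields $(1+o(1))(2\pi n)^{-3/2}(\det\Gamma)^{-1/2}$, provided the lattice generated by differences of support points of $V_1$ is all of $\ZZ^3$. I will check this by hand: $(1,1)-(1,-1)$ gives $(0,2,0)$, $(2,1)-(1,1)$ gives $(3,1,0)$, and $(2,1),(1,2),(2,2)$ and similar points give the remaining generators. The local CLT must hold uniformly over the stated compact parameter set. To get this, I will use a version of the LCLT that is uniform over families with uniformly bounded third (or exponential) moments, covariance eigenvalues bounded below, and characteristic functions uniformly bounded away from $1$ off a neighborhood of zero.

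The constraints $r,\hat r\in[1+\delta,C]$ and $|\theta|\le1-\eps$ keep the parameters $(a,b,c)$ of \cref{prop:dg2} in a compact set, by continuity of the inverse moment map. This needs $|r-\hat r|\le\eta$ with $\eta$ small, so that condition \eqref{eq:dg2-condition} holds with a uniform margin; near $r=\hat r$ it holds automatically. Compactness of $(a,b,c)$ gives uniform Gaussian tails. Nondegeneracy of $\Gamma$ holds because $V_1$ is not supported on an affine hyperplane, and continuity plus compactness makes $\det\Gamma$ bounded below. Aperiodicity is continuous and strict on a compact set, so it is uniform.

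\emph{Expansion \eqref{eq:N2 expansion}.} Write $\log N(r,\hat r,\theta)=nH(\mu^{(2)}_\theta)-\tfrac32\log(2\pi n)-\tfrac12\log\det\Gamma_\theta+o(1)$. Then Taylor expand $H(\mu^{(2)}_{r,\hat r,\theta})$ in $\theta$. At $\theta=0$ the measure is the product $\mu_r\otimes\mu_{\hat r}$, so the zeroth-order term is $H(\mu_r)+H(\mu_{\hat r})$. Under $\theta\mapsto-\theta$ combined with $y\mapsto -y$ the entropy is invariant, so $H$ is even in $\theta$ and the expansion is $\theta^2\partial^2_{\theta\theta}H/2+O(\theta^4)$, with smoothness coming from the implicit function theorem. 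At $\theta=0$, $\Gamma_0$ is block-structured: $XY$ is uncorrelated with $X^2$ and $Y^2$, and $\Var(XY)=r^2\hat r^2$. Hence $\det\Gamma_0=\Var(X^2)\Var(Y^2)r^2\hat r^2$. Combining this with \cref{prop:1d-lattice-count} for $N(r)N(\hat r)$ leaves exactly the factor $(2\pi n r^2\hat r^2)^{-1/2}$.

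The $\theta$-dependence of $\log\det\Gamma_\theta$ is even and smooth, so it contributes $O(\theta^2)/n$, which is absorbed as the $\theta^2 O(n^{-1})$ term. The $o(1)$ errors divided by $n$ give $o(n^{-1})$.

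\emph{Main obstacle.} I expect the hardest step to be the uniformity of the three-dimensional lattice LCLT. This requires quantitative control that the characteristic function of $V_1$ stays bounded away from $1$ in modulus on $[-\pi,\pi]^3$ outside a neighborhood of $0$, uniformly in the parameters. I would handle it by a continuity and compactness argument on the torus, after first verifying strict aperiodicity at each fixed parameter.
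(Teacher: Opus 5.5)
Your proposal is correct and follows the paper's proof essentially step for step. It has the same three steps: the max-entropy counting identity; a uniform three-dimensional lattice local CLT, justified by compactness of $(a,b,c)$, nondegeneracy of $\Gamma$, and the differences of support points generating $\ZZ^3$; and a Taylor expansion at $\theta=0$ of the even functions $H(\mu^{(2)}_{r,\hat r,\theta})$ and $\log\det\Gamma_{r,\hat r,\theta}$, using $\det\Gamma_{r,\hat r,0}=\Var(X^2)\Var(Y^2)r^2\hat r^2$ and \cref{prop:1d-lattice-count}.

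One small point in your lattice check. The points with $|x|,|y|\le 2$ only generate $3\ZZ\times\ZZ\times 3\ZZ$, because $x^2\equiv y^2\equiv 1\pmod 3$ there. Your ``similar points'' must therefore include ones such as $(3,1)$ and $(1,3)$, which contribute the differences $(8,2,0)$ and $(0,2,8)$.
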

The expansion \eqref{eq:N2 expansion} is later made explicit by computing
$\partial_{\theta\theta}^2H(\mu_{r,\hat r,0}^{(2)})=-1$ in \eqref{eq:H''}. The constraints on $(r,\hat{r},\theta)$ assumed in the last two formulas ensure for sufficiently small $\eta$ that the
coefficients $(a,b,c)$ are in a compact subset of
$\{(a,b,c):a,c<0,\ b^2<4ac\}$.
In particular, $4ac-b^2$ is bounded away from zero.

\begin{proof}[Proof of \cref{prop:2d-lattice-count}]
Apply \cref{lemma:max entropy} with
\[
    \Omega=((\ZZ\setminus\{0\})^2)^n,
    \qquad
    f((x_i,y_i)_{i\le n})=\sum_i(x_i^2,x_iy_i,y_i^2).
\]
The associated distribution is
$(\mu_{r,\hat r,\theta}^{(2)})^{\otimes n}$, which gives the first exact
identity and upper bound. Next, on the compact parameter set, the covariance matrix
$\Gamma_{r,\hat r,\theta}$ is uniformly non-degenerate, all moments are
uniformly bounded, and the lattice generated by differences of support
points is $\mathbb Z^3$. The multivariate lattice local central limit
theorem therefore gives \eqref{eq:2d lattice count}.

Finally, at $\theta=0$, note that
$\mu_{r,\hat r,0}^{(2)}=\mu_r\otimes\mu_{\hat r}$ and
\[
    \det\Gamma_{r,\hat r,0}
    =\Var_{\mu_r}(X^2)\,r^2\hat r^2\,
      \Var_{\mu_{\hat r}}(Y^2).
\]
Taking logarithms in \eqref{eq:2d lattice count}, using
\cref{prop:1d-lattice-count}, and Taylor expanding the two even analytic
functions
$H(\mu_{r,\hat r,\theta}^{(2)})$ and
$\log\det\Gamma_{r,\hat r,\theta}$ at $\theta=0$ gives
\eqref{eq:N2 expansion}.
\end{proof}

\begin{proposition}[Entropy of the discrete Gaussian]\label{prop:H(DG) properties}
Let $r>1$ and write $v=v(r):=r^2$. Viewing $H(\mu_r)$ as a function of $v$, one has
\[
    \partial_{vv}^2H(\mu_r)<0<\partial_vH(\mu_r)
\]
and
\[
    -\frac{\partial_vH(\mu_r)}{\partial_{vv}^2H(\mu_r)}
    =\frac1{2\sigma^2}\pa{\E[\mu_r]{X^4}-v^2}.
\]
Moreover, as $r\to\infty$,
\[
    \partial_vH(\mu_r)=\frac{1+o(1)}{2r^2},
    \qquad
    H(\mu_r)=\log r+O(1).
\]
\end{proposition}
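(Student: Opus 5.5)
We view $\mu_r$ as an exponential family in the natural parameter $a=-1/(2\sigma^2)<0$: write $\mu_r(x)=e^{ax^2-\Psi(a)}$ with $\Psi(a)=\log\sum_{x\ne0}e^{ax^2}$. Then $v=\Psi'(a)=\E[\mu_r]{X^2}$ and $\Psi''(a)=\Var_{\mu_r}(X^2)=\E[\mu_r]{X^4}-v^2$. This variance is strictly positive for $r>1$, because $X^2$ is then nonconstant. Hence $a\mapsto v$ is a smooth strictly increasing bijection from $(-\infty,0)$ onto $(1,\infty)$. This justifies parameterizing by $v$, and it also gives the existence and uniqueness of $\sigma(r)$ asserted in \cref{def:DG}.

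From the density we have $H(\mu_r)=\Psi(a)-av$. Differentiating in $v$ and using $\Psi'(a)=v$ gives the standard Legendre-duality identity
\[
    \partial_v H=\Psi'(a)\,\partial_v a-a-v\,\partial_v a=-a=\frac1{2\sigma^2}>0 .
\]
Differentiating once more, with $\partial_v a=1/\Psi''(a)$, yields
\[
    \partial^2_{vv}H=-\frac{1}{\Var_{\mu_r}(X^2)}<0 .
\]
Taking the quotient,
\[
    -\frac{\partial_vH}{\partial^2_{vv}H}=\frac{1}{2\sigma^2}\,\Var_{\mu_r}(X^2)=\frac1{2\sigma^2}\bigl(\E[\mu_r]{X^4}-v^2\bigr),
\]
which is exactly the stated identity.

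For the asymptotics as $r\to\infty$ we need $\sigma^2/r^2\to1$, equivalently $a\to0^-$ with $v\sim-1/(2a)$. We use Poisson summation, or simply a Riemann-sum comparison: for $s=\sigma^2$ large,
\[
    \sum_{x\in\ZZ}e^{-x^2/(2s)}=\sqrt{2\pi s}\,(1+O(e^{-2\pi^2 s})),
\]
and similarly for $\sum x^2e^{-x^2/(2s)}$. Removing the $x=0$ term changes $Z$ by $1$ and leaves the second moment sum unchanged. Therefore
\[
    v=\frac{s\sqrt{2\pi s}\,(1+o(1))}{\sqrt{2\pi s}-1+o(1)}=s(1+o(1)).
\]
Hence $\partial_vH=1/(2\sigma^2)=(1+o(1))/(2r^2)$. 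Moreover,
\[
    H=\log Z+\frac{v}{2\sigma^2}=\tfrac12\log(2\pi\sigma^2)+\tfrac12+o(1)=\log r+O(1).
\]
We also need to know that $\sigma\to\infty$ as $r\to\infty$. This follows from monotonicity of $v$ in $a$: if $\sigma$ stayed bounded, $v$ would stay bounded.

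The only mildly delicate point is making the Poisson-summation error estimates uniform, and confirming that $\Psi$ is smooth on $a<0$ so that all the differentiations are justified. Smoothness holds because the series for $\Psi$ and its derivatives converge locally uniformly there. Everything else is routine exponential-family calculus.
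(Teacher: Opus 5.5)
Your proof is correct and follows essentially the same route as the paper: exponential-family (Legendre) differentiation gives $\partial_v H=-a=1/(2\sigma^2)$ and $\partial_{vv}^2H=-1/\Var_{\mu_r}(X^2)$, and Poisson summation gives the large-$r$ asymptotics. The paper phrases the first part as implicit differentiation of the two normalization identities in $(a,b)$ with $b=-\log Z$, rather than through the log-partition function $\Psi$, but the computation is the same.
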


\begin{proof}
Set $a:=a(v)$ and $b:=b(v)$, with
\[
    \mu_r(x)=e^{ax^2+b}.
\]
Thus $a=-1/(2\sigma^2)$ and $b=-\log Z$. The relations
\begin{equation}\label{eq:relations-mu_r}
    v=\sum_xe^{ax^2+b}x^2,
    \qquad
    1=\sum_xe^{ax^2+b}
\end{equation}
give $a'v+b'=0$. Since $H(\mu_r)=-(av+b)$,
\[
    -\partial_vH(\mu_r)=a,
    \qquad
    -\partial_{vv}^2H(\mu_r)=a'.
\]
Here $a<0$, while differentiating the first identity in
\eqref{eq:relations-mu_r} gives
\begin{equation}\label{eq:implicitdiff-mu_r}
    1=a'\E{X^4}+b'v.
\end{equation}
Combining this with $a'v+b'=0$ gives
\[
    1=a'\pa{\E{X^4}-v^2}.
\]
Hence, $a'>0$. It follows that
\[
    -\frac{\partial_vH(\mu_r)}{\partial_{vv}^2H(\mu_r)}
    =-\frac a{a'}
    =-a\pa{\E{X^4}-v^2}
    =\frac1{2\sigma^2}\pa{\E{X^4}-v^2}.
\]

For the asymptotics, let $t:=-a=1/(2\sigma^2)$. By Poisson summation,
\[
    Z(t):=2\sum_{k\ge1}e^{-tk^2}
    =\sqrt{\frac\pi t}\pa{1+O(e^{-\pi^2/t})}-1.
\]
Hence
\[
    r^2=-\frac{d}{dt}\log Z(t)
    =\frac1{2t}\pa{1+O(\sqrt t)}.
\]
Since $\partial_vH(\mu_r)=t$, the first asymptotic follows. The identity
$H(\mu_r)=tr^2+\log Z(t)$ then gives $H(\mu_r)=\log r+O(1)$.
\end{proof}

\begin{proposition}[Discrete Gaussian moment inequality]
\label{prop:DG moments}
Let $X\sim\mu_r$, where $\mu_r$ is the punctured discrete Gaussian with variance $r^2$. For all $r>1$,
\[
    \frac1{2\sigma^2}\pa{\E{X^4}-r^4}<r^2.
\]
\end{proposition}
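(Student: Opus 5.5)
The plan is to reduce the inequality to a monotonicity statement for a one-parameter exponential family, and then prove that monotonicity by a regime split.

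\textbf{Reduction.} Write $t:=1/(2\sigma^2)>0$, so that $\mu_r(x)\propto e^{-tx^2}$ on $\ZZ\setminus\{0\}$, and set $Z(t):=\sum_{x\ne0}e^{-tx^2}$. Standard exponential-family identities give
\[
    r^2=-(\log Z)'(t),\qquad \Var_{\mu_r}(X^2)=\E{X^4}-r^4=(\log Z)''(t).
\]
The claim is $t\,\Var(X^2)<r^2$. Writing $W:=tX^2$, this is equivalent to $\Var(W)<\E{W}$. Differentiating in $t$ gives
\[
    \frac{d}{dt}\bigl(t r^2(t)\bigr)=r^2-t\Var(X^2),
\]
so the statement is equivalent to strict monotonicity of $g(t):=t\,r(t)^2=\E{tX^2}$ in $t\in(0,\infty)$. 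This is the same as the equivalent form $\partial_v\bigl(v\,\partial_vH(\mu_r)\bigr)>0$ of \cref{prop:H(DG) properties}, since $\partial_vH=t$. As a sanity check, in the continuous Gaussian analogue $W\sim\mathrm{Gamma}(1/2,1)$ and one has equality $\Var W=\E W=1/2$. So the content of the statement is that puncturing and discreteness strictly reduce the relative spread of $W$.

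\textbf{Key steps.} Symmetrize the difference $\E{W}-\Var(W)$ over two independent copies $k,j\ge1$. After clearing the positive factor $Z(t)^2$, strict monotonicity of $g$ is equivalent to
\[
    S(t):=\sum_{j,k\ge1}e^{-t(j^2+k^2)}\Bigl[t\,\frac{j^2+k^2}{2}-\frac{t^2(k^2-j^2)^2}{2}\Bigr]>0 .
\]
I would prove $S(t)>0$ by splitting into regimes in $t$.

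\emph{Large $t$} (roughly $t\ge t_1$, i.e.\ $r$ near $1$). The diagonal term $j=k=1$ contributes $t\,e^{-2t}$. The only negative terms have $j\ne k$, and each carries a factor $e^{-t(j^2+k^2)}\le e^{-5t}$ times a polynomial in $t$. Hence they are dominated by the diagonal term, and this regime is elementary.

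\emph{Small $t$} ($r$ large). Use Poisson summation as in the proof of \cref{prop:H(DG) properties}:
\[
    Z(t)=\sqrt{\pi/t}-1+O\bigl(t^{-1/2}e^{-\pi^2/t}\bigr).
\]
This gives $g(t)=\tfrac12\cdot\frac{1}{1-\sqrt{t/\pi}}+O(e^{-\pi^2/t})$, together with the corresponding expansion for $g'(t)$. Thus $g'(t)\approx\frac{1}{4\sqrt{\pi t}}>0$, and the exponentially small error cannot cancel it. The point is that the $-1$ coming from the removed origin is exactly what makes $g$ increasing, whereas for the unpunctured lattice $g$ would be essentially constant.

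\emph{Intermediate $t$.} Truncate both series at a fixed cutoff, with explicit tail bounds, and verify $S(t)>0$ (equivalently $g'(t)>0$) on a compact interval. This can be done by a monotone bracketing argument on a finite mesh, using Lipschitz bounds for $g'$.

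\textbf{Main obstacle.} I expect the main difficulty to be closing the gap between the two asymptotic regimes with fully rigorous constants, i.e.\ making $t_0$ and $t_1$ explicit and handling the compact middle range. Before falling back on a computer-assisted middle step, I would first look for a structural proof of $\Var(W)<\E W$. The candidate is a discrete Stein / integration-by-parts identity for the lattice Gaussian: compare $\E[X\,h(X)]$ with $\frac1{2t}\E[h'(X)]$ for $h(x)=x^3$ or $h(x)=x$, and exploit the boundary term created by the missing point $x=0$, which has a definite sign. If such an identity produces the strict inequality directly, the regime split becomes unnecessary.
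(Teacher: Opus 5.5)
This is essentially the paper's proof: you use the same reduction to $\frac{d}{dt}\bigl(t\,r(t)^2\bigr)>0$, Poisson summation for small $t$, and direct series bounds for large $t$. The one difference is that the paper needs no intermediate computer-checked range, because the two estimates already meet at $t=1/2$. For $t\le 1/2$, the leading term $\tfrac12(1-\sqrt{t/\pi})^{-1}$ has derivative above $.53$ while the theta-function correction contributes less than $10^{-4}$. For $t\ge 1/2$, it suffices to use the bound $\Var(X^2-1)\le\E{(X^2-1)^2}$, keep the $j=2,3$ terms, and bound the tail geometrically.

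One small slip in your aside: since $t$ is decreasing in $v=r^2$, monotonicity of $g$ in $t$ corresponds to $\partial_v\bigl(v\,\partial_vH(\mu_r)\bigr)<0$, not $>0$.
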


Note that if $\mu_r$ were replaced by the continuous Gaussian density, this would hold with equality.

\begin{proof}
Let $t=1/(2\sigma^2)$, and define
\[
    Z(t)=2\sum_{j\ge1}e^{-tj^2},
    \qquad
    R(t)=-\frac{d}{dt}\log Z(t).
\]
Then $R(t)=r^2$, $R'(t)=-\Var(X^2)$, and the desired inequality is
equivalent to $(tR(t))'>0$.
First, consider $0<t\le1/2$. Poisson summation gives
\[
    Z(t)=\bigl(\sqrt{\pi/t}-1\bigr)(1+\delta(t)),
    \qquad
    \delta(t)=
    \frac{2\sum_{m\ge1}e^{-\pi^2m^2/t}}{1-\sqrt{t/\pi}},
\]
and hence
\[
    tR(t)=\frac12+\frac1{2(\sqrt{\pi/t}-1)}
           -t\frac{d}{dt}\log(1+\delta(t)).
\]
The derivative of the first two terms is greater than $.53$ for $0 < t \le 1/2$, using the standard truncation formulas for geometric series. On the other hand, differentiating and using the inequality
$m^2\ge1+3(m-1)$ gives an absolute bound of, e.g., $10^{-4}$ for the
derivative of the last term. Hence $(tR(t))' > .5 > 0$, as desired. 

It remains to consider $t\ge1/2$. Let $Y=X^2-1$. Since
$\Var(Y)\le\E{Y^2}$,
\[
    (tR(t))'
    \ge
    \frac{1+\sum_{j\ge2}
       \bigl((j^2-1)-t(j^2-1)^2\bigr)e^{-t(j^2-1)}}
         {1+\sum_{j\ge2}e^{-t(j^2-1)}}.
\]
Keeping the terms $j=2,3$ and bounding the remaining geometric tail,
the numerator is at least
\[
    1+(3-9t)e^{-3t}+(8-64t)e^{-8t}-300te^{-15t}
    >1-3e^{-2}-24e^{-4}-150e^{-15/2}>0.
\]
Thus $(tR(t))' > 0$ for $t \ge 1/2$, completing the proof of the proposition in all cases. 
\end{proof}

\subsection{Unique maximizing radius}

The main result of this subsection is that the first moment profile
$F_1(r):=F_1(\kappa,\alpha,r)$ has a unique maximizer, without any assumption that $\kappa$ is large.

\begin{lemma}[Shape of first moment]\label{lemma:F1 shape}
Fix $\kappa>0$ and $\alpha>1$, and set
\[
    f(v):=F_1(\kappa,\alpha,\sqrt v),
    \qquad v>1.
\]
For every $v>1$, at least one of the following holds:
\begin{enumerate}
    \item $f''(v)<0$;
    \item $f'(v)<0$.
\end{enumerate}
Moreover, $f'(v)>0$ for $v$ sufficiently close to $1$, while
$f'(v)<0$ for all sufficiently large $v$.
\end{lemma}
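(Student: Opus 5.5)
The plan is to write $f'$ as a difference of two explicit monotone quantities, show that whenever $f'(v)\ge 0$ the second derivative is forced to be negative, and then read off the boundary behaviour from limits of those quantities.

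\textbf{Step 1: formula for $f'$.} Write $h(v):=H(\mu_{\sqrt v})$ and $K=K(v):=\kappa v^{-1/2}$, so that $f(v)=h(v)+\alpha\log p(K(v))$. By the proof of \cref{prop:H(DG) properties}, $h'(v)=t:=1/(2\sigma^2)$. Since $p'(K)=2\phi(K)$ and $dK/dv=-K/(2v)$,
\[
    \frac{d}{dv}\log p(K(v))=-\frac{K\phi(K)}{v\,p(K)}=-\frac{d(K)}{2v},
\]
with $d(K)=2K\phi(K)/p(K)=1-\E{W^2\mid |W|\le K}$. Setting $q(v):=2v\,h'(v)=r^2/\sigma^2$, this gives
\[
    f'(v)=\frac{q(v)-\alpha\,d(K(v))}{2v}.
\]

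\textbf{Step 2: $f'(v)\ge0$ implies $f''(v)<0$.} Differentiating,
\[
    f''(v)=\frac{q'(v)+\alpha\,d'(K)\,\frac{K}{2v}}{2v}-\frac{f'(v)}{v}.
\]
If $f'(v)\ge 0$, the last term is $\le 0$, so it suffices to show $q'(v)<0$ and $d'(K)<0$.

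For $q$, parametrize by $t$, so that $v=R(t)$ and $q=2tR(t)$. Then
\[
    \frac{dq}{dv}=\frac{2\,(tR)'(t)}{R'(t)}.
\]
Here $R'(t)=-\Var(X^2)<0$. The proof of \cref{prop:DG moments} shows exactly that $(tR)'(t)>0$ for all $t>0$, which is equivalent to the inequality stated there. Hence $q'(v)<0$.

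For $d$, I will show that $K\mapsto \E{W^2\mid |W|\le K}$ is strictly increasing. Differentiating the ratio $\int_{-K}^K w^2\phi\,/\,p(K)$ gives
\[
    \frac{2\phi(K)}{p(K)}\Bigl(K^2-\E{W^2\mid |W|\le K}\Bigr),
\]
which is positive because $W^2<K^2$ on the conditioning event. So $d'(K)<0$.

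Both terms in the first fraction are therefore negative, and $f''(v)<0$. Thus every $v>1$ has $f'(v)<0$ or $f''(v)<0$.

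\textbf{Step 3: the endpoints.}

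As $v\downarrow1$ we have $t\to\infty$: the law $\mu_r$ concentrates on $\pm1$ and $R(t)\to1$ forces $t\to\infty$. Hence $q=2vt\to\infty$. Since $0<d\le1$, it follows that $f'(v)>0$ near $1$.

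As $v\to\infty$, \cref{prop:H(DG) properties} gives $h'(v)=(1+o(1))/(2v)$, so $q(v)\to1$. Meanwhile $K\to0$ and $\E{W^2\mid|W|\le K}\sim K^2/3\to0$, so $d(K)\to1$. Therefore $q-\alpha d\to1-\alpha<0$ because $\alpha>1$, and $f'(v)<0$ for all large $v$.

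\textbf{Main obstacle.} The only nontrivial input is the monotonicity $q'<0$, and it is supplied entirely by \cref{prop:DG moments}. The remaining care is sign bookkeeping in the change of variables $v\leftrightarrow t$, and checking that the limit $t\to\infty$ as $v\downarrow1$ is justified uniformly.
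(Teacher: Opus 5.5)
Your proof is correct, and it takes a genuinely different, shorter route than the paper.

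The paper proves the contrapositive form of the dichotomy, $f''(v)\ge0\Rightarrow f'(v)<0$. It computes $\partial_{vv}^2\log p(\kappa/r)$ explicitly and notes that $f''(v)\ge0$ forces $r>\kappa/2$ and $(\log p)''>0$. It then proves the ratio bound $H'(\log p)''/(H''(\log p)')<1$. This combines $|H'/H''|=\frac{1}{2\sigma^2}(\E{X^4}-r^4)<r^2$ from \cref{prop:H(DG) properties,prop:DG moments} with the Gaussian inequalities $p(x)\le x\sqrt{2/\pi}$ and $e^{-x^2/2}\ge1-x^2/2$.

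You instead write $2vf'(v)=q(v)-\alpha\,d(\kappa/\sqrt v)$ and use that each term is monotone. The function $q$ is decreasing in $v$, which is exactly \cref{prop:DG moments}; the paper records this consequence itself in \cref{app:finite-profile-estimates}. The function $d$ is decreasing in $K$ by an elementary computation. Both arguments rest on the same discrete input. Yours, however, dispenses with the explicit second derivative of $\log p$, the case split at $r=\kappa/2$, and the tail bounds.

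Your argument also yields more than the stated dichotomy. The map $v\mapsto 2vf'(v)$ is strictly decreasing, so $f'$ changes sign exactly once. This makes the uniqueness argument in \cref{cor:F1 unique max} immediate, and it explains the stationarity identity $\alpha=q(r_*)/d(K)$ used in \cref{sec:2profile}.

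For the limit $v\downarrow1$, the cleanest justification is that $R$ is a continuous, strictly decreasing bijection of $(0,\infty)$ onto $(1,\infty)$, since $R'=-\Var(X^2)<0$. Hence $t=R^{-1}(v)\to\infty$, and no uniformity is required.
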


\begin{corollary}[Unique maximum for first moment]\label{cor:F1 unique max}
Fix $\kappa>0$ and $\alpha>1$. There exist a unique
$r_*\in(1,\infty)$ and constants $c,c'>0$, depending only on $\kappa$ and $\alpha$, such that for all $r\ge1$,
\[
    F_1(\kappa,\alpha,r)
    \le F_1(\kappa,\alpha,r_*)-
       \min\{c',c|r-r_*|^2\}.
\]
\end{corollary}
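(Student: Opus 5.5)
The plan is to derive \cref{cor:F1 unique max} directly from \cref{lemma:F1 shape}, working with $f(v)=F_1(\kappa,\alpha,\sqrt v)$ on $v>1$ and converting to $r=\sqrt v$ at the end.

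\emph{Existence and uniqueness of the critical point.} By \cref{lemma:F1 shape}, $f'>0$ near $v=1$ and $f'<0$ for large $v$. Since $f$ is smooth on $(1,\infty)$, $f'$ has at least one zero. At any zero $v_0$ of $f'$, the dichotomy in the lemma rules out $f'(v_0)<0$, so $f''(v_0)<0$. Every critical point is therefore a strict local maximum, and a nondegenerate one.

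To rule out two critical points, suppose $v_0<v_1$ are consecutive zeros of $f'$. Both are strict local maxima, so $f'<0$ just to the right of $v_0$ and $f'>0$ just to the left of $v_1$. By the intermediate value theorem $f'$ vanishes somewhere in $(v_0,v_1)$, contradicting consecutiveness. The zeros of $f'$ are isolated, being nondegenerate, so this argument goes through. If there were infinitely many zeros accumulating inside a compact interval, the accumulation point would be a degenerate zero, which is impossible. Hence $f'$ has a unique zero $v_*$, with $f'>0$ on $(1,v_*)$ and $f'<0$ on $(v_*,\infty)$. Set $r_*=\sqrt{v_*}>1$. This $r_*$ is the unique maximizer over $r>1$.

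\emph{The endpoint $r=1$.} The value $F_1(\kappa,\alpha,1)=\log 2+\alpha\log p(\kappa)$ should be the limit of $f(v)$ as $v\downarrow1$, since $\mu_r\to\mu_1$ and $H(\mu_r)\to\log 2$. Monotonicity on $(1,v_*)$ then gives $F_1(1)\le f(v_*)$, and in fact strict inequality with a positive gap.

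\emph{Quantitative bound.} Fix a small $\epsilon>0$ such that $f''\le -c_0<0$ on $[v_*-\epsilon,v_*+\epsilon]$; this is possible by continuity of $f''$. Taylor expansion gives $f(v)\le f(v_*)-\tfrac{c_0}{2}(v-v_*)^2$ on this interval. Since $|v-v_*|=|r-r_*|(r+r_*)\ge 2|r-r_*|$, this is a bound of the form $c|r-r_*|^2$.

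Outside this interval, the strict monotonicity of $f$ on each side of $v_*$ gives $f(v)\le\max\{f(v_*-\epsilon),f(v_*+\epsilon)\}<f(v_*)$. This in particular covers $v$ near $1$ and the value at $r=1$ via the limit. It yields a uniform gap $c'$, which together with the Taylor bound proves the claimed $\min\{c',c|r-r_*|^2\}$ estimate.

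The only delicate point is to check that the argument holds uniformly for large $v$, where we need $\sup_{v\ge M}f<f(v_*)$. This follows immediately from monotonic decrease on $(v_*,\infty)$, so no additional asymptotics are needed. Since the constants depend only on $(\kappa,\alpha)$, the main obstacle is already contained in \cref{lemma:F1 shape}; the corollary itself is a soft consequence of it.
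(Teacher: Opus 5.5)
Your proposal is correct and follows essentially the same argument as the paper. Both use the dichotomy in \cref{lemma:F1 shape} to show that every critical point of $f$ is a nondegenerate local maximum, so there is exactly one, and both then combine a local Taylor bound with monotonicity on either side of $v_*$. The only differences are cosmetic: you rule out a second zero via isolatedness and the intermediate value theorem rather than the paper's argument that a first subsequent zero $w$ would have $f''(w)\ge0$, and you handle the endpoint $r=1$ explicitly by continuity where the paper leaves it implicit.
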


\begin{proof}[Proof of \cref{lemma:F1 shape}]
We first prove the dichotomy. In what follows, primes denote derivatives with respect to $v=r^2$. A computation gives
\[
    \partial_{vv}^2\log p(\kappa/r)
    =-\frac{\kappa e^{-\kappa^2/r^2}
      \left(\sqrt{2\pi}e^{\kappa^2/(2r^2)}(\kappa^2-3r^2)p(\kappa/r)
      +2\kappa r\right)}
      {4\pi r^7p(\kappa/r)^2},
\]
which is strictly negative for $r\le\kappa/2$.

Fix $v$ and suppose that $f''(v)\ge0$. Then necessarily $r>\kappa/2$ and
\[
    \partial_{vv}^2\log p(\kappa/r)
    \ge-\frac1\alpha\partial_{vv}^2H(\mu_r)>0.
\]
Also,
\begin{equation}\label{eq:E1}
    \partial_v\log p(\kappa/r)
    =-\frac{\kappa e^{-\kappa^2/(2r^2)}}
      {\sqrt{2\pi}r^3p(\kappa/r)}<0.
\end{equation}
Thus it is enough to show
\begin{equation}\label{eq:dichotomy goal}
    \frac{H(\mu_r)'(\log p(\kappa/r))''}
         {H(\mu_r)''(\log p(\kappa/r))'}<1.
\end{equation}
A direct computation yields
\[
    \frac{(\log p(\kappa/r))''}{(\log p(\kappa/r))'}
    =\frac1{r^2}\pa{
       \frac1{\sqrt{2\pi}}\frac\kappa r
       \frac{e^{-\kappa^2/(2r^2)}}{p(\kappa/r)}
       +\frac{\kappa^2}{2r^2}-\frac32}.
\]
The left-hand side is negative under our assumption. Using
$p(x)\le x\sqrt{2/\pi}$ and $e^{-x^2/2}\ge1-x^2/2$, we obtain
\[
    \left|\frac{(\log p(\kappa/r))''}{(\log p(\kappa/r))'}\right|
    \le\frac1{r^2}\pa{1-\frac{\kappa^2}{4r^2}}.
\]
On the other hand, \cref{prop:H(DG) properties,prop:DG moments} give
\[
    \left|\frac{H(\mu_r)'}{H(\mu_r)''}\right|
    =\frac1{2\sigma^2}\pa{\E{X^4}-r^4}<r^2.
\]
Combining the last two displays proves \eqref{eq:dichotomy goal}, and hence the dichotomy.

We next show that $f'(v)>0$ near $v=1$. Write
$\mu_r(x)=e^{ax^2+b}$ and let $q=\PP{|X|>1}$. Since
$v\ge1+3q$, one has $q\to0$ as $v\downarrow1$, while
\[
    e^{3a}=\frac{\PP{|X|=2}}{\PP{|X|=1}}\le\frac q{1-q}.
\]
Thus $a\to-\infty$. Since $\partial_vH(\mu_r)=-a$ and
$\partial_v\log p(\kappa/r)$ remains bounded, $f'(v)>0$ near $1$.
Finally, \cref{prop:H(DG) properties} and \eqref{eq:E1} give, as
$r\to\infty$,
\[
    f'(v)=\frac{1-\alpha+o(1)}{2r^2}<0.
\]
\end{proof}

\begin{proof}[Proof of \cref{cor:F1 unique max}]
Let $f$ be as in \cref{lemma:F1 shape}. Since $f'>0$ near $1$ and
$f'<0$ eventually,
\[
    v_*:=\inf\{v>1:f'(v)\le0\}
\]
is finite, satisfies $v_*>1$, and obeys $f'(v_*)=0$. The dichotomy gives
$f''(v_*)<0$, so $f'<0$ immediately to the right of $v_*$. If $f'$ returned to zero, let $w>v_*$ be its first subsequent zero. Then
$f'<0$ on $(v_*,w)$, and hence $f''(w)\ge0$, contradicting the dichotomy. Therefore $f'>0$ on $(1,v_*)$ and $f'<0$ on $(v_*,\infty)$, so $v_*$ is the unique maximizer.

Since $f''(v_*)<0$, there are $c_0,\delta>0$ such that
\[
    f(v)\le f(v_*)-c_0(v-v_*)^2
    \qquad\text{for }|v-v_*|\le\delta.
\]
Monotonicity on either side of $v_*$ gives a constant $c_1>0$ such that
$f(v)\le f(v_*)-c_1$ whenever $|v-v_*|\ge\delta$. Writing
$v=r^2$ and $v_*=r_*^2$ now gives the claimed global estimate.
\end{proof}

\begin{remark}[Capacity]
Recall that 
\[
    \alpha_c(\kappa) := \sup_{r\ge 1} \frac{H(\mu_r)}{-\log p(\kappa/r)}\,.
\]
It is a minor subtlety that, \textit{a priori}, the supremum in this definition may only be attained in the limit as $r$ diverges so that $F_1(\kappa,\alpha_c(\kappa),r)$ could potentially be negative for every finite $r$. This is a danger if the margin $\kappa$ is extremely small, so that the model goes from almost surely having infinite solutions when $\alpha < 1$ and $A$ has a kernel, to suddenly having no solutions when $\alpha = 1$. Fortunately, this turns out to be the only obstruction: the supremum is attained as long as $\alpha_c(\kappa)>1$. A Taylor expansion gives
\[
    p(\kappa/r)
    =\sqrt{\frac2\pi}\,\frac{\kappa}{r}+O(r^{-3}).
\]
Combining this estimate with \cref{prop:H(DG) properties}, we have $-H(\mu_r)/\log p(\kappa/r) \to 1$ as $r \to \infty$. In particular, $\alpha_c(\kappa)\ge1$. Thus, if $\alpha_c(\kappa)>1$, continuity of $H(\mu_r)/\log p(\kappa/r)$ implies that the supremum above is attained at some finite $r$. Together with \cref{cor:F1 unique max}, we have for $\alpha>1$,
\begin{equation}\label{eq:F1-capacity-sign}
    \max_{r\ge1}F_1(\kappa,\alpha,r)
    =
    \begin{cases}
        >0, & \alpha<\alpha_c(\kappa),\\
        0,  & \alpha=\alpha_c(\kappa),\\
        <0, & \alpha>\alpha_c(\kappa).
    \end{cases}
\end{equation}
\end{remark}

\begin{proof}[Proof of \cref{thm:truncated 1mm}]
The existence and uniqueness of $r_*$ follow from
\cref{cor:F1 unique max}. Write
\[F_1(r):=F_1(\kappa,\alpha,r), \qquad  F_*:=F_1(r_*).\]
Throughout the proof, $c,C>0$ denote constants depending only on
$(\kappa,\alpha)$ which may change from line to line.

We first control the infinity norm. For all sufficiently large $n$, the interval $I$ lies in a fixed compact subset of $(1,\infty)$. Uniformly over $r\in I$, \cref{prop:1d-lattice-count} gives
\[
    \PP[X\sim\mu_r^{\otimes n}]{\|X\|_2^2=nr^2}=\Theta(n^{-1/2}).
\]
Writing $\mu_r(x)\propto e^{a(r)x^2}$, continuity gives $a(r)\le-c<0$ uniformly on this compact set. A union bound gives
\[
    \PP[X\sim\mu_r^{\otimes n}]{\|X\|_\infty>\log n}
    \le e^{-c(\log n)^2}.
\]
Since the uniform measure on $\Lambda_n(r)$ is exactly 
$\mu_r^{\otimes n}$ conditioned on $\{\|X\|_2^2=nr^2\}$, it follows that
\[
    |\Lambda_n(r)\setminus\Lambda_n^\typ(r)| \le |\Lambda_n(r)|\, \frac{\PP[X\sim\mu_r^{\otimes n}]{\|X\|_\infty>\log n}}{\PP[X\sim\mu_r^{\otimes n}]{\|X\|_2^2=nr^2}}
    \le e^{-c(\log n)^2}|\Lambda_n(r)|.
\]
Rotational invariance of the Gaussian constraints therefore gives
\begin{equation}\label{eq:E[Z] - typical}
    \E{\calZ_I-\calZ_I^\typ}
    \le e^{-c(\log n)^2}\E{\calZ_I},
    \qquad
    \E{\calZ_I}
    =\pa{1+O(e^{-c(\log n)^2})}\E{\calZ_I^\typ}.
\end{equation}

It remains to truncate the radius. For sufficiently
large $n$, the uniform local central limit
theorem and a Taylor expansion of $F_1$ around $r_*$ give
\begin{equation}\label{eq:lower bound EZ I}
    \E{\calZ_I}\ge c\,e^{nF_*}.
\end{equation}

Fix $\delta\in(0,r_*-1)$. For $|r-r_*|\le\delta$, the local central
limit theorem and \cref{cor:F1 unique max} give
\[
    \E{\calZ_r}
    \le Cn^{-1/2}e^{nF_*}e^{-cn(r-r_*)^2}.
\]
Writing $q:=nr^2$ and $q_*:=nr_*^2$, we have in this range
\[
    n(r-r_*)^2=\frac{(q-q_*)^2}{n(r+r_*)^2}
    \ge c\frac{(q-q_*)^2}{n}.
\]
Summing the resulting Gaussian tail over $q\in\mathbb Z$ yields
\begin{equation}\label{eq:F1 trunc mid}
    \sum_{\substack{r\in\calR_n\setminus I\\|r-r_*|\le\delta}}
        \E{\calZ_r}
    \le e^{-cw_n^2}\,\E{\calZ_I}.
\end{equation}

Fix a sufficiently large constant $R>r_*+\delta$. By continuity and uniqueness of the maximizer,
$F_1(r)\le F_*-c$ when $r\in[1,R]$ and $|r-r_*|>\delta$. The exact counting identity in
\cref{prop:1d-lattice-count} gives
$\E{\calZ_r}\le e^{nF_1(r)}$, and there are only $O(n)$ possible values of $nr^2$ in this bounded range. Hence
\[
    \sum_{\substack{r\in\calR_n\cap[1,R]\\|r-r_*|>\delta}}
        \E{\calZ_r}
    \le e^{-cn}\E{\calZ_I}.
\]
Finally, \cref{prop:H(DG) properties} and
$p(\kappa/r)\le\sqrt{2/\pi}\,\kappa/r$ imply, for $r$ sufficiently large,
\[
    \E{\calZ_r}\le\pa{C_{\kappa,\alpha}r^{1-\alpha}}^n.
\]
Grouping the possible values of $nr^2$ according to $k\le r<k+1$ gives
\begin{equation}\label{eq:F1 trunc extreme}
    \sum_{r\in\calR_n:\,r\ge R}\E{\calZ_r}
    \le\sum_{k\ge R}n(2k+1)
       \pa{C_{\kappa,\alpha}k^{1-\alpha}}^n
    \le e^{-cn}\E{\calZ_I},
\end{equation}
where $R$ is chosen sufficiently large and we used $F_*\ge0$ from \eqref{eq:F1-capacity-sign}, together with \eqref{eq:lower bound EZ I}. Combining the above computations gives
\[
    \E{\calZ-\calZ_I}
    \le e^{-cw_n^2}\E{\calZ_I}.
\]
Together with \eqref{eq:E[Z] - typical}, this proves
\eqref{eq:F1 dominated by r*}. Applying Markov's inequality also yields
\[
    \PP{0\le\calZ-\calZ_I^\typ
        \le e^{-cw_n^2}\E{\calZ_I^\typ}}
    \ge1-e^{-cw_n^2}.
\]

Finally, let $\alpha$ range over a compact subset of
$(1,\alpha_c(\kappa)]$. Recalling the strict inequality
\[
    \left.\partial_{vv}^2F_1(\kappa,\alpha,\sqrt v)
    \right|_{v=r_*^2}<0,
\]
the implicit function theorem implies that $r_*(\kappa,\alpha)$ is continuous in $\alpha$ and hence ranges over
a compact subset of $(1,\infty)$. Thus, the local central limit theorem,
curvature, compactness, and tail estimates above may be chosen uniformly.
\end{proof}

\section{Second moment profile function}
\label{sec:2profile}

The main purpose of this section is to prove \cref{thm:unique-max} by verifying \cref{hyp:maxima} for $\kappa$
large throughout the relevant range of $\alpha$ away from the
linear-algebraic transition at $\alpha=1$. 

Define the following second moment profile function, generalizing that in~\eqref{eq:F2-definitionintro}, by
\begin{equation}\label{eq:F2-definition}
    F_2(\kappa,\alpha,r,\hat r,\theta)
    :=H(\mu^{(2)}_{r,\hat r,\theta})
      +\alpha\log p^{(2)}(\kappa/r,\kappa/\hat r,\theta).
\end{equation}

\subsection{Basic identities and the profile reduction}
In order to analyze $F_2$, we need access to representation of $\mu^{(2)}$ in \cref{prop:dg2}. Fortunately, \eqref{eq:dg2-condition} is automatically satisfied after truncating to typical solutions.

\begin{proposition}
    Let $r,\hat{r} \in I$ and $\theta \in (-1,1)$. If $\Lambda_n^\typ(r,\hat{r},\theta)\neq \emptyset$, then for all $n$ sufficiently large, \eqref{eq:dg2-condition} is satisfied.
\end{proposition}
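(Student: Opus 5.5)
The plan is to reduce \eqref{eq:dg2-condition} to an elementary inequality about a single pair of typical lattice vectors. Fix $(x,y)\in\Lambda_n^\typ(r,\hat r,\theta)$, so that $\|x\|_2^2=nr^2$, $\|y\|_2^2=n\hat r^2$, $\la x,y\ra=\theta r\hat r n$, and $\|x\|_\infty,\|y\|_\infty\le\log n$. Write $a_i:=|x_i|\ge1$ and $b_i:=|y_i|\ge1$. Since $n r\hat r|\theta|=|\la x,y\ra|\le\sum_i a_ib_i$, it suffices to show
\[
    \sum_i a_ib_i< n+n\sqrt{(r^2-1)(\hat r^2-1)},
\]
except in the case $r=\hat r$. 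In that case \eqref{eq:dg2-condition} reads $r^2|\theta|<r^2$, which holds because $\theta\in(-1,1)$ by assumption.

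The key algebraic step is the coordinatewise identity $a_ib_i-1=\tfrac12(a_i^2-1)+\tfrac12(b_i^2-1)-\tfrac12(a_i-b_i)^2$. Summing over $i$ and setting $D:=\sum_i(a_i-b_i)^2\in\ZZ_{\ge0}$ gives
\[
    \sum_ia_ib_i-n=\frac n2\big((r^2-1)+(\hat r^2-1)\big)-\frac D2 .
\]
Next, apply the AM--GM gap with $s:=r^2-1$ and $t:=\hat r^2-1$, both bounded away from $0$ since $I$ lies in a compact subset of $(1,\infty)$:
\[
    \frac{s+t}2-\sqrt{st}=\frac{(\sqrt s-\sqrt t)^2}{2}=\frac{(s-t)^2}{2(\sqrt s+\sqrt t)^2}\le C(r^2-\hat r^2)^2 .
\]
So the target inequality reduces to showing $nC(r^2-\hat r^2)^2<D/2$.

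Now split into cases according to $D$. If $D=0$, then $a_i=b_i$ for all $i$, so $r=\hat r$; this is the trivial case already handled. If $D\ge1$, I would use the $\ell_\infty$ truncation. We have
\[
    n|r^2-\hat r^2|=\Big|\sum_i(a_i-b_i)(a_i+b_i)\Big|\le 2\log n\sum_i|a_i-b_i|\le 2\log n\, D,
\]
using that $|a_i-b_i|\le(a_i-b_i)^2$ for integers. I also have $n|r^2-\hat r^2|\le Cn|r-\hat r|\le C\sqrt n\,w_n$ because $r,\hat r\in I$. Combining the two,
\[
    nC(r^2-\hat r^2)^2\le \frac{C}{n}\,\big(n|r^2-\hat r^2|\big)^2\le \frac Cn\,(2D\log n)\,(C\sqrt n\,w_n)=O\!\Big(\frac{w_n\log n}{\sqrt n}\Big)D,
\]
which is less than $D/2$ for $n$ large. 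This yields the strict inequality.

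I expect the main obstacle to be quantitative. The crude bound $|\la x,y\ra|\le\|x\|\|y\|$ is not enough, because $r\hat r\ge1+\sqrt{(r^2-1)(\hat r^2-1)}$ always holds, with a deficit of only $O(w_n^2/n)$. One therefore has to exploit integrality and the nonzero coordinates through $D$, and then balance the two bounds on $|r^2-\hat r^2|$: the one coming from the typical $\ell_\infty$ bound and the one coming from the window $I$. The point to double-check is that mixing these two bounds, rather than squaring either one alone, is what makes the right-hand side $o(D)$ uniformly over $D\ge1$.
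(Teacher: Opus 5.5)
Your proof is correct and follows essentially the same route as the paper. Both bound $|r^2-\hat r^2|$ by $2\log n$ times a sum of squared integer differences, using $|k|\le k^2$ and the $\ell_\infty$ truncation, and then combine $\log n\,|r^2-\hat r^2|=o(1)$ from the window $I$ with $(\sqrt s-\sqrt t)^2=(s-t)^2/(\sqrt s+\sqrt t)^2$. The only cosmetic difference is that you work with $|x_i|,|y_i|$ and $D=\sum_i(|x_i|-|y_i|)^2$, which avoids the paper's reduction to $\theta\ge0$ and its use of $\sum_i(x_i-y_i)^2=n(r^2+\hat r^2-2r\hat r\theta)$.
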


\begin{proof}
If $r=\hat r$, then \eqref{eq:dg2-condition} is immediate, so assume $r \neq \hat{r}$. Assume without loss of generality that $\theta \in [0,1)$ and then fix some
$(x,y)\in\Lambda_n^\typ(r,\hat{r},\theta)$. Since the coordinates of $x$ and $y$ are integers of absolute
value at most $\log n$,
\begin{align*}
    |r^2-\hat r^2|
    \le\frac{1}{n}\sum_{i=1}^n|x_i-y_i|\,|x_i+y_i|&\le\frac{2\log n}{n}\sum_{i=1}^n|x_i-y_i| \le 2\log n\,(r^2+\hat r^2-2r\hat r\theta).
\end{align*}
Here we used $|k|\le k^2$ for every integer $k$.
By \cref{sec:1profile}, $r_*$ stays in a compact subset of
$(1,\infty)$. Since $r,\hat r\in I$,
\[
    \log n\,|r^2-\hat r^2|
    =O\left(\frac{\log n\log\log n}{\sqrt n}\right)=o(1).
\]
Consequently, uniformly in these parameters, for all sufficiently
large $n$,
\begin{align*}
    \left(\sqrt{r^2-1}-\sqrt{\hat r^2-1}\right)^2
    =\frac{(r^2-\hat r^2)^2}
        {\left(\sqrt{r^2-1}+\sqrt{\hat r^2-1}\right)^2} &<\frac{|r^2-\hat r^2|}{2\log n} \le r^2+\hat r^2-2r\hat r\theta.
\end{align*}
Rearranging gives \eqref{eq:dg2-condition} as required. 
\end{proof}

In particular, for all ranges of $(r,\hat{r},\theta)$ considered in the remainder of this article (with the exception of the endpoints $\theta = \pm 1$, which are handled separately), we may use the exponential representation of the discrete planar Gaussian in \cref{prop:dg2}.

We now turn to the proof of \cref{thm:unique-max}, beginning with computing derivatives of $H(\mu_{r,\hat r, \theta}^{(2)})$ with respect to $\theta$. 

\begin{proposition}[Entropy derivatives]\label{prop:F2 entropy derivatives}
Fix $r,\hat r>1$ and restrict $\theta\in(-1,1)$
to the domain \eqref{eq:dg2-condition}. Then
\begin{equation}\label{eq:H-prime}
    \partial_\theta H(\mu^{(2)}_{r,\hat r,\theta})
    =-br\hat r.
\end{equation}
Moreover, 
\begin{equation}\label{eq:H''}
    \left.
    \partial_{\theta\theta}^2
    H(\mu^{(2)}_{r,\hat r,\theta})
    \right|_{\theta=0}=-1,
\end{equation}
and $H(\mu_{r,\hat{r},\theta}^{(2)})$ is strictly concave in $\theta$ on its domain.
\end{proposition}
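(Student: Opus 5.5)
The approach follows the scalar computation in \cref{prop:H(DG) properties}, now applied to the exponential-family representation of \cref{prop:dg2}. Write $\mu^{(2)}_{r,\hat r,\theta}(x,y)=\exp\{ax^2+bxy+cy^2-\Psi(a,b,c)\}$. The moment constraints then read
\[
    \E{X^2}=r^2,\qquad \E{XY}=\theta r\hat r,\qquad \E{Y^2}=\hat r^2 ,
\]
and the entropy is
\[
    H=\Psi-ar^2-b\theta r\hat r-c\hat r^2 .
\]
The gradient of $\Psi$ is the vector of these means. Its Hessian is the covariance matrix $\Gamma$ of $T:=(X^2,XY,Y^2)$, which is positive definite since the support of $T$ is not contained in an affine plane. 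The inverse function theorem then makes $(a,b,c)$ smooth (indeed analytic) functions of $\theta$ on the domain \eqref{eq:dg2-condition}, for fixed $r,\hat r$.

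\emph{First derivative.} Differentiate $H$ in $\theta$ with $r,\hat r$ fixed. By the chain rule,
\[
    \partial_\theta\Psi=a'r^2+b'\theta r\hat r+c'\hat r^2 .
\]
These terms cancel against the derivatives of the coefficients in $-ar^2-b\theta r\hat r-c\hat r^2$, leaving $\partial_\theta H=-br\hat r$. This is \eqref{eq:H-prime}, the planar analogue of $\partial_vH=-a$.

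\emph{Second derivative and concavity.} Differentiating \eqref{eq:H-prime} gives $\partial^2_{\theta\theta}H=-r\hat r\,b'(\theta)$, so it suffices to compute $b'$. Differentiating the moment map $\nabla\Psi(a,b,c)=(r^2,\theta r\hat r,\hat r^2)$ in $\theta$ gives
\[
    \Gamma\,(a',b',c')^{\top}=(0,r\hat r,0)^{\top},
\]
hence $b'=r\hat r\,(\Gamma^{-1})_{22}$. Positive definiteness of $\Gamma$ gives $(\Gamma^{-1})_{22}>0$, so
\[
    \partial^2_{\theta\theta}H=-r^2\hat r^2(\Gamma^{-1})_{22}<0
\]
throughout the domain. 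This is the strict concavity.

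\emph{Evaluation at $\theta=0$.} Here $b=0$ and $\mu^{(2)}=\mu_r\otimes\mu_{\hat r}$. By independence and symmetry, $\mathrm{Cov}(X^2,XY)=\E{X^3}\E{Y}=0$, and likewise $\mathrm{Cov}(Y^2,XY)=0$. So $\Gamma$ is block diagonal, with middle entry $\Var(XY)=\E{X^2}\E{Y^2}=r^2\hat r^2$. Thus $(\Gamma^{-1})_{22}=1/(r^2\hat r^2)$, and $\partial^2_{\theta\theta}H|_{\theta=0}=-1$, which is \eqref{eq:H''}.

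\emph{Main obstacle.} The delicate point is justifying smooth dependence of $(a,b,c)$ on $\theta$ across the whole domain \eqref{eq:dg2-condition}, that is, that the parameters stay in the open set where $\Psi$ is finite. The plan is to use \cref{prop:dg2}, which gives existence and uniqueness of the parameters at every point of the domain. Combined with openness of the interior of the natural parameter space and the invertibility of $\nabla\Psi$ there, the inverse function theorem then yields local analyticity, which patches together to give analyticity on the whole domain.
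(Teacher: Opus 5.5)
Your proposal is correct and follows essentially the same route as the paper: the exponential-family identities $\nabla\Psi=m(\theta)$ and $\nabla^2\Psi=\Gamma$, the cancellation giving $\partial_\theta H=-br\hat r$ (which the paper phrases as differentiating the attained Legendre infimum), implicit differentiation $\Gamma\mathbf a'=m'$ giving $H''=-r^2\hat r^2(\Gamma^{-1})_{22}<0$, and independence at $\theta=0$ giving $(\Gamma^{-1})_{22}=\E{X^2Y^2}^{-1}=1/(r^2\hat r^2)$. Your inverse-function-theorem justification of the smooth dependence of $(a,b,c)$ on $\theta$ is a point the paper uses implicitly, so making it explicit is a welcome addition.
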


The proof uses standard ideas from the theory of exponential families. In case these manipulations are opaque, we note that tedious direct computation of the derivatives of $H$ gives the same result. 

\begin{proof}
Let
\[
    T(x,y)=(x^2,xy,y^2),\qquad
    m(\theta)=(r^2,\theta r\hat r,\hat r^2),
\]
and write
\[
    \mathbf a(\theta)=(a(\theta),b(\theta),c(\theta)),
    \qquad
    h(\theta)=H(\mu^{(2)}_{r,\hat r,\theta}).
\]
Differentiating the log-partition function gives
\[
    \nabla\Psi(\mathbf a)=\E{T(X,Y)}=m(\theta),
    \qquad
    \nabla^2\Psi(\mathbf a)
      =\mathrm{Cov}(T(X,Y))
      =\Gamma_{r,\hat r,\theta}.
\]
We claim that this covariance matrix is positive definite. Indeed, for
$u=(u_1,u_2,u_3)\in\RR^3$,
\[
    u^\top\Gamma_{r,\hat r,\theta}u
      =\Var\bigl(u_1X^2+u_2XY+u_3Y^2\bigr).
\]
If this vanished for some $u\ne0$, then since the law has full support on
$(\ZZ\setminus\{0\})^2$, the polynomial
$u_1x^2+u_2xy+u_3y^2$ would be constant there. Fixing $y=1$ and varying
$x$ gives $u_1=u_2=0$, after which varying $y$ gives $u_3=0$, a
contradiction. 

Since
\[
    \log\mu^{(2)}_{r,\hat r,\theta}(x,y)
      =\mathbf a(\theta)\cdot T(x,y)-\Psi(\mathbf a(\theta)),
\]
we have
\begin{equation}\label{eq:entropy-Legendre}
    h(\theta)
      =\Psi(\mathbf a(\theta))
         -\mathbf a(\theta)\cdot m(\theta)
      =\inf_{\mathbf u:\,\Psi(\mathbf u)<\infty}
         \{\Psi(\mathbf u)-\mathbf u\cdot m(\theta)\}.
\end{equation}
The infimum is uniquely attained at $\mathbf u=\mathbf a(\theta)$ since
the gradient of the expression being minimized is
$\nabla\Psi(\mathbf u)-m(\theta)$ and its Hessian is positive definite. Differentiating the attained infimum in \eqref{eq:entropy-Legendre} and using
$\nabla\Psi(\mathbf a)=m(\theta)$, we obtain
\[
\begin{aligned}
    h'(\theta)
      &=\nabla\Psi(\mathbf a)\cdot\mathbf a'
        -\mathbf a'\cdot m(\theta)
        -\mathbf a\cdot m'(\theta) =-\mathbf a\cdot m'(\theta)
       =-b(\theta)r\hat r,
\end{aligned}
\]
which proves \eqref{eq:H-prime}. Differentiating
$m(\theta)=\nabla\Psi(\mathbf a(\theta))$ gives
\[
    m'(\theta)=\Gamma_{r,\hat r,\theta}\mathbf a'(\theta).
\]
Since $m'(\theta)=(0,r\hat r,0)$ is constant,
\[
    h''(\theta)
      =-\mathbf a'(\theta)\cdot m'(\theta)
      =-m'(\theta)^\top
         \Gamma_{r,\hat r,\theta}^{-1}m'(\theta)<0.
\]
This proves that the entropy is strictly concave. Moreover, as $\mu_{r,\hat{r},0}^{(2)}= \mu_r\otimes\mu_{\hat r}$, independence yields \eqref{eq:H''}:
\[
    h''(0)
      =-(r\hat r)^2
        \bigl(\Gamma_{r,\hat r,0}^{-1}\bigr)_{22}
      = - (r\hat{r})^2 \, \E{X^2Y^2}^{-1} 
      =-1. \qedhere
\]
\end{proof}
We now compute the second derivative of the other term of the second moment profile function to obtain the following.
\begin{proposition}[Central estimate]\label{prop:F2'' central}
For every $\kappa>0$, $\alpha>0$, and $r,\hat r>1$,
\begin{equation}\label{eq:F2-center}
    \left.\partial_{\theta\theta}^2
      F_2(\kappa,\alpha,r,\hat r,\theta)\right|_{\theta=0}
    =-1+4B_{\kappa/r,\alpha}B_{\kappa/\hat r,\alpha}.
\end{equation}
\end{proposition}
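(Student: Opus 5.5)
By definition, $F_2 = H(\mu^{(2)}_{r,\hat r,\theta}) + \alpha\log p^{(2)}(\kappa/r,\kappa/\hat r,\theta)$, and \cref{prop:F2 entropy derivatives} already gives the entropy contribution at $\theta=0$: $\partial^2_{\theta\theta}H(\mu^{(2)}_{r,\hat r,\theta})|_{\theta=0}=-1$. So the statement reduces to showing
\[
    \alpha\,\partial_{\theta\theta}^2\log p^{(2)}(u,v,\theta)\big|_{\theta=0}=4B_{u,\alpha}B_{v,\alpha}=\alpha\, d(u)d(v),
\qquad u=\kappa/r,\ v=\kappa/\hat r.
\]
Equivalently, we need $\partial^2_{\theta\theta}\log p^{(2)}(u,v,0)=d(u)d(v)$. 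This is a purely Gaussian computation, with no discrete-Gaussian input.

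To carry it out, I would expand $p^{(2)}(u,v,\theta)$ in $\theta$ using the Mehler (Hermite) expansion of the bivariate Gaussian density:
\[
    \phi_\theta(w,z)=\phi(w)\phi(z)\sum_{k\ge0}\frac{\theta^k}{k!}He_k(w)He_k(z).
\]
Integrating over the box $[-u,u]\times[-v,v]$ gives
\[
    p^{(2)}(u,v,\theta)=\sum_k\frac{\theta^k}{k!}c_k(u)c_k(v),\qquad c_k(u)=\int_{-u}^u He_k\,\phi .
\]
By symmetry of the box, the odd coefficients vanish. For the even ones, $c_0(u)=p(u)$, and $c_2(u)=\int_{-u}^u(w^2-1)\phi(w)\,dw=-p(u)\,d(u)$ by the definition $d(u)=1-\E{W^2\mid|W|\le u}$. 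Hence
\[
    p^{(2)}(u,v,\theta)=p(u)p(v)\Big(1+\tfrac{\theta^2}{2}\,d(u)d(v)+O(\theta^4)\Big),
\]
and taking logarithms gives $\partial^2_\theta\log p^{(2)}|_{0}=d(u)d(v)$.

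Multiplying by $\alpha$ yields $\alpha d(u)d(v)=4\cdot\frac{\sqrt\alpha}{2}d(u)\cdot\frac{\sqrt\alpha}{2}d(v)=4B_{u,\alpha}B_{v,\alpha}$, since the two minus signs in the $B$'s cancel. Adding the entropy contribution $-1$ gives \eqref{eq:F2-center}.

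The only delicate point is justifying term-by-term integration and differentiation of the Mehler series near $\theta=0$. On a bounded box the series converges uniformly for $|\theta|\le 1/2$, by standard Hermite bounds. Alternatively, one can avoid series altogether by using the heat-equation identity $\partial_\theta\phi_\theta=\partial_w\partial_z\phi_\theta$. This gives
\[
    \partial_\theta p^{(2)}=\sum_{\pm,\pm}(\pm)(\pm)\phi_\theta(\pm u,\pm v),
\]
which vanishes at $\theta=0$. Differentiating once more gives $\partial^2_\theta p^{(2)}|_0=\big(2u\phi(u)\big)\big(2v\phi(v)\big)=p(u)d(u)\,p(v)d(v)$, using $d(u)=2u\phi(u)/p(u)$. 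Since $\partial_\theta p^{(2)}|_0=0$, the log-derivative is just this divided by $p(u)p(v)$. I would use this second route because it is elementary.
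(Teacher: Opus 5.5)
Your proposal is correct. The route you say you would use is exactly the paper's argument: integrate $\partial_\theta\phi_\theta=\partial_w\partial_z\phi_\theta$ over the box to get $\partial^2_{\theta\theta}\log p^{(2)}(u,v,0)=d(u)d(v)$, then add $\partial^2_{\theta\theta}H(\mu^{(2)}_{r,\hat r,\theta})|_{\theta=0}=-1$. The only difference is that the paper first writes $\partial_\theta\log p^{(2)}$ in closed form before reading off its derivative at $\theta=0$. Your Mehler-series alternative is also valid, and it mirrors the Hermite expansion the paper uses in the appendix for the Gaussian overlap estimates.
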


\begin{proof}
    
Let $\varphi_\theta$ denote the density of a pair of standard Gaussian random variables with covariance $\theta$.  Direct differentiation yields
\[
    \partial_\theta\varphi_\theta(x,y) =\partial_x\partial_y\varphi_\theta(x,y).
\]
Integrating this identity over $[-u,u]\times[-v,v]$, and then dividing by $p^{(2)}(u,v,\theta)$, gives
\begin{equation}\label{eq:energy_derivative}
\begin{split}
    \partial_\theta\log p^{(2)}(u,v,\theta)
    =\frac{1}{\pi p^{(2)}(u,v,\theta)\sqrt{1-\theta^2}} \left(\exp\left(-\frac{u^2-2\theta uv+v^2}{2(1-\theta^2)}\right)-\exp\left(-\frac{u^2+2\theta uv+v^2}{2(1-\theta^2)}\right)\right).
\end{split}
\end{equation}
In particular,
\[
    \left.\partial_{\theta\theta}^2
      \log p^{(2)}(u,v,\theta)\right|_{\theta=0}
    =d(u)d(v).
\]
Combining this with \eqref{eq:H''}, the result follows. 
\end{proof}

We now split the proof of \cref{thm:unique-max} into three ranges of parameters. 
Differentiation with respect to $r^2$ gives
\[
    \partial_{r^2}H(\mu_r)=\frac1{2\sigma(r)^2}, \qquad \partial_{r^2}\log p(\kappa/r) =-\frac{d(\kappa/r)}{2r^2}.
\]
For ease of notation, for $r>1$ and $\kappa > 0$ define
\[
    q(r):=\frac{r^2}{\sigma(r)^2} \quad\text{ and }\quad K := \kappa/r.
\]
From the derivative computations, we see that if $r$ is a stationary point of $F_1$, we have
\begin{equation}\label{eq:F1-stationarity-B}
    \alpha=\frac{q(r)}{d(K)},
    \qquad
    4B_{K,\alpha}^2=q(r)d(K).
\end{equation}

For $r>1$, $K>0$, and $0\le\theta\le1$, define
\begin{equation}\label{eq:Dr-relative-entropy}
    \begin{aligned}
        D_r(\theta) :=2H(\mu_r)-H(\mu^{(2)}_{r,r,\theta}) =D\!\left(\mu^{(2)}_{r,r,\theta} \,\middle\|\,\mu_r\otimes\mu_r\right), \qquad
        G_K(\theta) :=\log\frac{p^{(2)}(K,K,\theta)}{p(K)^2}.
    \end{aligned}
\end{equation}
The above equality for $D_r(\theta)$ follows from a direct computation, using that the marginals of $\mu_r \otimes \mu_{\hat r}$ and $\mu^{(2)}_{r, \hat r, \theta}$ have the same second moments. If $r = r_*(\kappa,\alpha)$, then $r$ is a stationary point of $F_1$ and \eqref{eq:F1-stationarity-B} gives
\begin{equation}\label{eq:profile-gap}
    F_2(\kappa,\alpha,r,r,0)-F_2(\kappa,\alpha,r,r,\theta) = D_r(\theta)-\frac{q(r)}{d(K)} G_K(\theta).
\end{equation}
Thus $D_r(\theta)$ is the entropy cost of correlation, while $G_K(\theta)$ is the logarithmic gain in the Gaussian constraint probability, and stationarity removes $\alpha$ from their comparison. The lower bound $\alpha \ge 1+\kappa^{-2}$ will be used only when $K\le 2$.

\subsection{The three regimes}

We begin with collecting some basic Gaussian estimates that will be used throughout. We relegate proofs which depend on unenlightening computations to the appendix. Numerical conventions and setup for these computations can be found in \cref{app:numer-conventions}.

\begin{lemma}[Gaussian overlap estimates]\label{lem:Gaussian-overlap-estimates}
For $K>0$ and $0<\theta<1$,
\begin{align}
    G_K(\theta)
       &\le\frac{1-p(K)}{p(K)}\,\theta^2,
       \label{eq:Gaussian-Hermite-coarse}\\
    G_K(\theta)
       &\le\frac{d(K)^2}{2}\theta^2
       +\left(\frac{1-p(K)}{p(K)}
                    -\frac{d(K)^2}{2}\right)\theta^4.
       \label{eq:Gaussian-Hermite-refined}
\end{align}
Moreover, if $K\le2$, then
\begin{equation}\label{eq:Gaussian-small-window}
    G_K(\theta)
       \le-\frac{d(K)^2}{2}\log(1-\theta^2).
\end{equation}
\end{lemma}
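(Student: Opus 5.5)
The plan is to expand the bivariate Gaussian probability in Hermite polynomials via Mehler's formula. All three bounds then become coefficient comparisons for a power series in $\theta^2$. Write $\mathrm{He}_k$ for the probabilists' Hermite polynomials and let
\[
    c_k:=\frac{\E{\ind_{\{|W|\le K\}}\mathrm{He}_k(W)}}{\sqrt{k!}} .
\]
Mehler's expansion of $\varphi_\theta$ gives
\[
    p^{(2)}(K,K,\theta)=\sum_{k\ge0}c_k^2\theta^k .
\]
Odd $k$ drop out by symmetry of $[-K,K]$, so
\[
    e^{G_K(\theta)}=1+S(\theta),\qquad S(\theta)=\sum_{j\ge1}s_j\theta^{2j},\qquad s_j:=c_{2j}^2/p(K)^2\ge0 .
\]
Two facts pin down the coefficients. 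First, Parseval for $\ind_{\{|W|\le K\}}$ gives $\sum_{k\ge0}c_k^2=p(K)$ and $c_0=p(K)$, hence $\sum_{j\ge1}s_j=(1-p)/p$. Second, $c_2=\E{\ind(W^2-1)}/\sqrt2=-p\,d(K)/\sqrt2$, so $s_1=d(K)^2/2$.

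The first two bounds then follow at once. Use $G_K=\log(1+S)\le S$ together with $\theta^{2j}\le\theta^2$ for all $j\ge1$ to get \eqref{eq:Gaussian-Hermite-coarse}. For \eqref{eq:Gaussian-Hermite-refined}, keep the $j=1$ term exactly and bound $\theta^{2j}\le\theta^4$ for $j\ge2$, using $\sum_{j\ge2}s_j=(1-p)/p-d^2/2$.

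For \eqref{eq:Gaussian-small-window}, I would aim for the stronger statement $1+S(\theta)\le(1-\theta^2)^{-d^2/2}$ and prove it coefficientwise. The right side has coefficients $(d^2/2)_j/j!$ in $\theta^{2j}$, where $(x)_j$ is the rising factorial. It suffices to show
\[
    s_j\le\frac{(d^2/2)_j}{j!}\qquad\text{for every } j\ge1 .
\]
At $j=1$ this holds with equality. To make $s_j$ explicit, integrate $\mathrm{He}_n\varphi=-(\mathrm{He}_{n-1}\varphi)'$ to get $c_{2j}=-2\,\mathrm{He}_{2j-1}(K)\varphi(K)/\sqrt{(2j)!}$. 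Then use $p^2d^2=4K^2\varphi(K)^2$. With these, the required inequality becomes
\[
    \frac{\mathrm{He}_{2j-1}(K)^2}{(2j)!}\le\frac{K^2}{d^2}\cdot\frac{(d^2/2)_j}{j!} .
\]
Since $(d^2/2)_j\ge (d^2/2)(j-1)!$, a sufficient condition is
\[
    \mathrm{He}_{2j-1}(K)^2\le K^2(2j-1)!\qquad\text{for } 0<K\le2 .
\]

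This Hermite estimate is the main obstacle. The generic Cramér bound $|\mathrm{He}_n(x)|\le 1.09\sqrt{n!}\,e^{x^2/4}$ loses the factor $e^{x^2/2}$, which is at most $e^2$ for $K\le 2$. So I would split into two ranges of $j$. For small $j$ (say $j\le J$ with $J$ around $10$), I would check the polynomial inequality $\mathrm{He}_{2j-1}(x)^2\le x^2(2j-1)!$ on $[0,2]$ directly, writing $\mathrm{He}_{2j-1}(x)=x\,P_j(x^2)$. For large $j$, I would recover the lost factor from the unused slack $(d^2/2+1)\cdots(d^2/2+j-1)/(j-1)!$. Alternatively, I would use a sharper bound such as $|\mathrm{He}_{2j-1}(x)|\le |x|\,\frac{(2j-1)!}{2^{j-1}(j-1)!}e^{x^2/4}$, obtained from the explicit series or the integral representation. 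Its ratio to $\sqrt{(2j-1)!}$ decays like $j^{-1/4}$ times a constant and absorbs $e^{1}$ once $j$ is moderately large. If the pure coefficient comparison fails at finitely many $j$, the fallback is to compare the partial sums of $S$ against the logarithmic series. That should remain valid since $G_K\le\log(1+S)$ gives extra room beyond the coefficientwise comparison.
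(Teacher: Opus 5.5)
Your derivation of \eqref{eq:Gaussian-Hermite-coarse} and \eqref{eq:Gaussian-Hermite-refined} is correct and is exactly the paper's argument: Hermite/Mehler expansion, Parseval, $s_1=d(K)^2/2$, and $\log(1+x)\le x$. The gap is in \eqref{eq:Gaussian-small-window}. Your formula $s_j=\frac{d^2}{K^2}\frac{\mathrm{He}_{2j-1}(K)^2}{(2j)!}$ and the reduction to $s_j\le (d^2/2)_j/j!$ are fine. However, the ``sufficient condition'' $\mathrm{He}_{2j-1}(x)^2\le x^2(2j-1)!$ on $(0,2]$ is false for every $j\ge 2$. Already $\mathrm{He}_3(x)^2=x^2(x^2-3)^2>6x^2$ whenever $x^2<3-\sqrt6$. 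In general, $\mathrm{He}_{2j-1}(x)^2/x^2\to((2j-1)!!)^2=(2j-1)!\,(3/2)_{j-1}/(j-1)!$ as $x\to0$.

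This is structural, not an artifact of small $j$. As $K\to0$ one has $d(K)\to1$ and $P(\theta)/P(0)\to(1-\theta^2)^{-1/2}$, so both $s_j$ and $(d^2/2)_j/j!$ tend to $\binom{2j}{j}4^{-j}$. The coefficientwise comparison is therefore asymptotically an equality for every $j$ at once. The factor $(d^2/2+1)_{j-1}/(j-1)!$ that you discarded (about $2\sqrt{j/\pi}$ near $K=0$) is precisely the available margin.

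Consequently your direct check for small $j$ fails at $j=2$, and the proposed sharper bound does not help. First, $(2j-1)!!/\sqrt{(2j-1)!}$ grows like $j^{1/4}$ rather than decaying. Second, even against the full rising factorial, $|\mathrm{He}_{2j-1}(x)|\le|x|(2j-1)!!\,e^{x^2/4}$ only gives $\frac{2\,j!\,\mathrm{He}_{2j-1}(K)^2}{K^2(2j)!}\le(3/2)_{j-1}e^{K^2/2}$. This exceeds the needed $(d(K)^2/2+1)_{j-1}$ because $d^2/2<1/2$. The partial-sum fallback is not an argument.

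To save the coefficient route you would have to prove the two-parameter inequality $\frac{2\,j!\,\mathrm{He}_{2j-1}(K)^2}{K^2(2j)!}\le (d(K)^2/2+1)_{j-1}$. It must hold uniformly in $j\ge1$ and $K\in(0,2]$ and be sharp as $K\to0$. It may well be true, but nothing in your proposal establishes it. A term-by-term induction cannot work, because $\mathrm{He}_{2j-1}(K)$ can sit near a zero.

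The paper sidesteps coefficients entirely:
\begin{itemize}
\item From $\partial_\theta\varphi_\theta=\partial_x\partial_y\varphi_\theta$ it gets the closed form $P'(\theta)=2e^{-K^2/(1-\theta^2)}\sinh z/(\pi\sqrt{1-\theta^2})$, with $z=K^2\theta/(1-\theta^2)$.
\item It sets $R(\theta)=(1-\theta^2)P'(\theta)/\theta$, so that $R(0)=d^2P(0)$.
\item For $K\le2$ it proves the scalar bound $z(1+\theta^2)\coth z-2z\theta\le 1+e^{-K^2}\theta^2$. This gives $R'\le e^{-K^2}P'\le d^2P'$, using $d^2\ge e^{-K^2}$.
\item Hence $R\le d^2P$, that is, $P'/P\le d^2\theta/(1-\theta^2)$, and integrating yields \eqref{eq:Gaussian-small-window}.
\end{itemize}
This differential inequality is where $K\le 2$ enters, and it needs no Hermite asymptotics.
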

The proof is given in \cref{app:Gaussian-small-window}; it is based on a Hermite expansion, a standard tool for computing Gaussian expectations. 

\begin{lemma}[Entropy interpolation]\label{lem:entropy-interpolation}
If $r\ge1.2$ and $0\le\theta\le1$, let $Z_r:=Z(r)$ denote the
normalizing constant in \cref{def:DG}, and set $c_r:=1.1/Z_r.$
Then $0<c_r<1$ and
\begin{equation}\label{eq:entropy-interpolation}
    D_r(\theta)
       \ge-\log\bigl(c_r+(1-c_r)\sqrt{1-\theta^2}\bigr)
       \ge\frac{1-c_r}{2}\theta^2.
\end{equation}
\end{lemma}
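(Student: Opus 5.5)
The plan is to use the variational (Legendre) characterization of $D_r(\theta)$ and then compare a lattice Gaussian sum with its continuous analogue.

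\emph{Elementary parts.} Clearly $c_r>0$. For $c_r<1$ we need $Z_r>1.1$. Since $Z_r\ge 2e^{-1/(2\sigma^2)}$, it suffices that $\sigma(r)$ is bounded below for $r\ge 1.2$. This follows because $r\mapsto\sigma(r)$ is increasing, by the monotonicity $a'>0$ from \cref{prop:H(DG) properties}, together with a one-off numerical evaluation at $r=1.2$.

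The second inequality in \eqref{eq:entropy-interpolation} is also elementary. Concavity of $\log$ gives $\log(c_r+(1-c_r)u)\ge(1-c_r)\log u$ for $u=\sqrt{1-\theta^2}\in[0,1]$. Hence the middle term is at least $-\frac{1-c_r}{2}\log(1-\theta^2)\ge\frac{1-c_r}{2}\theta^2$.

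\emph{Dual lower bound.} For the first inequality I would use the Donsker--Varadhan bound
\[
D(P\|Q)\ge \E[P]{g}-\log\E[Q]{e^{g}},
\]
applied with $P=\mu^{(2)}_{r,r,\theta}$, $Q=\mu_r\otimes\mu_r$, and a quadratic test function $g(x,y)=s\,xy-t(x^2+y^2)$. Because $P$ has prescribed second moments, $\E[P]{g}=s\theta r^2-2tr^2$ is explicit. The remaining term is a ratio of lattice sums:
\[
\E[Q]{e^{g}}=Z_r^{-2}\sum_{(x,y)\in(\ZZ\setminus\{0\})^2}\exp\Bigl(-\tfrac{x^2+y^2}{2\sigma^2}+s\,xy-t(x^2+y^2)\Bigr).
\]
I would choose $(s,t)$ by the continuous analogy: it is the tilt that turns $N(0,\sigma^2)^{\otimes2}$ into the bivariate Gaussian with correlation $\theta$. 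That is,
\[
s=\frac{\theta}{\sigma^2(1-\theta^2)},\qquad t=\frac{\theta^2}{2\sigma^2(1-\theta^2)}.
\]
In the continuum this gives exactly $-\tfrac12\log(1-\theta^2)$: the ratio of Gaussian integrals is $(1-\theta^2)^{-1/2}$, and the linear term balances. The mismatch between $r^2$ and $\sigma^2$ in $\E[P]{g}$ has the favourable sign, since $s\theta-2t=\theta^2/(\sigma^2(1-\theta^2))\ge0$ and $r^2\ge\sigma^2$ for the punctured discrete Gaussian once $r\ge1.2$. This last fact is what \cref{prop:DG moments} is morally encoding and can also be checked via Poisson summation. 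So $\E[P]{g}$ may be replaced by its continuous counterpart.

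\emph{Lattice-versus-integral step.} The heart of the argument is the estimate
\[
Z_r^{-2}\sum_{(\ZZ\setminus\{0\})^2} e^{-\frac{x^2-2\theta xy+y^2}{2\sigma^2(1-\theta^2)}}
\;\le\; \frac{e^{\E[P]{g}}}{c_r+(1-c_r)\sqrt{1-\theta^2}},
\]
or some similarly arranged version that yields $D_r(\theta)\ge-\log(c_r+(1-c_r)\sqrt{1-\theta^2})$ after taking logarithms. If the dual bound with this specific tilt is too lossy, an alternative is to optimize $(s,t)$ numerically-analytically, or to interpolate: show that $\theta\mapsto e^{-D_r(\theta)}$ lies below the chord between the punctured-lattice extreme $\theta\to1$ and the Gaussian behaviour. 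The constant $c_r=1.1/Z_r$ suggests this, since it represents the diagonal lattice mass (roughly $\sum_{x\neq0}e^{-x^2/(2\sigma^2)}\approx Z_r$ along $x=y$), normalized by $Z_r^2$.

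To prove the estimate I would sum first in $y$ for fixed $x$ and apply Poisson summation to the one-dimensional Gaussian in $y$, with variance $\sigma^2(1-\theta^2)$ and centre $\theta x$. This gives $\sqrt{2\pi\sigma^2(1-\theta^2)}$ plus a nonnegative theta-function correction that is at most the on-lattice peak mass, which is of order one. Summing over $x$ then yields one term proportional to $\sqrt{1-\theta^2}\,Z_r\sqrt{2\pi}\sigma\approx\sqrt{1-\theta^2}Z_r^2$ and one term of order $Z_r$; the latter produces $c_r$. Controlling these corrections uniformly in $\theta\in[0,1]$ and all $r\ge1.2$, with the explicit constant $1.1$, is the main obstacle. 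I expect it to need the numerical conventions of \cref{app:numer-conventions}, with a crude geometric-series tail bound for small $\sigma$ and Poisson summation for large $\sigma$, as in the proof of \cref{prop:DG moments}.
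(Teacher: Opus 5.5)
Your reduction is the right one: your tilt $g$ is exactly the paper's test function $f_\theta=\frac{2\theta xy-\theta^2(x^2+y^2)}{2\sigma^2(1-\theta^2)}$. However, you evaluate both sides of the Donsker--Varadhan bound incorrectly, and the estimate that actually carries the lemma is left unproved.

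With your coefficients, $s\theta-2t=0$, so $\mathbb{E}_P[g]=0$ exactly and there is no $r^2$ versus $\sigma^2$ issue. Also, in the continuum $\mathbb{E}_Q[e^{g}]=\sqrt{1-\theta^2}$, not $(1-\theta^2)^{-1/2}$. Writing now $t:=1/(2\sigma^2)$ and $s:=\sqrt{1-\theta^2}$, the inequality you need is
\[
M_r(\theta):=Z_r^{-2}\sum_{x,y\neq0}e^{-t(x^2-2\theta xy+y^2)/(1-\theta^2)}\le c_r+(1-c_r)s,
\qquad\text{i.e.}\qquad
Z_r^2M_r(\theta)\le sZ_r^2+1.1(1-s)Z_r .
\]
Your displayed version puts $c_r+(1-c_r)s$ in the denominator, and so would only give the trivial bound $D_r(\theta)\ge\log(c_r+(1-c_r)s)$.

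Moreover, the route you sketch for this estimate fails near $\theta=0$. There $M_r(0)=1$ equals the target, so the bound must be sharp at $\theta=0$. Bounding the Poisson correction in the $y$-sum by the peak mass, as you propose (and then removing $y=0$), gives $Z_r^2M_r(0)\le Z_r\sqrt{\pi/t}\approx Z_r(Z_r+1)$, an overshoot of order $Z_r$. What is needed is exact two-dimensional Poisson summation, with the puncturing entering through the removed lines $x=0$ and $y=0$:
\[
Z_r^2M_r(\theta)=\frac{\pi}{t}\,s\,\mathcal A(\theta)-2\sqrt{\pi/t}\,s\,\mathcal S(\pi^2s^2/t)+1,
\qquad
\mathcal A(\theta)=\sum_{m,n\in\ZZ}e^{-\pi^2(m^2+2\theta mn+n^2)/t},\quad \mathcal S(u)=\sum_{m\in\ZZ}e^{-um^2}.
\]
Subtract $s$ times the $\theta=0$ identity. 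The $\mathcal S$-difference is then nonpositive, and $\mathcal A(\theta)-\mathcal A(0)$ is $\theta^2\le2(1-s)$ times an exponentially small factor; this settles $s\ge.98$. Smaller $s$ still needs a separate argument. In the paper this is done by rotating to $m\pm n$ with explicitly certified error terms when $t/s^2\le30$, and by bounding the diagonal $x=y$ by $Z_r$ when $t/s^2\ge30$. This case analysis is where the constant $1.1$ is earned; you identify it as the main obstacle but do not supply it.

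Your two ``elementary'' steps are also incorrect as written.
\begin{itemize}
\item Keeping only $j=\pm1$ gives $Z_r\ge2e^{-t}$. But $t(1.2)\approx0.628$, so $2e^{-t(1.2)}\approx1.07<1.1$. You must keep $j=\pm2$ as well, e.g.\ $Z_r\ge2(e^{-.63}+e^{-4\cdot.63})>1.1$, using monotonicity and $t(1.2)<.63$.
\item Concavity of $\log$ gives $\log(c_r+(1-c_r)s)\ge(1-c_r)\log s$. That is an \emph{upper} bound on $-\log(c_r+(1-c_r)s)$, the wrong direction. Instead write $-\log(c_r+(1-c_r)s)=-\log\bigl(1-(1-c_r)(1-s)\bigr)\ge(1-c_r)(1-s)$ and use $1-\sqrt{1-\theta^2}\ge\theta^2/2$.
\end{itemize}
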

We give the proof in \cref{app:entropy-interpolation}, using the relative entropy variational formula followed by Poisson summation.  Together with \cref{lem:Gaussian-overlap-estimates}, it yields the first of our three regimes:

\begin{lemma}[Bulk profile]\label{lem:bulk-profile}
If
\[
    r\ge1.2,\qquad K\ge2,\qquad rK\ge10,
\]
then $q(r)\,d(K)<1$, and for $0 < \theta < 1$ we have
\[
    D_r(\theta)>\frac{q(r)}{d(K)}G_K(\theta).
\]
\end{lemma}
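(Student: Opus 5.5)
The plan is to combine \cref{lem:entropy-interpolation} with the Hermite bounds of \cref{lem:Gaussian-overlap-estimates}, and to separate the two sides by showing $q(r)\,d(K)<1$ with room to spare. Write $c=c_r=1.1/Z_r$. First bound $q(r)$: by \cref{prop:DG moments} and $q=r^2/\sigma^2=2r^2 t$ with $t=1/(2\sigma^2)$, one expects $q(r)\le 1+O(\text{small})$ once $r\ge1.2$. Concretely, from the Poisson-summation formula in the proof of \cref{prop:H(DG) properties}, $r^2 = \frac{1}{2t}(1+\sqrt{t/\pi}+\dots)$, which gives $q(r)=1+\sqrt{t/\pi}+\dots$. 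On the other hand, $d(K)=2K\phi(K)/p(K)$ is decreasing and tiny for $K\ge2$: $d(2)\approx 0.113$. For $r\ge 1.2$, $t$ is bounded, so $q(r)$ is bounded by a numerical constant (at most about $3$ near $r=1.2$), and $q(r)d(K)<1$ follows. If $K$ is near $2$, I will use $rK\ge10$: small $r$ forces $K\ge 10/r$ large. So I split into $r\le 5$, where $K\ge2$ and $d(K)$ is exponentially small, and $r\ge5$, where $q(r)\approx1$.

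For the main inequality, the target is
\[
-\log\bigl(c+(1-c)\sqrt{1-\theta^2}\bigr)>\frac{q(r)}{d(K)}G_K(\theta).
\]
Using \eqref{eq:Gaussian-Hermite-refined}, the right side is at most
\[
\frac{q\,d}{2}\theta^2+\frac{q}{d}\Bigl(\frac{1-p}{p}-\frac{d^2}{2}\Bigr)\theta^4 .
\]
Near $\theta=0$ the left side is $\frac{1-c}{2}\theta^2+\frac{(1-c)(1+c)}{8}\theta^4+\dots$. The quadratic coefficient comparison $1-c>qd$ needs $c_r$ small; $Z_r\ge 2$ for all $r>1$, so $c\le 0.55$ generally, and $c\to0$ as $r$ grows. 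For the quartic and global range, I will use the cleaner coarse bound \eqref{eq:Gaussian-Hermite-coarse}, $G_K\le \frac{1-p}{p}\theta^2$. Then it suffices that
\[
\frac{1-c}{2}>\frac{q(r)}{d(K)}\cdot\frac{1-p(K)}{p(K)}.
\]
Here $\frac{1-p}{p}\approx 2\phi(K)/K$ by the Mills ratio, while $d=2K\phi/p$. So the ratio $\frac{1-p}{p\,d}\approx 1/K^2$, and the condition becomes roughly $q(r)/K^2<(1-c)/2$. This holds for $K\ge2$ when $q\lesssim 1.8(1-c)$.

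The step I expect to be the main obstacle is the uniform numerics over the corner $r\approx1.2$, $K\approx 8$ versus $r$ large, $K\approx2$. There $c_r$ and $q(r)$ need explicit bounds: either Poisson-summation bounds for $Z_r$ and $t$, or monotonicity of $c_r$ and $q$ in $r$ checked at $r=1.2$. Where the coarse bound is too weak (near $K=2$, $1/K^2=1/4$), I will instead use the exact concave form $-\log(c+(1-c)\sqrt{1-\theta^2})$ against the refined quartic bound, which handles $\theta$ near $1$, since the left side tends to $-\log c>0$ while $G_K(1)=\log(1/p)$ is finite. Strictness for $0<\theta<1$ follows because all the inequalities used are strict there.
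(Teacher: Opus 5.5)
Your main line is the paper's proof. The coarse Hermite bound together with $\overline\Phi(K)\le\phi(K)/K$ gives the exact inequality $\frac{q(r)}{d(K)}G_K(\theta)\le\frac{q(r)}{K^2}\theta^2$, not just an approximation. This is then beaten by $\frac{1-c_r}{2}\theta^2$ from \cref{lem:entropy-interpolation}, after a piecewise numerical check in $r$ that uses monotonicity of $q(r)$ and $1/Z_r$ together with $K\ge\max\{2,10/r\}$.

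This coarse route works on the whole range, so your refined-bound fallback near $K=2$ is never needed: there $rK\ge10$ forces $r\ge5$, with $q\approx1.09$ and $c_r\approx0.1$.

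Correct the numerics, though:
\begin{itemize}
\item $Z_r\to0$ as $r\downarrow1$, so your claim $Z_r\ge2$ is false.
\item At $r=1.2$ one has $c_{1.2}\approx0.89$ and $(1-c_r)/2\approx0.056$. So $r\approx1.2$, not $K\approx2$, is the genuinely tight corner. It is saved only by $K\ge10/r$, and it needs a subdivision of the $r$-range; a single check at $r=1.2$ does not suffice.
\item $d(2)\approx0.226$, not $0.113$. This still gives $q(r)d(K)\le q(1.2)\,d(2)<0.42$, so your direct route to $q(r)d(K)<1$ stands.
\end{itemize}
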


\begin{proof}
Let $\overline\Phi(K)=\PP{W>K}$, so that $2\overline\Phi(K) = \PP{|W|>K}= 1-p(K)$. Using \eqref{eq:Gaussian-Hermite-coarse}, the definition of $d(K)$, and the fact that
$\overline\Phi(K)\le\phi(K)/K$, we obtain
\[
    \frac{q(r)}{d(K)}G_K(\theta)
      \le q(r)\frac{\overline\Phi(K)}{K\phi(K)}\theta^2
      \le\frac{q(r)}{K^2}\theta^2.
\]
For the parameter range assumed in the lemma, we verify the scalar estimate
\begin{equation}\label{eq:bulk-scalar-estimate}
    \frac{q(r)}{K^2}
       <\frac{1-1.1/Z_r}{2}
\end{equation}
in
\cref{app:finite-profile-estimates}.  The inequality $D_r(\theta) > \frac{q(r)}{d(K)} G_K(\theta)$ then follows from
\eqref{eq:entropy-interpolation}. Furthermore, \eqref{eq:F2-center}, the positive quadratic gap, and \eqref{eq:F1-stationarity-B} give $q(r)d(K)<1$.
\end{proof}

The second regime considers small values of $K = \kappa/r_*$. 

\begin{lemma}[Small-window profile]\label{lem:small-window-profile}
Suppose
\[
    r\ge5,\qquad 0<K\le2,\qquad rK\ge10,\qquad
    \frac{q(r)}{d(K)}\ge1+\frac1{r^2K^2}.
\]
Then
\begin{equation}\label{eq:small-window-profile}
    D_r(\theta)>\frac{q(r)}{d(K)}G_K(\theta)
    \qquad(0<\theta<1),
\end{equation}
and $q(r)d(K)<1$.
\end{lemma}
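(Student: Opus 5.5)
The plan is to compare both sides of \eqref{eq:small-window-profile} against the common function $-\log(1-\theta^2)$, using \eqref{eq:Gaussian-small-window} on the Gaussian side and a continuum-approximation lower bound on $D_r$. Since $K\le2$, \eqref{eq:Gaussian-small-window} gives
\[
    \frac{q(r)}{d(K)}G_K(\theta)\le -\frac{q(r)d(K)}{2}\log(1-\theta^2),
\]
so it suffices to show $D_r(\theta)> -\tfrac{q(r)d(K)}{2}\log(1-\theta^2)$ together with $q(r)d(K)<1$. For a continuous Gaussian, the relative entropy between a correlated bivariate Gaussian and the product of its marginals is exactly $-\tfrac12\log(1-\theta^2)$. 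We therefore aim for
\[
    D_r(\theta)\ge -\frac{1-\epsilon_r}{2}\log(1-\theta^2)
\]
with $\epsilon_r$ exponentially small in $r^2$ (of order $e^{-\pi^2\sigma^2}$), valid for $r\ge5$. The target then reduces to the scalar inequality $q(r)d(K)<1-\epsilon_r$.

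For the entropy bound, I would argue as in \cref{lem:entropy-interpolation}, via the variational formula $D(\nu\|\mu)\ge \E_\nu[g]-\log\E_\mu[e^{g}]$. Take $\nu=\mu^{(2)}_{r,r,\theta}$ and $\mu=\mu_r\otimes\mu_r$, and choose the quadratic test function $g$ that is optimal in the Gaussian case. Then $\E_\nu g$ is computed exactly from the prescribed second moments. The term $\log\E_\mu e^g$ is a ratio of two-dimensional theta sums over $(\ZZ\setminus\{0\})^2$. Poisson summation evaluates it as the Gaussian value plus errors of size $e^{-c\sigma^2}$, plus the punctured-axis corrections of order $1/\sigma$. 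Those $1/\sigma$ corrections are the delicate part; I would keep them explicitly and check that they have the favourable sign, or else absorb them into $\epsilon_r$ of order $1/r$.

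For the scalar inequality, the hypothesis $q/d\ge 1+1/(r^2K^2)=1+1/\kappa^2$ enters by expressing $q(r)d(K)=\alpha d(K)^2$. Poisson summation gives $q(r)=1+O(1/r)$, with the sign of the $O(1/r)$ term fixed as in \cref{prop:DG moments}, which shows $q(r)<1$ in disguise. The constraint $\alpha\ge1+1/\kappa^2$ then forces $d(K)\le q/(1+1/\kappa^2)<1$ with quantitative room. So $qd=q^2/\alpha\le q^2/(1+\kappa^{-2})$, and we need $q^2<(1+\kappa^{-2})(1-\epsilon_r)$. Since $q(r)<1$ up to the explicit $1/r$ correction and $\kappa=rK\ge10$, this becomes a one-variable numerical check in $r\ge5$. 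The auxiliary claim $q(r)d(K)<1$ follows directly from the same estimate.

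The main obstacle is the lower bound on $D_r$ with an explicit $\epsilon_r$. The removal of zero coordinates introduces $O(1/r)$ deviations from the continuum, which compete with the margin $\kappa^{-2}\le 1/(100)$ on the other side. I would first try to show that the puncturing only increases $D_r$ relative to the Gaussian value. Failing that, I would bound the $1/r$ terms numerically for $r\ge5$ and use $K\le2$, so that $\kappa^{-2}=1/(r^2K^2)\ge 1/(4r^2)$ is comparable. If the $1/r$ term dominates $1/(4r^2)$, I would instead exploit the factor $d(K)^2$ rather than $d(K)$, using that $d(K)$ is uniformly below $1$ for $K$ bounded away from $0$. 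Small $K$ would need separate treatment, where $d\to1$ but then $q/d\ge 1+\kappa^{-2}$ pins $q$ close to $1$.
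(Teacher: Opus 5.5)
Your use of \eqref{eq:Gaussian-small-window} on the Gaussian side and of the variational formula with the Gaussian likelihood-ratio test function matches the paper. The gap is the reduction to $D_r(\theta)>-\frac{q(r)d(K)}{2}\log(1-\theta^2)$ on all of $(0,1)$: that inequality is false for $\theta$ near $1$. Mix the diagonal coupling $(X,X)$, $X\sim\mu_r$, with weight $\theta$ and the product $\mu_r\otimes\mu_r$ with weight $1-\theta$. This gives a law on $(\ZZ\setminus\{0\})^2$ with the prescribed second moments, so maximality and concavity of entropy give $H(\mu^{(2)}_{r,r,\theta})\ge(2-\theta)H(\mu_r)$, i.e.\ $D_r(\theta)\le\theta H(\mu_r)=O(\log r)$. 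By contrast, $-\frac{qd}{2}\log(1-\theta^2)$, and equally your target $-\frac{1-\epsilon_r}{2}\log(1-\theta^2)$, diverges as $\theta\to1$.

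The same obstruction appears in the Poisson computation you propose; your test function is exactly the $f_\theta$ from the proof of \cref{lem:entropy-interpolation}. Write $s=\sqrt{1-\theta^2}$ and let $M_r(\theta)$ be the exponential moment of $f_\theta$ under $\mu_r\otimes\mu_r$. The re-added origin contributes an exact $+(1-s)$ to $Z_r^2M_r(\theta)-sZ_r^2$, so the lattice loss is of order $(1-s)/Z_r^2\approx(1-s)/(2\pi r^2)$: polynomial, not $e^{-c\sigma^2}$. In fact $M_r(\theta)\to1/Z_r>0$ as $\theta\to1$, so no bound $M_r(\theta)\le s^{1-\epsilon}$ can hold uniformly.

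The missing idea is a separate treatment of large $\theta$. The paper proves only the chord bound $M_r(\theta)\le s+C_{r,K}(1-s)$, with $C_{r,K}\approx Z_r^{-2}$. It upgrades this to $M_r(\theta)<s^{q(r)d(K)}$ only for $s\ge s_0:=p(K)^{1/d(K)^2}$, using that $(s^{qd}-s)/(1-s)$ is increasing in $s$. For $s\le s_0$ it uses that $D_r$ is increasing in $\theta$ (evenness and concavity of the planar entropy) together with the trivial bound $G_K(\theta)\le G_K(1)=-\log p(K)$. The cutoff is chosen exactly so that $-qd\log s_0=\frac{q}{d}(-\log p(K))$.

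The scalar part also has the wrong sign. By \cref{prop:DG moments}, $q$ is decreasing in $r$ with limit $1$, so $q(r)>1$; Poisson summation gives $q\approx1+1/Z_r$. It is not ``$q<1$ in disguise''. Hence $q(r)d(K)<1$ genuinely needs information about $d(K)$, and your bound $qd\le q^2/(1+\kappa^{-2})$ fails inside the allowed range. At $(r,K)=(5,2)$ one has $q\approx12/11$, so $q^2\approx1.19>1.01$, whereas in fact $qd\approx0.25$.

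Moreover, the true margin is thin. On the boundary $q/d=1+\kappa^{-2}$ with $r$ large, roughly $K^2\approx1.23/r$ and $1-qd\approx0.012/r$. So no lattice error of order $1/r$ can be absorbed. The $Z_r^{-2}$ loss, amplified near the cutoff by the factor $(1-s_0)/(s_0\log(1/s_0))$, has to be beaten quantitatively. That is exactly the content of the paper's scalar estimate $Z_r^2(1-qd)\frac{s_0\log(1/s_0)}{1-s_0}>1.01$, together with the bound on the theta-series remainder.
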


The proof is given in \cref{app:small-window-profile}. Briefly, since $K\leq 2$, \eqref{eq:Gaussian-small-window} yields 
\[
    \frac{q(r)}{d(K)}G_K(\theta)\le-\frac{q(r)d(K)}{2}\log(1-\theta^2),
\]
while Poisson summation gives the reverse strict bound $D_r(\theta)>-q(r)d(K)/2 \cdot \log(1-\theta^2)$ up to an explicit value of $\theta$. An easy argument using the monotonicity of $D_r$ covers the remaining values.

The final remaining regime is $r_* \approx 1$, which is where existing estimates on the SBP can be reused.
\begin{lemma}[Near-binary profile]\label{lem:near-binary-profile}
Suppose
\[
    1<r\le1.2,\qquad rK\ge10,\qquad
    H(\mu_r)+\frac{q(r)}{d(K)}\log p(K)\ge0.
\]
Then
\[
    D_r(\theta)>\frac{q(r)}{d(K)}G_K(\theta)
    \qquad(0<\theta<1),
\]
and $q(r)d(K)<1$.
\end{lemma}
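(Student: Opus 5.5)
The plan is to reduce to the symmetric binary perceptron, as the overview suggests. When $r\le 1.2$, $\mu_r$ is close to Rademacher and the planar law $\mu^{(2)}_{r,r,\theta}$ is close to the binary pair law. The point is to exploit that the SBP profile $F_2^{\rm bin}(\theta)=\log 2+H_2(\tfrac{1+\theta}{2})-2\log 2+\alpha\log p^{(2)}(\kappa',\kappa',\theta)$ is known from \cite{ALS22} to be strictly maximized at $\theta=0$ whenever $\alpha\le\alpha_{\rm bin}(\kappa')$.

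\textbf{Step 1: bound the entropy cost from below.} I would show that $D_r(\theta)\ge D_1(\theta):=2\log2-\log 2-H_2(\tfrac{1+\theta}{2})$, the binary mutual-information cost, possibly up to a factor $\lambda(r)$ close to $1$. One route is to project onto signs. Let $(X,Y)\sim\mu^{(2)}_{r,r,\theta}$. Then $D(\mu^{(2)}\|\mu_r\otimes\mu_r)\ge I(\sign X;\sign Y)$ by data processing, since mutual information equals this relative entropy. The correlation of the signs is at least $\theta'=\theta-O(1-1/Z_r)$, because $\E{XY}=\theta r^2$ and the mass on $|x|\ge2$ is small when $r\le1.2$. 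A cleaner alternative is to use the variational formula already used for \cref{lem:entropy-interpolation}, with the tilt restricted to $\{\pm1\}^2$.

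\textbf{Step 2: bound the Gaussian gain.} I would use the hypothesis $H(\mu_r)+\frac{q(r)}{d(K)}\log p(K)\ge0$, which says that $\alpha:=q(r)/d(K)$ is at most the first-moment threshold at radius $r$. Since $H(\mu_r)\le \log2+O(r^2-1)$ on this range, this gives $\alpha\le \alpha_{\rm bin}(K)(1+\epsilon)$ for a small explicit $\epsilon$. The condition $rK\ge10$ forces $K\ge 8.3$, so $p(K)$ is extremely close to $1$. The bound \eqref{eq:Gaussian-Hermite-coarse} then gives $\alpha G_K(\theta)\le \alpha\frac{1-p(K)}{p(K)}\theta^2$, and the coefficient $\alpha(1-p(K))/p(K)$ is tiny, of order $e^{-K^2/2}$ times $\alpha\approx \log 2/(1-p(K))$. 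More precisely $\alpha(1-p(K))\approx\log2$, so the coefficient is about $\log 2\approx0.69$. On the other side, $D_1(\theta)\ge\theta^2/2$. So the crude quadratic bound fails, and I would instead compare with the binary profile. By \cite{ALS22}, $D_1(\theta)>\alpha_{\rm bin}(K)G_K(\theta)$ for all $\theta\in(0,1)$, with a margin quantified near $\theta=0$ by $1-d(K)^2\alpha_{\rm bin}(K)>0$ and near $\theta=1$ by the endpoint behaviour. I would then absorb the factor $(1+\epsilon)$ and the sign-correlation shift $\theta\mapsto\theta'$ into this margin.

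\textbf{Step 3: the curvature claim.} The statement $q(r)d(K)<1$ is the small-$\theta$ consequence of the strict inequality. Concretely, $D_r''(0)=1$ by \eqref{eq:H''}, and $G_K''(0)=d(K)^2$ by the computation in \cref{prop:F2'' central}, so the inequality at second order gives $1\ge q(r)d(K)$. For strictness I would verify it directly: $d(K)=2K\phi(K)/p(K)$ is astronomically small for $K\ge8.3$, while $q(r)$ is bounded for $r\in(1,1.2]$. Together these give $q(r)d(K)\ll1$.

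\textbf{Main obstacle.} The hard step is making the SBP comparison uniform. The SBP inequality is strict but has a vanishing margin at the endpoints and near $\theta=0$. The perturbations from $r>1$ (a reduced sign correlation and an inflated $\alpha$) must be controlled quantitatively across all of $(0,1)$. I would handle this by splitting into two ranges. For $\theta\le\theta_0$, I would use \eqref{eq:Gaussian-Hermite-refined} against the quadratic lower bound. For $\theta\ge\theta_0$, I would invoke the SBP profile with an explicit numerical margin, relegating the verification to the appendix.
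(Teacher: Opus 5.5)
Sign projection with data processing, followed by the binary profile inequality of \cite{ALS22}, is the right tool, and the paper uses it. It works only once $r$ is known to be extremely close to $1$, yet your plan treats all of $1<r\le1.2$ as a small perturbation of the binary case, and that fails. At $r=1.2$ one has $\delta:=r^2-1=0.44$ and $H(\mu_r)\approx1.6\log2$. The sign-correlation bound $\E{ST}\ge\theta-\delta(1-\theta)$ is then vacuous for $\theta<\delta/(1+\delta)\approx0.31$. The inflation $\alpha\le(H(\mu_r)/\log2)\,\alpha_{\rm bin}(K)$ is far too large to absorb into any margin; also, $H(\mu_r)-\log2$ is of order $\delta\log(1/\delta)$, not $O(\delta)$.

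The missing idea is the dichotomy that stationarity supplies. Since $\alpha=q(r)/d(K)$ and $\overline\Phi(K)\le\phi(K)/K$, the coarse bound \eqref{eq:Gaussian-Hermite-coarse} gives $\alpha G_K(\theta)\le q(r)\theta^2/K^2$. Your claim that the crude quadratic bound fails presumes $\alpha\approx\alpha_{\rm bin}(K)$, i.e.\ $q(r)\approx K^2\log2$. If instead $q(r)\le8$, it gives $\alpha G_K(\theta)<0.116\,\theta^2$. This is beaten by a bound $D_r(\theta)\ge\theta^2/8$ valid uniformly for $1<r\le1.2$, which the paper gets from the variational formula with tilt $uXY/r^2$. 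You need some such uniform bound anyway: $D_r''(0)=1$ is only local, and the sign projection gives nothing at small $\theta$. Only when $q(r)>8$ is the binary comparison needed. Then $t(r)=q(r)/(2r^2)>25/9$ forces $r^2-1<7.3\cdot10^{-4}$, so the perturbation is genuinely small.

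Second, when $\alpha$ is close to or above $\alpha_{\rm bin}(K)$, no comparison through signs can cover a neighbourhood of $\theta=1$, however close $r$ is to $1$. Write $F_1:=H(\mu_r)+\alpha\log p(K)\ge0$. The inequality $\alpha_{\rm bin}(K)G_K(\theta)\le I_{\rm bin}(\theta)$ (your $D_1$) has zero margin at $\theta=1$, since $\alpha_{\rm bin}(K)G_K(1)=\log2=I_{\rm bin}(1)$. The hypothesis permits $\alpha>\alpha_{\rm bin}(K)$, and forces it when $F_1=0$ because $H(\mu_r)>\log2$. In that case $\alpha G_K(\theta)\to-\alpha\log p(K)>\log2$ as $\theta\to1$. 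Any lower bound on $D_r$ that only sees $(\sign X,\sign Y)$ is at most $\log2$, since it discards precisely the entropy of $|X|$ that the hypothesis $F_1\ge0$ is paying for.

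The paper handles $\theta\in(1-1/(3K^2),1)$ by monotonicity instead. Set $\varepsilon=1-\theta$. By \eqref{eq:energy_derivative}, in this regime $\alpha\,\partial_\theta\log p^{(2)}(K,K,\theta)$ is at least of order $K\varepsilon^{-1/2}$. Concavity of $\theta\mapsto H(\mu^{(2)}_{r,r,\theta})$ together with an endpoint entropy estimate gives $\partial_\theta H(\mu^{(2)}_{r,r,\theta})\ge-2.5\log(e/\varepsilon)$. Hence $F_2$ increases there, and $F_2(\theta)<F_2(1)=F_1\le2F_1=F_2(0)$.

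Finally, in Step~3, $q(r)$ is not bounded on $(1,1.2]$; it diverges as $r\downarrow1$. Strictness of $q(r)d(K)<1$ should come from the hypothesis instead. It gives $q(r)d(K)\le H(\mu_r)d(K)^2/(-\log p(K))$, which is tiny for $K\ge25/3$.
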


The proof is given
in \cref{app:near-binary-profile}.
Roughly, when $q(r)\le8$, the proof follows from convexity estimates.
When $q(r)>8$, $r$ must be very close to $1$: the series defining $\mu_r$ gives
$r^2-1<7.3\cdot10^{-4}$.  Projecting a correlated pair to its signs
then reduces the entropy comparison to the binary
profile inequality of Abbe, Li, and Sly~\cite{ALS22}, up to an explicit error. We need separate computations to handle the interval within $1/(3K^2)$ of the endpoint $\theta =1$.

\begin{proof}[Proof of \cref{thm:unique-max}]
Since $\kappa\geq 10$, we have $p(\kappa)>.99$, so
\[
    F_1(\kappa,1+\kappa^{-2},1)
      =\log2+(1+\kappa^{-2})\log p(\kappa)>0,
\]
and hence $\alpha_c(\kappa)>1+\kappa^{-2}$. Fix $\alpha$ in the range \eqref{eq:profile-main-range}.
Let $r=r_*(\kappa,\alpha)$ and $K=\kappa/r$. By \eqref{eq:F1-stationarity-B} and
\eqref{eq:F1-capacity-sign}, respectively,
\[
    \alpha=\frac{q(r)}{d(K)},
    \qquad
    F_1(\kappa,\alpha,r)\ge0.
\]
By evenness, it is enough to consider $0<\theta<1$. If $r\ge1.2$ and $K\ge2$, apply
\cref{lem:bulk-profile}. If $K\le2$, then $r\ge5$, and
\eqref{eq:profile-main-range} gives
\[
    \frac{q(r)}{d(K)}=\alpha
       \ge1+\frac1{\kappa^2}
       =1+\frac1{r^2K^2},
\]
so \cref{lem:small-window-profile} applies. The only remaining case is
$1<r<1.2$, which is handled by \cref{lem:near-binary-profile} (noting that the condition $H(\mu_r)+\frac{q(r)}{d(K)}\log p(K) \geq 0$ is equivalent to $F_1(\kappa, \alpha, r) \geq 0$). Thus, in all cases, \eqref{eq:profile-gap} is strictly positive for
$0<\theta<1$, and $1-4B_K^2=1-q(r)d(K)>0$, completing the proof.
\end{proof}

\section{Conditioned cycle counts}\label{sec:cycle counts}

\subsection{Cycle counts and planted ensembles}

Fix $\kappa>0$ and let $m=m(n)$, $\alpha=m/n$, and
$I=I_n(\kappa,\alpha)$. In this section, $\alpha$ is assumed to
remain in a fixed compact subset of $(1,\alpha_c(\kappa)]$ on which
\cref{hyp:maxima} holds. In particular, we will apply the results with
$m=\lfloor\alpha_c(\kappa)n-\eta\log n\rfloor$, where $\eta>0$ is fixed. For the remainder of this section, it will be convenient to work with the normalization
\[
    G:=\sqrt n A.
\]
That is, $G$ has iid standard Gaussian entries and, if
$\|x\|_2=r\sqrt n$, then
\begin{equation}\label{eq:rescaled planting event}
    x\in\sol_{n,m,\kappa}
    \quad\Longleftrightarrow\quad
    |\la G_j,\hat x\ra|\le \kappa/r
    \quad\text{for every }j\in[m],
\end{equation}
where $\hat x=x/\|x\|_2$. For later convenience, set
\[
    \psi_a^2:=\E{W^2\,\big|\,|W|\le a},
    \qquad\text{so that}\qquad
    2B_{a,\alpha}=\sqrt\alpha(\psi_a^2-1).
\]
By \cref{hyp:maxima} and continuity, there is a
constant $q<1$ such that, for all sufficiently large $n$,
\begin{equation}\label{eq:B contraction}
    |2B_{\kappa/r,\alpha}|\le q
    \qquad\text{for every }r\in I.
\end{equation}

The aim of this section is to control weighted cycle counts up to
$T=\lfloor\lambda\log n\rfloor$, where $\lambda>0$ is a sufficiently
small constant. The cycle-count correction was introduced for the
symmetric binary perceptron by Sah and Sawhney \cite{ss}. Here
Gaussianity allows us to replace their coordinate-wise planted
calculation by the low-rank resampling argument below. This comparison
uses the $\ell_\infty$ bound on typical solutions to control the
coherence of the planted directions; the only input we import directly from \cite{ss} is the limiting distribution of the cycle counts under the ``null'' (no planted solutions) distribution for $G$. 

\begin{definition}[Cycle counts]
For $k\ge1$, define the $2k$-cycle count of the first $m$ rows of a
matrix $M$ by
\[
    C_k(m,M):=(nm)^{-k/2}
    \sum_{\substack{i_1,\ldots,i_k\in[n]\ \mathrm{distinct}\\
                    j_1,\ldots,j_k\in[m]\ \mathrm{distinct}}}
    \prod_{\ell=1}^k M_{j_\ell,i_\ell}M_{j_\ell,i_{\ell+1}}
    -\ind_{\{k=1\}}\sqrt{mn},
\]
where $i_{k+1}:=i_1$. Also define
\[
    Y_T(m,M,a):=
    \sum_{k=1}^T
    \frac{2(2B_{a,m/n})^k C_k(m,M)-(2B_{a,m/n})^{2k}}{4k}.
\]
\end{definition}

We now turn to the construction of the null and planted distributions on $G$. Set
\[
    \Lambda_n^{\typ}(r,\hat r,\theta)
    :=\cb{(x,y)\in\Lambda_n^{\typ}(r)\times\Lambda_n^{\typ}(\hat r):
          \la x,y\ra=\theta r\hat r n}.
\]
Fix $x\in\Lambda_n^{\typ}(r)$ and
$(x,y)\in\Lambda_n^{\typ}(r,\hat r,\theta)$ with $|\theta|<1$. Let
\[
    \hat y:=\frac{y}{\|y\|_2},
    \qquad
    \hat z:=\frac{\hat y-\theta\hat x}{\sqrt{1-\theta^2}},
    \qquad
    U:=\begin{pmatrix}\hat x&\hat z\end{pmatrix}.
\]
Then $(\hat x,\hat z)$ is an orthonormal basis of
$\operatorname{span}(x,y)$ and
\begin{equation}\label{eq:y in xz basis}
    \hat y=\theta\hat x+\sqrt{1-\theta^2}\,\hat z.
\end{equation}

For each $j\in[m]$, define
\begin{align*}
    \tau_j^{(0)}&:=\la G_j,\hat x\ra\hat x,
    &G_j^{(x)}&:=G_j-\tau_j^{(0)},\\
    \tau_j^{(0,0)}&:=UU^\top G_j,
    &G_j^{(x,y)}&:=(I-UU^\top)G_j.
\end{align*}
Let $\xi_j^{(x)}$ be iid copies of a standard Gaussian conditioned on
$|\xi_j^{(x)}|\le\kappa/r$, and let
$\xi_j^{(x,y)}=(\xi_{j,1}^{(x,y)},\xi_{j,2}^{(x,y)})$ be iid copies of
$(W_1,W_2)\sim N(0,I_2)$ conditioned on
\begin{equation}\label{eq:pair conditioning event}
    |W_1|\le\kappa/r,
    \qquad
    |\theta W_1+\sqrt{1-\theta^2}\,W_2|\le\kappa/\hat r.
\end{equation}
Take these variables independent of the Gaussian projections above and
set
\[
    \tau_j^{(x)}:=\xi_j^{(x)}\hat x,
    \qquad
    \tau_j^{(x,y)}:=U\xi_j^{(x,y)}.
\]
All variables are sampled independently across rows. By orthogonal
invariance and \eqref{eq:rescaled planting event},
\begin{align}
    \calL(G)&=\calL(G^{(x)}+\tau^{(0)})
             =\calL(G^{(x,y)}+\tau^{(0,0)}),\label{eq:null couplings}\\
    \calL(G\mid x\in\sol)&=\calL(G^{(x)}+\tau^{(x)}),
        \label{eq:one planted coupling}\\
    \calL(G\mid x,y\in\sol)&=\calL(G^{(x,y)}+\tau^{(x,y)}).
        \label{eq:two planted coupling}
\end{align}

\subsection{Conditional cycle-count limits}\label{sec:conditional cycle count limits}

For $g\in\mathcal C^3(\RR^T)$ and $s\in[3]$, write
\[
    M_s(g):=\sup_{u\in\RR^T}\|D^s g(u)\|_{\mathrm{op}}.
\]
For a matrix $M$, define
\[
    V(M):=\pa{\frac{C_k(m,M)}{\sqrt{2k}}}_{k\in[T]},
\]
and, for $r\in I$, set
\[
    \rho_r:=
    \pa{\frac{(2B_{\kappa/r,\alpha})^k}{\sqrt{2k}}}_{k\in[T]}.
\]
We aim to track the distribution of $V(M)$ and show that the shifts under one and two planted solutions are $\rho_r$ and
$\rho_r+\rho_{\hat r}$, respectively. The main result of this section is the following set of quantitative limit laws:

\begin{lemma}[Distribution limits of shifted cycle counts]
\label{lemma:joint normal limits}
There is a sufficiently small $\lambda>0$ such that the following holds.
Let $T=\lfloor\lambda\log n\rfloor$, let $r,\hat r\in I$, and suppose
$|\theta|\le n^{-1/2}\log n$. Uniformly over
$x\in\Lambda_n^{\typ}(r)$ and
$(x,y)\in\Lambda_n^{\typ}(r,\hat r,\theta)$, for every
$g\in\mathcal C^3(\RR^T)$ and $W_T\sim N(0,I_T)$,
\begin{align}
    \ba{\E{g(V(G))}-\E{g(W_T)}}
    &\le 3n^{-1/5}\sum_{s=1}^3M_s(g),
        \label{eq:joint normal null}\\
    \ba{\E{g(V(G)-\rho_r)\mid x\in\sol}-\E{g(W_T)}}
    &\le 3n^{-1/5}\sum_{s=1}^3M_s(g),
        \label{eq:joint normal x}\\
    \ba{\E{g(V(G)-\rho_r-\rho_{\hat r})\mid x,y\in\sol}-\E{g(W_T)}}
    &\le 3n^{-1/5}\sum_{s=1}^3M_s(g).
        \label{eq:joint normal xy}
\end{align}
\end{lemma}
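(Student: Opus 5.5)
The plan is to treat the null bound \eqref{eq:joint normal null} as imported and to derive the two planted bounds from it through the low-rank couplings \eqref{eq:null couplings}--\eqref{eq:two planted coupling}. For the null law we take the quantitative joint Gaussian limit of the normalized cycle counts $(C_k/\sqrt{2k})_{k\le T}$ from \cite{ss}, which holds for $T=\lfloor\lambda\log n\rfloor$ with $\lambda$ small. If necessary we pass from their test-function class to $\mathcal C^3$ functions via a Lindeberg-type comparison, yielding an error $n^{-1/5}\sum_s M_s(g)$. The planted cases then reduce to one structural claim. Under the coupling $G=G^{(x)}+\tau^{(x)}$ we will show
\[
    V(G^{(x)}+\tau^{(x)})=V(G^{(x)}+\tau^{(0)})+\rho_r+\mathcal E,
\]
where $\mathcal E$ is small in $L^2$ uniformly over typical $x$. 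We will show the analogous decomposition with shift $\rho_r+\rho_{\hat r}$ for the two-planted coupling. A first-order Taylor bound $|\E g(V+\mathcal E)-\E g(V)|\le M_1(g)\E\|\mathcal E\|_2$, combined with \eqref{eq:joint normal null} applied to $V(G^{(x)}+\tau^{(0)})\stackrel{d}{=}V(G)$, then gives \eqref{eq:joint normal x} and \eqref{eq:joint normal xy}.

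To obtain the decomposition we expand each cycle product. We write $G_j=G_j^{(x)}+\tau_j$, where $\tau_j$ is either $\la G_j,\hat x\ra\hat x$ or $\xi_j\hat x$. Each edge factor $G_{j,i}$ is then a sum of a ``bulk'' term and a rank-one term $\tau_{j}\hat x_i$. The cycle sum becomes a sum over subsets $S$ of the $2k$ edges that carry rank-one terms. Only two terms differ between the planted and null cases at leading order. The first is the term $S=\emptyset$, which is common to both. The second is the term in which every edge carries the rank-one part. That term equals
\[
    (nm)^{-k/2}\sum_{\text{distinct}}\prod_\ell \tau_{j_\ell}^2\hat x_{i_\ell}^2\approx (nm)^{-k/2}\pa{\textstyle\sum_j\tau_j^2}^k\pa{\textstyle\sum_i\hat x_i^2}^k .
\]
Under the plant, $\sum_j(\xi_j^2-1)$ enters after the $k=1$ centering. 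The exact formulation: the deterministic shift is $(m/n)^{k/2}(\psi^2_{\kappa/r}-1)^k=(2B)^k$, which is what gives $\rho_r$ after dividing by $\sqrt{2k}$. The bookkeeping for this step is best organized as a trace: the leading change is $\operatorname{tr}\big((\Delta)^k\big)$ for the rank-one covariance perturbation $\Delta=\frac1{\sqrt{mn}}\hat x\hat x^\top\sum_j(\xi_j^2-1)$. The distinct-index constraints contribute corrections of order $\|\hat x\|_\infty^2=O(\log^2 n/n)$ per coincidence, and delocalization of typical $x$ makes these negligible. For the pair, $\Delta$ has rank two with eigen-directions $\hat x,\hat z$. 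Since $|\theta|\le n^{-1/2}\log n$, the conditioning \eqref{eq:pair conditioning event} nearly factorizes, so $\operatorname{tr}\Delta^k\approx(2B_{\kappa/r})^k+(2B_{\kappa/\hat r})^k$ up to $O(k\,q^{k}\theta)$.

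The main obstacle is bounding the mixed terms, where $S$ is a proper nonempty subset, uniformly for all $k\le\lambda\log n$. These terms have mean zero or small mean, but their variance must be controlled. Each rank-one edge contributes a factor $\hat x_i$, and the bulk edges are Gaussian and independent of the $\tau$'s. The plan is to compute second moments by Wick pairing. A term with $s=|S|$ rank-one edges has variance of order $(C k)^{2k}\big(n\|\hat x\|_\infty^2\big)^{s}n^{-1/2}$ or so, i.e.\ $(\log n)^{O(k)}n^{-1/2}$. This is where $\lambda$ small is essential: $(Ck\log^2 n)^{2k}\le n^{O(\lambda\log\log n)}$ is problematic, so I would instead aim for bounds like $C^k n^{-1/2}\mathrm{polylog}(n)$. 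I would obtain these by exploiting that $\sum_i\hat x_i^2=1$ rather than bounding coordinate-wise, and by using \eqref{eq:B contraction} to sum over $k$ and $S$ with a geometric factor $2^{2k}\le n^{2\lambda}$. Choosing $\lambda$ small then yields $\E\|\mathcal E\|_2\le n^{-1/4}$, which leaves room for the stated constant $3n^{-1/5}$.
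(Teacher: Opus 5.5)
Your architecture matches the paper's. You import \eqref{eq:joint normal null} from \cite{ss}, couple planted and null matrices by resampling only the projection onto $\operatorname{span}(x)$ (resp.\ $\operatorname{span}(x,y)$), show the cycle vectors differ by $\rho_r$ (resp.\ $\rho_r+\rho_{\hat r}$) plus an $L^2$-small error, and finish with a first-order Taylor bound.

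The step that is wrong as written is your account of where the shift comes from. You claim that, apart from $S=\emptyset$, only the all-rank-one term differs at leading order, and that mixed terms have mean zero or small mean. But $G_j^{(x)}$ has covariance $I-\hat x\hat x^\top$. So for $k\ge2$, a row whose two (necessarily distinct) columns both carry bulk entries has mean $\E{G^{(x)}_{j,i}G^{(x)}_{j,i'}}=-\hat x_i\hat x_{i'}$.

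Consider any mixed term in which each row is entirely bulk or entirely rank-one, with rank-one rows $R$. Its mean is $\approx(m/n)^{k/2}\psi_{\kappa/r}^{2|R|}(-1)^{k-|R|}$ under the plant and $\approx(m/n)^{k/2}(-1)^{k-|R|}$ under the null. These are of order $\alpha^{k/2}$, which is polynomially large in $n$ once $k\asymp\lambda\log n$ (recall $\alpha>1$). Only their binomial sum collapses to $(m/n)^{k/2}(\psi_{\kappa/r}^2-1)^k$.

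Taken literally, your bookkeeping gives a shift $\approx(m/n)^{k/2}(\psi_{\kappa/r}^{2k}-1)$, which has the wrong order. The ``$k=1$ centering'' you invoke is absent for $k\ge2$. So the correct formula $(2B_{\kappa/r,\alpha})^k$ you write down is not supported by your decomposition, and your $\mathcal E$ would contain non-negligible means.

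The repair is the row-wise computation in \cref{lem:low-rank-cycle-resampling}. For $k\ge2$, rows in a cycle are distinct and the two columns in each row differ, so row independence gives
\[
    \E{C_k(m,M^{(1)})}=\frac{(m)_k}{(mn)^{k/2}}\sum_{i_1,\dots,i_k\ \mathrm{distinct}}\ \prod_{\ell=1}^k u_{i_\ell}^\top(\Sigma-I_d)\,u_{i_{\ell+1}} .
\]
This equals $\tr(D^k)$ with the deterministic $D=\sqrt{m/n}(\Sigma-I_d)$, up to coincidence errors $O(k^2(\mu(U)+m^{-1})\|D\|_{\mathrm{op}}^k)$. The null mean is exactly zero.

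This is what your trace remark is reaching for. However, the relevant object is the planted row covariance $I+U(\Sigma-I_d)U^\top$, not a random $\Delta$ built from $\sum_j(\xi_j^2-1)$.

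Once the mean is computed this way, the remainder is centered, and your variance plan is the paper's: Wick pairing, summing with $\sum_i u_iu_i^\top=I_d$ instead of coordinatewise bounds, and absorbing $C^k$ into $n^{O(\lambda)}$. The paper makes it precise by observing that edges matched between the two cycles form $s$ paths, each saving a factor $n^{-1}$, while $s=0$ forces a shared row saving $m^{-1}$.

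Your pair-case error $O(kq^k\theta)$ is cruder than the paper's $O(k^2\theta^2)$, which comes from evenness of $\tr(D_\theta^k)$. It still suffices for $|\theta|\le n^{-1/2}\log n$.
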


The effect of conditioning is isolated in the following proposition.

\begin{proposition}[Conditional cycle counts]
\label{prop:cycle-shifts}
After decreasing $\lambda>0$ if necessary, uniformly for the parameters
in \cref{lemma:joint normal limits} and $1\le k\le T$,
\begin{align}
    \E{C_k(m,G^{(x)}+\tau^{(x)})-C_k(m,G)}
    &=(2B_{\kappa/r,\alpha})^k+\widetilde O(n^{-1}),
        \label{eq:cycles-x-mean}\\
    \mathrm{Var}\pa{C_k(m,G^{(x)}+\tau^{(x)})-C_k(m,G)}
    &\le n^{-3/4}\mathrm{Var}\pa{C_k(m,G)},
        \label{eq:cycles-x-var}\\
    \E{C_k(m,G^{(x,y)}+\tau^{(x,y)})-C_k(m,G)}
    &=(2B_{\kappa/r,\alpha})^k+(2B_{\kappa/\hat r,\alpha})^k
      +\widetilde O(n^{-1}),
        \label{eq:cycles-xy-mean}\\
    \mathrm{Var}\pa{C_k(m,G^{(x,y)}+\tau^{(x,y)})-C_k(m,G)}
    &\le n^{-3/4}\mathrm{Var}\pa{C_k(m,G)}.
        \label{eq:cycles-xy-var}
\end{align}
Moreover, for $k \ge 2$,
\begin{equation}\label{eq:null cycle variance}
    \mathrm{Var}(C_1(m,G))=2,
    \qquad
    \mathrm{Var}(C_k(m,G))
    =2k\frac{(m)_k(n)_k}{m^kn^k}
    =2k\pa{1+O\pa{\frac{k^2}{m}+\frac{k^2}{n}}}.
\end{equation}
\end{proposition}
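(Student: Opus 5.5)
The plan is to compute first and second moments by expanding each cycle product row by row, using the Gaussian couplings \eqref{eq:null couplings}--\eqref{eq:two planted coupling}. The key structural fact is that the distinct row indices $j_1,\dots,j_k$ make the rows appearing in one cycle independent.

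\emph{Null variance \eqref{eq:null cycle variance}.} This is an exact Wick computation. Two cycles $(i,j)$ and $(i',j')$ have nonzero covariance only if they traverse the same edge multiset. For $k\ge2$, a $2k$-cycle with distinct vertices is matched by exactly $2k$ relabelings: $k$ rotations times $2$ orientations. This gives $\mathrm{Var}(C_k)=2k\,(m)_k(n)_k/(mn)^k$. For $k=1$, the quantity $C_1+\sqrt{mn}=(nm)^{-1/2}\sum_{j,i}G_{ji}^2$ has variance $2$.

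\emph{Means.} Under \eqref{eq:one planted coupling}, row $j$ equals $G^{(x)}_j+\xi_j\hat x$. Its second-moment matrix is therefore
\[
    I-\hat x\hat x^\top+\psi_{\kappa/r}^2\hat x\hat x^\top=I+(\psi_{\kappa/r}^2-1)\hat x\hat x^\top .
\]
Since the $j_\ell$ are distinct, the expectation of each cycle term factors over rows as $\prod_\ell\bigl(\delta_{i_\ell i_{\ell+1}}+(\psi^2-1)\hat x_{i_\ell}\hat x_{i_{\ell+1}}\bigr)$. For $k\ge2$ the $i_\ell$ are distinct, so every $\delta$ term vanishes. This leaves
\[
    (\psi^2-1)^k\sum_{i\ \mathrm{distinct}}\prod_\ell\hat x_{i_\ell}^2=(\psi^2-1)^k\bigl(1-O(k^2\|\hat x\|_\infty^2)\bigr).
\]
The row count contributes $(m)_k/(nm)^{k/2}=\alpha^{k/2}(1+O(k^2/m))$. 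Multiplying, the mean is $(\sqrt\alpha(\psi^2-1))^k=(2B_{\kappa/r,\alpha})^k$. The delocalization $\|x\|_\infty\le\log n$ gives $\|\hat x\|_\infty^2\le(\log n)^2/n$, and $k\le\lambda\log n$, so both errors are $\widetilde O(n^{-1})$. Since $\E{C_k(m,G)}=0$, this proves \eqref{eq:cycles-x-mean}; the $k=1$ term, with its $\sqrt{mn}$ centering, is checked directly.

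For \eqref{eq:cycles-xy-mean}, the row covariance becomes $I+P$ with $P=U(\Sigma-I)U^\top$, where $\Sigma$ is the covariance of $\xi^{(x,y)}$. The same computation gives $\mathrm{tr}(P^k)$ up to the same coherence errors, and $\mathrm{tr}(P^k)=\lambda_1^k+\lambda_2^k$ for the eigenvalues $\lambda_1,\lambda_2$ of $\Sigma-I$. Write the conditioning \eqref{eq:pair conditioning event} in the $(\hat x,\hat y)$ frame. When $|\theta|\le n^{-1/2}\log n$, the matrix $\Sigma-I$ is $\mathrm{diag}(\psi_{\kappa/r}^2-1,\psi_{\kappa/\hat r}^2-1)+O(\theta)$ off the diagonal, with diagonal perturbations of order $O(\theta^2)$. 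Hence $\lambda_i^k$ differ from the diagonal powers by at most $k\,q^{k-1}O(\theta^2)=\widetilde O(n^{-1})$, using \eqref{eq:B contraction}. Here we also use that $\hat z$ is delocalized, which holds because $\theta$ is small.

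\emph{Variances, the main obstacle.} Couple $G'=G^{(x)}+\tau^{(x)}$ with $G=G^{(x)}+\tau^{(0)}$ on the same $G^{(x)}$. Then $G'-G$ is the rank-one matrix with rows $(\xi_j-\la G_j,\hat x\ra)\hat x$. Expanding the cycle products multilinearly, the difference $C_k(G')-C_k(G)$ is a sum over nonempty sets $S$ of rows that use the perturbation. Each such row contributes a factor $\hat x_{i_\ell}\hat x_{i_{\ell+1}}$ in place of two Gaussian entries. I would bound $\E{(C_k(G')-C_k(G))^2}$ by a Wick/moment expansion over pairs of cycles. In each surviving pairing, every perturbed row forces coordinate weights $\hat x_i^2\le(\log n)^2/n$ onto otherwise free indices. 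This gains at least a factor $n^{-1}\mathrm{polylog}(n)$ relative to the $2k$ null pairings, at a combinatorial cost of $C^k\le n^{C\lambda}$. Choosing $\lambda$ small then yields the bound $n^{-3/4}\mathrm{Var}(C_k)$. The rank-two case is identical with $U$ in place of $\hat x$, again relying on coherence of $\hat z$. The delicate point is organizing this pairing count so that the exponential-in-$k$ factors are uniformly absorbed; I expect this to require the most care.
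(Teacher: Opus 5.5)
Your null-variance computation and your two mean computations follow the paper's route. The paper packages the row factorization over distinct $j_\ell$ and the distinctness correction $O\bigl(k^2(\mu(U)+m^{-1})\|D\|_{\mathrm{op}}^k\bigr)$ into its low-rank resampling lemma, applied with $U=\hat x$ and with $U=(\hat x,\hat z)$.

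One step in the pair mean is wrong as written. Take $r=\hat r$, or more generally $|r-\hat r|\ll|\theta|$, which $I$ allows. Then the two diagonal entries of $\Sigma_\theta-I$ agree to leading order. The off-diagonal entry, however, is of exact order $\theta$: to first order it equals $\theta\,\psi_{\kappa/r}^2(\psi_{\kappa/\hat r}^2-1)\neq0$. So each eigenvalue moves by order $\theta$, and $\lambda_i^k$ individually is off by order $k\theta$, which is not $\widetilde O(n^{-1})$. Only the sum is stable. The clean argument is the paper's: $\Sigma_{-\theta}=J\Sigma_\theta J$ with $J=\mathrm{diag}(1,-1)$, so $\mathrm{tr}(D_\theta^k)$ is even in $\theta$. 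Its first-order term therefore vanishes, and Taylor's theorem gives the $O(k^2\theta^2)$ error.

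The genuine gap is the variance step. You aim to bound $\E{(C_k(G')-C_k(G))^2}$ by $n^{-3/4}\mathrm{Var}(C_k(m,G))$. But by your own mean computation, this uncentered second moment is at least $\bigl((2B_{\kappa/r,\alpha})^k+\widetilde O(n^{-1})\bigr)^2$. Since $2B_{\kappa/r,\alpha}=-\sqrt\alpha\,d(\kappa/r)\neq0$, this is of order one for bounded $k$, and still much larger than $n^{-3/4}$ for all $k\le T$ when $\lambda$ is small. So the target inequality is false.

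For the same reason, your heuristic that perturbed rows force weights $\hat x_i^2$ onto free indices cannot be what produces the gain. It fails exactly on the terms that build this square: two row-disjoint, fully perturbed cycles. There the weights $\hat x_i^2$ sum to one over the column labels. The $m^{2k}$ row labels against the normalization $(mn)^{-k}$ then leave a contribution of order $(2B_{\kappa/r,\alpha})^{2k}$, with no factor $n^{-1}$.

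What is missing is centering together with a vertex--edge count.
\begin{enumerate}
\item Write $M^{(b)}_j=G^\perp_j+U\xi^{(b)}_j$ with independent pieces. Do not perturb $G$ by $\xi_j-\langle G_j,\hat x\rangle$, which is correlated with $G$.
\item Expand the covariance of the centered monomials $P_1-P_0-\E{P_1-P_0}$. Row independence kills every pair with disjoint row tuples.
\item The all-$G^\perp$ term cancels between $b=0$ and $b=1$. Hence the $G^\perp$ edges Wick-matched across the two cycles form $s$ disjoint paths rather than a whole cycle.
\item Each path has one more vertex than edge, which costs $n^{-s}$. When $s=0$, a shared row is still required, and it costs $1/m$.
\item Column positions carrying $u_i$ factors are summed using $\sum_i u_iu_i^\top=I_d$.
\end{enumerate}
This yields $\mathrm{Var}\le C^kk^C/m$, and choosing $\lambda\log C<1/4$ then gives the $n^{-3/4}$ bound.

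In this count the gain comes from the row and column bookkeeping, not from $\hat x_i^2\le(\log n)^2/n$. Delocalization is needed only for the distinctness correction in the mean.
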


Assuming this proposition, the main lemma is readily deduced. 

\begin{proof}[Proof of \cref{lemma:joint normal limits}]
The null estimate \eqref{eq:joint normal null} is
given by the proof of \cite[Proposition~3.7]{ss}. That proof applies
verbatim when $m/n$ ranges over a fixed compact subset of
$(0,\infty)$ and $T\le\lambda\log n$, provided $\lambda>0$ is
sufficiently small.

For the one-planted estimate, couple the null matrix as
$G=G^{(x)}+\tau^{(0)}$ and define
\[
    \Delta^{(x)}:=V(G^{(x)}+\tau^{(x)})-\rho_r-V(G).
\]
By \cref{prop:cycle-shifts} and \eqref{eq:null cycle variance},
\[
    \E{\|\Delta^{(x)}\|_2^2}
    =\left\|\E{\Delta^{(x)}}\right\|_2^2
      +\sum_{k=1}^T\Var(\Delta_k^{(x)})
    \le n^{-2/3}
\]
for all sufficiently large $n$. Consequently,
\[
    \ba{\E{g(V(G^{(x)}+\tau^{(x)})-\rho_r)}-\E{g(V(G))}}
    \le M_1(g)\E{\|\Delta^{(x)}\|_2}
    \le M_1(g)n^{-1/3}.
\]
Combining this with \eqref{eq:joint normal null} and
\eqref{eq:one planted coupling} proves \eqref{eq:joint normal x}. The
pair-planted estimate is proven identically, using
\eqref{eq:two planted coupling} in place of \eqref{eq:one planted coupling}.
\end{proof}

\subsection{Low-rank resampling}
Towards developing the proof of \cref{prop:cycle-shifts}, we isolate the comparison estimate used for both planted ensembles.
For an $n\times d$ matrix $U$ with orthonormal columns, define its
coherence by
\[
    \mu(U):=\max_{i\in[n]}\|U^\top e_i\|_2^2.
\]
In our applications, $d=1$ and $U=\hat x$, or $d=2$ and
$U=(\hat x,\hat z)$; thus, the columns of $U$ span exactly the
directions resampled when one or two solutions are planted. The vectors
$G_j^\perp$ below are the projections of the Gaussian rows onto the
orthogonal complement of this span. Adding $U\xi_j^{(0)}$ restores an
iid standard Gaussian row, while replacing $\xi_j^{(0)}$ by
$\xi_j^{(1)}$ gives the planted row. 

\begin{lemma}[Cycle counts under low-rank resampling]
\label{lem:low-rank-cycle-resampling}
Fix $d\le2$. Let $U\in\RR^{n\times d}$ satisfy $U^\top U=I_d$ and
$\mu(U)\le n^{-1/2}$. Let $G_j^\perp$, $j\in[m]$, be iid
$N(0,I_n-UU^\top)$ vectors. Independently, let
$\xi_j^{(0)}\sim N(0,I_d)$ and let $\xi_j^{(1)}$ be iid centered random
vectors such that, for some $K<\infty$,
\[
    \sup_{\|v\|_2=1}\E{|\la v,\xi_j^{(1)}\ra|^4}\le K.
\]
Set
\[
    M_j^{(b)}:=G_j^\perp+U\xi_j^{(b)},
    \qquad
    \Sigma:=\E{\xi_j^{(1)}(\xi_j^{(1)})^\top},
    \qquad
    D:=\sqrt{m/n}\,(\Sigma-I_d).
\]
There is a constant $C$, depending only on $K$ and the compact range of
$m/n$, such that, uniformly for
$1\le k\le\lambda\log n$,
\begin{align}
    \E{C_k(m,M^{(1)})-C_k(m,M^{(0)})}
    &=\tr(D^k)
      +O\pa{k^2\pa{\mu(U)+m^{-1}}\|D\|_{\mathrm{op}}^k},
        \label{eq:low rank mean}\\
    \mathrm{Var}\pa{C_k(m,M^{(1)})-C_k(m,M^{(0)})}
    &\le C^k k^C\pa{\mu(U)+m^{-1}}
      \mathrm{Var}(C_k(m,M^{(0)})).
        \label{eq:low rank variance}
\end{align}
\end{lemma}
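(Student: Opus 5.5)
We will compare the two matrices through a direct expansion of the cycle sum. Write the rows in coordinates as $M^{(b)}_{j,i}=G^\perp_{j,i}+\la U^\top e_i,\xi^{(b)}_j\ra$ and set $u_i:=U^\top e_i\in\RR^d$, so that $\|u_i\|_2^2\le\mu(U)$ and $\sum_i u_iu_i^\top=I_d$. Inside each product $\prod_{\ell}M_{j_\ell,i_\ell}M_{j_\ell,i_{\ell+1}}$, every row $j_\ell$ contributes exactly two entries, and each entry splits into a ``perp'' factor and a ``planted'' factor. Expanding, every term of $C_k(m,M^{(b)})$ is indexed by a choice, for each row $j_\ell$, of which of its two incident entries are planted. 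The difference $C_k(m,M^{(1)})-C_k(m,M^{(0)})$ only receives contributions from terms in which at least one row uses a planted factor. We split according to the set $S\subset[k]$ of rows in which \emph{both} entries are planted. Rows with exactly one planted entry have a factor $\la u_i,\xi_j\ra$ of mean zero, independent across rows, and these are suppressed in expectation.

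\textbf{Mean.} Suppose every row is doubly planted ($S=[k]$). The row $j_\ell$ then contributes $u_{i_\ell}^\top\xi_j\xi_j^\top u_{i_{\ell+1}}$, whose expectation is $u_{i_\ell}^\top\Sigma\,u_{i_{\ell+1}}$ under $b=1$ and $u_{i_\ell}^\top u_{i_{\ell+1}}$ under $b=0$. Summing over distinct $j$'s gives the factor $(m)_k$. Summing over the $i$'s, first ignoring distinctness, telescopes via $\sum_iu_iu_i^\top=I_d$ to $\tr(\Sigma^k)$, and the normalization $(nm)^{-k/2}(m)_k$ turns this into $(m/n)^{k/2}\tr(\Sigma^k)(1+O(k^2/m))$. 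Terms with mixed $S$ (some rows doubly planted, the others purely perp) have expectation equal to a product of $\Sigma$ or $I$ blocks interleaved with $\E{G^\perp G^{\perp\top}}=I-UU^\top$ along the cycle. These combine binomially, and after the $b=0$ terms are subtracted they assemble to $\tr((\Sigma-I)^k)$ times the scaling, that is, $\tr(D^k)$. To see this cleanly, I would write the expected cycle as a trace of a product of $n\times n$ matrices $P_\ell\in\{I-UU^\top,\,U\Sigma U^\top\}$, resp.\ $\{I-UU^\top,UU^\top\}$. The difference of the two expansions of $\tr\prod((I-UU^\top)+U\Sigma U^\top)$ and $\tr(I)$ is exactly $\tr(\Sigma^k)-d$, and its mixed terms vanish because $U^\top(I-UU^\top)U=0$. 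So only the all-$U$ chain survives, and the $-d$ term matches the $k=1$ normalization. The error terms come from imposing distinctness of the $i$'s: each coincidence $i_\ell=i_{\ell'}$ costs a factor $\|u_i\|^2\le\mu(U)$, and there are $O(k^2)$ such pairs. Distinctness of the $j$'s costs $O(k^2/m)$. All factors are bounded by $\|D\|_{\mathrm{op}}^k$, which gives \eqref{eq:low rank mean}.

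\textbf{Variance.} I would compute the second moment of the difference by pairing two cycles. This is the standard graph-counting argument in which $\E{\cdot}$ of a product of two cycle terms is nonzero only if the union graph has every edge covered appropriately. In $\mathrm{Var}(C_k(m,M^{(0)}))\approx 2k$, the dominant contributions come from two identical cycles. In the difference, every term contains at least one planted factor $u_i$. Relative to the null count, each planted entry replaces a free index sum $\sum_i 1=n$ by a sum weighted by $\|u_i\|^2$ that is localized; together with the coherence bound this gains a factor $\mu(U)$ per pairing, or $m^{-1}$ from row coincidences. The fourth-moment bound $K$ controls the products of the $\xi$'s. Counting the overlap patterns of two $2k$-cycles gives at most $C^kk^C$ of them, which yields \eqref{eq:low rank variance}.

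The main obstacle is the variance bound. The combinatorics of overlapping pairs of cycles with mixed perp and planted factors must be organized so that each pattern gains at least one factor $\mu(U)+m^{-1}$ uniformly in $k\le\lambda\log n$. I would first handle the case $d=1$ with a careful enumeration of the shared-vertex structures, then note that $d=2$ only changes constants.
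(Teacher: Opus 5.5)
Your outline matches the paper's: expand each entry as $G^\perp_{j,i}+u_i^\top\xi_j$, cancel the all-$G^\perp$ term, and count overlaps for the variance. But both halves have a concrete problem.

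\textbf{The mean.} Your explicit trace computation is wrong for $k\ge2$. The quantity $\operatorname{tr}\bigl[(I-UU^\top+U\Sigma U^\top)^k\bigr]-n=\operatorname{tr}(\Sigma^k)-d$ is the expectation of the cycle sum \emph{without} column distinctness, and it is not $\operatorname{tr}((\Sigma-I)^k)$. For $d=1$, $k=2$ they differ by $2(\Sigma-1)$; after the $(m/n)^{k/2}$ scaling the discrepancy is $2\sqrt{m/n}\,D$, which is of order one. The culprit is the identity part of $I-UU^\top$. The diagonal entries it contributes have size one, not $\|u_i\|^2$, so they cannot be absorbed into your $O(k^2\mu(U))$ distinctness error.

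The missing observation is that $i_\ell\ne i_{\ell+1}$ when $k\ge2$. Each row therefore sees only an off-diagonal covariance entry. In particular, a purely perp row contributes $-u_{i_\ell}^\top u_{i_{\ell+1}}$, not $(I-UU^\top)_{i_\ell i_{\ell+1}}$. This gives $\mathbb{E}[M^{(1)}_{j,i}M^{(1)}_{j,i'}]=u_i^\top(\Sigma-I)u_{i'}$ and $\mathbb{E}[M^{(0)}_{j,i}M^{(0)}_{j,i'}]=0$ for $i\ne i'$. Hence $\mathbb{E}[C_k(m,M^{(0)})]=0$ and
\[
\mathbb{E}[C_k(m,M^{(1)})]=\frac{(m)_k}{(mn)^{k/2}}\sum_{\text{distinct}}\prod_\ell u_{i_\ell}^\top Qu_{i_{\ell+1}},\qquad Q=\Sigma-I.
\]
Only after this reduction does dropping distinctness give $\operatorname{tr}(Q^k)$, with each coincidence costing at most $\sum_i\|u_i\|^4\|Q\|_{\mathrm{op}}^k\le d\mu(U)\|Q\|_{\mathrm{op}}^k$. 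The $-d$ has nothing to do with the normalization except at $k=1$. There the ``cycle'' is the diagonal $M_{j,i}^2$, which is why that case is treated separately.

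\textbf{The variance.} This part is a heuristic rather than an argument, and its counting claim is false: two $2k$-cycles do not have only $C^kk^C$ overlap patterns. The shared edges form some number $s$ of disjoint paths, and their placements number $k^{\Theta(s)}$, which is superexponential in $k$. A single factor $\mu(U)+m^{-1}$ per pattern is therefore swamped when $k\asymp\log n$; the gain must compound with $s$.

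The paper gets this compounding as follows.
\begin{enumerate}
\item Expand $\mathrm{Cov}(G^\perp_{j,\cdot})$ as $\delta_{ii'}-u_i^\top u_{i'}$, and call two edges matched when the $\delta$ part is used. Matched edges must be shared by the two cycles.
\item Suppose the matched edges form $s\ge1$ paths with $r$ edges, $a$ row vertices and $b$ column vertices, so $a+b=r+s$. Rows give $m^{2k-a}$ choices.
\item Every column that is not an internal path vertex carries two $u$-factors, coming from a planted entry \emph{or} from the $-u_i^\top u_{i'}$ part. It is summed via $\sum_iu_iu_i^\top=I_d$, so columns give $C^kn^{r-b}$.
\item The total for fixed paths is $C^k(m/n)^{k-a}n^{-s}$, and summing over placements gives $C^k\sum_{s\ge1}(k^4/n)^s$.
\item For $s=0$ the centered monomials are independent unless they share a row, which gives $C^kk^2/m$.
\end{enumerate}
The gain thus comes from $\sum_iu_iu_i^\top=I_d$ and the vertex--edge count, not from coherence; the paper's bound is $C^kk^C/m$. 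Comparing to the null variance, where only identical cycles contribute, cannot see the partially overlapping pairs and the $-u_i^\top u_{i'}$ contractions. These have no null counterpart but must still be bounded.
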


\begin{proof}
Write $u_i:=U^\top e_i$ and $Q:=\Sigma-I_d$. Since
$M^{(0)}$ is an iid standard Gaussian matrix, its cycle counts are
centered. For $k=1$, direct calculation gives
\[
    \E{C_1(m,M^{(1)})-C_1(m,M^{(0)})}
    =\sqrt{m/n}\,\tr Q=\tr D.
\]
For $k\ge2$, the row indices in a cycle are distinct, and the two column
indices incident to each row are distinct. Row independence therefore
gives
\begin{align*}
    \E{C_k(m,M^{(1)})}
    &=\frac{(m)_k}{(mn)^{k/2}}
      \sum_{i_1,\ldots,i_k\ \mathrm{distinct}}
      \prod_{\ell=1}^k u_{i_\ell}^\top Q u_{i_{\ell+1}}.
\end{align*}
Without the distinctness restriction, the sum is $\tr(Q^k)$. To see
the cost of the restriction, fix $1\le a<b\le k$ and impose
$i_a=i_b$. Summing successively over the remaining column indices along
the two arcs from $a$ to $b$ gives
\[
    \sum_{i=1}^n
       (u_i^\top Q^{b-a}u_i)
       (u_i^\top Q^{k-b+a}u_i).
\]
The absolute value of this expression is at most
\[
    \|Q\|_{\mathrm{op}}^k\sum_{i=1}^n\|u_i\|_2^4
    \le d\,\mu(U)\|Q\|_{\mathrm{op}}^k.
\]
There are $O(k^2)$ possible equalities among the column indices.
Excluding them and using $(m)_k/m^k=1+O(k^2/m)$ proves
\eqref{eq:low rank mean}.

We next prove \eqref{eq:low rank variance}. Suppose first that $k\ge2$.
For distinct tuples $\bi=(i_1,\ldots,i_k)$ and
$\bj=(j_1,\ldots,j_k)$, write
\[
    P_b(\bi,\bj)
    :=\prod_{\ell=1}^k
       M^{(b)}_{j_\ell,i_\ell}M^{(b)}_{j_\ell,i_{\ell+1}},
    \qquad i_{k+1}:=i_1,
\]
and let
\[
    \overline P(\bi,\bj)
    :=P_1(\bi,\bj)-P_0(\bi,\bj)
      -\E{P_1(\bi,\bj)-P_0(\bi,\bj)}.
\]
Then, writing $Z_k:=C_k(m,M^{(1)})-C_k(m,M^{(0)})$,
\begin{equation}\label{eq:low rank variance expansion}
    \Var(Z_k)
    =\frac1{(mn)^k}
      \sum_{\bi,\bj,\bi',\bj'}
      \E{\overline P(\bi,\bj)\overline P(\bi',\bj')},
\end{equation}
where each of the four tuples is restricted to have distinct entries.

We estimate this sum by expanding every matrix entry as
\[
    M^{(b)}_{j,i}=G^\perp_{j,i}+u_i^\top\xi_j^{(b)}.
\]
The term using $G^\perp$ at all $2k$ edges is identical for $b=0$ and
$b=1$, so it cancels from $P_1-P_0$. Since the row indices within each
cycle are distinct, a fixed row occurs at most twice in the product of
the two cycle monomials and hence contributes at most four random
factors. The fourth-moment assumption therefore bounds every moment of
the $\xi$ variables which occurs here by a constant depending only on
$K$.

Only the following elementary Gaussian formulas are needed. If
$Z\sim N(0,P)$, then
\begin{align}
    \E{Z_aZ_b}&=P_{ab},\label{eq:Gaussian second moment}\\
    \E{Z_aZ_bZ_cZ_d}
      &=P_{ab}P_{cd}+P_{ac}P_{bd}+P_{ad}P_{bc},
        \label{eq:Gaussian fourth moment}
\end{align}
and a product of an odd number of coordinates has mean zero. In the
present case, independence across rows and
$P=I_n-UU^\top$ give
\begin{equation}\label{eq:projected Gaussian covariance}
    \E{G^\perp_{j,i}G^\perp_{j',i'}}
    =\ind_{\{j=j'\}}\pa{\ind_{\{i=i'\}}-u_i^\top u_{i'}}.
\end{equation}
Thus \eqref{eq:Gaussian second moment}--\eqref{eq:Gaussian fourth moment},
followed by expanding each covariance in
\eqref{eq:projected Gaussian covariance}, writes every summand in
\eqref{eq:low rank variance expansion} as a sum of at most $C^k$ terms.
Each such term consists of bounded coefficients, factors involving the
vectors $u_i$, and explicit requirements that certain row indices and
column indices in the two cycles be equal.

We now bound one of these terms. Say that an edge of the first cycle is
matched to an edge of the second if the first term in
\eqref{eq:projected Gaussian covariance} is used for their two
$G^\perp$ factors. The indicator in that formula then requires both the
row and the column index of the two edges to be equal. Such a term
cannot match two $G^\perp$ factors within one cycle: their row indices
are either different, or they are the two edges incident to the same
row and their column indices are different. Hence all matched edges are
shared by the two cycles.

Unless all $2k$ edges are matched, the matched edges form disjoint
paths. Indeed, every vertex has degree two in either cycle, so a
connected set of shared edges is either a path or the entire cycle; the
latter is exactly the all-$G^\perp$ term already cancelled above. Let
$s$ be the number of these paths. A path of $r$ edges has $r+1$
vertices. Once the labels in the first cycle are chosen, the equality
requirements prescribe the corresponding $r+1$ alternating row and
column labels in the second cycle.

We now sum all terms having fixed locations and orientations for these
paths. Let $r$ be the number of matched edges, and let $a$ and $b$ be
the numbers of row vertices and column vertices, respectively, lying
in the paths. Since a path has one more vertex than edge,
\begin{equation}\label{eq:path vertices}
    a+b=r+s.
\end{equation}
The row labels have at most $m^{2k-a}$ possible values: without the
paths the two cycles have $2k$ row labels, while the value of each of
the $a$ rows in the second cycle is prescribed by the first.

It remains to sum the column labels. Every occurrence of a matrix entry
which is not part of a matched pair carries one factor $u_i$. After the
$b$ shared column vertices are merged, there are $2k-b$ column
positions. Exactly $r-b$ of them are internal column vertices of the
paths, incident only to matched edges, and hence carry no $u_i$ factor;
every other column position carries two. A position carrying no such
factor has at most $n$ possible labels. A position carrying two is
summed using
\begin{equation}\label{eq:orthonormal row sum}
    \sum_{i=1}^n u_i u_i^\top=I_d.
\end{equation}
The requirement that the columns within either cycle be distinct does
not enlarge this bound: after some labels have been fixed, the relevant
sum has the form
\[
    \sum_{i\notin S}u_i u_i^\top
    =I_d-\sum_{i\in S}u_i u_i^\top,
\]
whose operator norm is at most one. Since $d\le2$ and every row
contributes a moment of at most four bounded random variables, the
remaining finite-dimensional sums contribute at most $C^k$. The full
column sum is therefore at most $C^k n^{r-b}$.

Including the normalization in
\eqref{eq:low rank variance expansion}, the contribution of the fixed
paths is at most
\begin{align}
    \frac{C^k m^{2k-a}n^{r-b}}{(mn)^k}
    &=C^k\pa{\frac mn}^{k-a}n^{r-a-b}
      \le C^k n^{-s},\label{eq:path contribution bound}
\end{align}
where we used \eqref{eq:path vertices} and the compact range of $m/n$.
This is the entire counting step: the exponent $s$ appears because each
path has one more vertex than edge.

There are at most $C^k$ choices made when expanding the matrix factors
and the Gaussian moments, and at most $k^{4s}$ choices for the endpoints
and orientations of $s$ paths. The total contribution from terms with
$s\ge1$ is consequently at most
\[
    C^k\sum_{s\ge1}
       \pa{\frac{k^4}{n}}^s.
\]
If $s=0$, the two centered monomials in
\eqref{eq:low rank variance expansion} are independent unless their row
tuples share a value. For each of the $k^2$ possible positions of such
a shared value, choosing the first row tuple leaves at most $m^{k-1}$
choices for the second. The same column summation as above bounds these
terms by
\[
    \frac{C^k k^2m^{2k-1}}{(mn)^k}
    =\frac{C^k k^2}{m}\pa{\frac mn}^k.
\]
Since $m/n$ remains in a compact subset of $(0,\infty)$ and
$k=O(\log n)$, we obtain the stronger bound
\begin{equation}\label{eq:absolute low rank variance}
    \Var(Z_k)\le \frac{C^k k^C}{m}.
\end{equation}
Finally, a direct calculation gives
\[
    \Var(C_k(m,M^{(0)}))
    =2k\frac{(m)_k(n)_k}{m^kn^k}\ge1
\]
for all sufficiently large $n$. After increasing $C$,
\eqref{eq:absolute low rank variance} proves
\eqref{eq:low rank variance} for $k\ge2$.

For $k=1$, the same estimate follows directly from
\[
    C_1(m,M^{(1)})-C_1(m,M^{(0)})
    =(mn)^{-1/2}\sum_{j=1}^m
      \pa{\|\xi_j^{(1)}\|_2^2-\|\xi_j^{(0)}\|_2^2}. \qedhere
\]
\end{proof}

\begin{proof}[Proof of \cref{prop:cycle-shifts}]
The variance identity \eqref{eq:null cycle variance} is immediate for
$k=1$. For $k\ge2$, two Gaussian cycle monomials have nonzero covariance
only when they represent the same unoriented cycle, which has $2k$
choices of starting point and orientation. This gives the first equality
in \eqref{eq:null cycle variance}; the second follows from
$(s)_k/s^k=1+O(k^2/s)$.

For a single planted solution, the scalar $\xi_j^{(x)}$ is centered by
symmetry and has fourth moment bounded by $(\kappa/r)^4$ and hence by $\kappa^4$. Thus \cref{lem:low-rank-cycle-resampling} applies with
$d=1$, $U=\hat x$, and $\Sigma=\psi_{\kappa/r}^2$. Since
\[
    \mu(\hat x)=\|\hat x\|_\infty^2
    \le \frac{\log^2 n}{nr^2},
    \qquad
    D=\sqrt\alpha(\psi_{\kappa/r}^2-1)
      =2B_{\kappa/r,\alpha},
\]
\eqref{eq:B contraction} and \eqref{eq:low rank mean} together give
\eqref{eq:cycles-x-mean}. Moreover, uniformly for $k\le T$,
\[
    C^k k^C\bigl(\mu(\hat x)+m^{-1}\bigr)
    \le n^{-1+\lambda\log C+o(1)}.
\]
After decreasing $\lambda$ so that $\lambda\log C<1/4$, then
\eqref{eq:low rank variance} gives \eqref{eq:cycles-x-var}.

For a planted pair, let
\[
    \Sigma_\theta
    :=\E{\xi_j^{(x,y)}(\xi_j^{(x,y)})^\top},
    \qquad
    D_\theta:=\sqrt\alpha(\Sigma_\theta-I_2).
\]
The full row covariance is
\begin{equation}\label{eq:pair planted covariance}
    \mathrm{Cov}(G_j^{(x,y)}+\tau_j^{(x,y)})
    =I_n+U(\Sigma_\theta-I_2)U^\top.
\end{equation}
This identity includes the mixed $\hat x\hat z^\top$ terms. At
$\theta=0$, the two conditioning events are independent, and hence
\begin{equation}\label{eq:Sigma zero}
    \Sigma_0=\begin{pmatrix}
        \psi_{\kappa/r}^2&0\\
        0&\psi_{\kappa/\hat r}^2
    \end{pmatrix},
    \qquad
    D_0=\begin{pmatrix}
        2B_{\kappa/r,\alpha}&0\\
        0&2B_{\kappa/\hat r,\alpha}
    \end{pmatrix}.
\end{equation}
The map $\theta\mapsto\Sigma_\theta$ is analytic near zero, uniformly
for $r,\hat r\in I$. Let $J=\operatorname{diag}(1,-1)$. The change of
variables $W_2\mapsto-W_2$ in \eqref{eq:pair conditioning event} gives
\[
    \Sigma_{-\theta}=J\Sigma_\theta J,
    \qquad D_{-\theta}=JD_\theta J.
\]
Hence $\tr(D_\theta^k)$ is even in $\theta$. By
\eqref{eq:B contraction}, $\|D_\theta\|_{\mathrm{op}}$ is uniformly
bounded above by a constant smaller than one for the present range of
$\theta$. Taylor's theorem therefore gives, uniformly for $k\le T$,
\begin{equation}\label{eq:pair trace expansion}
    \tr(D_\theta^k)
    =(2B_{\kappa/r,\alpha})^k
     +(2B_{\kappa/\hat r,\alpha})^k+O(k^2\theta^2).
\end{equation}

Finally, \eqref{eq:y in xz basis} and the definition of the typical set
give
\[
    \mu(U)=\max_i(\hat x_i^2+\hat z_i^2)
    \le C\frac{\log^2 n}{n}.
\]
The conditional vector in \eqref{eq:pair conditioning event} has
uniformly bounded fourth moments. Applying
\cref{lem:low-rank-cycle-resampling}, then using
$|\theta|\le n^{-1/2}\log n$ and \eqref{eq:pair trace expansion}, proves
\eqref{eq:cycles-xy-mean}. After decreasing $\lambda$ once more, the
variance estimate \eqref{eq:low rank variance} proves
\eqref{eq:cycles-xy-var}.
\end{proof}

\section{Conditioned second moment method}\label{sec:second moment method}

We now combine the profile estimates of \cref{sec:2profile} with the
conditional cycle-count limits of \cref{sec:cycle counts}. Fix
$\kappa>0$ and a compact interval $J\subset(1,\alpha_c(\kappa)]$ such
that \cref{hyp:maxima} holds for every $\alpha\in J$. Let $\eta>0$ be
fixed, let $m=m(n)$, and set $\alpha=m/n$. Throughout this section we
assume that
\[
    \alpha\in J,
    \qquad
    m\le \alpha_c(\kappa)n-\eta\log n.
\]
All constants below are uniform in this range. As in
\cref{sec:cycle counts}, let $T=\lfloor\lambda\log n\rfloor$, where
$\lambda>0$ is sufficiently small. The choice of $\lambda$ depends only
on the compact parameter range in the cycle-count estimates and is
independent of $\eta$. Set
\[
    L:=\sqrt{\log n}
\]
and define
\begin{equation}\label{eq:truncated cycle weight}
    \mathcal W_r(M)
    :=e^{-Y_T(m,M,\kappa/r)}
      \ind_{\{Y_T(m,M,\kappa/r)\ge-L\}},
    \qquad
    Q:=\sum_{r\in I}\calZ_r^{\typ}\mathcal W_r(G).
\end{equation}
The indicator enforces $0\le\mathcal W_r\le e^L$; we show below that the indicator equals one with high probability. We now present the main result of this section.

\begin{lemma}[Second moment and solution-space geometry]
\label{lemma:conditional 2mm}
The following hold as $n$ diverges.
\begin{enumerate}
    \item (Conditional second moment.)
    \begin{equation}\label{eq:conditional 2mm - goal}
        \E{Q^2}=(1+o(1))\E{Q}^2,
        \qquad
        \E{Q}=(1+o(1))\E{\calZ_I^{\typ}}.
    \end{equation}

    \item (Log-normal number of solutions.) Let
    \begin{equation}\label{eq:limiting cycle Gaussian}
        W_\alpha\sim\mathcal N\left(
        \frac14\log\bigl(1-4B_{\kappa/r_*,\alpha}^2\bigr),
        -\frac12\log\bigl(1-4B_{\kappa/r_*,\alpha}^2\bigr)
        \right),
    \end{equation}
    where $r_*=r_*(\kappa,\alpha)$. Then
    \begin{equation}\label{eq:conditional 2mm - scaling limit goal}
        \sup_{t\in\RR}
        \ba{
        \PP{\calZ_I^{\typ}>e^t\E{\calZ_I^{\typ}}}
        -\PP{W_\alpha>t}}
        =o(1).
    \end{equation}
    In particular,
    \begin{equation}\label{eq:conditional 2mm - tail goal}
        \PP{
        \frac{\calZ_I^{\typ}}{\E{\calZ_I^{\typ}}}
        \notin\bigl((\log n)^{-1/2},(\log n)^{1/2}\bigr)}
        =o(1).
    \end{equation}

    \item (Strong freezing.) There is a constant $\delta>0$ such
    that, with high probability,
    \begin{equation}\label{eq:strong freezing conclusion}
        \frac{1}{\calZ_I^{\typ}}
        \ba{\cb{x\in\sol_{n,m,\kappa}^{\typ}:
        (\sol_{n,m,\kappa}^{\typ}\setminus\{x\})
        \cap B_{\|\cdot\|_2}(x,\delta\sqrt n)\ne\emptyset}}
        \le e^{-n^{.48}}.
    \end{equation}
    Here the left-hand side is defined to be zero when
    $\calZ_I^{\typ}=0$.

    \item (Complete freezing.) If, in addition,
    $\alpha_c(\kappa)\in J$ and
    $m=\lfloor\alpha_c(\kappa)n-\eta\log n\rfloor$, then, with
    probability at least $1-n^{-1}$, every
    $x,y\in\sol_{n,m,\kappa}^{\typ}$ with $y\notin\{x,-x\}$ satisfy
    \begin{equation}\label{eq:complete freezing conclusion}
        |\la x,y\ra|\le\sqrt n\log n.
    \end{equation}
\end{enumerate}
\end{lemma}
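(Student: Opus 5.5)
The plan is to compute $\E{Q}$ and $\E{Q^2}$ by conditioning on planted solutions and using \cref{lemma:joint normal limits} to evaluate the conditional expectation of the cycle weight. Then $Q/\E{Q}\to1$, and the statements about $\calZ_I^{\typ}$ follow by comparing $Q$ with $\calZ_I^{\typ}$.

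\textbf{First moment.} Write $\E{Q}=\sum_{r\in I}\sum_{x\in\Lambda_n^{\typ}(r)}\PP{x\in\sol}\,\E{\mathcal W_r(G)\mid x\in\sol}$. Under the planted law, \eqref{eq:joint normal x} shows that $V(G)-\rho_r$ is close to $W_T$. Hence $Y_T=\sum_k\bigl(2(2B)^kC_k-(2B)^{2k}\bigr)/(4k)$ is approximately Gaussian. Its mean is $\sum_k(2B)^{2k}/(4k)$ and its variance is $\sum_k(2B)^{2k}/(2k)$, so the mean equals half the variance. Therefore $\E{e^{-Y_T}}\approx1$, using \eqref{eq:B contraction} to make the geometric sums converge.

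The function $e^{-y}\ind\{y\ge -L\}$ is not $\mathcal C^3$, so I will smooth the truncation with a cutoff at scale $1$ near $-L$. Its derivatives are $O(e^{L})=e^{\sqrt{\log n}}$, which is absorbed by the $n^{-1/5}$ error. Since $Y_T$ is a linear functional of $V$ whose coefficient vector has bounded $\ell_2$ norm, $\PP{Y_T<-L}$ is $e^{-cL^2}$. This gives $\E{Q}=(1+o(1))\E{\calZ_I^{\typ}}$.

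\textbf{Second moment.} Expand $\E{Q^2}$ over pairs $(x,y)\in\Lambda_n^{\typ}(r,\hat r,\theta)$ and split by the overlap.

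For $|\theta|\le n^{-1/2}\log n$, apply \eqref{eq:joint normal xy}. The product of the two weights is then approximately $e^{-Y(r)-Y(\hat r)}$ with a mean shift of $\rho_r+\rho_{\hat r}$. The Gaussian computation gives $\E{e^{-Y(r)-Y(\hat r)}}\approx\exp\bigl(\sum_k(2B_{\kappa/r})^k(2B_{\kappa/\hat r})^k/(2k)\bigr)=(1-4B^2)^{-1/2}(1+o(1))$. Meanwhile, \eqref{eq:N2 expansion}, \cref{prop:F2'' central} and Laplace's method show that the sum of $N(r,\hat r,\theta)\,p^{(2)}(\cdot)^m$ over this window equals $(1-4B^2)^{-1/2}(1+o(1))\,\E{\calZ_I^{\typ}}^2$ times... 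In fact the two factors do not multiply. The planted-pair factor for the weights is exactly $(1-4B^2)^{+1/2}\cdot$ relative to unweighted, because the tilt of each weight under the other's shift cancels. So I will carefully combine them as in \cite{ss} to get the ratio $1$.

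For $n^{-1/2}\log n<|\theta|\le\epsilon$, bound the weights by $e^{2L}$ and use the negative curvature in \cref{hyp:maxima} to get a factor $e^{-c\log^2 n}$. For $\epsilon\le|\theta|\le1-\epsilon$, part (1) of \cref{hyp:maxima} together with \cref{prop:2d-lattice-count} gives an exponentially small contribution. Near $\theta=\pm1$, where the local CLT degenerates, I will count pairs at Hamming distance $h$ directly. At most $\binom nh(2\log n)^{h}$ vectors $y$ lie near a given $x$, and the joint constraint probability gains at most $e^{O(h\log n)}$ relative to $p^m$ via the endpoint behaviour $\lim_{\theta\to\pm1}F_2'$. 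This range is the main obstacle. I would first try the crude bound $\sum_{y\ne x}\PP{x,y\in\sol}\le \E{\calZ}\cdot\max_h(\cdots)$ and show it is $e^{-cn}\E{\calZ}^2$ when $\alpha<\alpha_c$.

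\textbf{Remaining conclusions.} With $\E{Q^2}=(1+o(1))\E{Q}^2$, Chebyshev gives $Q/\E{Q}\to1$.

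For (2), note that $Y_T(r)=Y_T(r_*)+o(1)$ uniformly on $I$, since $B$ is Lipschitz in $r$ and $|r-r_*|=o(1)$. Also $Y_T(r_*)\ge -L$ with high probability, so $Q\approx\calZ_I^{\typ}e^{-Y_T(r_*)}$. Under the null law, $Y_T(r_*)$ converges by \eqref{eq:joint normal null} to a Gaussian with mean $\frac14\log(1-4B^2)$ and variance $-\frac12\log(1-4B^2)$. This yields $\log(\calZ_I^{\typ}/\E{\calZ_I^{\typ}})\to W_\alpha$ and hence \eqref{eq:conditional 2mm - tail goal}.

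For (3), the expected number of pairs at $\ell_2$-distance at most $\delta\sqrt n$ is at most $e^{-n^{.49}}\E{\calZ_I^{\typ}}$ by the endpoint analysis. Markov's inequality combined with the lower tail in (2) gives \eqref{eq:strong freezing conclusion}.

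For (4), at $m=\alpha_cn-\eta\log n$ we have $\E{\calZ}=n^{O(\eta)}$. The expected number of pairs with overlap $|\la x,y\ra|>\sqrt n\log n$, excluding $y=\pm x$, is at most $n^{O(\eta)}e^{-c\log^2n}+e^{-cn}\le n^{-1}$ by the tail bounds above. Markov's inequality then finishes.
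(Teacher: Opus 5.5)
Your first-moment computation, your handling of $n^{-1/2}\log n<|\theta|\le 1-c_2$, and your derivation of the log-normal limit follow the paper. Your central-window computation is also right once the sign slip is fixed. Under the pair-planted law the weights give $e^{-\la\rho_r,\rho_{\hat r}\ra}\approx(1-4B_{\kappa/r,\alpha}B_{\kappa/\hat r,\alpha})^{1/2}$, and this does multiply the Laplace factor $(1-4B_{\kappa/r,\alpha}B_{\kappa/\hat r,\alpha})^{-1/2}$ to give $1$.

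The genuine gap is the endpoint region $1-|\theta|<c_2$, and the plan you sketch there fails for two reasons.
\begin{itemize}
\item \emph{It is not a small-Hamming-distance region.} A fixed gap in $\theta$ allows $y$ to differ from $\pm x$ in $\Theta(n)$ coordinates. For such $h$ your count $\binom{n}{h}(2\log n)^h=e^{\Theta(n\log\log n)}$ cannot be beaten by any exponential loss in the constraint probability, so an entropy-sharp count is needed.
\item \emph{The target $e^{-cn}\E{\calZ}^2$ has the wrong shape.} Pairs at Hamming distance one have $1-|\theta|\asymp 1/n$, and each constraint costs only a factor $1-\Theta(\sqrt{1-|\theta|})$. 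So these pairs alone contribute about $\E{\calZ}\,e^{-\Theta(\sqrt n)}$. That exceeds $e^{-cn}\E{\calZ}^2$ whenever $\E{\calZ}=e^{o(n)}$. In particular it does so at $m=\lfloor\alpha_c n-\eta\log n\rfloor$, where $\E{\calZ}=n^{O(\eta)}$; this is the setting of part (4) and lies in the uniform range the lemma asserts.
\end{itemize}
Even for fixed $\alpha<\alpha_c$, such a bound can only hold with $c\le F_1(\kappa,\alpha,r_*)$. It is then of order at least $\E{\calZ}\,n^{-O(1)}$, far from the bound $e^{-n^{.48}}$ on the fraction of non-isolated typical solutions in \eqref{eq:strong freezing conclusion}. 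That requires the number of close pairs to be $e^{-n^{.49}}\E{\calZ_I^{\typ}}$. So your arguments for (3) and (4) invoke an estimate your endpoint plan does not deliver.

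You also never treat the terms $y=\pm x$. They contribute $O(\E{\calZ_I^{\typ}})$, which is $o(\E{Q}^2)$ only because $\E{Q}\ge n^{c\eta}\to\infty$. That lower bound is where $m\le\alpha_c n-\eta\log n$ is used, and it is absent from your plan.

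The missing ingredient is \cref{lemma:F2-endpoint-uniform}: if $\eps=1-|\theta|<c_0$ and $(r-\hat r)^2\le c_0\eps\log(1/\eps)$, then $F_2(\kappa,\alpha,r,\hat r,\theta)\le\min\{F_1(\kappa,\alpha,r),F_1(\kappa,\alpha,\hat r)\}-c\eps^{.51}$.
\begin{itemize}
\item The probability half comes from integrating the $\eps^{-1/2}$ singularity of $\partial_\theta\log p^{(2)}$ in \eqref{eq:energy_derivative} over $[\eps/2,\eps]$.
\item The entropy half is $H(\mu^{(2)}_{r,\hat r,\theta})\le\min\{H(\mu_r),H(\mu_{\hat r})\}+O(\eps\log^2(1/\eps))$. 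It follows from the chain rule after conditioning on whether $X\pm Y=0$, and it is the entropy-sharp replacement for your count.
\end{itemize}
Combine this with the exact bound $N(r,\hat r,\theta)\le e^{nH(\mu^{(2)}_{r,\hat r,\theta})}$, so no local CLT is needed near $\pm1$. Add the integrality fact that non-collinear pairs satisfy $n^2r^2\hat r^2(1-\theta^2)=\sum_{i<j}(x_iy_j-x_jy_i)^2\ge n-1$, hence $\eps\ge c/n$. This also makes the condition on $(r-\hat r)^2$ automatic for $r,\hat r\in I$. The result is an off-diagonal endpoint contribution of at most $e^{-cn^{.49}}\E{\calZ_I^{\typ}}$, linear in the first moment, which is what parts (1), (3) and (4) actually need.
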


As a first consequence, we obtain a logarithmic bound on the 
critical window for the satisfiability transition. This will later be sharpened to a constant-order bound
.

\begin{corollary}[Sharp threshold]\label{cor:sharp threshold log window}
Fix $\kappa>0$, and let $J\subset(1,\alpha_c(\kappa)]$ be a compact
interval on which \cref{hyp:maxima} holds. For every fixed $\eta>0$, if
$\alpha=m/n\in J$ and
$m\le\alpha_c(\kappa)n-\eta\log n$, then
$\sol_{n,m,\kappa}$ is nonempty with high probability. For any integer
sequence satisfying
$m\ge\alpha_c(\kappa)n+\omega(1)$,
$\sol_{n,m,\kappa}$ is empty with high probability.
\end{corollary}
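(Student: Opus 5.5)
The lower bound follows from Lemma~\ref{lemma:conditional 2mm}, and the upper bound is a first-moment computation near criticality.

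\emph{Lower bound.} Fix $\eta>0$, $\alpha=m/n\in J$ and $m\le\alpha_c(\kappa)n-\eta\log n$. Since $\sol_{n,m,\kappa}^{\typ}\subseteq\sol_{n,m,\kappa}$, it suffices to show $\calZ_I^{\typ}>0$ with high probability. The tail estimate \eqref{eq:conditional 2mm - tail goal} gives $\calZ_I^{\typ}\ge(\log n)^{-1/2}\E{\calZ_I^{\typ}}$ with high probability. It therefore remains only to check $\E{\calZ_I^{\typ}}>0$, which holds for large $n$ by \eqref{eq:lower bound EZ I} together with \eqref{eq:E[Z] - typical}. (Alternatively, Paley--Zygmund applied to $Q$ via \eqref{eq:conditional 2mm - goal} gives $Q>0$ with high probability, and $Q>0$ forces $\calZ_I^{\typ}>0$.) No uniformity issue arises, since Lemma~\ref{lemma:conditional 2mm} is stated uniformly over this range.

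\emph{Upper bound.} Let $m\ge\alpha_c n+\omega(1)$. Since $\calZ$ is nonincreasing in $m$, we may assume $m=\alpha_c n+t_n$ with $t_n\to\infty$ and $t_n=o(n)$; larger $m$ reduce to this case. By Markov's inequality it suffices to show $\E{\calZ}\to0$. Write $\E{\calZ}=\sum_{r\in\calR_n}N(r)p(\kappa/r)^m$ and split over radii.

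For $r$ in a fixed compact neighbourhood of $r_c:=r_*(\kappa,\alpha_c)$ we have
\[
N(r)p(\kappa/r)^m=N(r)p(\kappa/r)^{\alpha_c n}\,p(\kappa/r)^{t_n},
\]
and $p(\kappa/r)\le p_c':=\sup p<1$ on this neighbourhood. Proposition~\ref{prop:1d-lattice-count} gives $N(r)\le Cn^{-1/2}e^{nH(\mu_r)}$. Corollary~\ref{cor:F1 unique max} at $\alpha=\alpha_c$, together with $F_1(\kappa,\alpha_c,r_c)=0$ from \eqref{eq:F1-capacity-sign}, gives
\[
N(r)p(\kappa/r)^{\alpha_c n}\le Cn^{-1/2}e^{-cn(r-r_c)^2}.
\]
Summing over the values $q=nr^2\in\ZZ$ exactly as in \eqref{eq:F1 trunc mid} gives a Gaussian sum of order $n^{-1/2}\cdot\sqrt n=O(1)$. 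This portion is therefore $O\bigl((p_c')^{t_n}\bigr)=o(1)$.

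Radii away from $r_c$, both bounded and unbounded, contribute $e^{-cn}$ by the same arguments as in the proof of Theorem~\ref{thm:truncated 1mm}, since $F_1(\kappa,\alpha_c,r)\le -c'$ there. The factor $p^{t_n}\le1$ only helps, and \eqref{eq:F1 trunc extreme} handles large $r$ because $\alpha_c>1$.

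The main obstacle is uniformity: Theorem~\ref{thm:truncated 1mm} is stated for fixed compact $\alpha$-ranges, whereas here $m/n$ drifts to $\alpha_c$. The plan above avoids this by factoring out $p^{t_n}$ and working only with the exact profile at $\alpha_c$. One must still ensure the local CLT constant in Proposition~\ref{prop:1d-lattice-count} is uniform near $r_c$, which holds because $r_c>1$.
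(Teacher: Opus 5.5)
Your proposal is correct and matches the paper's proof. The paper uses exactly your parenthetical alternative, Paley--Zygmund on $Q$ via \eqref{eq:conditional 2mm - goal}, for the lower bound, and Markov's inequality with $\E{\calZ}=o(1)$ for the upper bound; it merely cites the first-moment estimates of \cref{sec:1profile} for the latter, whereas your factoring of $p(\kappa/r)^{t_n}$ against the exact profile at $\alpha_c$ (essentially the computation later carried out in \cref{lem:critical first moment}) supplies those details and correctly handles the fact that $m/n>\alpha_c$ lies outside the range of \cref{thm:truncated 1mm}.
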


\begin{proof}
By \eqref{eq:conditional 2mm - goal} and Paley--Zygmund,
\[
    \PP{Q>0}\ge\frac{\E{Q}^2}{\E{Q^2}}=1-o(1).
\]
Since $Q>0$ implies $\calZ_I^{\typ}>0$, this proves the first claim.
For the second claim, the first-moment estimates of
\cref{sec:1profile} give $\E{\calZ}=o(1)$, and Markov's inequality
finishes the proof.
\end{proof}

The rest of this section is dedicated to the proof of \cref{lemma:conditional 2mm}. We now isolate and prove some of the main technical components. 

\subsection{Endpoint estimate}
The proof of \cref{lemma:conditional 2mm} requires an estimate of $F_2(\theta)$ near $|\theta|=1$. Roughly, we seek to quantitatively bound from below the singularity of $|F_2'(\theta)|$ as $\theta \to \pm 1$, which will then be used to show that the contribution of $\theta$ near $\pm 1$ is negligible in the saddle-point computation for $\E{\calZ^2}$. 

\begin{lemma}[Endpoint profile estimate]
\label{lemma:F2-endpoint-uniform}
There are constants $c_0,c>0$ such that the following holds uniformly
for $\alpha\in J$ and $r,\hat r\in I$. If
\[
    0<\eps:=1-|\theta|<c_0,
    \qquad
    (r-\hat r)^2\le c_0\eps\log(1/\eps),
\]
then
\begin{equation}\label{eq:uniform endpoint profile estimate}
    F_2(\kappa,\alpha,r,\hat r,\theta)
    \le
    \min\{F_1(\kappa,\alpha,r),F_1(\kappa,\alpha,\hat r)\}
    -c\eps^{.51}.
\end{equation}
\end{lemma}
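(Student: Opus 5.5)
The plan is to compare the pair at overlap $\theta\approx\pm1$ with a single vector. The entropy gained by letting $y$ differ slightly from $x$ is only of order $\eps\log^2(1/\eps)$. The constraint probability, by contrast, drops by a factor $1-c\sqrt\eps$, because the two Gaussian projections then fail the slab constraint independently with probability of order $\sqrt{1-\theta^2}\asymp\sqrt\eps$. Since $\sqrt\eps\gg\eps^{.51}\gg\eps\log^2(1/\eps)$, this gives the claim.

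First, both terms of $F_2$ are unchanged under $(\theta,y)\mapsto(-\theta,-y)$, so I reduce to $\theta=1-\eps>0$.

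\emph{Entropy term.} Let $(X,Y)\sim\mu^{(2)}_{r,\hat r,\theta}$ and $\Delta:=Y-X$, which is integer valued. Subadditivity gives
\[
H(\mu^{(2)}_{r,\hat r,\theta})\le H(X)+H(\Delta)\le H(\mu_r)+H(\Delta),
\]
where $H(X)\le H(\mu_r)$ by the maximum-entropy characterization of $\mu_r$. Symmetrically, $H(\mu^{(2)}_{r,\hat r,\theta})\le H(\mu_{\hat r})+H(\Delta)$. The second moment of $\Delta$ is
\[
s:=\E{\Delta^2}=r^2+\hat r^2-2\theta r\hat r=(r-\hat r)^2+2\eps r\hat r\le C\eps\log(1/\eps),
\]
using the hypothesis $(r-\hat r)^2\le c_0\eps\log(1/\eps)$. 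I then need an elementary entropy bound for an integer variable with small second moment. Writing $\beta:=\PP{\Delta\ne0}\le s$,
\[
H(\Delta)\le h(\beta)+\beta\,H(\Delta\mid\Delta\ne0)\le h(\beta)+\beta\cdot\tfrac12\log\bigl(2\pi e(s/\beta+1)\bigr)+O(\beta).
\]
Here the middle inequality uses the maximum-entropy bound for an integer variable with conditional second moment $s/\beta$. Optimizing over $\beta\le s$ gives $H(\Delta)\le Cs\log(1/s)\le C'\eps\log^2(1/\eps)$. Making this uniform is routine because $r,\hat r$ range over a compact subset of $(1,\infty)$.

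\emph{Constraint term.} Let $K=\kappa/r$, $\hat K=\kappa/\hat r$ and $K_-:=\min(K,\hat K)$, and write $W_\theta=\theta W+\sqrt{1-\theta^2}\,Z$ with $Z$ independent of $W$. Then
\[
p^{(2)}(K,\hat K,\theta)\le p(K_-)-\PP{|W|\le K_-,\ |W_\theta|>K_-}.
\]
Using the smaller window is the key choice. The difference $|K-\hat K|$ can be of order $\sqrt{\eps\log(1/\eps)}\gg\sqrt\eps$, so the leakage event must be measured against the tighter window. To bound the leakage, restrict to $W\in[K_--\sqrt\eps,K_-]$ and $Z>2$. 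On this event,
\[
W_\theta\ge K_- -\sqrt\eps-\eps K_- +2\sqrt{1-\theta^2}\ge K_-+\tfrac12\sqrt\eps
\]
for small $\eps$, since $\sqrt{1-\theta^2}\ge\sqrt\eps$. This event has probability at least $c\,\phi(K_-+1)\sqrt\eps\ge c'\sqrt\eps$, uniformly because $K_-$ lies in a compact subset of $(0,\infty)$. Hence
\[
\log p^{(2)}(K,\hat K,\theta)\le\log p(K_-)-c''\sqrt\eps\le\min\{\log p(K),\log p(\hat K)\}-c''\sqrt\eps,
\]
because $p$ is increasing.

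\emph{Combining.} Pair the entropy bound involving $H(\mu_r)$ with $\log p(K)$, and the one involving $H(\mu_{\hat r})$ with $\log p(\hat K)$. This yields
\[
F_2\le\min\{F_1(\kappa,\alpha,r),F_1(\kappa,\alpha,\hat r)\}+C'\eps\log^2(1/\eps)-\alpha c''\sqrt\eps.
\]
For $\eps<c_0$ small enough this is at most $\min\{F_1(\kappa,\alpha,r),F_1(\kappa,\alpha,\hat r)\}-c\,\eps^{.51}$.

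The main obstacle I expect is the uniform entropy bound for $\Delta$. Under the law $\mu^{(2)}$, $\Delta$ is not an independent perturbation, so the bound has to rest on subadditivity together with the second-moment maximum-entropy estimate. I would check in particular that the $\beta\log(s/\beta)$ term is harmless when $\beta\ll s$: it is, since $\beta\log(s/\beta)\le s/e$. A secondary point is the realizability of $(r,\hat r,\theta)$ near the endpoint; when the exponential form of \cref{prop:dg2} fails, the argument above uses only the definition of $\mu^{(2)}$ as a maximum-entropy law, so it applies there as well.
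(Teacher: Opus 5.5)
Your entropy estimate (subadditivity with $\Delta=Y-X$, $\PP{\Delta\neq0}\le\E{\Delta^2}$, and a second-moment maximum-entropy bound for the conditional law) and your final combination step match the paper's argument; the paper works at $\theta=-1+\varepsilon$ with $X+Y$ instead. The gap is in the constraint term.

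Your inequality $p^{(2)}(K,\hat K,\theta)\le p(K_-)-\PP{|W|\le K_-,\ |W_\theta|>K_-}$ points the wrong way. Its right-hand side is exactly $p^{(2)}(K_-,K_-,\theta)$. Since $p^{(2)}$ is increasing in each window, $p^{(2)}(K,\hat K,\theta)\ge p^{(2)}(K_-,K_-,\theta)$, strictly when $K\neq\hat K$. By exchangeability of $(W,W_\theta)$, the correct identity is
\[
p^{(2)}(K,\hat K,\theta)=p(K_-)-\PP{|W|\le K_-,\ |W_\theta|>K_+},\qquad K_+:=\max\{K,\hat K\}.
\]
So the leakage has to carry $W_\theta$ from inside the tighter window to outside the looser one. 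Your event $\{W\in[K_--\sqrt\varepsilon,K_-],\ Z>2\}$ only gives $W_\theta\ge K_-+\tfrac12\sqrt\varepsilon$. That falls short of $K_+$ precisely in the regime $K_+-K_-\asymp\sqrt{\varepsilon\log(1/\varepsilon)}$ that you flagged yourself.

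In that regime the deficit really is smaller than $\sqrt\varepsilon$. It is of order $\sqrt\varepsilon\,e^{-(K_+-K_-)^2/(4\varepsilon)}$ up to logarithmic factors, i.e.\ $\sqrt\varepsilon\cdot\varepsilon^{O(c_0)}$. So the bound $\log p^{(2)}\le\log p(K_-)-c''\sqrt\varepsilon$ is false as stated. Note that your constraint estimate never uses the hypothesis $(r-\hat r)^2\le c_0\varepsilon\log(1/\varepsilon)$ or the smallness of $c_0$; that is the missing ingredient.

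The repair is to require $Z>z_0:=2+(K_+-K_-)/\sqrt\varepsilon$ instead of $Z>2$. On this event $W_\theta\ge K_++\sqrt\varepsilon-\varepsilon K_->K_+$. Since $(K_+-K_-)^2\le C(r-\hat r)^2\le Cc_0\varepsilon\log(1/\varepsilon)$, the Gaussian tail bound gives $\PP{Z>z_0}\ge c\,\varepsilon^{Cc_0}/\sqrt{\log(1/\varepsilon)}$. The deficit is therefore at least $c\,\varepsilon^{1/2+Cc_0}/\sqrt{\log(1/\varepsilon)}$. This exceeds $\varepsilon^{.51}$ for small $\varepsilon$ once $c_0$ is chosen with $Cc_0<.01$.

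This is the probabilistic counterpart of the paper's step. There, \eqref{eq:energy_derivative} gives $\partial_\theta\log p^{(2)}(a,b,-1+t)\le-c\,t^{-1/2}e^{-(a-b)^2/(4t)}$, which is integrated over $t\in[\varepsilon/2,\varepsilon]$. The factor $e^{-(a-b)^2/(4t)}\ge\varepsilon^{O(c_0)}$ is exactly why the lemma has exponent $.51$ rather than $1/2$. With this correction, your $O(\varepsilon\log^2(1/\varepsilon))$ entropy error is still dominated, and the rest of your argument goes through.
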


\begin{proof}
As $F_2$ is even in $\theta$, it suffices to take
$\theta=-1+\eps$, so that $\theta \in (-1,0)$. Let $a=\kappa/r$ and $b=\kappa/\hat r$. Note that $p^{(2)}(a,b,\theta)$ is bounded away from zero for all $\theta$. Hence, applying the
derivative formula \eqref{eq:energy_derivative}, it holds uniformly for
$t\in[\eps/2,\eps]$,
\begin{align*}
    \partial_\theta\log p^{(2)}(a,b,-1+t) 
    \le-c \,t^{-1/2} \,\exp\left\{-\frac{(a-b)^2}{4t}\right\}\,.
\end{align*}
Recalling the assumption on $(r-\hat r)^2$, the exponent above is at most of order $-c_0\log(1/\eps)$. Hence, taking $c_0$ sufficiently small,
\begin{equation}\label{eq:p2'-bound}
    \partial_\theta\log p^{(2)}(a,b,-1+t) \le -t^{-.49}.
\end{equation}
Since
$p^{(2)}(a,b,-1+\eps/2)\le\min\{p(a),p(b)\}$,
integrating \eqref{eq:p2'-bound} from $\eps/2$ to $\eps$
gives:
\begin{equation}\label{eq:uniform endpoint probability estimate}
   \log p^{(2)}(a,b,-1+\eps) \le \log\min\cb{p(a),\,p(b)} -c\,\eps^{.51}.
\end{equation}

It remains to control the entropy. Let $(X,Y)$ have law
$\mu_{r,\hat r,-1+\eps}^{(2)}$, and set
\[
    \delta:=\PP{X+Y\ne0},
    \qquad
    v:=\Var(X+Y)=(r-\hat r)^2+2r\hat r\eps.
\]
Since $X+Y$ is integer-valued, $\delta\le v$. Moreover, by the fact that the Gaussian maximizes entropy among real-valued random variables with fixed variance, it holds for any random variable $Z$,
\[
    H(Z)\le\frac12\log\left(2\pi e\,\Var(Z)\right)\,.
\]
This will be used in conjunction the crude upper bound $\Var(X+Y\mid X+Y = 0) \le v/\delta$.
In total, by the chain rule for entropy,
\begin{align*}
    H(X,Y)
    &\le H(X)+H(\ind_{\{X+Y\ne0\}})
       +\delta\, H(X+Y\mid X+Y\ne0)\\
    &\le H(X)+H(\ind_{\{X+Y\ne0\}})
       +\frac\delta2\log\left(2\pi e
          \,\frac v\delta\right).
\end{align*}
Since
$H(X)\le H(\mu_r)$ and $\delta\le v$, the last display is at most
\[
    H(\mu_r)+Cv\log(1/v).
\]
Interchanging $X$ and $Y$ gives the same estimate with
$H(\mu_{\hat r})$ in place of $H(\mu_r)$. Our assumption implies
$v=O(\eps\log(1/\eps))$, and hence
\begin{equation}\label{eq:uniform endpoint entropy estimate}
    H(\mu_{r,\hat r,-1+\eps}^{(2)})
    \le\min\{H(\mu_r),H(\mu_{\hat r})\}
       +O(\eps\log^2(1/\eps)).
\end{equation}
The error in \eqref{eq:uniform endpoint entropy estimate} is
$o(\eps^{.51})$. Moreover,
\[
    \min\{H(\mu_r),H(\mu_{\hat r})\}
    +\alpha\log\min\{p(a),p(b)\}
    \le\min\{F_1(\kappa,\alpha,r),F_1(\kappa,\alpha,\hat r)\}.
\]
Combining the last two displays with
\eqref{eq:uniform endpoint probability estimate} proves the lemma.
\end{proof}

\subsection{The correction factor and the first moment}\label{subsec:correction} We next compute the moments of the correction factor $\mathcal{W}_r(G)$ under the single and pair planted distributions. 
Recall the vectors $\rho_r$ and $V(M)$ from \cref{sec:conditional cycle count limits},
and set $s_r:=\|\rho_r\|_2^2$. We have the useful identity
\begin{equation}\label{eq:Y rho representation}
    Y_T(m,M,\kappa/r)
    =\la\rho_r,V(M)\ra-\frac12s_r.
\end{equation}
By \eqref{eq:B contraction}, there is a constant $q<1$ such that
$|2B_{\kappa/r,\alpha}|\le q$ for every $r\in I$. Hence,
\[
    s_r=\sum_{k=1}^T\frac{(2B_{\kappa/r,\alpha})^{2k}}{2k}
    \le\sum_{k=1}^\infty\frac{q^{2k}}{2k}
    =-\frac12\log(1-q^2).
\]
In particular,
$\sup_{r\in I}s_r<\infty$. Finally, we recall \eqref{eq:truncated cycle weight} here for convenience:
\[
    \mathcal W_r(M)
    :=e^{-Y_T(m,M,\kappa/r)}
      \ind_{\{Y_T(m,M,\kappa/r)\ge-L\}}\,.
\]

\begin{prop}\label{prop:correction-moments}
Uniformly over $r,\hat r\in I$, $x\in\Lambda_n^\typ(r)$, and
\[
    (x,y)\in\Lambda_n^\typ(r,\hat r,\theta),
    \qquad |\theta|\le n^{-1/2}\log n,
\]
it holds
\begin{align}
    \E{\mathcal W_r(G)\mid x\in\sol}
    &=1+o(1),
        \label{eq:one planted weight}\\
    \E{\mathcal W_r(G)\mathcal W_{\hat r}(G)\mid x,y\in\sol}
    &\le(1+o(1))e^{-\la\rho_r,\rho_{\hat r}\ra},
        \label{eq:two planted weight}\\
    \E{\mathcal W_r(G)^2\mid x\in\sol}
    &=O(1).
        \label{eq:one planted squared weight}
\end{align}
\end{prop}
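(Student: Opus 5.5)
The plan is to reduce all three estimates to Gaussian computations using \cref{lemma:joint normal limits} together with the representation \eqref{eq:Y rho representation}. Under the one-planted law, \eqref{eq:joint normal x} says that $V(G)\approx W_T+\rho_r$ with $W_T\sim N(0,I_T)$, so that
\[
    Y_T(m,G,\kappa/r)\approx\la\rho_r,W_T\ra+\tfrac12 s_r.
\]
For the idealized Gaussian we have $\E{e^{-\la\rho_r,W_T\ra-s_r/2}}=1$. Similarly, under the pair-planted law, $V(G)\approx W_T+\rho_r+\rho_{\hat r}$. Hence
\[
    Y_T(r)+Y_T(\hat r)\approx\la\rho_r+\rho_{\hat r},W_T\ra+\tfrac12 s_r+\tfrac12 s_{\hat r}+2\la\rho_r,\rho_{\hat r}\ra.
\]
Exponentiating and taking expectations gives $e^{\frac12\|\rho_r+\rho_{\hat r}\|^2-\frac12 s_r-\frac12 s_{\hat r}-2\la\rho_r,\rho_{\hat r}\ra}=e^{-\la\rho_r,\rho_{\hat r}\ra}$, which is \eqref{eq:two planted weight}. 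For \eqref{eq:one planted squared weight}, $e^{-2Y}$ has Gaussian expectation $e^{2s_r-s_r}=e^{s_r}=O(1)$, because $\sup_{r\in I} s_r<\infty$.

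The main obstacle is that $g(v)=e^{-\la\rho,v\ra+s/2}\ind\{\la\rho,v\ra-s/2\ge -L\}$ is neither smooth nor bounded, whereas \cref{lemma:joint normal limits} requires $\mathcal C^3$ test functions and is paid for through $\sum_s M_s(g)$. I would handle this by smoothing. Fix a smooth cutoff $\chi_L:\RR\to[0,1]$ that equals $1$ on $[-L+1,\infty)$, vanishes on $(-\infty,-L]$, and has derivatives bounded by absolute constants. I would then work with $h(y)=e^{-y}\chi_L(y)$ evaluated at $y=\la\rho_r,v\ra-s_r/2$, and further multiply by a cutoff at $y\le L'$ with $L'$ chosen to be, say, $4L$; this upper cutoff matters only for positive $y$, where $h$ is already small. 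Because $\|\rho_r\|_2\le C$, we get $M_s(g)\le C e^{L+1}\le e^{2\sqrt{\log n}}$, and this is far smaller than $n^{1/5}$. So the approximation error $3n^{-1/5}\sum_s M_s(g)$ is $o(1)$. Squared weights and products are treated in the same way, with $e^{2L}$ in place of $e^{L}$, which is still $n^{o(1)}$.

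It remains to compare the smoothed functions with the true weights and with the untruncated Gaussian integrals. For upper bounds (\eqref{eq:two planted weight} and \eqref{eq:one planted squared weight}) this comparison is monotone: the true indicator weight is dominated by the version with cutoff at $-L-1$, and the resulting Gaussian expectation is at most the untruncated one. For the equality \eqref{eq:one planted weight} I also need a matching lower bound. The smoothed weight with cutoff at $-L+1$ lies below the true weight, and its Gaussian expectation differs from $1$ by the tail $\E{e^{-Y}\ind\{Y<-L+1\}}$. Under the exponentially tilted measure, $Y$ is Gaussian with bounded variance, so this tail is $\PP{N(-s_r/2,s_r)<-L+1}=o(1)$. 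Each of these statements is uniform over $r,\hat r\in I$, since $s_r$ and $\la\rho_r,\rho_{\hat r}\ra$ are uniformly bounded and the estimates of \cref{lemma:joint normal limits} are uniform over typical $x$ and over $(x,y)$ with $|\theta|\le n^{-1/2}\log n$.
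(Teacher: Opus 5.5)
Your proposal is correct and follows essentially the same route as the paper. Both arguments use the representation \eqref{eq:Y rho representation}, the explicit Gaussian exponential moments, smooth cutoffs $e^{-y}\chi(y+L)$ and $e^{-y}\chi(y+L+1)$ sandwiching the indicator, derivative bounds of order $e^{2L}=n^{o(1)}$ against the $n^{-1/5}$ error in \cref{lemma:joint normal limits}, and a Gaussian tail estimate for the lower bound in \eqref{eq:one planted weight}. Your extra upper cutoff at $y\le L'$ is unnecessary, since $e^{-y}\chi_L(y)$ is already bounded by $e^{L}$ with bounded derivatives on all of $\RR$; it slightly breaks the pointwise domination used for the upper bounds, but only by an $O(e^{L-L'})$ error, so it is harmless.
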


\begin{proof}[Proof of \cref{prop:correction-moments}]
Under the conditional law given $\{x\in\sol\}$,
\eqref{eq:Y rho representation} gives the identity
\begin{equation}\label{eq:Y-single-Gaussian}
     Y_T(m,G,\kappa/r)
    =\la\rho_r,V(G)-\rho_r\ra+\frac12s_r.
\end{equation}
Similarly, under the conditional law given $\{x,y\in\sol\}$, we have
\begin{equation}\label{eq:Y-pair-Gaussian}
\begin{aligned}
    Y_T(m,G,\kappa/r)
    &=\la\rho_r,V(G)-\rho_r-\rho_{\hat r}\ra
      +\frac12s_r+\la\rho_r,\rho_{\hat r}\ra,\\
    Y_T(m,G,\kappa/\hat r)
    &=\la\rho_{\hat r},V(G)-\rho_r-\rho_{\hat r}\ra
      +\frac12s_{\hat r}+\la\rho_r,\rho_{\hat r}\ra.
\end{aligned}
\end{equation}
Letting $W_T\sim N(0,I_T)$, the classical formula for the moment-generating function of a Gaussian gives
\[
    \E{\exp\left\{-\la\rho_r,W_T\ra-\frac12s_r\right\}}=1,
    \qquad
    \E{\exp\left\{-2\la\rho_r,W_T\ra-s_r\right\}}
    =e^{s_r}=O(1).
\]
This will be used shortly in conjunction with \eqref{eq:Y-single-Gaussian} and the normal approximation \eqref{eq:joint normal x}, which asserts $V(G) - \rho_r\approx W_T$ in distribution.
We also have
\[
    \E{\exp\left\{
        -\la\rho_r+\rho_{\hat r},W_T\ra
        -\frac{s_r+s_{\hat r}}2
        -2\la\rho_r,\rho_{\hat r}\ra
    \right\}}
    =e^{-\la\rho_r,\rho_{\hat r}\ra}.
\]
Similarly, this will be used shortly in conjunction with \eqref{eq:Y-pair-Gaussian} and \eqref{eq:joint normal xy}, which asserts $V(G) - \rho_r - \rho_{\hat{r}} \approx W_T$.

We next reintroduce into our approximation the cutoff $\ind( \cdot \ge -L)$ from the definition of $\mathcal{W}_r$. Recall that $L := \sqrt{\log n}$; moreover, as noted above, $s_r$ and hence $\|\rho_r\|_2$ are uniformly bounded for all $r \in I$. Letting $g: x\mapsto  \exp(-x) \ind(x \ge -L)$, these estimates and the formula for the Gaussian density imply that, for some constant $c := c_{\kappa} > 0$,
\begin{align*}
    \E{g\big(\la\rho_r,W_T\ra+\tfrac12s_r\big)}
    &=1+O(n^{-c}),\\
    \E{g\big(\la\rho_r,W_T\ra+\tfrac12s_r\big)^2}
    &=O(1),\\
    \E{
    g\big(\la\rho_r,W_T\ra+\tfrac12s_r
    +\la\rho_r,\rho_{\hat r}\ra\big)
    g\big(\la\rho_{\hat r},W_T\ra+\tfrac12s_{\hat r}
    +\la\rho_r,\rho_{\hat r}\ra\big)}
    &=e^{-\la\rho_r,\rho_{\hat r}\ra}+O(n^{-c}).
\end{align*}

It remains to transfer these Gaussian calculations to the actual cycle counts. We first need a smooth approximation for $g$. Fix a nondecreasing
$\chi\in\mathcal C^\infty(\RR;[0,1])$ that is zero on $(-\infty,0]$ and one on $[1,\infty)$. Set
\[
    \chi_-(x):=\chi(x+L),\qquad
    \chi_+(x):=\chi(x+L+1),\qquad
    g_\pm(x):=e^{-x}\chi_\pm(x).
\]
Then
\[
    \chi_-(x)\le\ind(x\ge-L)\le\chi_+(x),
    \qquad g_-(x)\le g(x)\le g_+(x).
\]
Replacing $g$ by $g_\pm$ changes the Gaussian expectations above by $O(n^{-c})$, after decreasing $c$ if necessary, since these functions differ only on $[-L-1,-L+1]$.
We then apply \eqref{eq:joint normal x} to
\[
    u\longmapsto
    g_\pm\left(
        \la\rho_r,u\ra + \frac12s_r\right), \qquad \text{ and } u\longmapsto
    g_\pm\left(
        \la\rho_r,u\ra  + \frac12s_r\right)^2\,.
\]
Similarly, we apply
\eqref{eq:joint normal xy} to
\[
\begin{aligned}
    u\longmapsto\;&
    g_\pm\left(
        \la\rho_r,u\ra+\frac12s_r
        +\la\rho_r,\rho_{\hat r}\ra
    \right) \times
    g_\pm\left(
        \la\rho_{\hat r},u\ra+\frac12s_{\hat r}
        +\la\rho_r,\rho_{\hat r}\ra
    \right).
\end{aligned}
\]
We claim that the first three derivatives of all three of the above maps are bounded by $O(e^{2L})$. Indeed, the functions $\chi_\pm(\cdot)$ have uniformly bounded derivatives by construction, and the cutoffs $\chi_\pm(\cdot)$ in the definition of $g_\pm$ enforce the uniform bounds $0 \le g_\pm(\cdot) \le e^{L+1}$. Hence, the error incurred by the above invocations of the Gaussian
approximations \eqref{eq:joint normal x} and \eqref{eq:joint normal xy} is at most
\[
    O(n^{-1/5}e^{2L})
    =O\left(
        \exp\left\{-\frac15\log n+2\sqrt{\log n}\right\}
      \right)
    =o(1).
\]
\end{proof}

\begin{remark}[On the choice of the parameter $L$]
    The definition $L:=\sqrt{\log n}$ was somewhat arbitrary and taken for concreteness. Nonetheless, the above argument enforces that $L$ should satisfy $\omega(1) \le L \le o(\log n)$, as two tensions must be (loosely) balanced. First, we need $L$ small enough that $e^{2L}=n^{o(1)}$, so that the cutoff $\chi_\pm$ is sufficiently smooth for approximation; at the same time, $L$ should be large enough that $e^{-cL^2}=o(1)$, so that the cutoff $\chi$ does not affect the limiting Gaussian expectation. 
\end{remark}

With \cref{prop:correction-moments} established, we next compute $\E{Q}$.

\begin{prop}
    For some $c := c_\kappa > 0$, 
    \begin{equation}\label{eq:conditional first moment}
        \E{Q} = (1+o(1))\,\E{\calZ_I^\typ} \ge n^{c\eta}.
    \end{equation}
\end{prop}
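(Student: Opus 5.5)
By linearity of expectation, I would write
\[
    \E{Q}=\sum_{r\in I}\sum_{x\in\Lambda_n^\typ(r)}\PP{x\in\sol}\,\E{\mathcal W_r(G)\mid x\in\sol}.
\]
By \eqref{eq:one planted weight} in \cref{prop:correction-moments}, the conditional expectation is $1+o(1)$. This bound is uniform over $r\in I$ and $x\in\Lambda_n^\typ(r)$. Pulling the factor $1+o(1)$ out of the double sum leaves
\[
    \sum_{r\in I}|\Lambda_n^\typ(r)|\,\PP{x\in\sol}=\sum_{r\in I}\E{\calZ_r^\typ}=\E{\calZ_I^\typ},
\]
which is the first claim. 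The uniformity over the whole typical shell is what makes this step legitimate, and it is exactly the form in which \cref{prop:correction-moments} is stated.

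For the lower bound, I would use \cref{thm:truncated 1mm} to replace $\E{\calZ_I^\typ}$ by $\E{\calZ}$ up to a factor $1+o(1)$. It then suffices to bound $\E{\calZ_{r}}$ from below for a single admissible radius $r\in I$ with $r=r_*+O(n^{-1})$. Such a radius exists because $nr^2$ ranges over the integers. By \cref{prop:1d-lattice-count},
\[
    \E{\calZ_r}=\Theta(n^{-1/2})\exp\{nF_1(\kappa,\alpha,r)\}.
\]
By \cref{cor:F1 unique max}, $F_1(\kappa,\alpha,r)=F_1(\kappa,\alpha,r_*)+O(n^{-2})$. Summing over the roughly $w_n\sqrt n$ radii in $I$, each within the Gaussian window, I expect $\E{\calZ_I}\ge c\,e^{nF_1(\kappa,\alpha,r_*)}$, as in \eqref{eq:lower bound EZ I}.

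The remaining task is to show $nF_1(\kappa,\alpha,r_*(\kappa,\alpha))\ge c\eta\log n$. Set $\Phi(\alpha):=\max_r F_1(\kappa,\alpha,r)$. By the envelope theorem,
\[
    \Phi'(\alpha)=\log p(\kappa/r_*(\kappa,\alpha))<0,
\]
and this derivative is bounded away from zero uniformly for $\alpha\in J$, since $r_*$ stays in a compact subset of $(1,\infty)$. Alternatively, one can avoid differentiability entirely: for $\alpha\le\alpha_c$, evaluate $F_1(\kappa,\alpha,\cdot)$ at $r_c=r_*(\kappa,\alpha_c)$ to get
\[
    \Phi(\alpha)\ge F_1(\kappa,\alpha,r_c)=(\alpha-\alpha_c)\log p_c.
\]
Here $F_1(\kappa,\alpha_c,r_c)=0$ by \eqref{eq:F1-capacity-sign} and the attainment discussed in the capacity remark. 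Taking $m\le\alpha_c n-\eta\log n$ gives $n(\alpha_c-\alpha)\ge\eta\log n$, so
\[
    n\Phi(\alpha)\ge\eta\,|\log p_c|\log n .
\]
This yields $\E{Q}\ge n^{c\eta}$ with $c=|\log p_c|/2$ after absorbing constants.

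The main point needing care is this last step, in the case $\alpha_c\notin J$ or when $J$ does not reach $\alpha_c$. In that case the bound $\Phi(\alpha)\ge(\alpha-\alpha_c)\log p_c$ still holds, because it only uses $r_c$ as a test radius in the supremum defining $\alpha_c$, and in fact gives a linear exponent when $\alpha$ stays away from $\alpha_c$. One then has to check that the polynomial prefactors, namely $n^{-1/2}$ from the local CLT and at least one radius in $I$, do not destroy the bound. This is harmless after shrinking $c$, provided $\eta|\log p_c|>1/2$. Otherwise I would keep the sum over $I$, whose roughly $\sqrt n$ terms restore the factor $n^{1/2}$, so that $\E{\calZ_I}\gtrsim e^{n\Phi}$ exactly as in \eqref{eq:lower bound EZ I}.
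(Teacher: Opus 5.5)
Your proof is correct and follows essentially the same route as the paper. You use linearity of expectation with the uniform estimate \eqref{eq:one planted weight} for the first equality. You then use the test radius $r_c=r_*(\kappa,\alpha_c)$ to get $\max_r F_1(\kappa,\alpha,r)\ge(\alpha-\alpha_c)\log p_c\ge|\log p_c|\,\eta\log n/n$, and combine this with \eqref{eq:lower bound EZ I} and \eqref{eq:E[Z] - typical}, arriving at the same constant $c=|\log p_c|/2$.

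One small slip: the lower bound $F_1(\kappa,\alpha,r)\ge F_1(\kappa,\alpha,r_*)-O(|r-r_*|^2)$ comes from smoothness of $F_1$ and stationarity at $r_*$, not from \cref{cor:F1 unique max}, which only bounds $F_1$ from above. This is immaterial once you invoke \eqref{eq:lower bound EZ I} directly.
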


\begin{proof}
Since $\PP{x\in\sol}=p(\kappa/r)^m$ for
$x\in\Lambda_n^\typ(r)$, summing \eqref{eq:one planted weight} gives the first equality:
\begin{align}
    \E{Q}
    &=\sum_{r\in I}\sum_{x\in\Lambda_n^\typ(r)}
        p(\kappa/r)^m
        \E{\mathcal W_r(G)\mid x\in\sol}\notag\\
    &=(1+o(1))
        \sum_{r\in I}|\Lambda_n^\typ(r)|\,p(\kappa/r)^m
      =(1+o(1))\,\E{\calZ_I^\typ}.
\end{align}

Next, we develop the lower bound. Let $r_c:=r_*(\kappa,\alpha_c(\kappa))$. By
\eqref{eq:F1-capacity-sign},
\(
    F_1(\kappa,\alpha_c(\kappa),r_c)=0.
\)
Since $\alpha=m/n\le\alpha_c(\kappa)-\eta(\log n)/n$ by assumption,
\[
    \max_{r\ge1}F_1(\kappa,\alpha,r)
    \ge F_1(\kappa,\alpha,r_c)=(\alpha-\alpha_c(\kappa))\log p(\kappa/r_c) \ge
      -\log p(\kappa/r_c)\,\frac{\eta\log n}{n}.
\]
Consequently, \eqref{eq:E[Z] - typical} and \eqref{eq:lower bound EZ I} yield, for example with
$c=-\frac12\log p(\kappa/r_c)>0$,
\[
    \E{\calZ_I^\typ}
    \ge C^{-1}
       \exp\left\{n\max_{r\ge1}F_1(\kappa,\alpha,r)\right\} \ge n^{c\eta}\,.
\]
\end{proof}

\subsection{The second moment}

Fix $r,\hat r\in I$, and define
\[
    f_{r,\hat r}(\theta)
    :=\frac1n\log\left(
      N(r,\hat r,\theta)\,
      p^{(2)}(\kappa/r,\kappa/\hat r,\theta)^m
      \right).
\]
Our goal is to estimate $\E{Q^2}$ from above. We have
\begin{align*}
\E{Q^2}
&=\sum_{r,\hat r\in I}
\E{\calZ_r^\typ\calZ_{\hat r}^\typ
\mathcal W_r(G)\mathcal W_{\hat r}(G)}\\
&=\sum_{r,\hat r\in I}\sum_\theta
\sum_{(x,y)\in\Lambda_n^\typ(r,\hat r,\theta)}
\PP{x,y\in\sol}\,
\E{\mathcal W_r(G)\mathcal W_{\hat r}(G)
\mid x,y\in\sol} \\
&= \sum_{r,\hat{r} \in I}\sum_\theta e^{nf_{r,\hat{r}}(\theta)} \,\E{\mathcal{W}_r(G)\mathcal{W}_{\hat r}(G)\mid x,y \in \sol}\,.
\end{align*}

We begin by expanding $f_{r,\hat{r}}(\theta)$. Combining \eqref{eq:N2 expansion} with
\cref{prop:F2'' central}, and then with \eqref{eq:E[Z] - typical} to truncate to typical solutions: for all $|\theta|$ sufficiently
small, it holds uniformly
\begin{align}
    nf_{r,\hat r}(\theta)
    &\le
    \log\frac{\E{\calZ_r^{\typ}}\E{\calZ_{\hat r}^{\typ}}}
                   {\sqrt{2\pi n}\,r\hat r}
    -\frac n2\bigl(1-4B_{\kappa/r,\alpha}
                         B_{\kappa/\hat r,\alpha}\bigr)\theta^2
    +O(n\theta^4+\theta^2)+o(1).
    \label{eq:f(theta)-taylor}
\end{align}

We divide the possible values of $\theta$ into three disjoint regions. It will be convenient to consider the parameterization 
$
    \theta=\frac{u}{nr\hat r},
$
where $u$ ranges over integers for which
$\Lambda_n^{\typ}(r,\hat r,u/(nr\hat r))$ is nonempty. Fix
$c_2>0$ smaller than the constant $c_0$ in
\cref{lemma:F2-endpoint-uniform}, and let
\begin{align*}
    \Omega_{\mathrm{cen}}&:=\cb{u:|u|\le\sqrt n\log n},\\
    \Omega_{\mathrm{mid}}&:=\cb{u:\sqrt n\log n<|u|
                         \le(1-c_2)nr\hat r},\\
    \Omega_{\mathrm{end}}&:=\cb{u:(1-c_2)nr\hat r<|u|\le nr\hat r}.
\end{align*}

We first consider $u\in\Omega_{\mathrm{cen}}$. Recalling the definition of $\rho_r$ from \cref{sec:conditional cycle count limits}, we have
\[
    \la\rho_r,\rho_{\hat r}\ra
    =\sum_{k=1}^T
      \frac{(4B_{\kappa/r,\alpha}B_{\kappa/\hat r,\alpha})^k}{2k}\,.
\]
Using the fact that $\sum_{k=1}^\infty (4x)^k/(2k)$ is the power-series for $-\log(1-4x)/2$, we have
\[
    -\la \rho_r,\,\rho_{\hat{r}} \ra = \frac{1}{2}\log\pa{1 - 4B_{\kappa/r,\alpha}B_{\kappa/\hat r,\alpha}} + O(n^{-c})\,,
\]
where the error comes from truncating the power series. Convergence of the power series is justified by recalling $1-4B_{\kappa/r_*,\alpha}^2$ is bounded uniformly from below away from zero by \cref{hyp:maxima}, and hence compactness of $J$ and continuity then imply that $1-4B_{\kappa/r,\alpha}B_{\kappa/\hat r,\alpha}$ is also uniformly bounded away from zero for all $n$ sufficiently large. Then, \eqref{eq:two planted weight} yields
\[
    \E{\mathcal W_r(G)\mathcal W_{\hat r}(G)\mid x,y\in\sol}
    \le (1+o(1))
    \sqrt{1-4B_{\kappa/r,\alpha}B_{\kappa/\hat r,\alpha}}.
\]
Finally, \eqref{eq:f(theta)-taylor} and comparison with a Gaussian integral give
\begin{align}
    &\sum_{u\in\Omega_{\mathrm{cen}}}
      \sum_{(x,y)\in\Lambda_n^{\typ}(r,\hat r,u/(nr\hat r))}
      \PP{x,y\in\sol}
      \E{\mathcal W_r(G)\mathcal W_{\hat r}(G)\mid x,y\in\sol}
    \nonumber\\
    &\qquad\le(1+o(1))
      \frac{\E{\calZ_r^{\typ}}\E{\calZ_{\hat r}^{\typ}}}
           {\sqrt{2\pi n}\,r\hat r}
      \sqrt{1-4B_{\kappa/r,\alpha}
B_{\kappa/\hat r,\alpha}}
      \sum_{u\in\ZZ}
      \exp\left\{-\frac{\pa{1-4B_{\kappa/r,\alpha}
B_{\kappa/\hat r,\alpha}}u^2}
                              {2nr^2\hat r^2}\right\}
    \nonumber\\
    &\qquad=(1+o(1))
      \E{\calZ_r^{\typ}}\E{\calZ_{\hat r}^{\typ}},
    \label{eq:2mm-central}\,.
\end{align}

This completes our analysis of $\Omega_{\mathrm{cen}}$, which we claim provides the dominant contribution to $\E{Q^2}$. Our analysis of the remaining regions uses the crude bound that, almost surely,
\begin{equation}\label{eq:WWcrude}
    \mathcal W_r\mathcal W_{\hat r}\le e^{2L}=n^{o(1)}
\end{equation}
We split the
intermediate annular region $\Omega_{\mathrm{mid}}$ at $|u|=c_1nr\hat r$, where $c_1>0$ is sufficiently
small relative to the implicit constant in the error term in \eqref{eq:f(theta)-taylor}. For example, it suffices to take $c_1$ small enough that
\eqref{eq:f(theta)-taylor} holds for $|\theta|\le c_1$ and so that the remainder term denoted by
``$O(n\theta^4)$'' is at most
$(1-4B_{\kappa,r_*,\alpha}^2)n\theta^2/100$. Then, uniformly for $|\theta|\le c_1$,
\[
    nf_{r,\hat r}(\theta)
    \le
    \log\frac{\E{\calZ_r^{\typ}}\E{\calZ_{\hat r}^{\typ}}}
                   {\sqrt{2\pi n}\,r\hat r}
    -c n\theta^2 \le \log\frac{\E{\calZ_r^{\typ}}\E{\calZ_{\hat r}^{\typ}}}
                   {\sqrt{2\pi n}\,r\hat r}
    - c'\log(n)^2.
\]
The second inequality follows from construction of $\Omega_{\mathrm{mid}}$. Using \eqref{eq:WWcrude} and summing over
$u,r,\hat r$, we obtain
\begin{equation}\label{eq:2mm-near intermediate}
    \sum_{r,\hat r\in I}
    \sum_{\substack{u\in\Omega_{\mathrm{mid}}:\\
                    |u|\le c_1nr\hat r}}\,
    \sum_{(x,y)\in\Lambda_n^{\typ}(r,\hat r,u/(nr\hat r))}
    \PP{x,y\in\sol}\,
    \E{\mathcal W_r\mathcal W_{\hat r}\mid x,y\in\sol}
    \le n^{-\omega(1)}\E{\calZ_I^{\typ}}^2.
\end{equation}

We now turn to the remaining portion within $\Omega_{\mathrm{mid}}$. \cref{hyp:maxima} implies that there exists a constant $c>0$ such
that, for all sufficiently large $n$,
\[
    F_2(\kappa,\alpha,r,\hat r,\theta)
    \le F_2(\kappa,\alpha,r,\hat r,0)-c
\]
whenever $c_1\le|\theta|\le1-c_2$. As before, 
continuity of $F_2$, compactness of $J$, and the fact that
$r,\hat r=r_*+o(1)$ implies that this bound holds uniformly over $\alpha \in J$ and $r,\hat{r} \in I$. Finally, \cref{prop:2d-lattice-count} and \eqref{eq:WWcrude} yield
\begin{equation}\label{eq:2mm-bulk}
    \sum_{r,\hat r\in I}
    \sum_{\substack{u\in\Omega_{\mathrm{mid}}:\\
                    |u|>c_1nr\hat r}}
    \,\sum_{(x,y)\in\Lambda_n^{\typ}(r,\hat r,u/(nr\hat r))}
    \PP{x,y\in\sol}\,
    \E{\mathcal W_r\mathcal W_{\hat r}\mid x,y\in\sol}
    \le e^{-cn}\,\E{\calZ_I^{\typ}}^2.
\end{equation}

It remains to treat $\Omega_{\mathrm{end}}$. Fix
$r,\hat r\in I$ and $u\in\Omega_{\mathrm{end}}$, and let
$\theta:=u/(nr\hat r)$ and $\eps:=1-|\theta|<c_0$.
If $\eps>0$ and $N(r,\hat{r},\theta) \neq 0$, we claim: $|r-\hat r|^2\le c_0\eps\log(1/\eps)$. Indeed, take $(x,y)\in\Lambda_n^\typ(r,\hat r,\theta)$.
Since $x,y$ are not collinear, the ratios $x_i/y_i$ are not
all equal. Choose one value taken by these ratios, and let $a$
be the number of indices where it occurs, so $1\le a\le n-1$.
Pairing each of these indices with each of the remaining indices
gives $a(n-a)\ge n-1$ pairs with $x_i/y_i\ne x_j/y_j$.
For each such pair, $x_i y_j-x_j y_i$ is a nonzero integer,
so its square is at least one. Consequently,
\[
    2n^2r^2\hat r^2\eps
    \ge n^2r^2\hat r^2(1-\theta^2)
    =\sum_{i<j}(x_i y_j-x_j y_i)^2
    \ge n-1.
\]
Since $r,\hat r$ are uniformly bounded, $\eps\ge c/n$.
Together with $|r-\hat r|^2=o(n^{-1}\log n)$, this implies
$|r-\hat r|^2\le c_0\eps\log(1/\eps)$, as claimed.
Thus \cref{lemma:F2-endpoint-uniform} applies and gives
\[
    nF_2(\kappa,\alpha,r,\hat r,\theta)
    \le
    n\min\{F_1(\kappa,\alpha,r),F_1(\kappa,\alpha,\hat r)\}
    -cn\eps^{.51}
    \le
    n\min\{F_1(\kappa,\alpha,r),F_1(\kappa,\alpha,\hat r)\}
    -c n^{.49}.
\]
Combining this with \cref{prop:2d-lattice-count} and \eqref{eq:WWcrude} yields (after possibly decreasing the constant $c > 0$)
\begin{align}
    &\sum_{r,\hat r\in I}
    ~\sum_{\substack{u\in\Omega_{\mathrm{end}}: \\ \eps \neq 0}}
    ~\sum_{(x,y)\in
        \Lambda_n^{\typ}\left(r,\hat r,\frac{u}{nr\hat r}\right)}
    \PP{x,y\in\sol}
    \left(
        1+
        \E{\mathcal W_r(G)\mathcal W_{\hat r}(G)
        \mid x,y\in\sol}
    \right)
    &\le e^{-cn^{.49}}\E{\calZ_I^{\typ}}.
    \label{eq:2mm-extreme off diagonal}
\end{align}

Finally, consider $\eps=0$, so that $\theta=\pm1$.
By symmetry, suppose $\theta=1$. Equality in Cauchy--Schwarz
gives $x=(r/\hat r)y$. Choose a coordinate $i$ with
$|y_i|\le\|y\|_2/\sqrt n=\hat r$. Then
\[
    |x_i-y_i|
    =|r/\hat r-1|\,|y_i|
    \le |r-\hat r|=o(1).
\]
Since $x_i-y_i$ is an integer, it equals zero for all sufficiently
large $n$. As $y_i\ne0$, this forces $r=\hat r$ and $x=y$.
Thus the remaining contribution of $\Omega_{\mathrm{end}}$, from $\eps = 0$, consists of the diagonal
and antipodal pairs:

\begin{equation}\label{eq:2mm diagonal}
    2\sum_{r\in I}\sum_{x\in\Lambda_n^{\typ}(r)}
    \PP{x\in\sol}
    \E{\mathcal W_r(G)^2\mid x\in\sol}
    =O(\E{\calZ_I^{\typ}}) \stackrel{\eqref{eq:conditional first moment}}{=} n^{-\Omega(\eta)}\,\E{Q}^2.
\end{equation}

Combining the above estimates on $\Omega_{\mathrm{cen}}$, $\Omega_{\mathrm{mid}}$, and $\Omega_{\mathrm{end}}$ immediately yields \eqref{eq:conditional 2mm - goal}.

\subsection{Log-normal limit}
We begin by removing the dependence of $Q$ on $r$. That is, we will replace the radius-dependent cycle correction by the correction
at $r_*$ via Lipschitz estimates. Differentiating the definition of $Y_T$ and using
$|2B_{\kappa/r,\alpha}|\le q<1$ gives
\begin{equation}\label{eq:Y radius derivative}
    \sup_{r\in I}|\partial_rY_T(m,M,\kappa/r)|
    \le C\left(1+\max_{k\le T}|C_k(m,M)|\right).
\end{equation}
By \eqref{eq:null cycle variance} and a union bound,
\[
    \PP{\max_{k\le T}|C_k(m,G)|>n^{1/20}}\le n^{-c}.
\]
On the complementary event, \eqref{eq:Y radius derivative} and the
definition of $I$ imply
\begin{equation}\label{eq:Y radial comparison}
    \max_{r\in I}
    |Y_T(m,G,\kappa/r)-Y_T(m,G,\kappa/r_*)|=o(n^{-2/5}).
\end{equation}
A smoothing argument (essentially identical to those in the proof of \cref{prop:correction-moments}, but using the null limit law \eqref{eq:joint normal null} for $(C_k)_k$ rather than a planted law) gives
\[
    \PP{Y_T(m,G,\kappa/r_*)<-L+1}\le n^{-c}.
\]
Therefore, with probability at least $1-n^{-c}$, we have from
\eqref{eq:truncated cycle weight} that
\begin{equation}\label{eq:Event Y consequence}
    Q=\calZ_I^{\typ}e^{-Y_T(m,G,\kappa/r_*)}(1+o(1)).
\end{equation}

Next, Chebyshev's inequality and \eqref{eq:conditional 2mm - goal}
show that $Q/\E{\calZ_I^{\typ}}=1+o(1)$ in probability. On the other
hand, \eqref{eq:joint normal null} and
\eqref{eq:Y rho representation} together with the standard smoothing
argument imply
\begin{equation}\label{eq:kolmogorov distance}
    \sup_{x \in \RR} \left|\PP{Y_T(m,G,\kappa/r_*) \le x} - \PP{W \le x}\right| \le O(n^{-c}), \qquad \text{ where } W \sim \mathcal N(-s_{r_*}/2,s_{r_*}).
\end{equation}
Finally, as observed at the start of \cref{subsec:correction},
\begin{align*}
    -\frac12s_{r_*}
    &=\frac14\log(1-4B_{\kappa/r_*,\alpha}^2)+O(n^{-c}), \qquad
    s_{r_*}
    =-\frac12\log(1-4B_{\kappa/r_*,\alpha}^2)+O(n^{-c}).
\end{align*}
Hence, within the intersection of the events $\{Y_T(m,G,\kappa/r_*) \ge -L\}$ and
$\{Q/\E{\calZ_I^{\typ}}=1+o(1)\}$, it further holds with
probability $1-o(1)$ that $\calZ_I^{\typ}>0$ and
\[
    \log\frac{\calZ_I^{\typ}}{\E{\calZ_I^{\typ}}}
    =Y_T(m,G,\kappa/r_*)+o(1)\,.
\]
The normal distribution  $W$ above has variance bounded
above and away from zero, so its density is uniformly
bounded, and hence its distribution function is uniformly Lipschitz. The
last display and the Kolmogorov distance estimate \eqref{eq:kolmogorov distance} prove
\eqref{eq:conditional 2mm - scaling limit goal}. Taking
$t=\pm\frac12\log\log n$ and using the Gaussian tail bound gives
\eqref{eq:conditional 2mm - tail goal}.

\subsection{Solution-space geometry}
Define 
\[
    D = |\{(x,y)\in (\sol^\typ)^2\,:\,\|x-y\|_2\le \delta\sqrt{n},\, x \neq y\}|\,.
\]
Here, $\delta>0$ is a constant chosen sufficiently small, so that
every pair $(x,y)$ contributing to $D$ has correlation $\theta>1-c_2$, where $c_2$ is the constant in the definition of $\Omega_{\mathrm{end}}$ above. Recall from above that any collinear
pair $(x,y) \in \Lambda_n^\typ(r,\hat{r},1)$ necessarily satisfies $x = y$ and $r = \hat{r}$. Hence, \eqref{eq:2mm-extreme off diagonal} gives
\begin{equation}\label{eq:close pair first moment}
    \E{D}\le e^{-cn^{.49}}\E{\calZ_I^{\typ}}.
\end{equation}
By \eqref{eq:conditional 2mm - scaling limit goal},
\[
    \PP{\calZ_I^{\typ}<n^{-1}\E{\calZ_I^{\typ}}}=o(1).
\]
Markov's inequality applied to
\eqref{eq:close pair first moment} then yields
\[
    \PP{D/\calZ_I^{\typ}>e^{-n^{.48}}}
    \le \PP{\calZ_I^{\typ}<n^{-1}\E{\calZ_I^{\typ}}} +n\exp\{n^{.48}-cn^{.49}\}
    =o(1).
\]
This proves \eqref{eq:strong freezing conclusion}.

Finally, suppose that
$m=\lfloor\alpha_c(\kappa)n-\eta\log n\rfloor$. The first-moment
estimates of \cref{sec:1profile} give
$\E{\calZ_I^{\typ}}=n^{O(1)}$. Next, we claim
\[
    \sum_{r,\hat{r} \in I}\sum_{\substack{u \in \Omega_{\mathrm{mid}}  \cup \Omega_{\mathrm{end}} \\ \theta \neq \pm 1}} \sum_{(x,y) \in \Lambda^\typ_n(r,\hat{r},u/(nr\hat r))} \PP{(x,y) \in \sol^2} = n^{-\omega(1)}\,\E{\calZ^\typ}^2.
\]
The claim can be proven identically to \eqref{eq:2mm-near intermediate}, \eqref{eq:2mm-bulk}, and \eqref{eq:2mm-extreme off diagonal} upon recalling that the weights \[
\E{\mathcal{W}_r(G)\mathcal{W}_{\hat r}(G)\mid x,y \in \sol},
\]
which are the only difference between the claim and the referenced bounds, were dealt with using \eqref{eq:WWcrude}, which simply crudely dropped the weights. Hence, the same proofs go through with trivial changes, yielding
\[
    \E{\left|\left\{(x,y) \in (\sol^\typ_{n,m,\kappa})^2\,:\, |\la x,y\ra| \ge \sqrt n\log n,\, x \not\in \{-y,y\}\right\}\right|} \le n^{-\omega(1)}\,\E{\calZ_I^{\typ}}^2=n^{-\omega(1)}\,.
\]
Markov's inequality gives
\eqref{eq:complete freezing conclusion} with probability at least
$1-n^{-1}$. This completes all parts of \cref{lemma:conditional 2mm}. \qed \\

\section{Proofs of the main theorems}
\label{sec:main theorem proofs}

In this section, we deduce the main results from the estimates of the preceding
sections. 

We first state the more refined version of \cref{thm:main}, which includes the precise definition of the function $f_\kappa$.

\begin{theorem}[Sharp threshold and critical window]\label{thm:main2}
Fix $\kappa>0$ such that $\alpha_c(\kappa)>1$ and
\cref{hyp:maxima} holds with $\alpha=\alpha_c(\kappa)$. Abbreviate
$\alpha_c:=\alpha_c(\kappa)$,  let
\[
    r_c:=\arg\max_{r\ge1}F_1(\kappa,\alpha_c,r),
    \qquad p_c:=p(\kappa/r_c),
\]
and set
\begin{equation*}
    B_c:=\frac{\sqrt{\alpha_c}}2
       \left(\E{W^2\mid |W|\le\kappa/r_c}-1\right), \quad 
    \gamma_c:=-\left.
       \frac{\partial^2}{\partial v^2}
       F_1(\kappa,\alpha_c,\sqrt v)
       \right|_{v=r_c^2}, \quad 
    C_\kappa:=
       \left(\gamma_c\Var_{X\sim\mu_{r_c}}(X^2)\right)^{-1/2}.
\end{equation*}
Finally, define
\begin{equation}\label{eq:critical window distribution}
    f_\kappa(t):=\E{\exp\left\{-\frac{C_\kappa}{2}
                    p_c^t e^{W_c}\right\}}, \qquad  W_c\sim\mathcal N\left(
       \frac14\log(1-4B_c^2),
       -\frac12\log(1-4B_c^2)
       \right).
\end{equation}
Then, for each fixed $T>0$, uniformly over $t\in[-T,T]$ such that
$m=\alpha_cn+t\in\mathbb N$,
\begin{equation}\label{eq:main thm - scaling limit}
    \PP { \sol_{n,m,\kappa} = \emptyset } = f_\kappa(t)+o(1).
\end{equation}
Furthermore,
$\lim_{t\to-\infty}f_\kappa(t)=0$ and
$\lim_{t\to+\infty}f_\kappa(t)=1$. 
\end{theorem}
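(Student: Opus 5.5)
Our plan follows the overview: reveal the first $m_0:=\lfloor\alpha_c n-\eta\log n\rfloor$ rows and apply \cref{lemma:conditional 2mm} at $m_0$. Then add the remaining $\ell:=m-m_0=\eta\log n+t+O(1)$ rows, and compute the survival probability conditionally on the first $m_0$ rows. Write $\sol_0:=\sol^{\typ}_{n,m_0,\kappa}$ and $\calZ_0:=|\sol_0|$. Solutions outside the typical set are negligible. Indeed, by \cref{thm:truncated 1mm} applied at $m$ itself, $\E{\calZ-\calZ^{\typ}_I}=o(1)$, since $\E{\calZ_I}$ is bounded at $m=\alpha_cn+t$. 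Also, any typical solution at $m$ lies in $\sol_0$ with its radius in $I_n(\kappa,m_0/n)$, because $r_*(\kappa,\alpha)$ moves only by $O(\log n/n)$ across this range. Hence
$\PP{\sol_{n,m,\kappa}=\emptyset}=\PP{\text{no }x\in\sol_0\text{ survives rows }m_0+1,\dots,m}+o(1)$.

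\textbf{Step 1 (conditional Poisson limit).} Work on the event $E$ of probability $1-o(1)$ on which three things hold: complete freezing \eqref{eq:complete freezing conclusion}; $\calZ_0\le n^{C\eta}$, from \eqref{eq:conditional 2mm - tail goal} and $\E{\calZ_I^\typ}=n^{O(\eta)}$; and the log-normal approximation. Choose one representative from each antipodal pair, giving $\calZ_0/2$ vectors $\hat x$. For each new row $g$, the vector $(\la g,\hat x\ra)_{\hat x}$ is Gaussian with covariance equal to the normalized Gram matrix $\Gamma=I+E$, where $|E_{xy}|\le\log n/\sqrt n$. Thus $\|E\|_F\le n^{C\eta}\log n/\sqrt n=o(1)$ once $\eta$ is small. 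The total variation distance between $N(0,\Gamma)^{\otimes\ell}$ and $N(0,I)^{\otimes\ell}$ is then $O(\sqrt{\ell}\,\|E\|_F)=o(1)$, using Pinsker and the KL bound $\tfrac{\ell}{4}\|E\|_F^2(1+o(1))$. Under independence, each representative of radius $r\in I$ survives with probability $p(\kappa/r)^\ell=(1+o(1))p_c^\ell$. This uses $|r-r_c|=O(\log\log n/\sqrt n)$ and $\ell=O(\log n)$. It gives \eqref{eq:overview conditional extinction}:
\[
\PP{\text{no survivor}\mid A_{[m_0]}}=\Bigl(1-(1+o(1))p_c^\ell\Bigr)^{\calZ_0/2}+o(1)=\exp\{-\tfrac12 p_c^\ell\calZ_0\}+o(1).
\]

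\textbf{Step 2 (identifying the constant).} By \cref{lemma:conditional 2mm}(2), $\calZ_0/\E{\calZ_0}\to e^{W_\alpha}$ in distribution, with $\alpha=m_0/n$. Since $B$ is continuous in $\alpha$, $W_\alpha\to W_c$ in law. It remains to show $p_c^{\ell}\E{\calZ^{\typ}_{I}}(m_0)\to C_\kappa p_c^{t}$. We compute $\E{\calZ_I}$ at $m_0$ by summing \cref{prop:1d-lattice-count} over the lattice of values $nr^2\in\ZZ$:
\[
\sum_{v}\frac{e^{nF_1(\kappa,\alpha_0,\sqrt v)}}{\sqrt{2\pi n\Var(X^2)}}\approx \frac{e^{n\max F_1}}{\sqrt{2\pi n\Var(X^2)}}\cdot n\sqrt{\frac{2\pi}{n\gamma_c}}=C_\kappa e^{n\max_r F_1(\kappa,m_0/n,r)}.
\]
By the envelope theorem, $n\max_rF_1(\kappa,m_0/n,r)=(m_0-\alpha_cn)\log p_c+o(1)$; the $o(1)$ holds because the derivative in $\alpha$ at the maximizer is $\log p(\kappa/r_*)$ and the second-order term is $O(\log^2n/n)$. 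Multiplying by $p_c^{\ell}$ gives $C_\kappa p_c^{m-\alpha_cn}=C_\kappa p_c^t$. The map $z\mapsto e^{-cz}$ is bounded and continuous, so taking expectations yields $f_\kappa(t)$. Uniformity over $t\in[-T,T]$ follows because every estimate is uniform in $\ell$ within this range.

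\textbf{Step 3 (limits of $f_\kappa$).} As $t\to+\infty$ we have $p_c^t\to0$, and as $t\to-\infty$ we have $p_c^t\to\infty$; note $p_c<1$. Dominated convergence then gives the two limits $1$ and $0$.

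The main obstacle is the Gaussian comparison in Step 1, because the bound must hold simultaneously over all $\calZ_0$ vectors, and $\calZ_0$ can be polynomially large. The comparison therefore relies on complete freezing and on taking $\eta$ small enough that $n^{C\eta}\log n/\sqrt n\cdot\sqrt{\log n}=o(1)$. A secondary issue is that the truncated radii window $I$ depends on $\alpha$. We handle it by noting that typical solutions at $m$ form a subset of $\sol_0$, and that the remaining atypical ones have $o(1)$ expectation by \cref{thm:truncated 1mm}.
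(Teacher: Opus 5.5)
Your proposal is correct and follows the paper's proof essentially step for step. You reveal $m_0$ rows, use complete freezing and Pinsker to decouple the next $\ell$ rows, obtain the conditional extinction probability $\exp\{-\frac12p_c^\ell\calZ_0\}$, and combine the log-normal limit with the Laplace computation of $\E{\calZ_0}$. The only difference is that you do that computation via the envelope theorem at $m_0/n$, whereas \cref{lem:critical first moment} expands around $r_c$ with the extra factor $p(\kappa/r)^s$.

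Three small points need tightening:
\begin{enumerate}
\item The windows $I_n(\kappa,m/n)$ and $I_n(\kappa,m_0/n)$ are shifted by $O(\log n/n)$, so typical solutions at $m$ need not literally lie in $\sol_0$.
\item \cref{thm:truncated 1mm} is only stated for $\alpha\le\alpha_c$, so it does not cover $m=\alpha_cn+t$ with $t>0$. The paper handles this point and the previous one together, using the radius truncation of \cref{lem:critical first moment} with the narrower window $\frac12\log\log n$.
\item Invoking \cref{lemma:conditional 2mm} at $m_0/n<\alpha_c$ requires \cref{hyp:maxima} on a whole interval $[\alpha_0,\alpha_c]$. This comes from the stability argument in the proof of \cref{lem:lognormal}.
\end{enumerate}
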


We record precise estimates for the first moment near the critical
point, which supply the normalization in the critical-window law.
The constants $r_c,p_c,\gamma_c$, and $C_\kappa$ are the same as those in
\cref{thm:main2} above.

\begin{lemma}[Critical first moment]
\label{lem:critical first moment}
Under the assumptions and notation of \cref{thm:main2}, for every fixed
$C<\infty$, uniformly over $s$ satisfying
$|s|\le C\log n$ and $m=\alpha_c n+s\in\mathbb N$,
\begin{equation}\label{eq:critical first moment}
    \E{\calZ(n,m,\kappa)}=(C_\kappa+o(1))p_c^s.
\end{equation}
\end{lemma}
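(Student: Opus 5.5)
We compute $\E{\calZ}$ radius by radius and then sum over radii with a discrete Laplace method around $r_c$. By \cref{thm:truncated 1mm} (its proof applies verbatim with $\alpha=m/n=\alpha_c+s/n$, which lies in a compact subset of $(1,\alpha_c]$ up to an $O(\log n/n)$ perturbation), it suffices to compute $\E{\calZ_I}$ up to a factor $1+o(1)$. Rotational invariance gives
\[
\E{\calZ_r}=N(r)\,p(\kappa/r)^m .
\]
Writing $q=nr^2\in\ZZ$ and $v=q/n$, \cref{prop:1d-lattice-count} gives, uniformly over $r\in I$,
\[
\E{\calZ_r}=\frac{1+o(1)}{\sqrt{2\pi n\Var_{\mu_r}(X^2)}}\exp\{nH(\mu_r)+m\log p(\kappa/r)\}.
\]
Since $r\to r_c$ uniformly on $I$, we may replace $\Var_{\mu_r}(X^2)$ by $\Var_{\mu_{r_c}}(X^2)$ at cost $1+o(1)$.

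Next expand the exponent. Set $f(v):=F_1(\kappa,\alpha_c,\sqrt v)$. Then
\[
nH(\mu_r)+m\log p(\kappa/r)=nf(v)+s\log p(\kappa/\sqrt v).
\]
We use three facts:
\begin{itemize}
\item $f(r_c^2)=0$ by \eqref{eq:F1-capacity-sign};
\item $f'(r_c^2)=0$ by stationarity;
\item $f''(r_c^2)=-\gamma_c<0$ by \cref{cor:F1 unique max}.
\end{itemize}
Hence, for $|v-r_c^2|\le Cw_n/\sqrt n$,
\[
nf(v)=-\tfrac{n\gamma_c}{2}(v-r_c^2)^2+O(n|v-r_c^2|^3)=-\tfrac{n\gamma_c}{2}(v-r_c^2)^2+o(1).
\]
For the second term, with $|s|\le C\log n$,
\[
s\log p(\kappa/\sqrt v)=s\log p_c+O\big(|s|\,|v-r_c^2|\big)=s\log p_c+O\big(\log n\,w_n/\sqrt n\big)=s\log p_c+o(1).
\]
This gives
\[
\E{\calZ_r}=(1+o(1))\,p_c^s\,\frac{e^{-n\gamma_c(v-r_c^2)^2/2}}{\sqrt{2\pi n\Var_{\mu_{r_c}}(X^2)}}.
\]

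Now sum over $q$. The admissible values of $q$ are all integers near $nr_c^2$, since every sufficiently large integer is a sum of $n$ nonzero squares in the relevant range. Writing $q-nr_c^2=\ell$, we have $n(v-r_c^2)^2=\ell^2/n$. The sum
\[
\sum_{\ell}\frac{e^{-\gamma_c\ell^2/(2n)}}{\sqrt{2\pi n}}
\]
is a Riemann sum over the window $|\ell|\le \sqrt n\,w_n\cdot 2r_c$, which diverges on the scale $\sqrt n$. It therefore converges to $\gamma_c^{-1/2}$. Combining, $\E{\calZ_I}=(C_\kappa+o(1))p_c^s$.

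It remains to recheck that the truncation error in \cref{thm:truncated 1mm} is relative, and hence uniform in $s$. The tail bounds there compare against $\E{\calZ_I}$, which we have just shown is $\Theta(p_c^s)$. The middle-range estimate \eqref{eq:F1 trunc mid} uses the curvature of $F_1(\kappa,\alpha,\cdot)$. The far-range estimates lose $e^{-cn}$, which dominates $p_c^{-|s|}=n^{O(1)}$. This check is the main obstacle. The points needing care are that the maximizer $r_*(\kappa,m/n)$ differs from $r_c$ by only $O(\log n/n)$, which is negligible against the window $w_n/\sqrt n$, and that the error $O(|s||v-r_c^2|)$ stays $o(1)$ uniformly over $|s|\le C\log n$.
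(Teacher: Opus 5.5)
Your proposal is correct and follows essentially the same route as the paper. Both truncate to radii within $w_n/\sqrt n$ of $r_c$, apply the local CLT of \cref{prop:1d-lattice-count}, expand $F_1(\kappa,\alpha_c,\sqrt v)$ to second order at $v=r_c^2$, and evaluate the discrete Gaussian sum to get $(\gamma_c\Var_{\mu_{r_c}}(X^2))^{-1/2}p_c^s$. The only difference is in the truncation: you rerun \cref{thm:truncated 1mm} at $\alpha=m/n$, tracking the $O(\log n/n)$ shift of $r_*$ and the slightly negative $F_*$ when $s>0$, whereas the paper truncates with the $\alpha_c$ profile and absorbs the factor $p(\kappa/r)^s=p_c^s e^{O(|s||r-r_c|)}$ by completing the square.
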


\begin{proof}
Fix $C<\infty$ and suppose that $|s|\le C\log n$. Recalling
$w_n:=\log\log n$, we first claim that
\begin{equation}\label{eq:critical radius truncation}
    \sum_{\substack{r\in\mathcal R_n:\
                    \sqrt n\,|r-r_c|>w_n}}
       \E{\calZ_r}
    =o(p_c^s).
\end{equation}

Indeed, \eqref{eq:F1-capacity-sign} gives
\[
    F_1(\kappa,\alpha_c,r_c)=0.
\]
The radius-truncation argument in the proof of
\cref{thm:truncated 1mm}, with
$(\alpha,r_*)=(\alpha_c,r_c)$, applies mutatis mutandis. Hence, for $r$ near $r_c$,
\[
    N(r)p(\kappa/r)^{\alpha_c n}
    \le K n^{-1/2}e^{-cn(r-r_c)^2}.
\]
The only additional factor is $p(\kappa/r)^s$. A first-order expansion
gives
$
    p(\kappa/r)^s
    =p_c^s\exp\{O(|s||r-r_c|)\}.
$
Then,
\begin{align*}
    \E{\calZ_r}
    &\le K n^{-1/2}p_c^s
       \exp\{-cn(r-r_c)^2+K|s||r-r_c|\}\\
    &\le K' n^{-1/2}p_c^s
       \exp\{-c'n(r-r_c)^2\},
\end{align*}
where the second inequality follows by completing the square and using
$s^2/n=o(1)$. Thus the proof of \eqref{eq:F1 trunc mid} applies without
further change.

On any fixed compact set away from $r_c$, it holds uniformly that
$p(\kappa/r)^s/p_c^s= n^{O(1)}$, while $\E{\calZ_r}$ is already exponentially smaller than $\E{\calZ_I}$. Finally, for large $r$, we have $p(\kappa/r)^s\le (Kr)^{C\log n}$ for some $K<\infty$. Since $\alpha_c>1$, the
summation in \eqref{eq:F1 trunc extreme} still decays exponentially. Hence, arguing as in \cref{thm:truncated 1mm} yields
\eqref{eq:critical radius truncation}.

It therefore suffices to sum over radii satisfying
$\sqrt n\,|r-r_c|\le w_n$. Uniformly over these radii,
\cref{prop:1d-lattice-count} gives
\[
    \E{\calZ_r}
    =\frac{1+o(1)}
      {\sqrt{2\pi n\Var_{X\sim\mu_r}(X^2)}}
      \exp\{nF_1(\kappa,\alpha_c,r)\}
      p(\kappa/r)^s.
\]
Recalling that $F_1(\kappa,\alpha_c,r_c)=0$ and that $r_c$ is the
maximizer of $F_1$, the definition of $\gamma_c$ gives
\[
    F_1(\kappa,\alpha_c,r)
    =-\frac{\gamma_c}{2}(r^2-r_c^2)^2
      +O(|r^2-r_c^2|^3)
    =-\frac{\gamma_c}{2}(r^2-r_c^2)^2+o(n^{-1}).
\]
Moreover, first-order expansions give
\[
    p(\kappa/r)^s
    =p_c^s\exp\{O(|s||r-r_c|)\}
    =(1+o(1))p_c^s, \quad
    \Var_{X\sim\mu_r}(X^2)
    =(1+o(1))\Var_{X\sim\mu_{r_c}}(X^2).
\]
Consequently, 
\begin{align*} \E{\calZ} &=(1+o(1)) \frac{p_c^s} {\sqrt{2\pi n\Var_{X\sim\mu_{r_c}}(X^2)}} \sum_{\substack{r\in\calR_n\\ \sqrt n\,|r-r_c|\le\log\log n}} \exp\left\{-\frac{\gamma_c n}{2}(r^2-r_c^2)^2\right\} +o(p_c^s). \\ &= (1+o(1)) \frac{p_c^s} {\sqrt{\gamma_c\Var_{X\sim\mu_{r_c}}(X^2)}} =(C_\kappa+o(1))p_c^s. \end{align*}
Between the first and second lines, we have used that the Gaussian sum converges to $\sqrt{2\pi n/\gamma_c}$. \end{proof}
We are ready to prove our main result about the structure of the solution set.
\begin{proof}[Proof of \cref{thmStructure}] We prove the claims in \cref{thmStructure} in reverse order, beginning with item \ref{item:lognormal}. We begin by giving the more precise statement of the log-normal limit.

\begin{lemma}\label{lem:lognormal}
Let $\kappa, \alpha, n, m$ be as in \cref{thmStructure}. Let
\[
    B_*:=\frac{\sqrt\alpha}{2}
       \left(\E{W^2\mid |W|\le\kappa/r_*}-1\right)
\]
and
\[
    W_{\kappa,\alpha}\sim\mathcal N\left(
       \frac14\log(1-4B_*^2),
       -\frac12\log(1-4B_*^2)
       \right),
\]
then
\begin{equation}\label{eq:main lognormal limit}
    \sup_{s\in\RR}
    \left|
\PP{\calZ>e^s\E{\calZ}}-\PP{W_{\kappa,\alpha}>s}
    \right|=o(1).
\end{equation}
\end{lemma}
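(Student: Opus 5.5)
The plan is to transfer the log-normal limit already proved for the truncated count, \eqref{eq:conditional 2mm - scaling limit goal} in \cref{lemma:conditional 2mm}, to the full count $\calZ$. The truncation is the bridge: \cref{thm:truncated 1mm} gives both $\E{\calZ}=(1+o(1))\E{\calZ_I^\typ}$ and, with probability $1-o(1)$,
\[
    0\le\calZ-\calZ_I^\typ\le e^{-cw_n^2}\E{\calZ_I^\typ}.
\]

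First I check that the hypotheses line up. \cref{thmStructure} fixes $\alpha\in(1,\alpha_c(\kappa))$ at which \cref{hyp:maxima} holds and sets $m=\lfloor\alpha n\rfloor$. I would take $J=\{\alpha\}$, or a tiny compact interval around it. Since $\alpha<\alpha_c$, the condition $m\le\alpha_c n-\eta\log n$ holds for large $n$ for any fixed $\eta$. One subtlety is that $m/n$ differs from $\alpha$ by $O(1/n)$. So the quantities $r_*$ and $B$ appearing in \cref{lemma:conditional 2mm} are evaluated at $m/n$, not at $\alpha$. Continuity of $r_*$ in $\alpha$ (from the implicit function argument at the end of the proof of \cref{thm:truncated 1mm}) and of $B_{u,\alpha}$ shows that the Gaussian parameters of $W_\alpha$ converge to those of $W_{\kappa,\alpha}$. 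Because the limiting variance $-\frac12\log(1-4B_*^2)$ is bounded away from zero, the distribution functions converge uniformly. Hence the sup-norm statement transfers from $W_{m/n}$ to $W_{\kappa,\alpha}$.

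Next comes the comparison. Write $\calZ/\E{\calZ}=(1+o(1))\,\calZ_I^\typ/\E{\calZ_I^\typ}+R$, where $0\le R\le e^{-cw_n^2}(1+o(1))=o(1)$ on an event of probability $1-o(1)$. On this event, for every $s$,
\[
    \{\calZ_I^\typ>e^{s+\epsilon_n}\E{\calZ_I^\typ}\}\subset\{\calZ>e^s\E{\calZ}\}\subset\{\calZ_I^\typ>e^{s-\epsilon_n}\E{\calZ_I^\typ}\}\cup\mathcal{E},
\]
with $\epsilon_n\to0$. Here $\mathcal{E}$ is the event that the additive error $R$ matters, which can only happen when $e^s\lesssim R$.

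To handle $\mathcal{E}$, I split into two ranges of $s$. For $s\le -\frac12\log\log n$, both probabilities are $1-o(1)$: the right side by the Gaussian tail, and the left side by \eqref{eq:conditional 2mm - tail goal} together with $\calZ\ge\calZ_I^\typ$. For $s\ge-\frac12\log\log n$, we have $e^{s}\ge(\log n)^{-1/2}\gg e^{-cw_n^2}$. Since $w_n=\log\log n$, the additive error only shifts $s$ by $o(1)$, uniformly in $s$.

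Finally, because the limiting Gaussian CDF is uniformly Lipschitz, the $o(1)$ shifts in $s$ cost $o(1)$ uniformly. Combining this with \eqref{eq:conditional 2mm - scaling limit goal} gives \eqref{eq:main lognormal limit}.

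I expect the main obstacle to be the bookkeeping in this comparison step: the additive truncation error must be converted into a multiplicative one uniformly over all $s$. The key inputs are that $e^{-c(\log\log n)^2}$ is much smaller than $(\log n)^{-1/2}$, which does hold, and the tail bound \eqref{eq:conditional 2mm - tail goal}.
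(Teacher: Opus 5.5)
Your comparison step is the same as the paper's, and it is fine. The truncation bound $0\le\calZ-\calZ_I^\typ\le e^{-c(\log\log n)^2}\E{\calZ_I^\typ}$, combined with the lower tail $\calZ_I^\typ\ge(\log n)^{-1/2}\E{\calZ_I^\typ}$, makes the additive error multiplicatively negligible. This directly gives $\log(\calZ/\E{\calZ})=\log(\calZ_I^\typ/\E{\calZ_I^\typ})+o_{\mathbb P}(1)$, so your split into ranges of $s$ is unnecessary. The uniformly bounded Gaussian density then absorbs both the $o(1)$ shift and the convergence of the Gaussian parameters from $m/n$ to $\alpha$.

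The gap is in your first step. \cref{lemma:conditional 2mm}, and the uniform estimates inside its proof, require $\alpha=m/n\in J$ for a compact interval $J$ on which \cref{hyp:maxima} holds. Taking $J=\{\alpha\}$ fails, since $m/n=\lfloor\alpha n\rfloor/n\neq\alpha$ in general. The alternative, a tiny interval around $\alpha$, needs \cref{hyp:maxima} at values other than $\alpha$. But \cref{thmStructure} assumes it only at the single value $\alpha$, so you must prove that the hypothesis is stable under small perturbations of $\alpha$.

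This stability does not follow from continuity alone. Condition (1) is a strict inequality on the non-compact set $(-1,1)\setminus\{0\}$: the gap $F_2(0)-F_2(\theta)$ vanishes as $\theta\to0$, and $F_2$ is singular as $\theta\to\pm1$. The paper supplies the argument in three pieces:
\begin{itemize}
\item Near $\theta=0$: condition (2) is an open condition (with $r_*$ continuous in $\alpha$), which together with the Taylor expansion at $\theta=0$ controls small $|\theta|$.
\item Near $\theta=\pm1$: \cref{lemma:F2-endpoint-uniform}, whose proof never uses the hypothesis, gives $F_2\le F_1(\kappa,\alpha',r_*)-c\varepsilon^{.51}$. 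This is at most $2F_1(\kappa,\alpha',r_*)=F_2(0)$ because $F_1(\kappa,\alpha',r_*(\kappa,\alpha'))\ge0$ for $\alpha'\le\alpha_c(\kappa)$.
\item In between: continuity and compactness cover the remaining $\theta$.
\end{itemize}
With this robustness step added, your argument goes through.
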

\begin{proof} 
Let $\alpha_n=m/n$. Note that \cref{hyp:maxima} is robust to sufficiently small perturbations in $\alpha$: item~(2) and the local expansion at
$\theta=0$ control small overlaps, \cref{lemma:F2-endpoint-uniform}
controls a neighborhood of $\theta=\pm1$, and continuity and compactness
control the remaining values of $\theta$.
Consequently, \cref{hyp:maxima} holds throughout some compact interval
$J\subset(1,\alpha_c(\kappa)]$ containing $\alpha$ in its interior.
For all sufficiently large $n$, we have $\alpha_n\in J$ and
$m\le\alpha_c(\kappa)n-\eta\log n$ for some fixed $\eta>0$, so the results of
\cref{sec:second moment method} apply. Let
$I=I_n(\kappa,\alpha_n)$ and write $\calZ^{\typ}=\calZ_I^{\typ}$. By
\cref{thm:truncated 1mm,lemma:conditional 2mm},
\begin{equation}\label{eq:full versus typical}
    \frac{\calZ-\calZ^{\typ}}{\calZ^{\typ}}=o_{\mathbb P}(1),
    \qquad
    \frac{\E{\calZ}}{\E{\calZ^{\typ}}}=1+o(1).
\end{equation}
Indeed, the numerator in the first ratio is at most
$e^{-c(\log\log n)^2}\E{\calZ^{\typ}}$ with high probability, while the denominator is lower bounded via
\eqref{eq:conditional 2mm - tail goal}. Hence, with high probability,
\begin{align*}
    \log\frac{\calZ}{\E{\calZ}}
    &=
    \log\frac{\calZ^{\typ}}{\E{\calZ^{\typ}}}
    +\log\frac{\calZ}{\calZ^{\typ}}
    +\log\frac{\E{\calZ^{\typ}}}{\E{\calZ}} \stackrel{\eqref{eq:full versus typical}}=
    \log\frac{\calZ^{\typ}}{\E{\calZ^{\typ}}}
    +o_{\mathbb P}(1),
\end{align*}
Since $\alpha_n\to\alpha$, continuity
gives $r_*(\kappa,\alpha_n)\to r_*$ and $B_{\kappa/r_*(\kappa,\alpha_n),\alpha_n}\to B_*$.
Thus, the mean and variance of the Gaussian $W_{\alpha_n}$ in
\eqref{eq:conditional 2mm - scaling limit goal} converge to those of
$W_{\kappa,\alpha}$. Since these Gaussian densities are uniformly
bounded, the $o_{\mathbb P}(1)$ perturbation above does not affect the
Kolmogorov distance. This establishes \eqref{eq:main lognormal limit}, and also item \ref{item:lognormal} of \cref{thmStructure}. \end{proof}

We next prove item \ref{item:typical empirical} of \cref{thmStructure}, the assertion about empirical distributions. For fixed
$r\in I$, consider an empirical distribution $\nu$ of a vector in
$\Lambda_n^{\typ}(r)$. 
Writing $\mu_r(k)=e^{a_rk^2+b_r}$, we have
\[
    \sum_k k^2\nu(k)=r^2=\sum_k k^2\mu_r(k).
\]
Hence,
\begin{align*}
    D(\nu\|\mu_r)
    =-H(\nu)-\sum_k\nu(k)(a_rk^2+b_r)&=-H(\nu)-a_rr^2-b_r =H(\mu_r)-H(\nu).
\end{align*}
On the other hand, there are at most $(n+1)^{2\log n}$ possible empirical
distributions $\nu$ since typical vectors $x$ satisfy $\|x\|_\infty\le \log n$ by construction. Moreover, standard counting considerations give that each empirical distribution $\nu$ corresponds to at most $e^{nH(\nu)}$
vectors. Hence, for every $s>0$,
\begin{equation}\label{eq:empirical type bound}
    \left|\left\{x\in\Lambda_n^{\typ}(r):
       D(\nu_x\|\mu_r)\ge s\right\}\right|
    \le \exp\{nH(\mu_r)-ns+O((\log n)^2)\}.
\end{equation}
We remark that this is the main computation in the proof of
Sanov's theorem; however, it is more convenient here to use the explicit bound above rather than quote the full theorem.

Let $\eps_n=n^{-1/2}(\log n)^{3}$ and define the set of solutions with ``atypical'' empirical distribution by
\[
    A_0:=\left\{x\in\sol_{n,m,\kappa}^{\typ}:
       \|\nu_x-\mu_{r_x}\|_{\mathrm{TV}}>\eps_n\right\},
    \qquad r_x:=\|x\|_2/\sqrt n.
\]
Combining this with \eqref{eq:empirical type bound} gives
\begin{align*}
    \E{|A_0|}
    &\le
    e^{-2n\eps_n^2+O((\log n)^2)}
    \sum_{r\in I}e^{nH(\mu_r)}p(\kappa/r)^m.
\end{align*}
Applying the local central limit
theorem to \cref{prop:1d-lattice-count}, and then using \eqref{eq:E[Z] - typical} to restrict to typical vectors, it holds uniformly over $r\in I$, 
\[
    e^{nH(\mu_r)}
    \le C\sqrt n\,|\Lambda_n^{\typ}(r)|.
\]
Hence, as $n\eps_n^2 = \omega((\log n)^2)$,
\[
    \E{|A_0|}
    \le e^{-cn\eps_n^2}\E{\calZ^{\typ}}.
\]
Consequently, Markov's inequality and
\eqref{eq:conditional 2mm - tail goal} give
\begin{align*}
    \PP{\frac{|A_0|}{\calZ^{\typ}}>
             e^{-cn\eps_n^2/2}}
    &\le
    \PP{\calZ^{\typ}<(\log n)^{-1/2}\E{\calZ^{\typ}}}
      +(\log n)^{1/2}e^{-cn\eps_n^2/2}=o(1).
\end{align*}
That is, with high probability, the proportion of solutions with atypical empirical distribution is vanishing:
\begin{equation}\label{eq:empirical good solutions}
    \frac{|A_0|}{\calZ^{\typ}}=o_{\mathbb P}(1).
\end{equation}
Of course, our definition of $A_0$ compares to $\mu_{r_x}$ rather than $\mu_{r_*}$; it remains to correct for this. 
Since $\alpha_n\to\alpha$ and $I$ is centered at
$r_*(\kappa,\alpha_n)$,
\[
    \sup_{r\in I}|r-r_*|
    \le\frac{w_n}{\sqrt n}
       +|r_*(\kappa,\alpha_n)-r_*|
    =o(1).
\]
Continuity of the discrete Gaussian in its variance therefore gives
\[
    \sup_{r\in I}\|\mu_r-\mu_{r_*}\|_{\mathrm{TV}}=o(1).
\]
Thus every $x\in\sol_{n,m,\kappa}^{\typ}\setminus A_0$ satisfies
\[
    \|\nu_x-\mu_{r_*}\|_{\mathrm{TV}}
    \le\eps_n+
       \sup_{r\in I}\|\mu_r-\mu_{r_*}\|_{\mathrm{TV}}
    =o(1).
\]
Combining the above estimates readily yields
item \ref{item:typical empirical} of \cref{thmStructure}; moreover, we remark that tracking the errors more carefully above leads to the quantitative rate $\|\nu_x - \mu_{r_*}\|_{\mathrm{TV}} \le n^{-1/2}(\log n)^{c}$. \\

We establish item \ref{item:typical orthogonal} next, which asserts the near-orthogonality of typical solutions. Let $P^{\typ}$ be the number of ordered pairs $(x,y)$ of typical
solutions $x,y\in\sol_{n,m,\kappa}^{\typ}$ such that
$y\notin\{x,-x\}$ and
$|\la x,y\ra|>\sqrt n\log n$. The unweighted estimates in the proof of
\cref{lemma:conditional 2mm}, obtained by removing the weights in the
arguments for
\eqref{eq:2mm-near intermediate}, \eqref{eq:2mm-bulk}, and
\eqref{eq:2mm-extreme off diagonal}, give
\[
    \E{P^{\typ}}=o\bigl(\E{\calZ^{\typ}}^2\bigr).
\]
It follows from \eqref{eq:conditional 2mm - scaling limit goal} and
Markov's inequality that $P^{\typ}/(\calZ^{\typ})^2=o_{\mathbb P}(1)$.
Except for pairs $(x,y)$ with $y\in\{x, -x\}$ (which are negligible in number compared to $\calZ^2$), all remaining ordered pairs have at least one endpoint outside the
typical set and hence number at most
$2\calZ(\calZ-\calZ^{\typ})$. Applying \eqref{eq:full versus typical}
yields item~(2).\\

It remains to prove item \ref{item:freezing}, namely freezing in Hamming distance. Let $\delta$ be the constant in \eqref{eq:strong freezing conclusion}; we will prove item \ref{item:freezing} with a modified constant $\delta'$. Since $I$
remains in a compact subset of $(1,\infty)$, there are $c,C>0$ such
that the sub-Gaussian estimate $\mu_r(k)\le Ce^{-ck^2}$ holds uniformly for $r\in I$. Thus, there exists
some fixed $M<\infty$ with
\[
    \sup_{r\in I}
    \E[\mu_r]{X^2\ind_{\{|X|>M\}}}\le\frac{\delta^2}{32}.
\]
For all $n$ sufficiently large, we then have for every $x\in\sol_{n,m,\kappa}^{\typ}\setminus A_0$,
\begin{align*}
    \frac1n\sum_{i:\,|x_i|>M}x_i^2 &\le \E[\mu_{r_x}]{X^2\ind_{\{|X|>M\}}}
      + 2(\log n)^2\eps_n \le\frac{\delta^2}{16}\,.
\end{align*}
That is, ``large'' coordinates do not contribute significantly to $\|x\|$. Next, choose $\delta'\in(0,1/2)$ so that
$2M^2\delta'\le\delta^2/16$. If
$x,x'\in\sol_{n,m,\kappa}^{\typ}\setminus A_0$ differ in at most
$2\delta' n$ coordinates, decomposing the set $\{i\,:\, x_i \neq x_i'\}$ based on whether $|x_i| > M$ or $|x_i| \le M$ (and similarly for $x_i'$) gives
\begin{align*}
    \|x-x'\|_2^2
    &\le2\sum_{i:x_i\ne x_i'}x_i^2
       +2\sum_{i:x_i\ne x_i'}(x_i')^2 \le4\pa{\frac{\delta^2n}{16}+2M^2\delta'n}
    \le\frac{\delta^2n}{2}.
\end{align*}
This gives freezing with respect to Hamming distance for pairs of typical solution vectors. It remains to briefly handle freezing in the case that at least one of the solution vectors is atypical. Define
\[
    A:=(\sol_{n,m,\kappa}\setminus\sol_{n,m,\kappa}^{\typ})\cup A_0,
\]
the set of solutions which are atypical in radius, infinity norm, or empirical distribution. Also define the set of typical solutions violating freezing as
\[
    B := \{x \in \sol_{n,m,\kappa}\setminus A\,:\, \exists\,y\in (\sol_{n,m,\kappa}\setminus A)\setminus\{x\},~\|x-y\|_2 \le \delta \sqrt{n}\}\,.
\]
By
\eqref{eq:full versus typical} and
\eqref{eq:empirical good solutions}, $|A|$ is negligible; by
\eqref{eq:strong freezing conclusion}, $|B|$ is also negligible. That is,
\[
    \frac{|A|}{\calZ}=o_{\mathbb P}(1),
    \qquad
    \frac{|B|}{\calZ}=o_{\mathbb P}(1).
\]

A fixed vector $y\in A$ can be within Hamming distance $\delta'n$ of at most one solution outside $A\cup B$.
Indeed, if two distinct solutions $x,x'\notin A\cup B$ were each within Hamming distance $\delta'n$ of $y$, then $x$ and $x'$ would differ in at most $2\delta'n$
coordinates. The preceding bound would give
$\|x-x'\|_2\le\delta\sqrt n$, so the definition of $B$ would imply $x,x'\in B$, which is a contradiction.

Therefore, the total
number of solutions $x \in \sol_{n,m,\kappa}$ having at least one (different) solution within Hamming distance
$\delta' n$ is at most
\[
    2|A|+|B|=o_{\mathbb P}(\calZ).
\]
This proves item \ref{item:freezing} and completes all parts of the proof of \cref{thmStructure}.
\end{proof}

Finally, we prove our main theorem about the satisfiability threshold and critical window. (Recall that \cref{thm:main2} subsumes \cref{thm:main}.)
\begin{proof}[Proof of \cref{thm:main2}]
We directly adapt the stopping-time argument of Sah and Sawhney
\cite[Section~2]{ss}, mutatis mutandis. There are some minor running differences due to the present model allowing for slight variation in the norms of the solutions. These are handled using the same type of Lipschitz estimates that have been used throughout the rest of the manuscript. We present the proof for completeness.

By assumption, \cref{hyp:maxima} holds at $\alpha_c$. The same stability argument used in the previous proof gives that \cref{hyp:maxima} also holds
throughout some interval $J=[\alpha_0,\alpha_c]$. We use
\cref{lemma:conditional 2mm} on this interval. Choose a sufficiently small fixed $\eta>0$ and reveal the first
\[
    m_0:=\lfloor\alpha_c n-\eta\log n\rfloor
\]
rows. Define $\mathcal F_{m_0}:=\sigma(G_1,\ldots,G_{m_0})$, and let $s_0 :=m_0-\alpha_c n$,
$I_0 :=I_n(\kappa,m_0/n)$, and
\[
    \mathcal T_0:=\bigcup_{r\in I_0}\Lambda_n^{\typ}(r),
    \qquad
    \calZ_0:=|\sol_{n,m_0,\kappa}\cap\mathcal T_0|.
\]
Note that $\mathcal{T}_0$, which should be thought of as the surviving typical solutions at the precritical time, is $\mathcal{F}_{m_0}$--measurable. By \cref{lem:critical first moment,thm:truncated 1mm},
\begin{equation}\label{eq:pretime first moment}
    \E{\calZ_0}=(C_\kappa+o(1))\,p_c^{s_0}.
\end{equation}
Choose $\eta$ small enough that
$\eta\log(1/p_c)<1/8$. Then
\cref{lemma:conditional 2mm} implies that with high probability, 
\begin{equation}\label{eq:pretime good event1}
    \frac{\calZ_0}{\E{\calZ_0}}
       \in\bigl((\log n)^{-1/2},(\log n)^{1/2}\bigr),
    \qquad
    \calZ_0\le n^{1/4},
\end{equation}
and also, simultaneously for all pairs $x,y\in\sol_{n,m_0,\kappa}\cap\mathcal T_0$ with
$y\notin\{x,-x\}$,
\begin{equation}\label{eq:pretime good event2}
    |\la x,y\ra|\le\sqrt n\log n\,.
\end{equation}

Fix $T<\infty$, let $m=\alpha_c n+t\in\mathbb N$ with $|t|\le T$, and
let $\ell=m-m_0$. Conditioned on $\mathcal F_{m_0}$, choose one
representative from each antipodal pair in
$\sol_{n,m_0,\kappa}\cap\mathcal T_0$, and call the resulting set $S$.
Thus $|S|=\calZ_0/2$. For a new standard Gaussian row $G_j=\sqrt nA_j$,
the vector
\[
    \pa{\la G_j,x/\|x\|_2\ra}_{x\in S}
\]
is centered Gaussian with covariance matrix $R$ given by
\[
    R_{x,y}=\frac{\la x,y\ra}{\|x\|_2\|y\|_2}.
\]
Within the intersection of the high-probability events in \eqref{eq:pretime good event1} and \eqref{eq:pretime good event2}, the covariance $R$ trivializes:
\[
    \|R-I\|_{\mathrm{F}}
    \le C|S|\frac{\log n}{\sqrt n}=o(1).
\]
Consequently, using the formula for the relative entropy of multidimensional Gaussians readily gives
\[
    D\left(
       N(0,R)^{\otimes\ell}\,\middle\|\,N(0,I)^{\otimes\ell}
    \right)
    \le C\ell\,\|R-I\|_{\mathrm F}^2=o(1).
\]
In particular, Pinsker's inequality gives that the joint law for the Gaussian processes corresponding to the inner products of $S$ with the next
$\ell=O(\log n)$ rows is vanishingly close in total variation from the idealized law in
which all these Gaussian variables are independent. This is
the decoupling step in
\cite[Section~2]{ss}.

For $x\in S$ with $\|x\|_2=r_x\sqrt n$, its survival probability under the addition of $\ell$ fresh constraints is
\[
    q_x:=p(\kappa/r_x)^\ell.
\]
Next, the dependence on $x$ is removed from $q$. We claim:
\begin{equation}\label{eq:critical maximizing radius comparison}
    r_*(\kappa,m_0/n)=r_c+O_{\kappa,\eta}((\log n)/n).
\end{equation}
Indeed, let $\Phi(v,\beta):=\partial_vF_1(\kappa,\beta,\sqrt v)$. Then
$\Phi(v_c,\alpha_c)=0$ and
$\partial_v\Phi(v_c,\alpha_c)=-\gamma_c<0$. The implicit function
theorem therefore gives a $C^1$ function
$v_*(\beta)=r_*(\kappa,\beta)^2$ near $\alpha_c$, and hence
\[
    |v_*(\beta)-v_c|\le C|\beta-\alpha_c|,
\]
where $C$ depends only on $\kappa$. Since $v_c>0$ and $\eta$ is fixed,
this yields the claim. Consequently, we obtain that uniformly over $x\in S$,
\begin{equation}\label{eq:varying radii survival}
    q_x=p_c^\ell
       \left(1+O\left(\frac{(\log n)(\log\log n)}{\sqrt n}\right)\right).
\end{equation}
Within the high probability events in \eqref{eq:pretime good event1} and \eqref{eq:pretime good event2},
$\calZ_0p_c^\ell=O(\sqrt{\log n})$ almost surely by
\eqref{eq:conditional 2mm - tail goal} and
\eqref{eq:pretime first moment}, while
$p_c^\ell=n^{-\eta\log(1/p_c)+o(1)}=o(1)$. Hence
\eqref{eq:varying radii survival} and the expansion
$\log(1-z)=-z+O(z^2)$ let us estimate the probability that no solutions from $\mathcal{T}_0$ survive after the $m^\text{th}$ constraint:

\begin{equation}\label{eq:conditional extinction}
    \PP{\mathcal T_0 \cap \sol_{n,m,\kappa} = \emptyset
          \mid \mathcal F_{m_0}}
    = \exp\left\{-\frac12p_c^\ell\calZ_0\right\}+o(1).
\end{equation}
The factor $1/2$ appears because solutions occur in antipodal pairs.

It remains only to remove the restriction to the set of typical solutions, $\mathcal T_0$. Apply
\eqref{eq:critical radius truncation} with
$w_n=\frac12\log\log n$. Together with \eqref{eq:critical maximizing radius comparison} and the infinity-norm truncation
argument leading to \eqref{eq:E[Z] - typical}, which is uniform when
$m/n$ ranges in a fixed neighborhood of $\alpha_c$, we have uniformly for $|t|\le T$,
\[
    \E{\left|\sol_{n,m,\kappa}\setminus\mathcal T_0\right|}=o(1).
\]
Thus Markov's inequality and \eqref{eq:conditional extinction} yield
\begin{equation}\label{eq:extinction from pretime}
    \PP{\sol_{n,m,\kappa}=\emptyset}
    =\E{\exp\left\{-\frac12p_c^\ell\calZ_0\right\}}+o(1).
\end{equation}
Finally, as $m_0/n\to\alpha_c$, \eqref{eq:conditional 2mm - scaling limit goal} gives
\[
    \log\frac{\calZ_0}{\E{\calZ_0}}
    \ \xrightarrow{\ d\ }\ W_c,
\]
Moreover,
\eqref{eq:pretime first moment} with $t := s_0 + \ell$ gives
\[
    \frac12p_c^\ell\,\E{\calZ_0}
    =\left(\frac{C_\kappa}{2}+o(1)\right)p_c^t.
\]
Since $(t,z)\mapsto\exp\left(-C_\kappa p_c^t e^{z}/2\right)$ is bounded and uniformly continuous on $[-T, T]\times \mathbb R$, it follows uniformly for $|t|\leq T$ that 
\[\E{\exp\left\{-\frac{1}{2}p_c^{\ell}\calZ_0\right\}}=\E{\exp\left\{-\frac{C_\kappa}{2} p_c^t e^{W_c}\right\}}+o(1).\]
Then \eqref{eq:extinction from pretime} gives, uniformly
for $|t| \le T$ and $\alpha_cn+t\in\mathbb N$,
\[
    \PP{\sol_{n,m,\kappa}=\emptyset}
    =\E{\exp\left\{-\frac{C_\kappa}{2}p_c^te^{W_c}\right\}}+o(1)
    =f_\kappa(t)+o(1).
\]
Finally, $p_c\in(0,1)$ and $e^{W_c}>0$ almost surely, so the dominated convergence theorem gives the claimed limits for $f_\kappa(t)$ as $t \to \pm \infty$.
\end{proof}

\section*{Acknowledgements}
WP supported in part by NSF grant DMS-2348743.

\bibliographystyle{plain}
\bibliography{references}

\appendix

\section{Planar discrete Gaussian}

\begin{proof}[Proof of \cref{prop:dg2}]
Write $S=(\ZZ\setminus\{0\})^2$ and
\[
    R=r^2-1,\qquad T=\hat r^2-1,\qquad s=r\hat r\theta.
\]
For $q(x,y)=Ax^2+Bxy+Cy^2$, the sum
$\sum_{(x,y)\in S}e^{-q(x,y)}$ is finite exactly when
$A,C>0$ and $B^2<4AC$. Also, no nonzero linear combination of $x^2,xy,y^2$ is
constant on $S$.

The results of \cite[Proposition~3.2, Theorems~3.3--3.4,
and Appendix~B.1]{wainwright-jordan} therefore reduce the claim
to the following check: for every nonzero quadratic $q(x,y)=Ax^2+Bxy+Cy^2$
bounded below on $S$,
\[
    Ar^2+Bs+C\hat r^2>\inf_{(x,y)\in S}q(x,y).
\]
Such a quadratic necessarily has $A,C\ge0$ and
$|B|\le2\sqrt{AC}$; otherwise it tends to $-\infty$ along
integer multiples of a suitable vector.
Taking $x,y\in\{-1,1\}$ gives
\[
    \inf_{(x,y)\in S}q(x,y)\le A+C-|B|.
\]
By the hypothesis, choose $\varepsilon\in(0,1)$ such that
$|s|-1\le(1-\varepsilon)\sqrt{RT}$. Then
\begin{align*}
    Ar^2+Bs+C\hat r^2-(A+C-|B|)
    &=AR+CT+Bs+|B|\\
    &\ge AR+CT-(1-\varepsilon)\sqrt{RT}\,|B|\\
    &\ge\varepsilon(AR+CT)>0,
\end{align*}
where we used $|B|\le2\sqrt{AC}$ and
$2\sqrt{RATC}\le AR+CT$. The cited result gives the claimed density and unique coefficients.
\end{proof}

\section{Auxiliary estimates for the second moment profile}
\label{app:profile-estimates}

\subsection{Numerical convention}\label{app:numer-conventions}

All decimal inequalities in this appendix are rigorous interval bounds.
Finite expressions involving
$\exp$, $\log$, $\sqrt{\cdot}$, $\operatorname{erf}$, and
$\operatorname{erfc}$ are evaluated using arbitrary-precision ball
arithmetic; here
\[
    p(x)=\operatorname{erf}(x/\sqrt2),
    \qquad
    \overline\Phi(x)=\frac12\operatorname{erfc}(x/\sqrt2),
\]
where (recall) $\overline\Phi(x) = \PP{W>x}$ for a standard Gaussian variable $W$.
The parameter $t = t(r):=1/(2\sigma(r)^2)$ is enclosed by bisection using the
strictly monotone moment equation
\begin{equation}\label{eq:interval-moment-equation}
    r^2=
    \frac{\sum_{j\ge1}j^2e^{-tj^2}}
         {\sum_{j\ge1}e^{-tj^2}}.
\end{equation}

Every positive series is truncated with a geometric or integral bound on
the omitted tail. Every displayed decimal is rounded outward. After the
analytic reductions we present in the following subsections, the remaining checks only concern explicit
functions of at most two variables on the stated domains. We state the
certified bounds and omit the
unilluminating subdivision arithmetic. 

We repeatedly use the elementary inequalities
\begin{equation}\label{eq:Gaussian-tail-two-sided}
    \frac{x}{1+x^2}\phi(x)
       \le\overline\Phi(x)
       \le\frac{\phi(x)}x,
    \qquad x>0.
\end{equation}
We also use that for any $u>0$,
\begin{equation}\label{eq:theta-sum-elementary}
    \sum_{m\in\ZZ}e^{-um^2}-1
       \le\frac{2e^{-u}}{1-e^{-3u}},
    \qquad
    \sum_{m\in\ZZ}e^{-um^2}
       \le1+\sqrt{\frac\pi u}.
\end{equation}
The first inequality follows by comparing consecutive terms in the sum, and the second by comparing
the sum with its Gaussian integral.

\subsection{Proof of \cref{lem:Gaussian-overlap-estimates}}
\label{app:Gaussian-small-window}

\begin{proof}
Let $(h_j)_{j\ge0}$ be the orthonormal Hermite polynomials in
$L^2(\gamma)$, and expand
\[
    \frac{\ind_{\{|x|\le K\}}}{p(K)}
       =1+\sum_{j\ge1}a_jh_j(x).
\]
Symmetry gives $a_j=0$ for odd $j$, and correlated standard
Gaussians satisfy
\[
    \E{h_j(W)h_\ell(W_\theta)}
       =\ind_{\{j=\ell\}}\theta^j.
\]
Consequently,
\[
    \frac{p^{(2)}(K,K,\theta)}{p(K)^2}
       =1+\sum_{j\ge1}a_{2j}^2\theta^{2j}.
\]
Using Parseval's identity and $h_2(x)=(x^2-1)/\sqrt2$, we obtain
\[
    \sum_{j\ge1}a_{2j}^2=\frac{1-p(K)}{p(K)},
    \qquad
    a_2^2=\frac{d(K)^2}{2}.
\]
The bounds \eqref{eq:Gaussian-Hermite-coarse} and \eqref{eq:Gaussian-Hermite-refined} now follow from $\log(1+x)\le x$. 

We next prove \eqref{eq:Gaussian-small-window}. We use the shorthands
\[
    P(\theta)=p^{(2)}(K,K,\theta),
    \qquad
    z=\frac{K^2\theta}{1-\theta^2}.
\]
The derivative identity \eqref{eq:energy_derivative} gives
\[
    P'(\theta)
      =\frac{2e^{-K^2/(1-\theta^2)}\sinh z}
             {\pi\sqrt{1-\theta^2}}.
\]
Define $R(\theta):=(1-\theta^2)P'(\theta)/\theta.$ Logarithmic differentiation gives
\begin{equation}\label{eq:log-diff-R}
    \theta(1-\theta^2)\frac{R'(\theta)}{R(\theta)}
      =z(1+\theta^2)\coth z-2z\theta-1.
\end{equation}
For $0<K\le2$, we claim
\begin{equation}\label{eq:coth-scalar}
    z(1+\theta^2)\coth z-2z\theta
       \le1+e^{-K^2}\theta^2.
\end{equation}
If $z\le1$, use
$z\coth z\le1+z^2/3$, and set
$u=K^2/(1-\theta^2)=z/\theta$. The claim reduces to
\[
    1-e^{-u(1-\theta^2)}-2u
      +\frac{1+\theta^2}{3}u^2\le0.
\]
For $u\le3$, we have
$1-e^{-u(1-\theta^2)}\le u(1-\theta^2)$, so the left-hand side is at most $u(1-\theta^2) - 2u + (1+\theta^2)u^2 / 3 = u(1+\theta^2)(u/3-1)\leq 0$.  If $u>3$, the conditions
$z\le1$ and $K\le2$ give $u\le2+\sqrt5$, and then since $\theta^2 u^2 = z^2 \leq 1$, the left-hand side above is
at most $(u^2-6u+4)/3<0$.

If $z\ge1$, then we use the inequality $z(1+\theta^2)\coth z - 2z\theta \leq z(1-\theta)^2 + (1+\theta^2) e^{-z}$. To prove the claim it is thus enough to show
\[
    z(1-\theta)^2+(1+\theta^2)e^{-z}\le1.
\]
The left-hand side is convex in $z$, and since $K \leq 2$, we have $1\le z\le\frac{4\theta}{1-\theta^2}.$
Evaluation at these two endpoints proves the inequality for
$\theta\ge\sqrt5-2$, which is precisely the range in which the interval
for $z$ is nonempty.  This proves \eqref{eq:coth-scalar}.

Since $p(K)\le\sqrt{2/\pi}\,K$, we have
$d(K)^2\ge e^{-K^2}$. From \eqref{eq:log-diff-R} and 
\eqref{eq:coth-scalar}, we obtain
\[
    R'(\theta)\le d(K)^2P'(\theta).
\]
Also note that $R(0)=d(K)^2P(0)$.  Integrating first this inequality and then
\[
    \frac{P'(\theta)}{P(\theta)}
       \le d(K)^2\frac{\theta}{1-\theta^2}
\]
gives
\[
    \log\frac{P(\theta)}{P(0)}
       \le-\frac{d(K)^2}{2}\log(1-\theta^2),
\]
as required.
\end{proof}

\subsection{Proof of \cref{lem:entropy-interpolation}}
\label{app:entropy-interpolation}

\begin{proof}
It suffices to prove the lemma for $0\leq \theta < 1$; the endpoint $\theta=1$ follows from continuity.
Apply the relative entropy variational formula with
\[
    f_\theta(x,y)
      :=\frac{t\,(2\theta xy-\theta^2(x^2+y^2))}{1-\theta^2},
\] where (recall) $t=1/(2\sigma(r)^2)$.
We immediately see that the expectation of $f_\theta$ under $\mu^{(2)}_{r,r,\theta}$ is zero. On the other hand, we compute
\[\E[\mu_r\otimes\mu_r]{e^{f_\theta}}=\frac1{Z_r^2}\sum_{x,y\ne0}
        \exp\left(-\frac{t\,(x^2+y^2-2\theta xy)}
                           {1-\theta^2}\right)=:M_r(\theta).
\]
The relative entropy variational formula then gives 
\begin{equation}\label{eq:entropy-M-bound}
    D_r(\theta)\ge-\log M_r(\theta).
\end{equation}

For $r \geq 1.2$, we have $t\le t(1.2)<.63$. Then,
\[
    Z_r = \sum_{m\in \ZZ \setminus\{0\}} e^{-t m^2} \ge2(e^{-.63}+e^{-.63 \cdot 4})>1.1,
\]
which proves that $0<c_r<1$.

For ease of notation, let $s:=\sqrt{1-\theta^2}$ and $c:=\pi^2/t$, and define the two series 
\[
    \mathcal S(u) :=\sum_{m\in\ZZ}e^{-um^2}, \qquad \mathcal A(\theta) :=\sum_{m,n\in\ZZ} e^{-c(m^2+2\theta mn+n^2)},
\]
noting that $Z_r = \mathcal S(t)-1$.
Poisson summation gives 
\begin{align}
    Z_r&=\sqrt{\frac{\pi}{t}}\,\mathcal S\left(\frac{\pi^2}{t}\right)-1,\label{eq:Z-Poisson}\\
    Z_r^2M_r(\theta)
      &=\frac{\pi}{t}s\mathcal A(\theta)-2\sqrt{\frac{\pi}{t}}\,s\, \mathcal S\left(\frac{\pi^2}{t}s^2\right)+1,
      \label{eq:M-Poisson}
\end{align}
We now claim that
\begin{equation}\label{eq:entropy-interpolation-theta-bound}
    Z_r^2M_r(\theta)
       \le sZ_r^2+1.1(1-s)Z_r.
\end{equation}
The proof only uses estimates of positive theta series. If $s\ge.98$, then note that combining \eqref{eq:Z-Poisson} and \eqref{eq:M-Poisson} gives
\begin{equation}\label{eq:Zr-bashing}Z_r^2 M_r(\theta) - sZ_r^2 = \frac{\pi}{t} s(\mathcal A(\theta) - \mathcal A(0)) - 2\sqrt{\frac{\pi}{t}}\,s\left(\mathcal S\left(\frac{\pi^2}{t}s^2\right)-\mathcal S\left(\frac{\pi^2}{t}\right)\right)+(1-s).\end{equation}
Pairing $(m,n)$ with $(m,-n)$ gives
\[
    \mathcal A(\theta)-\mathcal A(0)
       \le\frac{2\pi^4}{t^2}\theta^2
          \left(\sum_{m\in\ZZ}m^2e^{-\pi^2(1-\theta)m^2/t}\right)^2.
\]
The first term in the right-hand side of \eqref{eq:Zr-bashing} is thus at most
$2.4\cdot10^{-7}(1-s)$. The second term there (involving $\mathcal S$) is nonpositive since $s \leq 1$ and $\mathcal S(u)$ is decreasing in $u$. Since $Z_r \geq 1.1$, the entire right-hand side of \eqref{eq:Zr-bashing} is at most $(2.4\cdot 10^{-7} + 1)(1-s) \leq 1.1 (1-s)Z_r$. This proves \eqref{eq:entropy-interpolation-theta-bound} in this case.

Suppose next that $s<.98$, and for ease of notation, set $v=t/s^2$. Isolating the diagonal
$m+n=0$ in $\mathcal A(\theta)$, we change variables to $m+n,m-n$, using that $m^2 + 2\theta mn + n^2 = (1+\theta)(m+n)^2/2 + (1-\theta)(m-n)^2/2.$ Applying \eqref{eq:theta-sum-elementary}, separate the common leading contribution $1+\sqrt{\frac{\pi}{v}}\left(\sqrt{\frac{\pi}{t}}-2\right)$. After this contribution is removed from both sides of \eqref{eq:entropy-interpolation-theta-bound}, it suffices to show that the remaining diagonal, off-diagonal, and Poisson-correction errors have total size at most $(1-s)\left(\sqrt{\frac{\pi}{t}}-2\right)$, which is a positive quantity since $t < \pi/4$. Normalizing the three error terms by this quantity gives, respectively:
\begin{align*}
 E_d(v,s)
   &:=\frac{\sqrt{\pi/v}\,\eta(\pi^2/v)}{1-s},\\
 E_o(v,s)
   &:=\frac{\sqrt{\pi/v}\,\eta\left(\frac{\pi^2(1+\theta)}{2vs^2}\right)\left(1+\eta\left(\frac{\pi^2}{2v(1+\theta)}\right)\right)}
   {(1-s)(1-2s\sqrt{v/\pi})},\\
 E_c(v,s)
   &:=\frac{2s\,\eta(\pi^2/(vs^2))}
   {(1-s)(1-2s\sqrt{v/\pi})},
\end{align*}
where $\eta(u):=2e^{-u}/(1-e^{-3u})$.
The inequality $E_d+E_o+E_c<1$ is then a sufficient condition for
\eqref{eq:entropy-interpolation-theta-bound}.
Interval
evaluation gives
\[
\begin{array}{c|cccc}
v\text{-range}&E_d&E_o&E_c&\text{total}\\ \hline
(0,1]&<.0009&<.1756&<.00030&<.177\\
{[1,30]}&<.8684&<.0007&<.00003&<.870.
\end{array}
\]
The bounds are uniform for
$0<s\le\min\{.98,\sqrt{.63/v}\}$, including the limiting boundaries,
and prove \eqref{eq:entropy-interpolation-theta-bound} for $v\le30$.
These are direct interval evaluations of the explicit theta sums, under the
conventions described in \cref{app:numer-conventions}.

Finally, if $v\ge30$, the terms $x=y$ in the original sum defining
$M_r(\theta)$ contribute at most $Z_r$. Then, 
\eqref{eq:theta-sum-elementary} bounds all other terms by
\[
    \left(\mathcal S(v(1+\theta)/2)-1\right)\mathcal S(v(1-\theta)/2)
       <1.3\cdot10^{-6}\left(1+\sqrt{\frac{\pi}{t}}\right)<.1(1-s)Z_r.
\]
Since $s<\sqrt{.63/30}$, this again gives
\eqref{eq:entropy-interpolation-theta-bound}. Consequently,
\[
    M_r(\theta)
       \le s+\frac{1.1(1-s)}{Z_r}
       =c_r+(1-c_r)s.
\]
Together with \eqref{eq:entropy-M-bound}, this proves the first inequality
in \eqref{eq:entropy-interpolation}.  The second follows from
$-\log(1-x)\ge x$ and
$1-\sqrt{1-\theta^2}\ge\theta^2/2$. 
\end{proof}

\subsection{The scalar estimate for bulk regime}
\label{app:finite-profile-estimates}
We first record the monotonicities needed below.  Since $d(r^2)/dt=-\Var_{\mu_r}(X^2)$, we have
\[
    \frac{d}{d(r^2)}q(r) =2\left(t-\frac{r^2}{\Var_{\mu_r}(X^2)}\right)<0
\]
by \cref{prop:DG moments}.  Thus $q(r)$ is decreasing.  Moreover, $t=t(r)$ is decreasing, so $Z_r=2\sum_{j\ge1}e^{-tj^2}$ is increasing and $1/Z_r$ is decreasing.

We now verify the coefficient comparison \eqref{eq:bulk-scalar-estimate} for the range of parameters described in \cref{lem:bulk-profile}.
The functions $q(r)$ and $1/Z_r$ are decreasing in $r$.  Since
$K\ge\max\{2,10/r\}$,
interval evaluation at the endpoints of the following ranges gives
\[
\begin{array}{c|c|c}
r\text{-range}
  &(1-1.1/Z_r)/2&q(r)/K^2\\
\hline
{[1.2,1.5]}&>.055&<.041\\
{[1.5,2]}&>.237&<.060\\
{[2,3]}&>.338&<.117\\
{[3,4]}&>.407&<.187\\
{[4,5]}&>.435&<.280\\
{[5,\infty)}&>.449&<.273
\end{array}
\]
For the finite ranges, the entries are obtained from
\eqref{eq:interval-moment-equation}, using five terms of the defining
series and a geometric tail bound.  For $r\ge5$, we use the inequalities $q(r)\le q(5), Z_r\ge Z_5,$ and $K\ge2$.
This proves \eqref{eq:bulk-scalar-estimate}.

\subsection{The small-window profile}
\label{app:small-window-profile}
Throughout this subsection, we use the notation \[t = \frac{1}{2\sigma(r)^2}, \quad s = \sqrt{1-\theta^2}, \quad s_0 = p(K)^{1/d(K)^2}, \quad u_0 = \frac{\pi^2}{t}\left(1-\sqrt{1-s_0^2}\right),\quad \mathcal T_2(u):= \sum_{m\in\ZZ} m^2 e^{-um^2}.\]
We first prove a scalar estimate for $K\le2$.

\begin{lemma}[Small-window scalar estimate]
\label{lem:small-window-scalar}
Suppose
\begin{equation}\label{eq:small-window-domain}
    r\ge5,\qquad
    0<K\le2,\qquad
    rK\ge10,\qquad
    \frac{q(r)}{d(K)}
       \ge1+\frac1{r^2K^2}.
\end{equation}
Then
\begin{align}
    Z_r^2(1-q(r)d(K))
       \frac{s_0\log(1/s_0)}{1-s_0}
       &>1.01,\label{eq:small-window-main-scalar}\\
    \frac{4\pi^5}{t^3}\mathcal T_2(u_0)^2
       &<10^{-12}.
       \label{eq:small-window-remainder-bound}
\end{align}
\end{lemma}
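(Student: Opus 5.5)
We treat both displays as explicit scalar inequalities in the two parameters $(r,K)$ on the domain \eqref{eq:small-window-domain}. We first reduce this domain to a compact one on which interval arithmetic, under the conventions of \cref{app:numer-conventions}, applies.

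The first step is to parametrize by $r$ and $K$. The quantities $q(r)$, $t(r)$ and $Z_r$ depend only on $r$, while $d(K)$, $p(K)$ and $s_0$ depend only on $K$. Since $r\ge5$, Poisson summation, as in the proof of \cref{prop:H(DG) properties}, gives uniform approximations with exponentially small errors $O(e^{-\pi^2/t})$:
\[
t=\frac{1}{2r^2}\bigl(1+O(r^{-1})\bigr),\qquad Z_r\approx\sqrt{\pi/t}-1,\qquad q(r)=2tr^2=1+O(r^{-1}).
\]
Because $q(r)$ is decreasing (\cref{app:finite-profile-estimates}) and tends to $1$, the constraint $q(r)\ge d(K)\bigl(1+1/(r^2K^2)\bigr)$ couples $r$ and $K$. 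In particular, it forces $1-q(r)d(K)$ to be bounded below by an explicit positive quantity. Heuristically $q(r)\approx 1+\tfrac{1}{\sqrt{2\pi}\,r}$, so $1-qd\gtrsim 1-d$ roughly. Combined with $d(K)<1$ and $d(K)\to1$ as $K\to0$, this is the delicate point. As $K\to0$ one has $1-d(K)\sim K^2/3$, and the condition $q/d\ge1+1/(r^2K^2)$ then says $q-1\gtrsim 1/(r^2K^2)-K^2/3$. With $q-1=O(1/r)$ this forces $rK^2\gtrsim1$ roughly, giving a lower bound on $K$ of order $r^{-1/2}$.

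The next step is to estimate the factor $g(s_0):=s_0\log(1/s_0)/(1-s_0)$. Here $s_0=p(K)^{1/d(K)^2}$, and for small $K$ we have $p(K)\approx\sqrt{2/\pi}\,K$, so $s_0\approx p(K)$ is small and $g(s_0)\approx s_0\log(1/s_0)$. The product $Z_r^2(1-qd)\,g(s_0)$ is then of order
\[
r^2\cdot(1-qd)\cdot K\log(1/K).
\]
We must show this exceeds $1.01$ using $rK\ge10$ together with the coupling constraint above. For moderate $K\in[K_0,2]$ everything is bounded, and we verify the inequality by interval evaluation over a finite grid in $(r,K)$. We use monotonicity in $r$: the factor $Z_r^2$ increases with $r$, while $1-q(r)d$ also increases with $r$ since $q$ decreases. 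It therefore suffices to check the smallest admissible $r$ for each $K$-cell, namely the boundary $r=\max\{5,10/K\}$ or the boundary of the coupling constraint. For $r$ beyond a large threshold, we replace the grid by the asymptotic expansions with explicit error terms.

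The second display \eqref{eq:small-window-remainder-bound} should be the easier of the two. Since $t\le t(5)\approx 1/50$, the parameter $\pi^2/t$ is at least about $490$. We also need $u_0$ not too small, which requires $1-\sqrt{1-s_0^2}\approx s_0^2/2$ bounded below. Since $s_0\gtrsim p(K)$ and $K\ge10/r$, we get $u_0\gtrsim \pi^2 r^2 K^2/\pi\gtrsim 100\pi$. Then $\mathcal T_2(u_0)\le 2e^{-u_0}(1+\text{geometric tail})$ is astronomically small, and it dominates the polynomial factor $t^{-3}$ even as $r\to\infty$, because $u_0$ grows like $r^2K^2\cdot\Omega(1)$. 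This needs a careful lower bound on $s_0$ for small $K$ in terms of $rK$, which we again obtain from $p(K)\ge\sqrt{2/\pi}\,K e^{-K^2/2}$.

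The main obstacle is \eqref{eq:small-window-main-scalar} in the corner where $K$ is small and $r$ is large. There $1-qd$ is small, and the margin comes only from the hypothesis $q/d\ge1+1/(r^2K^2)$. We would first try to write $1-qd\ge 1-d^2\bigl(1+1/(r^2K^2)\bigr)^{-1}\cdot(\ldots)$ with explicit expansions, and then check the product against $1.01$ analytically in this asymptotic regime. Interval arithmetic would be reserved for the compact remainder.
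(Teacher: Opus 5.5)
Your overall strategy matches the paper's. Both proofs reduce to the lower boundary of the admissible $(r,K)$ region by monotonicity, certify one-variable functions by interval arithmetic, and treat $r\to\infty$ separately. The difference is that you take the monotonicity in the other variable.

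The paper fixes $r$. It uses that on $K\le1$ both $1-q(r)d(K)$ and $s_0\log(1/s_0)/(1-s_0)=f(-\log p(K)/d(K)^2)$ increase in $K$, and that $K\mapsto q(r)/d(K)-1-1/(r^2K^2)$ is strictly increasing with a unique zero $k_0(r)$. It then evaluates only on $K_0(r)=\max\{10/r,k_0(r)\}$, adds a separate bound on $1\le K\le2$, and compactifies the unbounded branch by $x=1/r$.

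You fix $K$. There the $s_0$-factor is constant, $Z_r$ increases and $q$ decreases (both recorded in \cref{app:finite-profile-estimates}), so the product increases in $r$ once it is positive. This spares you proving the $K$-monotonicity of the $s_0$-factor. The price is that both sides of the constraint decrease in $r$, so the minimal admissible $r$ is located less cleanly than $k_0(r)$. Both reductions land on the same boundary curve.

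Where your sketch goes wrong is in sizing the critical quantity, and hence in locating the real work.

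\begin{itemize}
\item Since $q(r)>1$, we have $1-qd=(1-d)-(q-1)d<1-d$, so your heuristic that $1-qd$ is roughly at least $1-d$ is backwards. On the coupling branch, where $K^2\approx1.23/r$, the two terms nearly cancel and $1-qd\approx0.012/r$. That is only a few percent of $1-d\approx K^2/3$.

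\item The hypothesis $q/d\ge1+1/(r^2K^2)$ does not supply this margin by itself. It only restricts $(r,K)$, forcing $K\ge k_0(r)$. A lower bound on $1-qd$ then needs an upper bound on $q(r)$ that is sharp to second order. The paper uses $q<1+1/Z_r$ with $Z_r>\sqrt{2\pi}r-1.54$, improved to $-1.502$ for $r\ge80$.

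\item The minimum is not in an asymptotic regime. It sits at the finite corner $r\approx81.4$, $K\approx0.123$, where $rK=10$ meets the coupling constraint. The product there is roughly $1.07$ and $1-qd\approx10^{-4}$. At that point the constant $-1.5$ in $Z_r\approx\sqrt{2\pi}r-1.5$ alone changes $1-qd$ by about a third. So this corner cannot be checked analytically from leading-order expansions, as your last paragraph proposes. It must lie inside the interval-arithmetic region, as in the paper's rows for $80\le r\le81.5$ and $r\ge81.4$.

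\item By contrast, as $r\to\infty$ along the coupling branch the product grows like $\sqrt r\log r$. That end is easy once the leading coefficient is computed correctly.
\end{itemize}

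For \eqref{eq:small-window-remainder-bound} there are two smaller points. First, $rK\ge10$ alone only bounds $u_0$ below by a constant, which cannot beat $t^{-3}\asymp r^6$ uniformly in $r$. You need $K^2\ge c/r$ from the coupling hypothesis to get $u_0\ge c' r$. Second, for $1\le K\le2$ your lower bound on $s_0=p(K)^{1/d(K)^2}$ must account for the exponent $1/d(K)^2$, which is close to $20$ at $K=2$; bounding $p(K)$ alone is not enough.
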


\begin{proof}
Define $f(u)=u/(e^u-1).$
The left-hand side of
\eqref{eq:small-window-main-scalar} is
\begin{equation}\label{eq:small-window-E}
    \mathcal E(r,K)
       :=Z_r^2(1-q(r)d(K))f\left(\frac{-\log p(K)}{d(K)^2}\right).
\end{equation}
The remainder of the proof uses only positive one-dimensional series. We have
\[
    \frac1{d(K)}=\sum_{j\ge0}\frac{K^{2j}}{(2j+1)!!},
    \qquad
    p(K)=\sqrt{\frac2\pi}\,Ke^{-K^2/2}
          \sum_{j\ge0}\frac{K^{2j}}{(2j+1)!!},
\]
while Poisson summation gives
\[
    q(r)=1+\frac{1-2\pi^{5/2}/t^{3/2}\cdot \mathcal T_2(\pi^2/t)}{Z_r}<1+\frac1{Z_r},
    \qquad
    Z_r>\sqrt{2\pi}\,r-1.54
\]
for $r\ge5$. For $r\geq 80$ we may improve the second inequality above by replacing $1.54$ with $1.502$.

These identities also show that, for fixed $r$, both factors in
\eqref{eq:small-window-E} are increasing in $K$ on the feasible
subinterval $K\le1$.
On this subinterval, the map
\[
    K\longmapsto
    \frac{q(r)}{d(K)}-1-\frac1{r^2K^2}
\]
is strictly increasing, tends to $-\infty$ as $K\downarrow0$, and is
positive at $K=1$. Let $k_0(r)$ be its unique zero,
and define $K_0(r)=\max\{10/r,k_0(r)\}$. It is therefore enough to
evaluate $\mathcal E(r,K)$ at $K=K_0(r)$. The series above, with
geometric bounds on their positive tails, give the following outward-rounded
lower bounds:
\[
\begin{array}{c|c}
\text{range}&\mathcal E(r,K)\\ \hline
1\le K\le2&>16\\
K=10/r,\ 10\le r\le80&>1.93\\
K=10/r,\ 80\le r\le81.5&>1.02\\
K=k_0(r),\ r\ge81.4&>1.044.
\end{array}
\]
The last two ranges overlap because bisection in the two monotone defining
equations shows that the change of branch occurs in $81.4<r<81.5$.
For the unbounded last range, the substitution $x=1/r$, together with
the Poisson representations above, extends the defining expressions
continuously to $x=0$; interval evaluation on
$x\in [0,1/81.4]$ gives the  bound in the last entry of the table.
This proves \eqref{eq:small-window-main-scalar}, and in particular
$q(r)d(K)<1$.

For the second estimate, the same positive-series bounds give
\[
    s_0\ge .46K\quad(K\le1),\qquad
    s_0\ge .4\quad(1\le K\le2),
\]
and hence
\[
    u_0>1.5r^2\quad(K\ge1),\qquad
    u_0>1.91r^2K^2\ge191\quad(K\le1).
\]
Since $r\geq 5$ we have $q(r) > 1$, so $\pi^5/t^3<8\pi^5r^6$. Furthermore, by comparing consecutive terms, we obtain $\mathcal T_2(u)\le2e^{-u}/(1-4e^{-3u}),$
and checking the endpoints of the same four monotone ranges yields
\eqref{eq:small-window-remainder-bound}.
\end{proof}

\begin{proof}[Proof of \cref{lem:small-window-profile}]
First, we note that \cref{lem:small-window-scalar} immediately gives $0<q(r)d(K)<1$.  Now, using the Poisson formula
\eqref{eq:M-Poisson} only for $s\ge s_0$, we claim that
\begin{equation}\label{eq:small-window-M-chord}
    M_r(\theta)\le s+C_{r,K}(1-s),
\end{equation}
where $M_r(\theta)$ was defined in \cref{app:entropy-interpolation} and 
\[
    C_{r,K}
       :=\frac{1+4\pi^5/t^3 \cdot \mathcal T_2(u_0)^2}{Z_r^2}.
\]
Indeed, subtracting $s$ times the identity at $\theta=0$ in
\eqref{eq:M-Poisson} and pairing the summands indexed by
$(m,n)$ and $(m,-n)$ gives
\[
    \mathcal A(\theta)-\mathcal A(0)
       \le\frac{2\pi^4}{t^2}\theta^2
          \left(\sum_{m\in\ZZ}
            m^2e^{-\frac{\pi^2}{t}(1-\theta)m^2}\right)^2.
\]
Also, we have $\mathcal S(\pi^2 s^2 / t)\ge \mathcal S(\pi^2/t)$, so the contribution by terms involving $\mathcal S$ to the difference between
the proposed upper bound and $M_r(\theta)$ is nonnegative.
Finally,
\[
    \theta^2\le2(1-s),
    \qquad
    \frac{\pi^2}{t}(1-\theta)\ge
       \frac{\pi^2}{t}\left(1-\sqrt{1-s_0^2}\right)=u_0.
\]
These inequalities prove \eqref{eq:small-window-M-chord}.

Since $0<q(r)d(K)<1$, the function
\[
    g(s):=\frac{s^{q(r)d(K)}-s}{1-s}
\]
is increasing on $s\in(0,1)$, and
\[
    g(s_0)
      \ge(1-q(r)d(K))
        \frac{s_0\log(1/s_0)}{1-s_0}.
\]
Thus \cref{lem:small-window-scalar} implies that $C_{r,K}<g(s_0)\le g(s)$ for $s\ge s_0$.
From \eqref{eq:entropy-M-bound} and
\eqref{eq:small-window-M-chord}, we obtain
\[
    D_r(\theta)>-q(r)d(K)\log s.
\]
On the other hand, \eqref{eq:Gaussian-small-window} gives
\[
    \frac{q(r)}{d(K)}G_K(\theta)
       \le-q(r)d(K)\log s.
\]
Combining these yields \eqref{eq:small-window-profile} for $s\ge s_0$.

For $s\le s_0$, the entropy
$H(\mu^{(2)}_{r,r,\theta})$ is even and concave (\cref{prop:F2 entropy derivatives}), so $D_r(\theta)$
is increasing on $0\le\theta\le1$.  Also
\[
    G_K(\theta)\le G_K(1)=-\log p(K),
\]
and
\[
    \frac{q(r)}{d(K)}(-\log p(K))
       =-q(r)d(K)\log s_0.
\]
The strict inequality already proved at $s=s_0$ therefore covers the
remaining interval.
\end{proof}

\subsection{The near-binary profile}
\label{app:near-binary-profile}
As before, we use the notation $t = 1/(2\sigma(r)^2)$ throughout this subsection.
We begin with the following estimate.
\begin{lemma}[Near-binary entropy]\label{lem:near-binary-entropy}
If $1<r\le1.2$, then
\begin{equation}\label{eq:near-binary-entropy-bound}
    D_r(\theta)\ge\frac{\theta^2}{8},
    \qquad 0\le\theta\le1.
\end{equation}
\end{lemma}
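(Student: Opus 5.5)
The plan is to avoid the planar discrete Gaussian $\mu^{(2)}_{r,r,\theta}$ entirely and work under the explicit product measure $\mu_r\otimes\mu_r$, via the relative entropy variational formula. This is the same device used in the proof of \cref{lem:entropy-interpolation}, but with a simpler linear test function. Recall that $D_r(\theta)=D(\mu^{(2)}_{r,r,\theta}\|\mu_r\otimes\mu_r)$ by \eqref{eq:Dr-relative-entropy}, and that $\E[\mu^{(2)}_{r,r,\theta}]{XY}=\theta r^2$. For any $\lambda$ with finite exponential moment, Donsker--Varadhan gives
\[
    D_r(\theta)\ \ge\ \lambda\theta r^2-\log \Lambda_r(\lambda),
    \qquad
    \Lambda_r(\lambda):=\E[\mu_r\otimes\mu_r]{e^{\lambda XY}}
    =\frac1{Z_r^2}\sum_{x,y\ne0}e^{-t(x^2+y^2)+\lambda xy},
\]
where $t=1/(2\sigma(r)^2)$. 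The sum is finite for $|\lambda|<2t$. By \eqref{eq:interval-moment-equation}, $t(r)$ is decreasing, so $t\ge t(1.2)$ on the whole range. The first step is therefore to certify a lower bound on $t(1.2)$; we expect something of order $0.6$, consistent with the bound $t(1.2)<.63$ used earlier. This leaves room to take $\lambda$ up to a fixed constant $\lambda_0<2t(1.2)$.

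The second step is a quadratic bound on the log-moment generating function, of the form
\[
    \log\Lambda_r(\lambda)\le \frac{C_0}{2}\lambda^2 \qquad\text{for } 0\le\lambda\le\lambda_0,
\]
with an explicit constant $C_0$ uniform in $1<r\le1.2$. Since $\Lambda_r$ is even in $\lambda$ (by the symmetry $y\mapsto -y$), and $\Lambda_r(0)=1$, $\Lambda_r'(0)=0$, $\Lambda_r''(0)=r^4$, it suffices to bound $(\log\Lambda_r)''=\Var_\lambda(XY)\le \E_\lambda[X^2Y^2]$ uniformly on $[0,\lambda_0]$. Here $\E_\lambda$ denotes expectation under the tilted measure. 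This quantity is a ratio of explicit positive double series in $(t,\lambda)$. I would bound it by keeping the terms with $|x|,|y|\le 3$ exactly and controlling the rest with geometric tails, as in \eqref{eq:theta-sum-elementary}. Near $r=1$ everything is dominated by $|x|=|y|=1$, and $\E_\lambda[X^2Y^2]\approx 1$. The constant $C_0$ only needs to be modest; for instance $C_0\le 2$ would suffice with room to spare.

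The third step is to optimize. Taking $\lambda=\min\{\theta r^2/C_0,\lambda_0\}$ gives the following two bounds. When $\theta r^2\le C_0\lambda_0$, we get $D_r(\theta)\ge \theta^2r^4/(2C_0)\ge\theta^2/(2C_0)$. Otherwise, we get $D_r(\theta)\ge \lambda_0\theta r^2-C_0\lambda_0^2/2\ge \lambda_0\theta/2$, which is at least $\theta^2/8$ provided $\lambda_0\ge1/4$. So \eqref{eq:near-binary-entropy-bound} follows once $C_0\le4$ and $\lambda_0\ge 1/4$ are certified; the endpoint $\theta=1$ follows by continuity.

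The main obstacle is the second step: obtaining a clean, uniform-in-$r$ bound on $\E_\lambda[X^2Y^2]$ for $\lambda$ up to $\lambda_0$. The tilt $e^{\lambda xy}$ erodes the Gaussian decay along the diagonal $x=y$, where the effective exponent becomes $-(2t-\lambda)x^2$. This is exactly why $\lambda_0$ must stay well below $2t$. If the interval arithmetic is too tight, my fallback is to shrink $\lambda_0$ and handle large $\theta$ differently. Since $D_r$ is increasing in $\theta$ (by concavity and evenness of $H(\mu^{(2)}_{r,r,\theta})$, \cref{prop:F2 entropy derivatives}), it suffices there to certify $D_r(\theta_1)\ge 1/8$ at a single $\theta_1$. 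That can be done with the same variational bound, or by projecting to signs via data processing, which reduces to a binary entropy comparison.
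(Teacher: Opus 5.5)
Your proposal is correct and is essentially the paper's argument: the paper also applies the relative entropy variational formula under $\mu_r\otimes\mu_r$ with the linear test function $uXY/r^2$ for $0\le u\le 1/4$ (your $\lambda=u/r^2$). It then proves $\log\mathbb{E}\,e^{uXY/r^2}\le 2u^2$ uniformly for $t\ge t(1.2)$ and takes $u=\theta/4$.

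The only real difference is how the log-MGF is bounded. You control the tilted second moment of a double series. Note that your heuristic $\mathbb{E}_\lambda[X^2Y^2]\approx 1$ fails at $r=1.2$, where this quantity is already $r^4\approx 2.07$ at $\lambda=0$; the certificate $C_0\le 4$ that you actually need does still hold. The paper instead uses $\cosh z\le 1+\frac{z^2}{2}e^{|z|}$ and $2|XY|\le X^2+Y^2$ to reduce everything to the single ratio $\mathcal T_2(t-u/(2r^2))/\mathcal T_2(t)<1.45$. That ratio is monotone in $t$, which disposes of the uniformity-in-$r$ issue you flagged as the main obstacle.
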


\begin{proof}
For any $0\le u\le1/4$, the relative entropy variational formula gives
\[
    D_r(\theta)
      \ge u\theta-\log\E{e^{uXY/r^2}},
\]
where $X,Y$ are independent with law $\mu_r$.  By symmetry, the
exponential inside the expectation may be replaced by its hyperbolic
cosine.  Using the easy bounds
\[
    \cosh z\le1+\frac{z^2}{2}e^{|z|}
    \qquad\text{and}\qquad
    2|XY|\le X^2+Y^2,
\]
we obtain
\[
    \E{e^{uXY/r^2}}
      \le1+\frac{u^2}{2}
       \left(\frac{\E{X^2e^{uX^2/(2r^2)}}}{r^2}\right)^2.
\]
Recalling the notation $\mathcal T_2(v)=\sum_{j\in\ZZ}j^2e^{-vj^2},$
the ratio in parentheses is $\mathcal T_2(t-u/(2r^2))/\mathcal T_2(t).$
This ratio decreases with $t$ and increases with $u/(2r^2)$.  Since $t\ge t(1.2)>.627$ and $0\le u/(2r^2)\le\frac18$,
it suffices to evaluate
\[
    \frac{\mathcal T_2(.627-1/8)}{\mathcal T_2(.627)}<1.45.
\]
Five terms followed by an integral bound for the positive tail certify this inequality.  It follows that
\[
    \log\E{e^{uXY/r^2}}\le2u^2.
\]
Taking $u=\theta/4$ proves
\eqref{eq:near-binary-entropy-bound}.
\end{proof}

Define
\[
    I_{\rm bin}(\theta)
      :=\frac{1+\theta}{2}\log(1+\theta)
        +\frac{1-\theta}{2}\log(1-\theta),
    \qquad
    \alpha_{\rm bin}(K):=\frac{\log2}{-\log p(K)}.
\]
We use the binary profile inequality
\begin{equation}\label{eq:ALS-global}
    \alpha_{\rm bin}(K)G_K(\theta)
       \le I_{\rm bin}(\theta),
    \qquad K>4,\quad 0<\theta<1,
\end{equation}
proved in \cite[Lemma~4.6]{ALS22}.  The inequality is strict in the
displayed open interval.

\begin{proof}[Proof of \cref{lem:near-binary-profile}]
Let $\alpha=q(r)/d(K)$, and write
$F_2(\theta)=F_2(rK,\alpha,r,r,\theta)$. Since $rK\ge10$ and
$r\le1.2$, we have $K\ge25/3$.

Suppose first that $q(r)\le8$.  Using
\eqref{eq:Gaussian-Hermite-coarse}, $1-p(K) = 2\overline \Phi(K)$, and
$\overline\Phi(K)\le\phi(K)/K$,
\begin{equation}\label{eq:big-q-bound}
    \alpha G_K(\theta)
      \le\frac{q(r)}{K^2}\theta^2
      \le\frac8{(25/3)^2}\theta^2
      <.116\theta^2.
\end{equation}
This is smaller than \eqref{eq:near-binary-entropy-bound}.

Suppose now that $q(r)>8$.  Then
$t(r)=q(r)/(2r^2)>25/9.$
Factoring out the $j=1$ term in the series defining $Z_r$ and its
second moment gives
\[
    r^2-1
      \le\sum_{j\ge2}(j^2-1)e^{-(j^2-1)t(r)}
      <7.22\cdot10^{-4}.
\]
Here the $j=2$ term is retained and the remaining positive tail is
bounded geometrically.  In particular,
\begin{equation}\label{eq:near-binary-delta}
    \delta:=r^2-1<7.3\cdot10^{-4}.
\end{equation}
Writing $X=S|X|$, where the sign $S$ is uniform on $\{\pm1\}$ and independent of $|X|$, we have
$\E{|X|-1}\le\delta/3$.  The geometric distribution maximizes entropy
among nonnegative integer-valued random variables with a fixed mean, and
therefore
\[
    H(\mu_r)
       \le\log2+g(\delta/3)
       <1.0033\log2,
    \qquad
    g(x):=(1+x)\log(1+x)-x\log x.
\]
The assumption $H(\mu_r) + \frac{q(r)}{d(K)} \log p(K) \geq 0$ implies
\begin{equation}\label{eq:near-binary-alpha-upper}
    \alpha\le1.0033\,\alpha_{\rm bin}(K).
\end{equation}

For $(X,Y)\sim\mu^{(2)}_{r,r,\theta}$, let $S=\sign(X)$ and
$T=\sign(Y)$.  Applying data processing to
\eqref{eq:Dr-relative-entropy}, and using
$\E{|X||Y|-1}\le r^2-1$, gives
\begin{equation}\label{eq:sign-projection}
    D_r(\theta)
      \ge I_{\rm bin}\bigl([\theta-\delta(1-\theta)]_+\bigr).
\end{equation}
Indeed,
\[
    \E{ST}
      =\E{XY}-\E{ST(|X||Y|-1)}
      \ge\theta r^2-\delta
      =\theta-\delta(1-\theta).
\]
For $\theta\ge1/4$,
\begin{equation}\label{eq:binary-information-loss}
    I_{\rm bin}(\theta-\delta(1-\theta))
       \ge.9955\,I_{\rm bin}(\theta).
\end{equation}
To see this, note that
\[
    I_{\rm bin}(\theta)
     -I_{\rm bin}(\theta-\delta(1-\theta))
     \le\delta(1-\theta)\arctanh\theta,
\]
while $I_{\rm bin}(\theta)\ge\theta^2/2$; the resulting ratio is at
most $.0045$.

For $\theta\le1/4$,
\eqref{eq:Gaussian-Hermite-refined},
\eqref{eq:near-binary-alpha-upper}, and
\eqref{eq:Gaussian-tail-two-sided} give
\begin{equation}\label{eq:near-binary-center}
    \alpha G_K(\theta)<.045\theta^2
       <D_r(\theta).
\end{equation}
Here we used
\[
    \alpha_{\rm bin}(K)\frac{1-p(K)}{p(K)}
       \le\frac{\log2}{p(K)}<.7,
    \qquad
    \alpha_{\rm bin}(K)d(K)^2<10^{-12}.
\]

Now, if $\alpha\le.995\alpha_{\rm bin}(K),$
then \eqref{eq:ALS-global}--\eqref{eq:sign-projection} prove the main claim
for $\theta\ge1/4$, and \eqref{eq:near-binary-center} covers the
remaining interval.  

It thus remains to consider $\alpha>.995\alpha_{\rm bin}(K).$
Using
\[
    -\log p(K)\le\frac{1-p(K)}{p(K)} = \frac{2\overline{\Phi}(K)}{p(K)}
       \le\frac{2\phi(K)}{Kp(K)},
\]
we obtain
\[
    \frac{d(K)}{-\log p(K)}\ge K^2.
\]
Recall that stationarity gives $\alpha = q(r)/d(K)$, which yields 
\[
    q(r)>.995(\log2)K^2,
    \qquad
    t(r)>.239K^2.
\]
The defining series therefore yields
\begin{equation}\label{eq:binary-exponential-radius}
    r^2-1\le4e^{-3t(r)}
       \le4e^{-.717K^2}.
\end{equation}

We first treat
\[
    \frac14\le\theta\le
    \theta_U:=1-\frac1{3K^2}.
\]
Let $P_K(\theta)=p^{(2)}(K,K,\theta)$. Integrating
\eqref{eq:energy_derivative}, using the two-sided Gaussian tail bound
\eqref{eq:Gaussian-tail-two-sided}, and evaluating the resulting
one-variable expressions on the intervals
\[
    [1/4,1/2],\quad[1/2,.8],\quad[.8,.95],\quad[.95,.98]
\]
gives
\[
    I_{\rm bin}(\theta)
     -1.0033\,\alpha_{\rm bin}(K)G_K(\theta)>.0315,
    \qquad \frac14\le\theta\le.98.
\]
For $.98\le\theta\le\theta_U$, let
$J_\pm=\PP{W>K,\ \pm W_\theta>K}$. Conditioning on
$(W+W_\theta,W-W_\theta)$ gives
\[
    J_+<.693\overline\Phi(K),\qquad J_-<10^{-3}\overline\Phi(K).
\]
Since
\[
    P_K(\theta)=p(K)^2+2(J_++J_-)-4\overline\Phi(K)^2,
\]
it follows that
$\alpha_{\rm bin}(K)G_K(\theta)<.482$, and hence
\begin{equation}\label{eq:binary-middle-gap}
    I_{\rm bin}(\theta)
     -1.0033\,\alpha_{\rm bin}(K)G_K(\theta)>.0315,
    \qquad \frac14\le\theta\le\theta_U.
\end{equation}
All constants in the above computations are direct
outward-rounded evaluations, uniform for $K\ge25/3$.

The loss from replacing $\theta$ by
$\theta-\delta(1-\theta)$ in \eqref{eq:sign-projection} is at most
\[
    \delta(1-\theta)\arctanh\theta<.0045\log2 < .0032.
\]
Thus \eqref{eq:near-binary-alpha-upper} and
\eqref{eq:binary-middle-gap} prove the profile inequality on
$[1/4,\theta_U]$.

Finally, let $\theta=1-\varepsilon\in(\theta_U,1)$.  By \eqref{eq:binary-exponential-radius},
\[
    r^2-1<10^{-21},
    \qquad
    \E{(Y-X)^2}=2r^2\varepsilon.
\]
Since $H(X)\le H(\mu_r)$, and since an integer-valued random variable
of second moment $2r^2\epsilon$ satisfies
\[
    H(Y-X)\le H(|Y-X|)+\PP{Y\ne X}\log2
       \le g(2r^2\epsilon)+2r^2\epsilon\log2,
\]
we obtain $H(\mu^{(2)}_{r,r,\theta})-H(\mu_r)\le2.5\varepsilon\log(e/\varepsilon).$
Concavity of $H(\mu_{r, r, \theta}^{(2)})$ (\cref{prop:F2 entropy derivatives}) therefore yields
\begin{equation}\label{eq:near-binary-entropy-derivative}
    \partial_\theta H(\mu^{(2)}_{r,r,\theta})
       \ge-2.5\log\frac e\varepsilon.
\end{equation}
On the other hand, the derivative computation
\eqref{eq:energy_derivative} yields the following direct estimate.
Since $\alpha>.995\alpha_{\rm bin}(K)$,
$P_K(\theta)\le p(K)$, and
$-p(K)\log p(K)\le1-p(K)=2\overline\Phi(K)$,
\[
    \frac{\alpha}{P_K(\theta)}
       \ge\frac{.995\log2}{2\overline\Phi(K)}.
\]
The upper bound in \eqref{eq:Gaussian-tail-two-sided} therefore gives
\[
 \alpha\partial_\theta\log P_K(\theta)
 \ge
 \frac{.995\log2}{\sqrt{2\pi}}\,
 \frac{K}{\sqrt{\varepsilon(2-\varepsilon)}}\,
 \exp\left(-\frac{K^2\varepsilon}{2(2-\varepsilon)}\right)
 \left(1-\exp\left(-\frac{2K^2\theta}{1-\theta^2}\right)\right).
\]
Here $K^2\varepsilon\le1/3$, $K\ge25/3$, and $\theta \geq 0.98$, so $1-\exp(-2K^2\theta/(1-\theta^2)) \geq 1-e^{-100}$.  Hence
\begin{equation}\label{eq:near-binary-energy-derivative}
    \alpha\partial_\theta\log P_K(\theta)
       >.17K\varepsilon^{-1/2}
       >.16K\varepsilon^{-1/2}.
\end{equation}
At $\varepsilon=1/(3K^2)$ and $K=25/3$, the absolute value of the
right-hand side of \eqref{eq:near-binary-entropy-derivative} is less
than $15.85$, while the right-hand side of
\eqref{eq:near-binary-energy-derivative} is greater than $19.24$.
The comparison improves as $K$ grows or $\varepsilon$ decreases.
Thus $F_2'(\theta)>0$ on $(\theta_U,1)$.  Finally,
\[
    F_2(1)=H(\mu_r)+\alpha\log p(K)
       \le2(H(\mu_r)+\alpha\log p(K))=F_2(0),
\]
so the profile inequality $F_2(\theta) < F_2(0)$ follows on this last interval $\theta\in(\theta_U, 1)$.

Finally, we prove that $q(r)d(K) < 1$. If $q(r)\le8$, then by \eqref{eq:energy_derivative} and the computation directly following it, Taylor expansion at $\theta=0$ gives 
\[
    \alpha G_K(\theta)
       =\frac{q(r)d(K)}2\theta^2+O(\theta^4).
\]
Here, we have recalled $\alpha=q(r)/d(K)$, and then used symmetry of $G_K$ to eliminate the odd terms in the Taylor expansion. Hence, combining this expansion with \eqref{eq:big-q-bound} and taking $\theta \to 0$ gives $q(r)d(K)\le2\cdot0.116<1$. If $q(r)>8$, \eqref{eq:near-binary-center} similarly gives $q(r)d(K)\le2\cdot0.045<1$. This concludes the proof.

\end{proof}

\end{document}